\definecolor{DarkRed}{rgb}{0.9,0.7,0.3}
\newcommand{\bc}[1]{\lan #1 \ran}
\newcommand{\p}{\partial}
\newcommand{\bx}{{\bf x}}
\newcommand{\by}{{\bf y}}
\newcommand{\bv}{{\bf v}}
\newcommand{\be}{{\bf e}}
\newcommand{\bw}{{\bf w}}
\newcommand{\bh}{{\bf h}}
\newcommand{\bet}{\bm{\eta}}
\newcommand{\bxi}{\bm{\xi}}
\newcommand{\e}{\epsilon}
\newcommand{\R}{{\mathbb R}}
\newcommand{\Z}{{\mathbb Z}}
\newcommand{\C}{{\mathbb C}}
\newcommand{\ten}{\otimes}
\newcommand{\pl}{\hspace{.1cm}}
\newcommand{\ran}{\rangle}
\newcommand{\lan}{\langle}
\newcommand{\al}{\alpha}
\newcommand{\si}{\sigma}
\newcommand{\la}{\lambda}
\newcommand{\E}{{\mathbb E}}
\newcommand{\A}{{\mathcal A}}
\newcommand{\ddd}{{\Delta}}
\newcommand{\M}{{\mathcal M}}
\numberwithin{equation}{section}
\newcommand{\K}{{\mathcal K}}
\renewcommand{\S}{{\mathcal S}}
\newcommand{\N}{{\mathcal N}}
\newcommand{\norm}[2]{\parallel \! #1 \! \parallel_{#2}}
\newtheorem{lemma}{Lemma}[section]
\newtheorem{prop}[lemma]{Proposition}
\newtheorem{theorem}[lemma]{Theorem}
\newtheorem{defi}[lemma]{Definition}
\newtheorem{cor}[lemma]{Corollary}
\newtheorem{rem}[lemma]{Remark}
\newcommand{\re}{\begin{rem}\rm}
\newcommand{\mar}{\end{rem}}
\newcommand{\ket}[1]{|{#1}\rangle}
\newcommand{\qd}{\end{proof}\vspace{0.5ex}}
\newcommand{\prf}{\begin{proof}[\bf Proof:]}
\newcommand{\xspace}{\hbox{\kern-2.5pt}}
\newcommand{\hl}{\color{black}}
\begin{document}
\title{Quantum Euclidean Spaces with noncommutative derivatives}
\author{Li Gao}
\address{Department of Mathematics\\
Texas A\&M University, College Station, TX 77843, USA} \email[Li Gao]{ligao@math.tamu.edu}

\author[Marius Junge]{Marius Junge$^*$}\thanks{$^*$ Partially supported by NSF grant DMS-1501103  and  DMS-1800872}
\address{Department of Mathematics\\
University of Illinois, Urbana, IL 61801, USA} \email[Marius
Junge]{mjunge@illinois.edu}

\author[Edward McDonald]{Edward McDonald}
\address{School of Mathematics and Statistics\\
University of New South Wales, UNSW Sydney
NSW, 2052, Australia} \email[Edward McDonald]{edward.mcdonald@unsw.edu.au}
\begin{abstract}Quantum Euclidean spaces, as Moyal deformations of Euclidean spaces, are the model examples of noncompact noncommutative manifold. In this paper, we study the quantum Euclidean space equipped with partial derivatives satisfying canonical commutation relation (CCR). This gives an example of semi-finite spectral triple with non-flat geometric structure. We develop an abstract symbol calculus for the pseudo-differential operators with noncommuting derivatives. We also obtain a simplified local index formula (even case) that is similar to the commutative setting.
\end{abstract}
\maketitle

\section{Introduction}
The theory of pseudo-differential operators ($\Psi$DOs) plays an influential role in {\hl the} index theory of elliptic operators. This approach also prevails in noncommutative geometry. In \cite{CM95}, Connes and Moscovici established the local index formula for spectral triples, which gives an analytic expression for the index pairing between $K$-theory of noncommutative algebras and the $K$-homology class induced by a Dirac type operator. This local index formula was extended to the locally compact {\hl (i.e., non-unital)} setting by Carey, Gayral, Rennie and Sukochev \cite{CGRSmemo}. In both {\hl proofs of the local index formula} \cite{CM95,CGRSmemo}, an abstract theory of $\Psi$DOs {\hl is crucial to the analysis}. On {\hl the prototypical example of a noncommutative geometry}--quantum tori, pseudo-differential operators been widely used in studying curvatures and other geometric structures (see e.g. \cite{CT2011,fathizadeh2013weyl,lesch2016modular,bhuyain2012ricci,CM14}).
Recently several works \cite{Jim,Ponge18,Ponge18s, GJP17} give detailed accounts of {\hl the symbol calculus for} $\Psi$DOs on quantum tori.

Quantum Euclidean spaces are {\hl model examples of noncommutative spaces in the locally compact setting}, and can be viewed as locally compact counterparts of quantum tori. They are noncommutative deformation of Euclidean spaces which originate from the Heisenberg relation and Moyal products in quantum mechanics. Let $\theta=(\theta_{jk})_{j,k=1}^{d}$ be a skew-symmetric $d\times d$ matrix. Roughly speaking, a $d$-dimensional quantum Euclidean space is given by the von Neumann algebra $\R_\theta$ generated by the spectral projections of $d$ self-adjoint operators $x_1,\cdots, x_d$ satisfying the the canonical commutation relation (CCR) \[[x_j,x_k]=-i\theta_{jk}\pl.\] {\hl We will review a rigorous definition of $\R_\theta$ in Section \ref{sectionpre}.}
{\hl Despite having a relatively simple algebraic structure (a type ${\rm I}$ von Neumann algebra)} the connection to Euclidean spaces and quantum physics make them indispensable in various
scenarios. For example, from {\hl the perspective of harmonic and functional analysis}, Calder\'on-Zygmund theory {\hl and pseudodifferential operator theory} on quantum Euclidean spaces was established in {\hl the recent article} \cite{GJP17} and the {\hl theory of distributions} goes back to \cite{moyal1,moyal2}. In noncommutative geometry, quantum Euclidean spaces serve as model examples for non-unital spectral triples \cite{moyalplane}. {\hl In mathematical physics, noncommutative Euclidean spaces have been heavily studied under the name of canonical commutation relation (CCR) algebras \cite[Section 5.2.2.2]{Bratteli-Robinson2} and in the context of Weyl quantization \cite[Chapter 14]{Hall}, \cite[Chapter 2, Section 3]{Takhtajan2008}.}
Also, the discovery of instantons on noncommutative $\R^4$ makes an influential connection to string theory \cite{connes01,nekrasov98,seiberg99}.

In this paper, we revisit the connection between $\Psi$DOs and {\hl the }local index formula for quantum Euclidean spaces. Both topics have been considered for $\R_\theta$, with its standard geometric structure. Recall that $\R_\theta$ is associated with a Weyl quantization map, defined for functions in the Schwartz class $S(\R^d)$ as:
\[\la_\theta: f\in S(\R^d)\mapsto \frac{1}{2\pi^d}\int_{\R^d}\hat{f}(\xi)\la_\theta(\xi)d\xi\in \R_\theta\pl.\]
where $\la_\theta(\bxi)=e^{\bxi_1x_1+\cdots+\bxi_dx_d},\bxi \in \R^d$ is a projective unitary representation of $\R^d$, \[\la_\theta(\bxi)\la_\theta(\bet)=e^{i\frac{\theta}{2}\bxi\bet}\la_\theta(\bxi+\bet)\] (see Section \ref{sectionpre} for further details).
The canonical trace {\hl associated to} $\R_\theta$ is {\hl defined on the image of $S(\R^d)$ under $\la_\theta$ as} $\displaystyle \tau_\theta(\la_\theta(f))=\int f$. {\hl Differentiation operators $\frac{\partial}{\partial \bx_j}$ admit a canonical extension to} $\R_\theta$, defined on $\la_\theta(S(\R^d))$ by $D_j\la_\theta(f)=\la_\theta(-i\frac{\partial}{\partial \bx_{j}}f)$.
{\hl The operators $D_j$ have self-adjoint extensions to the Hilbert-Schmidt space} $L_2(\R_\theta, \tau_\theta)$. {\hl Since partial differentiation operators on $S(\R^d)$ commute, it follows immediately that $[D_j,D_k] = 0$ for $1\leq j,k\leq d$. The fact that these partial derivatives mutually commute reflects a ``flat" geometry of $\R_\theta$.}

The scope of this paper is to consider a more general but still computable differential structure on $\R_\theta$. More precisely, we shall equip $\R_\theta$ with ``covariant derivatives" $\xi_1,\cdots,\xi_d$ satisfying (another) CCR relation. Unlike the standard case
\begin{align} [x_j,x_k]=-i\theta_{j,k}, [D_j,x_k]=-i\delta_{j,k} \pl, [D_j,D_k]=0\pl,\label{st}\end{align}
we consider that $x_j$'s and $\xi_k$'s together have the commutation relations
\begin{align}[x_j,x_k]=-i\theta_{j,k},  [\xi_j,x_k]=-i\delta_{jk}  \pl, \pl [\xi_j,\xi_k]=-i\theta_{jk}'\pl.\label{ncnc}\end{align}
where $\delta$ is the Kronecker Delta notation and $\theta'$ is an arbitrary but fixed skew-symmetric matrix. In the classical case when $\theta=0$ and $\R_0=L_\infty(\R^d)$, such $\xi_j$'s are covariant derivatives of connections with a constant curvature form (see Section 3.1). From this perspective, \eqref{ncnc} can be viewed as a natural deformation of \eqref{st} by adding a nonzero curvature form. {\hl From the perspective
of quantum physics, noncommuting derivatives occur in the presence of a magnetic field \cite{AHS1978}. One can view the matrix $\theta'$ as representing a constant magnetic field on $\R_\theta$.}
The noncommutativity of {\hl the} covariant derivatives $\xi_j$ adds essential difficulty in developing the {\hl theory of $\Psi$DOs}. When $\theta'=0$, the commutativity of $D_j$'s makes the phase space (or the Fourier transform side) a commutative space, and then the symbol of a $\Psi$DO is a operator-valued function $a:\R^d \to \R_\theta$. In our setting for noncommuting $\xi_j$'s, the symbol will become purely abstract as operators affiliated to $\R_\theta\ten \R_\theta'$. Moreover, due to the unbounded natural of symbol functions, we have to inevitably deal with unbounded but smooth elements. {\hl The idea of incorporating noncommuting derivatives into pseudodifferential calculus has also appeared in the related context of magnetic pseudodifferential
calculus \cite{MP2004,MPR2005}.}

We now briefly explain our setting and illustrate the main results.
Let $\R_\theta\overline{\ten}\R_\theta'$ be the $2d$-dimensional quantum Euclidean space generated by {\hl the relations}
\[[x_j,x_k]=-i\theta_{j,k}\pl, [\xi_j,\xi_k]=-i\theta'_{j,k}\pl, [x_j,\xi_k]=0\pl\]
and let $\R_\Theta$ be the {\hl$2d$-dimensional} space generated by \eqref{ncnc} with parameter matrix $\Theta=\left[\begin{array}{cc}\theta&I_d\\ -I_d&\theta'
\end{array}\right]$.
{\hl We will consider pseudodifferential calclulus defined with symbols as operators} affiliated to $\R_\theta\overline{\ten}\R_\theta'$ and the $\Psi$DOs {\hl themselves }are operators affiliated to $\R_\Theta$. The operator {\hl or quantization} map ``$Op$'' sending symbols to $\Psi$DOs is simple: for $a\in\R_\theta, b\in \R_{\theta'}$
\begin{align}\label{o}Op(a\ten b)=ab\in \R_\Theta\pl,\end{align}
where $\R_\theta,\R_\theta'$ are viewed as subalgebras of $\R_\Theta$. The domain of {\hl $Op$} can extended to the following abstract symbol class.
\begin{itemize}
\item We say an operator $a$ affiliated to $\R_\theta\overline{\ten} \R_{\theta'}$ is a symbol of order $m$ (write as $a\in \Sigma^m$) if for any multi-indices $\al$ and $\beta$, $D_x^\al D_\xi^\beta(a)(1+\sum_{j}\xi_j^2)^{-\frac{m+|\beta|}{2}}$ extends to a bounded operator in $\R_\theta\overline{\ten} \R_{\theta'}$.
\end{itemize}
Here $D_x$ are {\hl the canonical (commuting) differentiation operators acting on} the first component $\R_\theta$ and $D_\xi$ {\hl are the same} for $\R_{\theta'}$. {\emph A priori} it is not clear that this definition is closed under {\hl multiplication},
and {\hl adjoint, or if we have the expected properties }$\Sigma^m\cdot \Sigma^n=\Sigma^{m+n}$ and $(\Sigma^m)^*=\Sigma^m$, which {\hl are important components for the development of a } symbol calculus. To resolve that, we introduce in Section \ref{sectionasy} a notation of ``asymptotic degree'' to measure the unboundedness of operators affiliated to $\R_\theta$. {\hl This is a notion directly inspired by the abstract pseudodifferential calculus developed
by Connes and Moscovici \cite[Appendix B]{CM95} and Higson \cite{higson}.} With this definition of symbol class,
 we establish in Section \ref{phido} the two core parts of $\Psi$DOs calculus---the $L_2$-boundedness theorem for $0$-order $\Psi$DOs and the composition formula.
\begin{theorem}[c.f. Theorem \ref{l2}] Let $a$ be a symbol of order $0$ {\hl(i.e., $a \in \Sigma^0$)}. Then $Op(a)$, {\hl initially defined on $\la_{\Theta}(S(\R^{2d}))$ has unique extension to} a bounded operator {\hl on the Hilbert space $L_2(\R_\Theta)$.}
\end{theorem}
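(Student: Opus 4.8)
The plan is to transport the classical proof of the Calder\'on--Vaillancourt / H\"ormander $L_2$-boundedness theorem into the abstract symbol calculus: reduce the assertion to a positivity inequality, and prove that inequality by a parametrix (``square root'') construction together with a base case for operators of very negative order. It suffices to produce a constant $N$ such that $\|Op(a)\eta\|_{L_2(\R_\Theta)}^2\le N^2\|\eta\|_{L_2(\R_\Theta)}^2$ for every $\eta$ in the dense subspace $\la_\Theta(S(\R^{2d}))$, as the unique bounded extension then follows. By the composition formula and the stability of $\Sigma^0$ under adjoint and composition, $Op(a)^*Op(a)=Op(b)$ for some $b\in\Sigma^0$, and the inequality becomes positivity of $Op(N^2-b)$ on that dense domain. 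The image of $b$ in $\Sigma^0/\Sigma^{-1}$ is represented by the bounded positive symbol $a^*a\in\R_\theta\overline{\ten}\R_{\theta'}$ of norm $\le\|a\|^2$, so fixing $N^2>\|a\|^2$ makes $N^2-a^*a$ bounded below by a positive constant.

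The core step is to find $c\in\Sigma^0$ with $Op(c)^*Op(c)=Op(N^2-b)+Op(r)$ for some $r\in\Sigma^{-M}$, $M$ as large as desired. Take $c_0:=(N^2-a^*a)^{1/2}$. Here the key lemma is that $\Sigma^m$ is stable under holomorphic functional calculus applied to operator-valued symbols: since $N^2-a^*a$ is a bounded invertible positive symbol, $c_0=\frac{1}{2\pi i}\int_\Gamma z^{1/2}\bigl(z-(N^2-a^*a)\bigr)^{-1}\,dz$ for a suitable contour $\Gamma$, and one checks $c_0\in\Sigma^0$ by establishing that the resolvents $\bigl(z-(N^2-a^*a)\bigr)^{-1}$ lie in $\Sigma^0$ with seminorm bounds uniform for $z\in\Gamma$, via the Leibniz rule for $D_x,D_\xi$ and induction on the multi-index lengths --- in the spirit of the abstract pseudodifferential calculus of Connes--Moscovici \cite{CM95} and Higson \cite{higson}. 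Then $Op(c_0)^*Op(c_0)=Op(N^2-b)+Op(r_1)$ with $r_1\in\Sigma^{-1}$, since the principal symbol of the left side is $c_0^2=N^2-a^*a$. Correcting order by order, one chooses $c_j\in\Sigma^{-j}$ (using also that $c_0^{-1}\in\Sigma^0$) so that the symbol of $Op\bigl(\sum_{i\le j}c_i\bigr)^*Op\bigl(\sum_{i\le j}c_i\bigr)-(N^2-b)$ lies in $\Sigma^{-j-1}$, and then takes an asymptotic sum $c\sim\sum_j c_j$, which the notion of asymptotic degree of Section~\ref{sectionasy} makes rigorous as an element of $\Sigma^0$; this $c$ has the required remainder $r\in\Sigma^{-M}$.

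It remains to know that $Op(r)$ is bounded on $L_2(\R_\Theta)$ when $r\in\Sigma^{-M}$ with $M$ large. Classically this is a Schur-test estimate: the Schwartz kernel $K(x,y)=\int r(x,\xi)e^{i(x-y)\xi}\,d\xi$ satisfies $|K(x,y)|\le C_N\langle x-y\rangle^{-2N}$ for every $N$ by integrating by parts $2N$ times in $\xi$ (using that the $\xi$-derivatives of $r$ decay fast, uniformly in $x$), and Schur's lemma with $N>d/2$ gives boundedness. In the present setting I would prove the analogue by a noncommutative Schur-type estimate: using Plancherel on $\R_\Theta$ to present $Op(r)$ as a (twisted) integral operator, each application of $D_\xi$ to $r$ improves its decay, so enough integrations by parts produce decay in the conjugate variable sufficient for a Schur/Young bound --- while, crucially, $D_x$-derivatives of $r$ are only bounded, so no decay in the spatial variable is available (which is exactly why the naive Hilbert--Schmidt bound fails and why this step is delicate). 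Granting this, for $\eta\in\la_\Theta(S(\R^{2d}))$ one has
\[
N^2\|\eta\|^2-\|Op(a)\eta\|^2=\langle Op(N^2-b)\eta,\eta\rangle=\|Op(c)\eta\|^2-\langle Op(r)\eta,\eta\rangle\ge-\|Op(r)\|\,\|\eta\|^2,
\]
so $\|Op(a)\eta\|^2\le(N^2+\|Op(r)\|)\|\eta\|^2$, and density of $\la_\Theta(S(\R^{2d}))$ in $L_2(\R_\Theta)$ completes the proof. The two genuine obstacles are (i) the functional-calculus-within-$\Sigma^0$ lemma, i.e.\ the uniform resolvent bounds for operator-valued symbols, and (ii) the base-case boundedness of negative-order operators, which demands a workable noncommutative substitute for the Schur test. (An alternative to the square-root construction is a Cotlar--Stein argument using a dyadic decomposition of $1$ in the single positive operator $\sum_j\xi_j^2$ affiliated to $\R_{\theta'}$, the almost-orthogonality bounds $\|T_j^*T_k\|,\|T_jT_k^*\|\lesssim 2^{-|j-k|}$ then reducing to integration by parts in the $D_x$-variables.)
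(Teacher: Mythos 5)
Your proposal tackles the $L_2$-boundedness theorem by transplanting the classical Calder\'on--Vaillancourt square-root (Friedrichs symmetrization) argument into the abstract symbol calculus. The paper takes a fundamentally different and much shorter route: it uses the comultiplication dilation $\si_\theta\ten\si_{\theta'}$ of Proposition~\ref{comulti} to convert the abstract symbol $a\in\Sigma^0\subset\M_{\theta,\theta'}$ into an $\R_{\theta,\theta'}$-valued \emph{classical} symbol $(\bx,\bxi)\mapsto\al_\bx^1\al_\bxi^2(a)$ on $\R^d\times\R^d$ whose partial derivatives are all uniformly bounded in $\R_{\theta,\theta'}$-norm, then invokes Merklen's operator-valued Calder\'on--Vaillancourt theorem (Theorem~\ref{ovcv}) as a black box and transfers the resulting norm bound back to $Op(a)$ through the isometry $V_\theta\ten id_{\R_{\theta'}}$ of Proposition~\ref{48}. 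This dilation-and-cite strategy sidesteps the square-root construction entirely; it is exactly what the comultiplication machinery of the section is set up to deliver.

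Your route has three gaps you would have to close. First, to write $Op(a)^*Op(a)=Op(b)$ you need an \emph{adjoint formula} at the operator level --- that $Op(a)^*=Op(\tilde a)$ for some $\tilde a\in\Sigma^0$ congruent to $a^*$ modulo $\Sigma^{-1}$ --- before the composition formula can be applied. The paper only proves that $\Sigma^m$ is closed under $*$ at the symbol level (the Corollary following Theorem~\ref{char}) and never establishes the operator-level adjoint formula, so this must be supplied. Second, holomorphic functional calculus inside $\Sigma^0$ (to form $(N^2-a^*a)^{1/2}$ and $c_0^{-1}$ as elements of $\Sigma^0$) is not in the paper; your sketch of uniform resolvent bounds along a contour is the right idea, but it is a nontrivial lemma in its own right and deserves a full inductive proof. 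Third and most seriously, the base case --- that $Op(r)$ is bounded for $r\in\Sigma^{-M}$ with $M$ large --- is the genuine crux, and you correctly flag it. A plain $r\in\Sigma^{-M}$ carries no decay in the $x$-variables, so $Op(r)$ is in general not Hilbert--Schmidt (contrast Corollary~\ref{int}, which yields $L_p(\R_\Theta)$ membership only under the \emph{tame} hypothesis with $x$-decay), and it is not clear how to set up a Schur test directly on $L_2(\R_\Theta)$, where the would-be integral kernel is valued in a noncommutative algebra and the conjugate variable for the integrations by parts is not a scalar. The natural way to make your Schur estimate rigorous is to pass to the operator-valued classical picture via $\si_\theta\ten\si_{\theta'}$ and argue with $\R_{\theta,\theta'}$-valued kernels on $\R^d$ --- but that is precisely what the paper's proof does, and once you are there the square-root superstructure becomes an unnecessary detour, since the operator-valued Calder\'on--Vaillancourt theorem already yields boundedness for every symbol with all derivatives bounded, negative order or not.
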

\begin{theorem}[c.f. Theorem \ref{compo}] Let $a$ be a symbol of order $m$ and $b$ be a symbol of order $n$. Then $Op(a)Op(b)=Op(c)$ for some symbol $c$ of order $m+n$. Moreover \[c\sim \sum_{\al} \frac{ i^{-|\al|}}{\al !}D_\xi^\al (a)D_x^\al(b)\]
in the sense that for any positive integer $N$, $c-\sum_{|\al|\le N} \frac{ i^{-|\al|}}{\al !}D_\xi^\al (a)D_x^\al(b)$ is a symbol of order $m+n-N-1$.
\end{theorem}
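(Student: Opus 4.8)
The plan is to compute $Op(a)Op(b)$ in a Fourier (``$x\bxi$-ordered'') representation, isolate the single obstruction to the $x$- and $\bxi$-exponentials commuting, and read off the expansion by Taylor-expanding the resulting phase. On the dense subclass of Schwartz-type symbols one has $Op(a)=c_d\int\int\widehat a(\bs,\bt)\,\lambda_\theta(\bs)\lambda_{\theta'}(\bt)\,d\bs\,d\bt$ with absolutely convergent integral, and likewise for $b$; the only noncommutativity entering $Op(a)Op(b)$ (after checking that $\lambda_\Theta(S(\R^{2d}))$ is invariant under $Op$ of symbols, so that the composition is defined) is the passage of $\lambda_{\theta'}(\bt)$ (a function of $\bxi$) through $\lambda_\theta(\bs')$ (a function of $x$), which by $[\bxi_j,x_k]=-i\delta_{jk}$ contributes a scalar phase $e^{\pm i\bt\cdot\bs'}$. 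Recombining the resulting $x$- and $\bxi$-exponentials through the $\R_\theta$- and $\R_{\theta'}$-relations gives $Op(a)Op(b)=Op(a\# b)$, where $a\# b$ is an oscillatory integral coupling the $\bxi$-frequency of $a$ to the $x$-frequency of $b$ via $e^{\pm i\bt\cdot\bs'}$, on top of the usual $\theta,\theta'$ twists already present in $\R_\theta\overline{\ten}\R_{\theta'}$. As an independent check on orders one may use the intertwining identities $[\bxi_k,Op(a)]=Op(D_{x_k}a)$ and $[x_k,Op(a)]=\pm Op(D_{\xi_k}a)$ (verified on $\lambda_\theta(\bs)\ten\lambda_{\theta'}(\bt)$ and extended) together with $\Sigma^m\cdot\Sigma^n=\Sigma^{m+n}$ from Section~\ref{sectionasy} and Theorem~\ref{l2}; injectivity of $Op$ on symbols, coming from injectivity of the Weyl map, then makes $c$ unique.

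Next I would Taylor-expand the phase, $e^{\pm i\bt\cdot\bs'}=\sum_{|\al|\le N}\frac{c_\al}{\al!}\bt^\al(\bs')^\al+R_N$ with integral remainder $R_N=\sum_{|\al|=N+1}\frac{(N+1)c_\al}{\al!}\bt^\al(\bs')^\al\int_0^1(1-s)^N e^{\pm is\bt\cdot\bs'}\,ds$. Since on the Fourier side $D_\xi^\al$ acts on $a$ by multiplication by $\bt^\al$ and $D_x^\al$ acts on $b$ by multiplication by $(\bs')^\al$, the expansion of the phase turns into
\[ a\# b=\sum_{|\al|\le N}\frac{i^{-|\al|}}{\al!}\,D_\xi^\al(a)\,D_x^\al(b)+r_N(a,b),\qquad r_N(a,b)=\sum_{|\al|=N+1}\frac{(N+1)\,i^{-|\al|}}{\al!}\int_0^1(1-s)^N\,\big(D_\xi^\al a\big)\,\#_s\,\big(D_x^\al b\big)\,ds, \]
where $\#_s$ is the product twisted by the phase $e^{\pm is\bt\cdot\bs'}$ (so $\#_1=\#$ and $\#_0$ is the ordinary product of $\R_\theta\overline{\ten}\R_{\theta'}$), the constants $c_\al$ being exactly those making the leading part agree with the statement once the Fourier and differentiation conventions of Section~\ref{sectionpre} are unwound. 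Since $D_\xi^\al$ lowers the symbol order by $|\al|$ and $D_x^\al$ preserves it --- directly from the definition of $\Sigma^\bullet$ --- we get $D_\xi^\al(a)\in\Sigma^{m-|\al|}$ and $D_x^\al(b)\in\Sigma^n$, so the expansion terms have orders $m+n,m+n-1,\dots,m+n-N$; it remains to identify $r_N(a,b)$ --- and, by the same reasoning applied directly to the defining oscillatory integral, $a\# b$ itself --- as a symbol of order $m+n-N-1$, resp. $m+n$.

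This remainder estimate is the main work. One applies $D_x^\beta D_\xi^\g$ to $r_N(a,b)$, differentiating under $\int_0^1 ds$ and inside the oscillatory integral, and must show that, after commuting the weight $(1+\sum_j\bxi_j^2)^{-(m+n-N-1+|\g|)/2}$ past the derivatives at the cost of lower-order corrections, what remains is a bounded operator in $\R_\theta\overline{\ten}\R_{\theta'}$, uniformly in $s\in[0,1]$. This is the operator-algebra counterpart of the classical fact that a composition integral becomes absolutely convergent once enough integrations by parts are performed in the phase-space variables: the oscillation $e^{\pm is\bt\cdot\bs'}$, together with the symbol estimates for $a$ and $b$, is used to convert multiplications into derivatives, and the $N+1$ surplus orders carried by $D_\xi^\al(a)\in\Sigma^{m-N-1}$ yield the order $m+n-N-1$. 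Two ingredients require genuine care: that the translations in $\bxi$ and in $x$ entering $\#_s$ act by order-preserving $*$-automorphisms of $\R_\theta\overline{\ten}\R_{\theta'}$ with bounds locally uniform in the parameters, and that the noncommutative ``integration by parts'' and the rearrangement of the unbounded weight $(1+\sum_j\bxi_j^2)^{s/2}$ past $D_x,D_\xi$ are legitimate --- both supplied by the asymptotic-degree calculus of Section~\ref{sectionasy}. Finally I would remove the Schwartz restriction by the standard regularization argument ($\widehat a\rightsquigarrow\chi(\eps\,\cdot)\widehat a$, $\chi\in S(\R^{2d})$, $\chi(0)=1$, $\eps\to 0$), using continuity of $Op$, of multiplication, and of the expansion terms in the appropriate topologies; Theorem~\ref{l2} then certifies that $Op(a)Op(b)=Op(c)$ and its truncations are genuine bounded-operator identities once the orders are $\le 0$. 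I expect the remainder estimate --- uniform in $s$, with the unbounded weight present --- to be the real obstacle; the rest is a careful transcription of the classical oscillatory-integral argument into operator-algebra language.
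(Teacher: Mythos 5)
Your high-level strategy is the same as the paper's: express $Op(a)Op(b)=Op(c)$ through a twisted convolution, identify the phase that couples the $\xi$-frequency of $a$ to the $x$-frequency of $b$, Taylor-expand that phase to produce $\sum_\al\frac{i^{-|\al|}}{\al!}D_\xi^\al(a)D_x^\al(b)$, and estimate the remainder. The paper does this in physical variables, writing $c=\frac{1}{(2\pi)^d}\int\al_\bet^2(a)\al_\by^1(b)e^{-i\bet\cdot\by}\,d\bet\,d\by$ and Taylor-expanding $\al_\bet^2(a)$ around $\bet=0$ as in \eqref{su}, which is the Fourier-conjugate of your phase expansion. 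Up to that translation of variables the routes coincide.

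The issue is that you explicitly defer the remainder estimate, and that is where nearly all of the paper's work lies; moreover, your proposed regularization has a concrete problem. Approximating $\hat a$ by $\chi(\eps\,\cdot)\hat a$ and appealing to "continuity of $Op$ and multiplication" cannot yield the order estimate: for $m>0$ elements of $\Sigma^m$ are unbounded and are not limits of Schwartz-class elements in the Fr\'echet topology of $\Sigma^m$, and Theorem~\ref{l2} gives operator-norm continuity only at order zero, so there is no topology in which to pass the symbol estimates to the limit. The paper instead regularizes the oscillatory integral itself: it inserts a cutoff $\phi(\eps\by)$ on the $b$-side, so that $b_\eps(\by)=\phi(\eps\by)\al_\by(b)$ is compactly supported and $\hat b_\eps$ decays rapidly in $\bet$; it bounds the Taylor-remainder terms using the operator-norm weight estimate $\norm{\bc{\xi+t\bet}^{r}\bc{\xi}^{-r}}{}\lesssim\bc{\bet}^{2n}$ from Lemma~\ref{bi}, which is exactly the uniform-in-parameter control you ask for; it controls the difference $b_{\eps'}-b_\eps$ (supported away from $\by=0$) by iterated integration by parts with $\Delta_\xi^{m_1}$ and $(1+\Delta_\by)^{m_2}$; and it shows the regularized $c_\eps$ form a Cauchy net in $\Sigma^{m+n}$. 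Without such a mechanism your defining integral for $c$ and the $\#_s$-twisted $r_N(a,b)$ are purely formal. The remark that "the oscillation converts multiplications into derivatives" names the right classical device, but the substance of the proof is making that device run against the noncommutative weight $\bc{\xi}^s$ and the operator-valued transference actions, which is precisely what the asymptotic-degree calculus of Section~\ref{sectionasy} and Lemma~\ref{bi} supply.
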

The proofs of the above theorems use the idea of co-multiplication maps. The co-multiplication maps enables us to convert {\hl the }operator map $Op$ as an operator-valued classical operator map on the $\R^d$. In particular, this gives an alternative approach to some parts of symbol calculus in \cite{GJP17} for $\theta'=0$.

In Section \ref{sectionloc}, we apply the $\Psi$DO calculus {\hl prove that}
\begin{align}\label{spectraltriple}(W^{\infty,1}(\R_\theta), L_2(\R_\Theta)\ten \mathbb{C}^N, D=\sum_{j}\xi_j\ten c_j) \pl,\end{align}
{\hl forms a semifinite non-unital spectral triple (in the sense of \cite[Definition 2.1]{CGRSmemo}).}
Here, $c_j$ are generators of the Clifford algebra $Cl^d$ and $W^{\infty,1}(\R_\theta)=\{a | D^\al(a)\in L_1(\R_\theta)\pl \forall \pl \al\}$ is the noncommutative Sobolev spaces. We denote $W^{\infty,1}(\R_\theta)^\sim=W^{\infty,1}(\R_\theta)+\mathbb{C}$ {\hl for the minimal} unitalization. {\hl The triple \eqref{spectraltriple} forms} a smoothly summable semifinite spectral triple with isolated spectrum dimension (see Section \ref{sectionloc} for further details).
{\hl We are able to apply the even case of the local index formula \cite[Theorem 3.33]{CGRSmemo}, yielding the following:}
\begin{theorem}[c.f. Corollary \ref{formula}] \label{C}Let $d$ be even and $\R_\theta$ be a $d$-dimensional quantum Euclidean space. Then $(A,H,D):=(W^{\infty,1}(\R_\theta), L_2(\R_\Theta)\ten M_N, \sum_{j}\xi_j\ten c_j)$ is an even, smoothly summable, semi-finite spectral triple with isolated spectrum dimension. Moreover, for a projection $e\in M_n(W^{\infty,1}(\R_\theta)^\sim)$, the index pairing is given by
\begin{align*}&\lan [e]-[1_e], (A,H,D)\ran =\pi^\frac{d}{2}(\tau_\theta\ten tr(\gamma (e-1_e) \frac{\omega^\frac{d}{2}}{\frac{d}{2}!})+\sum_{m=1}^{\frac{d}{2}}\frac{1}{2m!}\tau_\theta\ten tr(\gamma e(de)^{2m}
\frac{\omega^{\frac{d}{2}-m}}{(\frac{d}{2}-m)!}))\pl,\end{align*}
where $\omega=\frac{i}{2}\sum_{j,k}\theta_{j,k}c_jc_k$.
\end{theorem}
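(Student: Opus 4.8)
\emph{Proof strategy.} The plan is to verify the hypotheses of the even semifinite local index formula \cite[Theorem~3.33]{CGRSmemo} for the triple $(A,H,D)$ and then to reduce the residue cocycle it produces to the stated closed form by a purely Clifford-algebraic computation. I would begin with the structural checks. Since $d$ is even, $Cl^d$ carries a chirality element, which supplies the grading $\gamma$; and $D=\sum_j\xi_j\ten c_j$ is essentially self-adjoint on $\lambda_\Theta(S(\R^{2d}))\ten M_N$ because $\sum_j\xi_j^2$ is, the $\xi_j$ forming a representation of a CCR algebra. Expanding,
\[
D^2=\Big(\sum_j\xi_j^2\Big)\ten 1+\tfrac12\sum_{j,k}[\xi_j,\xi_k]\ten c_jc_k ,
\]
the second summand is bounded, so $D^2$ agrees with $\Delta_\xi:=\sum_j\xi_j^2\ten 1$ up to a bounded perturbation; $\Delta_\xi$ is (conjugate to) a Schr\"odinger-type operator with at most quadratic potential on the copy of $\R_{\theta'}$ sitting inside $\R_\Theta$. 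Boundedness of $[D,a]=\sum_k\partial_k(a)\ten c_k$ for $a\in W^{\infty,1}(\R_\theta)^\sim$, and of its iterated commutators with $|D|$, follows from the mapping properties of the derivations on $W^{\infty,1}(\R_\theta)$, giving $a,[D,a]\in\bigcap_n\operatorname{dom}\delta^n$. For the summability, Cwikel-type estimates for $\R_\Theta$ ($L_1$-integrable symbols against the resolvent of $\Delta_\xi$) show that $a(1+D^2)^{-s/2}$ and $(1+D^2)^{-s/4}a(1+D^2)^{-s/4}$ lie in $\mathcal L^1$ for $\operatorname{Re}s>d$ and in $\mathcal L^{(d,\infty)}$ for $s=d$ whenever $a\in W^{\infty,1}(\R_\theta)$; this identifies $d$ as the spectral dimension.

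The substantive point is then \emph{smooth summability} with \emph{isolated} spectrum dimension, and this is exactly where the $\Psi$DO calculus of Sections~\ref{sectionasy}--\ref{phido} enters. The key observation is that each operator $T=b_0[D,b_1]\cdots[D,b_n]$ with $b_i\in W^{\infty,1}(\R_\theta)^\sim$, as well as every operator obtained from such a $T$ by iterated commutators with $D^2$, is, under the quantization map $Op$, a pseudodifferential operator whose symbol belongs to one of the classes $\Sigma^\bullet$; likewise the symbol of $(1+D^2)^{-z}$ lies in $\Sigma^{-2\operatorname{Re}z}$ up to a bounded error. Feeding this into Theorem~\ref{l2} and the composition formula Theorem~\ref{compo}, together with the fact that the canonical trace of $\R_\Theta$ decomposes (after a linear change of the derivative variables) as $\tau_\theta$ on $\R_\theta$ tensored with a quantum-Euclidean trace over the remaining variables, one shows that for each such $T$ the function $z\mapsto\tau\big(T(1+D^2)^{-z}\big)$ extends meromorphically to $\C$ with all poles simple and contained in $\{d,d-1,d-2,\dots\}$. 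That is precisely the statement that the dimension spectrum is isolated, and it forces the Connes--Moscovici/CGRS cochain to collapse onto its leading residue term with no lower-order corrections.

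It then remains to evaluate the residue cocycle
\[
\phi_{2m}(a_0,\dots,a_{2m})=c_m\operatorname{Res}_{z=0}\tau\big(\gamma\,a_0[D,a_1]\cdots[D,a_{2m}]\,(1+D^2)^{-m-z}\big)
\]
on $e-1_e$. Substituting $[D,a]=\sum_k\partial_k(a)\ten c_k$ and expanding $(1+D^2)^{-m-z}$ in the bounded Clifford-curvature term $\tfrac12\sum_{j,k}[\xi_j,\xi_k]\ten c_jc_k$, the residue localizes the $\Delta_\xi$-dependence to a radial Gaussian integral, which accounts for the constant $\pi^{d/2}$, while the trace over $M_N$ against the chirality $\gamma$ extracts the component of Clifford degree $d$. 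The curvature expansion then contributes exactly the truncated exponential $\sum_k\omega^k/k!$, and reorganizing over $m$ reassembles the asserted formula. The principal obstacle is the middle step: controlling the entire family of zeta functions $\tau(T(1+D^2)^{-z})$ well enough to conclude that the dimension spectrum is isolated---this is what the abstract symbol calculus of Sections~\ref{sectionasy}--\ref{phido} is designed to supply---followed by the careful bookkeeping of the normalization constant and the Clifford combinatorics, which mirrors the classical computation of the Chern character of a Dirac operator twisted by a line bundle of constant curvature.
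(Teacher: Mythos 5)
Your overall frame — verify the hypotheses of \cite[Theorem~3.33]{CGRSmemo}, then evaluate the residue cocycle by Clifford combinatorics and a Gaussian/heat-kernel trace — matches the paper, and the final bookkeeping you sketch (chirality extracts Clifford degree $d$; expanding $e^{s\omega}$ gives the truncated exponential $\sum \omega^n/n!$; the factor $\pi^{d/2}$ comes from the $\xi$-trace) is consistent with the paper's computation. But the route you propose through the crucial middle step is genuinely different from the paper's, and as sketched it has a gap.

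You claim that the $\Psi$DO composition and $L_2$-boundedness theorems (\ref{l2}, \ref{compo}) are what establish isolated spectral dimension, and that isolated spectral dimension then ``forces the cochain to collapse onto its leading residue term.'' Neither part matches what actually carries the proof. The paper barely uses the $\Psi$DO calculus in Section~\ref{sectionloc}: smooth summability is proved by a direct $L_2\times L_2\to L_1$ factorization plus an operator integral (Lemma~\ref{prodint}), and the meromorphic continuation comes from the Cahen--Mellin representation $(1+D^2)^{-s}=\Gamma(s)^{-1}\int_0^\infty e^{-t(1+D^2)}t^{s-1}dt$ together with the \emph{explicit} heat kernel trace of Proposition~\ref{interg}, $\tau_{\theta'}(e^{-t|\xi|^2})=t^{-d/2}\pi^{d/2}h(t)$ with $h(t)-1=O(t^2)$. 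Your proposal defers to the composition formula to produce meromorphic continuation, but \ref{compo} is a symbol-level asymptotic expansion; it does not by itself yield analytic control of $z\mapsto\tau(T(1+D^2)^{-z})$ without a trace formula and a Tauberian-type estimate, which you would still need to supply — and which the paper gets essentially for free from the closed-form heat trace.

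The more serious issue is logical: isolated spectral dimension with simple poles does kill the $j\geq 2$ terms in the residue cocycle, but it does \emph{not} force the sum over the multi-index $k$ to collapse to $|k|=0$. That collapse in the paper comes from two independent constraints: (i) a Clifford-degree count, $Str_\theta(\Psi_k\,\omega^n)=0$ unless $2n+m\geq d$, because $\Psi_k$ has Clifford degree at most $m$; and (ii) the location of the pole of $\Gamma(n-\tfrac d2+|k|+\tfrac m2+z)$ at $z=0$, which requires $2n+2|k|+m\leq d$, together with the vanishing of the residue coming from $h(s)-1$. Only the intersection of these constraints, $d\leq 2n+m\leq d-2|k|$, pins down $|k|=0$ and $2n+m=d$. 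Your sketch writes $\phi_{2m}$ in the already-collapsed form without deriving this; that is precisely the content of Theorem~\ref{C} that distinguishes it from the generic CGRS formula, and it cannot be waved in by ``isolatedness.''
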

Note that the Dirac Laplacian {\hl has square given by}
\[D^2=(\sum_{j}\xi_j\ten c_j)^2=\sum_{j}\xi_j^2-\omega\pl.\]
Where $\omega$ plays the role of a curvature form in the index pairing.
The general local index formula in \cite{CM95,CGRSmemo} contains residue cocycles which involve higher order residues at $z=0$ for zeta functions
\[\zeta_{k}(z)=tr(\gamma a_0d a_1^{(k_1)}\cdots d a_m^{(k_m)}(1+D^2)^{-\frac{m}{2}-k-z})\pl\]
where $a_j\in A$, $da=[D,a]$ and $\displaystyle da^{(k)}:=\underbrace{[D^2,[D^2,\cdots [D^2}_{\text{$k$-times}},da]]$.
Theorem \ref{C} basically observes that the above zeta functions has nonzero residue only for $|k|=0$ and the poles are simple. For a Dirac operator on compact spin Riemannian manifolds, such a simplification was observed in \cite{CM95} and fully developed by Ponge \cite{Ponge} using Getzler calculus. {\hl The local index formula of Connes and Moscovici \cite{CM95}} recovers the Atiyah-Singer index theorem for spin Dirac operators. Theorem \ref{C} shows that a similar simplified index formula holds for the noncommutative {\hl spectral triple}
$(W^{\infty,1}(\R_\theta), L_2(\R_\Theta)\ten M_N, \sum_{j}\xi_j\ten c_j)$. We also provide a concrete example of the index pairing in $d=2$ (Theorem \ref{example}).

The paper is organized as follows: We first reviews some preliminary facts about quantum Euclidean spaces in Section 2. Section 3 introduces and discuss the notation ``asymptotic degree'', which is a key tool in the subsequent discussions. In Section 4, we discuss the symbol calculus of $\Psi$DOs and prove Theorem 1.1 and 1.2. Section 5 is devoted to the local index formula and Theorem 1.4.

{\bf Acknowledgement}-The authors are grateful to Alexander Gorokhovsky for helpful discussion on the local index formula.

\section{Preliminaries on Quantum Euclidean spaces}\label{sectionpre}
In this section we review the basic structures of Quantum Euclidean spaces. Quantum Euclidean spaces in the literature has been studied under several different names: Moyal plane \cite{moyalplane,moyal1,moyal2}, canonical commutatation
relation (CCR) algebras \cite[Section 5.2.2.2]{Bratteli-Robinson}, noncommutative Euclidean Spaces \cite{gao18,dao2} and quantum Euclidean spaces \cite{GJP17}. In particular, \cite{Bratteli-Robinson} gives a detail account from the operator theoretic perspective. The distribution theory was studied in \cite{moyal1,moyal2}. More recently \cite{GJP17} studies harmonic analysis on quantum Euclidean spaces. From the noncommutative geometric perspective, an early exposition is in \cite{moyalplane}.

\subsection{Definitions and notations}
Throughout the paper we use the usual letters $x_1,x_2,\cdots,$ and $\xi_1,\xi_2,\cdots$ for operators and the boldface letters $\bx=(\bx_1,\bx_2,\cdots,\bx_d), \bxi=(\bxi_1,\bxi_2,\cdots,\bxi_d)$ for vectors and scalars. Let $d\ge 2$ and ${\theta=(\theta_{jk})_{j,k=1}^d}$ be a real skew-symmetric $d \times d$ matrix. Let $\S(\R^d)$ the space of complex Schwartz
functions (smooth, rapidly decreasing) on $\R^d$. The Moyal product $\star_\theta$ associated to $\theta$ is defined as (see \cite{Rieffel93}),
\begin{align*}
f\star_\theta g (\bx):=(2\pi)^{-d}\int_{\R^d}\int_{\R^d}f(\bx+\frac{{\bf \theta}}{2} \bv)g(\bx-\bw)e^{i\bv\cdot \bw}d\bv d\bw \pl, \pl f,g\in \S(\R^d)
\end{align*}
The Moyal product is bilinear, associative and reversed under complex conjugation $\overline{f}\star_\theta \overline{g}= \overline{g\star_\theta f}$, which makes  $(\S(\R^d),\star_\theta)$ a $*$-algebra.
The left Moyal multiplication gives the following $^*$-homomorphism $\la_\theta: (\S(\R^d),\star_\theta) \to B(L_2(\R^d))$,
\begin{align}\label{left}\la_\theta(f) g= f\star_\theta g, \la_\theta(f)\la_\theta(g)=\la_\theta(f\star_\theta g) \pl.\end{align}
\begin{defi}The quantum Euclidean space associated to $\theta$ is given by the following objects in $B(L_2(\R^d))$,
\begin{enumerate}
\item[i)]$\S_\theta:=\la_\theta(\S(\R^d))$ as the quantized Schwartz class ;
\item[ii)]$\E_\theta:= \overline{\S_\theta^{||\cdot ||}}$ as the $C^*$-algebra generated by $\S_\theta$;
\item[iii)] $\R_\theta:=(\S_\theta)''$ as the von Neumann algebra generated by $\S_\theta$.
\end{enumerate}
\end{defi}
\noindent When $\theta=0$, $\star_0$ is the usual point-wise multiplication,  $\E_0=C_0(\R^d)$ is the space of continuous functions on $\R^d$ which vanish at infinity and $\R_0=L_\infty(\R^d)$ is the space of essentially bounded functions on $\R^d$. An equivalent approach is the $\theta$-twisted regular representation of the group $\R^d$. For each vector $\bxi\in \R^d$, we define the unitary operator $\la_\theta(\bxi)$ on $L_2(\R^d)$,
\begin{align}
 (\la_\theta(\bxi) g)(\bx)=e^{i\bxi\cdot\bx}g(\bx-\frac{\theta}{2}\bxi)\pl\label{rep}
\end{align}
They satisfies the commutation relation
\[ \la_\theta(\bxi)\la_\theta(\bet)=e^{\frac{i}{2}\bxi \cdot \theta \bet}\la_\theta(\bxi+\bet)=e^{{i}\bxi \cdot \theta \bet}\la_\theta(\bet)\la_\theta(\bxi)\pl.\]
The map $\la_\theta:\R^d\to B(L_2(\R^d)$ is a projective unitary representation of $\R^d$ called the twisted left regular representation. The Moyal multiplication \eqref{left} for $(S(\R^d),\star_\theta)$ is equivalent to the corresponding Weyl quantization
\[\la_\theta(f)=\frac{1}{(2\pi)^d}\int_{\R^d}\hat{f}(\bxi)\la_\theta(\bxi)d \bxi \pl,\pl f\in \S(\R^d). \]
Here $\hat{f}(\bxi)=\int_{\R^d} f(\bx)e^{- i\bx\cdot\bxi}d\bx$ is the Fourier transform of $f$ and the integral converges in strong operator topology. Let $u_j(t)=\la_\theta(0,0,\cdots,t,\cdots,0)$ be the one parameter unitary group associated to the $j$-th coordinate. The generator $x_j$ of $u_j(t)$ satisfying $u_j(t)=e^{ix_jt}$ is given by.
\[(x_jg)(\bx)= \bx_jg(\bx)+\frac{i}{2}\sum_{k}\theta_{jk}\frac{\p g}{\p \bx_k}(\bx)\pl.\]
$(x_1,\cdots,x_d)$ are $d$ self-adjoint operators on $L_2(\R^d)$ affiliated to $\R_\theta$ which satisfies the CCR relation $[x_j,x_k]=-i\theta_{jk}$. The projective unitary representation $\bxi\to \la_\theta(\bxi)$ can be recovered from $(x_1,\cdots, x_d)$ using Baker--Campbell--Hausdorff formula i.e.
\[\la_\theta(\bxi):=e^{i(\bxi_1x_1+\cdots+\bxi_d x_d)}=e^{-\frac{i}{2}\sum_{j<k}\theta_{jk}\bxi_j\bxi_k}e^{i\bxi_1x_1}\cdots e^{i\bxi_dx_d}\pl, \pl\bxi\in\R^d\pl\]
The generator $(x_1,\cdot, x_d)$, unitary $\la_\theta(\bxi)$ and the quantized Schwartz class $\la_\theta(f)$ are equivalent formulations of quantum Euclidean spaces. We will use them interchangeably in the paper.

\subsection{The Stone-von Neumann Theorem}We say two self-adjoint operator $P,Q$ satisfies the Heisenberg relation $[P,Q]=-iI$ if for any $s,t\in \R$, \[e^{isP}e^{itQ}=e^{ist}e^{itQ}e^{isP}\pl \]
The well-known Stone-von Neumann Theorem states that any irreducible representations of $[P,Q]=-iI$ is unitarily equivalent to the $1$-dimensional Schrodinger picture that
\[Pf=-i\frac{df}{d\bx}\pl, \pl (Qf)(\bx)=\bx f(\bx)\pl ,\pl f\in \S(\R)\pl.\]
Here $P,Q$ are unbounded self-adjoint operators on $L_2(\mathbb{R})$ and the one-parameter unitary groups are
\begin{align}(e^{itP}f)(\bx)=f(\bx+t)\pl,  \pl (e^{isQ}f)(\bx)=e^{is\bx }f(\bx)\pl, \label{action}\end{align}
The Stone-von Neumann Theorem extends to $n$ pairs of Heisenberg relations that mutually commute, i.e.
\begin{align} [P_j, Q_k]=\begin{cases}
                           -iI, & \mbox{if } j=k \\
                           0, & \mbox{if } j\neq k.
                         \end{cases}\pl, \pl\pl\pl [P_j, P_k]=[Q_j, Q_k]=0 \pl, \pl\pl\pl\forall\pl j,k
\label{standard}\end{align}
The following is the Theorem 14.8 of \cite{Hall}.
\begin{theorem}[Stone–von Neumann Theorem]
Suppose $P_1, \cdots ,P_d$ and $Q_1,\cdots,Q_d$ are self-adjoint operators on $H$ satisfying the CCR relations \eqref{standard}. Then $H$ can be decomposed as an orthogonal direct sum of closed subspaces $\{H_j\}$ satisfying \begin{enumerate}
\item[i)]each $H_l$ is invariant under $e^{itP_j}$ and $e^{itQ_j}$ for all $j$ and $t$.
\item[ii)]there exist unitary
operators $U_l: H_l \to L_2(\R^d )$ such that
\begin{align}\label{mp}U_l P_j U_l^*f= -i\frac{\p}{\p \bx_j}f\pl, \pl(U_l Q_j U_l^*f)(\bx)= \bx_jf(\bx)\pl.\end{align}
\end{enumerate}
\end{theorem}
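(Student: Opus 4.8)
The plan is to reduce the multidimensional statement to a single, jointly irreducible building block and then invoke the one-parameter group machinery. First I would assemble the operators into a projective unitary representation of $\R^{2d}$: set $W(\bs,\bt) = \exp(i\sum_j (\bs_j P_j + \bt_j Q_j))$, which is well defined by the BCH formula precisely because the CCR \eqref{standard} guarantee that the relevant commutators are central (scalar). The relations \eqref{standard} translate into the Weyl form $W(\bs,\bt)W(\bs',\bt') = e^{\frac{i}{2}\omega((\bs,\bt),(\bs',\bt'))} W(\bs+\bs',\bt+\bt')$ for the standard symplectic form $\omega$ on $\R^{2d}$; verifying this is a routine BCH computation that I would not belabor. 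The key technical point to check here is that the individual self-adjointness hypotheses, together with pairwise commutation of the groups $\{e^{itP_j}\}$ and $\{e^{itQ_j}\}$ in the sense of commuting spectral projections, are exactly what is needed for $W$ to be strongly continuous and well defined — this is where one must be careful, since commutation of unbounded operators is subtle, but the exponentiated relations \eqref{standard} are stated precisely to sidestep that issue.

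Next I would construct the intertwiner. The standard approach is to form the operator
\[
E = c\int_{\R^{2d}} e^{-\frac{1}{4}(|\bs|^2+|\bt|^2)} W(\bs,\bt)\, d\bs\, d\bt,
\]
with $c$ a normalizing constant, and show using the Weyl relations that $E$ is (a positive multiple of) a projection: the Gaussian weight is the Wigner function of the ground state, and the computation $W(\bs,\bt) E W(\bs,\bt)^* = (\text{phase})\cdot(\text{translated Gaussian integral})$ collapses, after a Gaussian convolution identity, to show $E W(\bs,\bt) E = (\text{scalar})\, E$ and hence $E^2 = E$ up to scalar. Decompose the range of $E$ into an orthonormal basis $\{e_l\}$ and let $H_l = \overline{\mathrm{span}}\{W(\bs,\bt)e_l\}$. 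One shows the $H_l$ are mutually orthogonal, invariant under all $W(\bs,\bt)$ (hence under each $e^{itP_j}$, $e^{itQ_j}$), and that $H = \bigoplus_l H_l$ because $E$ is the projection onto the ``vacuum'' and the $W(\bs,\bt)$ act irreducibly on each piece. Finally, on the Schrödinger representation — where the analogous operator $E_0$ is the rank-one projection onto the Gaussian $\bx\mapsto e^{-|\bx|^2/2}$ — the map $W(\bs,\bt)e_l \mapsto W_0(\bs,\bt)(\text{Gaussian})$ extends to a unitary $U_l: H_l \to L_2(\R^d)$ intertwining the representations, which upon differentiating the one-parameter subgroups yields exactly \eqref{mp}.

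The main obstacle, and the step I expect to require the most care, is establishing that $E$ is a nonzero projection (up to normalization) and that its range ``generates'' all of $H$ under the group action — equivalently, that $\sum_l H_l = H$ with no leftover. The positivity and idempotency of $E$ rest on a Gaussian integral identity that must be carried out with the correct symplectic normalization, and the exhaustion of $H$ amounts to showing $E \neq 0$ on every nonzero invariant subspace, which follows from the fact that $\{W(\bs,\bt)\}$ generates an irreducible family on each $H_l$ together with a density argument. Everything else — strong continuity, the BCH identities, and the final differentiation to recover the operators $P_j, Q_j$ in Schrödinger form — is standard and I would present it compactly, citing \cite{Hall} for the details.
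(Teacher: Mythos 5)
The paper does not give a proof of this theorem; it is recorded verbatim as Theorem~14.8 of \cite{Hall} and used as a black box, so there is no ``paper's own proof'' to compare against. Your sketch is the classical von Neumann averaging argument, which is a standard route (and closely related to, though not identical with, Hall's exposition). It is essentially sound, but two points deserve repair. First, defining $W(\bs,\bt)=\exp\bigl(i\sum_j(\bs_jP_j+\bt_jQ_j)\bigr)$ ``by BCH'' is not rigorous for unbounded generators: a priori you do not know $\sum_j(\bs_jP_j+\bt_jQ_j)$ is essentially self-adjoint, and BCH does not apply off the shelf. The clean definition is $W(\bs,\bt)=e^{\frac{i}{2}\bs\cdot\bt}\,e^{i\bs\cdot P}\,e^{i\bt\cdot Q}$, built directly from the one-parameter groups whose existence and mutual commutation \eqref{standard} guarantees; the Weyl relations are then verified algebraically from the exponentiated CCR without invoking BCH on unbounded operators, and only afterwards does one conclude that $W(\bs,\bt)$ coincides with the exponential of the sum. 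Second, the ``main obstacle'' you flag, that $E\neq 0$ on every nonzero invariant subspace so that $\bigoplus_l H_l=H$, is dispatched by a Fourier-inversion argument rather than anything deep: if $E=0$ on such a subspace, then for all $(\bs',\bt')$ one has $W(\bs',\bt')EW(\bs',\bt')^*=0$, which unwinds to say that the Fourier transform of $(\bs,\bt)\mapsto e^{-\frac{1}{4}(|\bs|^2+|\bt|^2)}\lan W(\bs,\bt)v,w\ran$ vanishes identically; Fourier inversion then gives $\lan W(\bs,\bt)v,w\ran\equiv 0$, and evaluating at $(\bs,\bt)=0$ forces $\lan v,w\ran=0$ for all $v,w$, a contradiction. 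With these two points tightened, your proposal is a correct proof of the cited theorem.
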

The above theorem says that any representation of \eqref{standard} is a finite or infinite multiple of the $n$-dimensional Schrodinger picture on $L_2(\R^n)$.
When $d=2n$ is even dimensional,
this gives the standard noncommutative case for $\R_\theta$ that $\displaystyle \theta=\left[ {\begin{array}{cc} 0 & -I_n \\ I_n & 0 \end{array} } \right]$, where $I_n$ is the $n$-dimensional identity matrix. In this case, $\E_\theta\cong K(L_2(\R^n))$ the compact operators and $\R_\theta\cong B(L_2(\R^n))$. The following proposition gives change of variables between $\R_\theta$'s with different $\theta$.
\begin{prop}\label{change}Let $T=(T_{jk})^{d}_{j,k=1}$ be a real invertible matrix and $T^t$ be its transpose. Let $\theta$ and $\tilde{\theta}$ be two skew-symmetric matrices such that $\tilde{\theta}=T\theta T^t$. Then the map $\Phi_T$:
\[\Phi_T(\la_{\tilde{\theta}}(\bxi))=\la_{\theta}(T^t\bxi)\pl,\pl  \Phi_T(\la_{\tilde{\theta}}(f))=\la_{\theta}(f\circ T)\pl\]
extends to a $*$-isomorphism from $\E_{\tilde{\theta}}$ to $\E_\theta$ and a normal $*$-isomorphism from $\R_{\tilde{\theta}}$ to $\R_\theta$.
\end{prop}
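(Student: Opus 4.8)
The plan is to realize $\Phi_T$ as conjugation by an explicit unitary on $L_2(\R^d)$; once this is done, being a normal $*$-isomorphism comes for free, and only the claim that it carries one generating family onto the other remains.

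First I would introduce the operator $W_T$ on $L_2(\R^d)$ defined by $(W_Tf)(\bx)=|\det T|^{1/2}f(T\bx)$. A change of variables shows $W_T$ is unitary with $(W_T^*g)(\bx)=|\det T|^{-1/2}g(T^{-1}\bx)$. Plugging the explicit formula $(\la_\theta(\bxi)g)(\bx)=e^{i\bxi\cdot\bx}g(\bx-\tfrac{\theta}{2}\bxi)$ for the twisted regular representation into $W_T\la_{\tilde{\theta}}(\bxi)W_T^*$ and using the identity $T^{-1}\tilde{\theta}=\theta T^t$ (immediate from $\tilde{\theta}=T\theta T^t$) together with $\bxi\cdot T\bx=(T^t\bxi)\cdot\bx$, a one-line computation gives
\[ W_T\,\la_{\tilde{\theta}}(\bxi)\,W_T^*=\la_\theta(T^t\bxi)\pl,\qquad \bxi\in\R^d\pl. \]
Hence $\Phi_T:=\operatorname{Ad}(W_T)=W_T(\cdot)W_T^*$ is a normal $*$-automorphism of $B(L_2(\R^d))$ acting on generators by $\la_{\tilde{\theta}}(\bxi)\mapsto\la_\theta(T^t\bxi)$. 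Since $\operatorname{Ad}(W_T)$ is norm continuous and, for $f\in\S(\R^d)$, the Weyl integral $\la_{\tilde{\theta}}(f)=(2\pi)^{-d}\int\hat f(\bxi)\la_{\tilde{\theta}}(\bxi)\,d\bxi$ converges in operator norm (as $\hat f\in L_1$ and $\|\la_{\tilde{\theta}}(\bxi)\|=1$), applying $\Phi_T$ under the integral and substituting $\bet=T^t\bxi$ gives $\Phi_T(\la_{\tilde{\theta}}(f))=(2\pi)^{-d}|\det T|^{-1}\int\hat f((T^{-1})^t\bet)\la_\theta(\bet)\,d\bet=\la_\theta(f\circ T)$, where the last step uses $\widehat{f\circ T}(\bet)=|\det T|^{-1}\hat f((T^{-1})^t\bet)$. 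This reproduces the formula in the statement and shows the two prescriptions for $\Phi_T$ are consistent.

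It then remains to see that $\Phi_T$ maps $\E_{\tilde{\theta}}$ onto $\E_\theta$ and $\R_{\tilde{\theta}}$ onto $\R_\theta$. At the $C^*$-level, $f\mapsto f\circ T$ is a linear bijection of $\S(\R^d)$ (since $T$ is an invertible linear map), so $\Phi_T(\la_{\tilde{\theta}}(\S(\R^d)))=\la_\theta(\S(\R^d))$; taking norm closures and using that $\Phi_T$ is isometric yields $\Phi_T(\E_{\tilde{\theta}})=\E_\theta$. At the von Neumann level, $\R_\theta$ is generated by the unitaries $\{\la_\theta(\bxi):\bxi\in\R^d\}$ (they belong to $\R_\theta$ via the functional calculus of the affiliated generators $x_j$, and conversely each $\la_\theta(f)$ lies in their double commutant by reading the Weyl integral in the strong topology); since $T^t$ is a bijection of $\R^d$, $\Phi_T$ carries $\{\la_{\tilde{\theta}}(\bxi)\}$ bijectively onto $\{\la_\theta(\bet)\}$, hence the generated von Neumann algebras onto one another, so $\Phi_T(\R_{\tilde{\theta}})=\R_\theta$, normally because $\operatorname{Ad}(W_T)$ is normal. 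Restricting $\Phi_T$ gives the asserted isomorphisms.

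Because the isomorphism is produced concretely, I do not expect a genuine obstacle. The two places that require care are: getting the normalization and exponent in $W_T$ right so that conjugation produces the transformation $\bxi\mapsto T^t\bxi$ (and not $(T^{-1})^t\bxi$), which is where $\tilde{\theta}=T\theta T^t$ enters in the precise form $T^{-1}\tilde{\theta}\bxi=\theta T^t\bxi$; and keeping the $C^*$ and von Neumann arguments separate, since the unitaries $\la_\theta(\bxi)$ lie in $\R_\theta$ but not in $\E_\theta$, so at the $C^*$-level one must work with the Schwartz elements $\la_\theta(f)$ and the norm-convergent Weyl integral rather than with the $\la_\theta(\bxi)$ directly.
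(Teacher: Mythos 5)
Your proof is correct and follows essentially the same route as the paper: realize $\Phi_T$ as conjugation by the change-of-variables operator on $L_2(\R^d)$, verify the action on the unitaries $\la_{\tilde\theta}(\bxi)$ using $T^{-1}\tilde\theta=\theta T^t$, and transfer to $\E$ and $\R$ by continuity and normality. The only (cosmetic) difference is that you normalize to a genuine unitary $W_T$, whereas the paper conjugates by $(U_Tf)(\bx)=f(T^{-1}\bx)$, which is only a scalar multiple of a unitary; your choice makes the $*$-preservation and normality of $\operatorname{Ad}(W_T)$ immediate rather than implicit.
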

\begin{proof}Define the operator $U_T$ on $L_2(\R^d)$ as follows,
\[ (U_Tf)(\bx)=f(T^{-1}\bx)\pl.\]
$U_T$ is bounded and invertible with $\norm{U_T}{}=|\det(T)|^\frac{1}{2}$ and $(U_T)^{-1}=U_{T^{-1}}$. For any Schwartz function $f$, one verifies that
\begin{align*} (U_T^{-1}\la_{\tilde{\theta}}(\bxi)U_T f) (\bx)
= e^{i\bxi\cdot T\bx }f(T^{-1}(T\bx+\frac{1}{2}\tilde{\theta}\bxi))=e^{i(T^t\bxi)\cdot \bx }f(\bx+\frac{1}{2}\theta T^t\bxi)=\la_\theta(T^t\bxi)f (\bx)\pl.
\end{align*}
Then it is clear that $U_T^{-1}\S_{\tilde{\theta}} U_T=\S_\theta$. Since $U_T$ is a bounded invertible operator on $L_2(\R^d)$, then $\Phi_T(\cdot )=U_T^{-1}(\cdot)U_T$ extends to a $*$-isomorphism from  $\E_{\tilde{\theta}}$ to $\E_\theta$ and a normal $*$-isomorphism from $\R_{\tilde{\theta}}$ to $\R_\theta$.
\end{proof}
In general, let $\theta$ be a skew-symmetric matrix of rank $2n\le d$. There exists an invertible matrix $T$ such that $\tilde{\theta}=T\theta T^t$ is the following standard form
\begin{align}
\left[
  \begin{array}{ccc}
    0         &-I_n          &           \\
    I_n       &0            &           \\
             &             &0_{d-2n}
  \end{array}
\right] \label{sf},
\end{align}
where $0_{d-2n}$ is $(d-2n)\times (d-2n)$ zero matrix.
Let $x_1,\cdots, x_d$ be the generators of $\E_{(\tilde{\theta})}$. Then $x_1,\cdots, x_2n$ by Stone-von Neumann theorem are unitary equivalent to (a multiple of) the derivatives and position operators $\displaystyle -i\frac{\partial}{\partial \bx_1},\cdots, -i\frac{\partial}{\partial \bx_n}, \bx_1,\cdots, \bx_n$ on $L_2(\R^n)$, and $x_{2n+1},\cdots, x_{d}$ are $d-2n$ the position operators $\bx_{n+1},\cdots, \bx_{d-n}$ on $L_2(\R^{d-2n})$. Hence if $\theta$ is of rank $2n<d$, we have up to multiplicity \[\E_\theta \cong \K(L_2(\R^n))\ten C_0(\R^{d-2n})\pl , \pl \R_\theta \cong B(L_2(\R^n))\overline{\ten} L_\infty(\R^{d-2n})\] In particular, the $C^*$-algebra $\E_\theta$ is simple if and only if the matrix $\theta$ is of full rank.
\subsection{Integrals and Derivatives } \label{diff} We start with the noncommutative integrals.
\begin{prop}\label{trace}The linear functional  \[\tau_\theta(\la_\theta(f))=\int_{\R^d}f\pl , \pl f\in \S(\R^d)\]
extends to a normal faithful semi-finite trace on $\R_\theta$.
\begin{enumerate}
\item[i)] Let $T$ be a real invertible matrix and $\theta,\tilde{\theta}$ be two skew-symmetric matrix such that $\tilde{\theta}=T\theta T^t$. Then the normal $*$-isomorphism
\begin{align}\Phi_T:\R_{\tilde{\theta}} \to \R_{\theta}\pl, \Phi_T(\la_{\tilde{\theta}}(f))=\la_{\theta}(f\circ T) ,\label{Q}\end{align}
satisfies $\tau_\theta\circ \Phi_T = |det T|^{-1}\tau_{\tilde{\theta}}$.
 \item[ii)]
 Let $\bx\in\R^d$ and $\al_\bx$ be the translation action $\al_\bx(f)(\cdot)=f(\cdot+\bx)$. Define the map
\begin{align*}\al_\bx(\la_\theta(\bxi))=e^{i\bxi\cdot \bx} \la_\theta(\bxi)\pl , \pl \al_\bx(\la_\theta(f))=\la_\theta(\al_\bx(f))\pl.\end{align*}
Then $\al_\bx$ is a $\tau_\theta$-preserving automorphism on $\R_\theta$.
\end{enumerate}\label{trace}
\end{prop}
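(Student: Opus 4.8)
The plan is to realize $\tau_\theta$ as the canonical trace attached to a Hilbert algebra, so that normality, faithfulness and semifiniteness are automatic from the general theory; parts (i) and (ii) will then follow by a short computation on the $\sigma$-weakly dense $*$-subalgebra $\S_\theta$ together with a normality argument. The only genuinely analytic ingredient is the \emph{trace identity} for the Moyal product,
\[\int_{\R^d}(f\star_\theta g)(\bx)\,d\bx=\int_{\R^d}f(\bx)g(\bx)\,d\bx\pl,\qquad f,g\in\S(\R^d)\pl.\]
I would prove this by inserting the defining double integral for $f\star_\theta g$, integrating first in $\bx$, and then making the substitutions $\bx\mapsto\bx+\bw$ followed by $\bw\mapsto\bw-\tfrac{\theta}{2}\bv$; the skew-symmetry of $\theta$ annihilates the phase $\bv\cdot\theta\bv$ and the remaining $\bv,\bw$ integrals collapse to a Fourier inversion, leaving $\int fg$. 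All interchanges of order of integration are legitimate because $f$ and $g$ are Schwartz.

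Granting the trace identity, $(\S(\R^d),\star_\theta)$ is a Hilbert algebra for the involution $f\mapsto\overline f$ and the inner product $\langle f,g\rangle=\int\overline f g$: associativity of $\star_\theta$ and the relation $\overline{f\star_\theta g}=\overline g\star_\theta\overline f$ are recorded in Section \ref{sectionpre}, left multiplication by $f$ is bounded since it is exactly $\la_\theta(f)\in B(L_2(\R^d))$, the identity $\langle f\star_\theta g,h\rangle=\langle g,\overline f\star_\theta h\rangle$ reduces via associativity to the trace identity, and $\S(\R^d)\star_\theta\S(\R^d)$ is dense in $L_2(\R^d)$ because $(\S(\R^d),\star_\theta)$ has an approximate unit (see \cite{Rieffel93}). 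Its Hilbert space completion is $L_2(\R^d)$, and since left multiplication by $f$ is $\la_\theta(f)$, the associated left von Neumann algebra is $\R_\theta$. By the standard theory of Hilbert algebras, $\R_\theta$ therefore carries a canonical normal, faithful, semifinite trace $\tau$ characterized by $\tau(\la_\theta(f)^*\la_\theta(f))=\|f\|_2^2$. To identify $\tau$ with the stated functional, note that the Hilbert-algebra formula gives $\tau(\la_\theta(\overline g)\la_\theta(h))=\langle g,h\rangle=\int\overline g h$ while the trace identity gives $\int\overline g\star_\theta h=\int\overline g h$; hence $f\mapsto\tau(\la_\theta(f))$ and $f\mapsto\int f$ agree whenever $f=\overline g\star_\theta h$, and such products span $\S(\R^d)$, so $\tau(\la_\theta(f))=\int f$ for all $f$. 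We set $\tau_\theta:=\tau$.

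For (i), the proof of Proposition \ref{change} exhibits $\Phi_T=\operatorname{Ad}(U_T^{-1})$ with $\|U_T f\|_2^2=|\det T|\,\|f\|_2^2$, so for $f\in\S(\R^d)$
\[\tau_\theta\big(\Phi_T(\la_{\tilde{\theta}}(f))^*\Phi_T(\la_{\tilde{\theta}}(f))\big)=\|f\circ T\|_2^2=|\det T|^{-1}\|f\|_2^2=|\det T|^{-1}\tau_{\tilde{\theta}}\big(\la_{\tilde{\theta}}(f)^*\la_{\tilde{\theta}}(f)\big)\pl.\]
Since $\S_{\tilde{\theta}}$ is $\sigma$-weakly dense and $\tau_\theta\circ\Phi_T$ and $|\det T|^{-1}\tau_{\tilde{\theta}}$ are both normal traces, they agree on all of $\R_{\tilde{\theta}}^+$. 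For (ii), the double integral defining $\star_\theta$ is translation invariant, so $\al_\bx$ is a $*$-automorphism of $\S_\theta$; it is $L_2$-isometric, hence extends to a normal $*$-automorphism of $\R_\theta$, and the computation $\tau_\theta(\al_\bx(\la_\theta(f))^*\al_\bx(\la_\theta(f)))=\|\al_\bx f\|_2^2=\|f\|_2^2$ together with the same density argument gives $\tau_\theta\circ\al_\bx=\tau_\theta$. (When $\theta$ is invertible one checks directly that $\al_\bx=\operatorname{Ad}(\la_\theta(-\theta^{-1}\bx))$, so $\al_\bx$ is even inner.)

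The single step carrying content is the trace identity for $\star_\theta$; everything else is formal algebra plus the standard Hilbert-algebra (Tomita--Takesaki) machinery. The one place demanding care is the passage from identities holding on the dense $*$-subalgebra $\S_\theta$ to identities of weights on $\R_\theta^+$, which is exactly what the normality and semifiniteness built into the Hilbert-algebra construction are there to supply.
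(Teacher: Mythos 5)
Your proof takes a genuinely different route from the paper's. The paper identifies $\R_\theta$ concretely (up to multiplicity) with $B(L_2(\R^n))\overline{\ten}L_\infty(\R^{d-2n})$ via the Stone--von Neumann theorem, writes $\la_\theta(f_1\ten f_2)$ as an explicit kernel operator $T_{f_1,f_2}$ on $L_2(\R^n)$, and computes its trace as a product of ordinary integrals; normality, faithfulness and semifiniteness are read off from the identification with a type $\mathrm{I}$ factor tensored with an abelian algebra. You instead package everything into the observation that $(\S(\R^d),\star_\theta,\bar{\ })$ with the $L_2$ inner product is a Hilbert algebra, and then invoke the canonical trace on its left von Neumann algebra. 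Your approach avoids the Stone--von Neumann theorem entirely and isolates the single analytic fact that carries all the content — the Moyal trace identity $\int f\star_\theta g=\int fg$ — which is a nice conceptual gain. On the other hand, the paper's more explicit route yields the concrete model $B(L_2(\R^n))\overline{\ten}L_\infty(\R^{d-2n})$ as a by-product, which is then actually used elsewhere (e.g.\ in Proposition \ref{interg}, where the heat-kernel and Lorentz-norm computations are reduced to the quantum harmonic oscillator); your argument does not give that for free.

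There is one small but real gap in the identification step: you write that products $\overline g\star_\theta h$ ``span $\S(\R^d)$.'' This is false in general — already for $\theta=0$ a Schwartz function decaying like $e^{-\sqrt{1+|\bx|^2}}$ cannot be written as a (finite sum of) pointwise products of Schwartz functions, because the factors would have to decay at least like $e^{-c|\bx|^2}$ on a cone. (Exact factorization $\S\star_\theta\S=\S$ is known for the Moyal plane when $\theta$ is invertible, but the proposition covers arbitrary skew-symmetric $\theta$, including singular and zero.) What you actually have is that $\S\star_\theta\S$ is \emph{dense} in $\S$ (Rieffel's approximate unit), and to conclude you must argue that both $f\mapsto\tau(\la_\theta(f))$ and $f\mapsto\int f$ are continuous on $\S$ in a topology in which this density holds. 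The second is obvious; for the first you need $\la_\theta(f)\in L_1(\R_\theta,\tau)$ with $\|\la_\theta(f)\|_1$ controlled by Schwartz seminorms, which follows e.g.\ from $\la_\theta(f)=\bigl(\la_\theta(f)\bc{x}^{2m}\bigr)\bc{x}^{-2m}$ together with $\bc{x}^{-2m}\in L_1$ for $2m>d$ (Propositions \ref{interg} and \ref{indu}) — but neither of those is available at this point in the paper, so you would have to re-derive the needed $L_1$ estimate from the Hilbert-algebra framework directly (e.g.\ via the approximate unit: $\la_\theta(u_n\star_\theta f)\to\la_\theta(f)$ in $L_1(\R_\theta,\tau)$). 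Parts (i) and (ii) are correct and in fact cleaner than the paper's Fourier-side computation; the observation that $\al_\bx$ is inner for invertible $\theta$ is a pleasant aside.
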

\begin{proof}The fact $\tau_\theta$ is a normal faithful trace on $\R_\theta$ was proved in \cite{GJP17} by writing $\R_\theta$ as an iterated crossed product $L_\infty(\R)\rtimes \R\rtimes \cdots \rtimes\R$. Here we present a proof using change of variables, which is useful for our later discussion. A similar discussion can be found in \cite{cwikel}. Denote the multiplier and translation unitary groups on $L_2(\R^n)$ as follows,
\[(u(\bxi)f)(\bx)=f(\bx+\bxi)\pl, \pl (v(\bet)f)(\bx)=e^{i\bet\cdot \bx}f(\bx)\pl.\]
We first consider the case $d=2n$ and $\displaystyle \theta=\left[ {\begin{array}{cc} 0 & -I_n \\ I_n & 0 \end{array}}\right]$. By the Stone-von Neumann theorem, there exists some Hilbert space $H$ and a unitarily $W:L_2(\R_\theta)\to L_2(\R^n)\ten I_H$ such that
\[W\la_\theta(\bxi,{\bf 0})W^*=u(\bxi)\ten I_H\pl, \pl W\la_\theta({\bf 0},\bet)W^*= v(\bet)\ten I_H\pl,\]
where $\bxi\in \R^n$ are the first $n$ coordinates and $\bet\in \R^n$ are the last $n$ coordinates. For $f_1,f_2\in \S(\R^n)$, the quantization $\la_\theta(f_1\ten f_2)$ is unitarily equivalent to (a multiple of) the following operator $T_{f_1,f_2}$. For $h\in L_2(\R^n)$
\begin{align*}
(T_{f_1,f_2} h)(\by)&=(2\pi)^{-2n}\int \int \hat{f_1}(\bxi)\hat{f_2}(\bet)e^{-\frac{i}{2}\bxi\cdot\bet}e^{i
\bet\cdot (\by+\bxi)}h(\by+\bxi) d\xi d\bet\\
&=(2\pi)^{-2n}\int \int\hat{f_1}(\bx-\by)\hat{f_2}(\bet)e^{-\frac{i}{2}(\bx-\by)\cdot \bet}e^{i
\bx\cdot\bet}h(\bx) d\bx d\eta
\\
&= (2\pi)^{-n}\int \hat{f_1}({\bx-\by})f_2(\frac{\bx+\by}{2})h(\bx) d\bx \pl.
\end{align*}
Bacause $f_1,f_2\in \S(\R^n)$, it follows from \cite[Proposition 1.1 and Theorem 3.1]{brislawn88} that $T_{f_1,f_2}$ is a trace class operator on $L_2(\R^n)$ and
\begin{align*}tr(T_{f_1,f_2})=&(2\pi)^{-n}\int_{\R^n}\hat{f_1}(\by-\by)f_2(\frac{\by+\by}{2})d\by
\\=&(2\pi)^{-n}\int_{\R^n}\hat{f_1}(0)f_2(\by)d\by=(2\pi)^{-n}\int_{\R^n} f_1 \cdot \int_{\R^n} f_2\pl,
\end{align*}
which coincides with $\tau_\theta$ on $\R_\theta$ up to a normalization constant $(2\pi)^{-n}$. Now we consider $\theta$ is a singular standard form $\displaystyle \theta=\left[ {\begin{array}{ccc} 0 & -I_n & 0\\ I_n & 0 & 0\\ 0 & 0 & 0\end{array}}\right]$.
Let $\theta_1=\left[\begin{array}{cc} 0 & -I_n \\ I_n & 0 \end{array}\right]$ be the nonsingular part. ${\R_{\theta_1}}\cong B(L_2(\R^n))$ is a Type I factor and the degenerated part gives the left regular representation $\la_{0}:\R^{d-2n}\to B(L_2(\R^{d-2n}))$. Then,
\[\R_\theta\cong \R_{\theta_1}\overline{\ten} \R_0\cong B(L_2(\R^n))\overline{\ten} L_\infty(\R^{d-2n})\]
 as von Neumann algebras. The trace $\tau_\theta$ on $\R_\theta$ is the product trace $\tau_{\theta_1}\ten \tau_0$, where $\tau_{0}$ on $L_\infty(\R^{d-2n})$ is the Lebesgue integral and $\tau_{\theta_1}$ is up to a constant the standard trace $tr$ on $B(L_2(\R^n))$. Then $\tau_\theta$ is normal faithful semifinite and the case for general $\theta$ follows from $i)$. Recall that the $*$-isomorphism $\Phi_T$ is implemented by the bounded invertible operator
\[U_T:L_2(\R_{\tilde{\theta}})\to L_2(\R_{\theta})\pl, \pl U_T \la_{\tilde{\theta}}(f)=\la_{\theta}(f\circ T^{-1})\pl.\]
For $f\in \S(\R^d)$,
\begin{align*}\tau_\theta\circ \Phi_T(\la_{\tilde{\theta}}(f))=&\tau_\theta\Big(\int_{\R^d}\hat{f}(\bxi)\la_\theta(T\bxi)d\bxi \Big)= |\det T|^{-1}\tau_\theta\Big(\int_{\R^d}\hat{f}\big(T^{-1}\bet\big)\la_\theta(\bet)d\bet\Big)
\\=&|\det T|^{-1}\hat{f}(0)=|\det T|^{-1}\tau_{\tilde{\theta}}(\la_{\tilde{\theta}}(f))\pl.\end{align*}
For ii), $\al_\bx$ is implemented by the shifting unitary $U_\bx$ on $L_2(\R^d))$ that
\[\al_\bx(\la_\theta(f))= U_\bx \la_\theta(f) U_\bx^*\pl , \pl U_\bx f (\by)=f(\by+\bx) \pl.\]
Hence $\al_\bx$ extends to an automorphism on $\R_\theta$.
\end{proof}
 The automorphisms $\al_\bx,\bx\in \R^d$ is called the \emph{transference action} on $\R_\theta$. For $1\le p\le \infty$, we write $L_p(\R_\theta)$ for the noncommutative $L_p$ space with respect to $\tau_\theta$ and identify $L_\infty(\R_\theta)= \R_\theta$. For all $\theta$, $L_2(\R_\theta)\cong L_2(\R^d)$ and $\la_\theta$ is exactly the left regular representation of $\R_\theta$ on $L_2(\R_\theta)$. It is clear that $S_\theta$ is dense in $\E_\theta$ and $L_2(\R_\theta)$.

 \begin{lemma}$\S_\theta$ is dense in $L_1(\R_\theta)$.
\end{lemma}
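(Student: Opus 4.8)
The plan is to deduce the $L_1$-density from the already-established density of $\S_\theta$ in $L_2(\R_\theta)$, exploiting that $\S_\theta$ is a $*$-subalgebra of $\R_\theta$ — it is closed under multiplication because $\la_\theta$ is a homomorphism of $(\S(\R^d),\star_\theta)$ and the Moyal product of two Schwartz functions is again Schwartz — together with the standard factorization $L_1(\R_\theta)=L_2(\R_\theta)\cdot L_2(\R_\theta)$. First I would record that $\S_\theta\subseteq L_1(\R_\theta)$ at all: this follows from the computation in the proof of Proposition \ref{trace}, where $\la_\theta(f)$ is identified, in the standard model, with a trace-class operator on $L_2(\R^{n})$ tensored with an integrable function in the degenerate directions; the general case then follows via the change of variables $\Phi_T$ of Proposition \ref{change}. (Alternatively, $\S_\theta$ admits an approximate identity bounded in $\R_\theta$, so Cohen's factorization theorem gives $\S_\theta=\S_\theta\cdot\S_\theta\subseteq L_2(\R_\theta)\cdot L_2(\R_\theta)\subseteq L_1(\R_\theta)$.)

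Then, given $x\in L_1(\R_\theta)$ and $\e>0$, I would take the polar decomposition $x=u|x|$ with $u\in\R_\theta$ a partial isometry and set $z:=|x|^{1/2}$, $y:=u|x|^{1/2}$. Since $\|z\|_{L_2(\R_\theta)}^2=\tau_\theta(|x|)=\|x\|_{L_1(\R_\theta)}$ and $\|y\|_{L_2(\R_\theta)}\le\|u\|_\infty\,\|z\|_{L_2(\R_\theta)}$, both $y$ and $z$ lie in $L_2(\R_\theta)$ and $yz=u|x|=x$. Using the density of $\S_\theta$ in $L_2(\R_\theta)$, choose $g,h\in\S(\R^d)$ with $\|y-\la_\theta(g)\|_{L_2(\R_\theta)}$ and $\|z-\la_\theta(h)\|_{L_2(\R_\theta)}$ sufficiently small; then $\la_\theta(g)\la_\theta(h)=\la_\theta(g\star_\theta h)\in\S_\theta$, and the noncommutative H\"older inequality gives
\[
\|x-\la_\theta(g\star_\theta h)\|_{L_1(\R_\theta)}\le \|y-\la_\theta(g)\|_{L_2(\R_\theta)}\,\|z\|_{L_2(\R_\theta)}+\|\la_\theta(g)\|_{L_2(\R_\theta)}\,\|z-\la_\theta(h)\|_{L_2(\R_\theta)}.
\]
Since $\|\la_\theta(g)\|_{L_2(\R_\theta)}\le\|y\|_{L_2(\R_\theta)}+\e$, the right-hand side can be made smaller than $\e$, which proves that $\S_\theta$ is dense in $L_1(\R_\theta)$.

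I do not expect a serious obstacle: this is the standard ``$L_1=L_2\cdot L_2$'' reduction. The only points needing care are (a) that $\S_\theta$ is genuinely contained in $L_1(\R_\theta)$ and (b) that $\S_\theta$ is closed under multiplication, so that the product of the two $L_2$-approximants stays in $\S_\theta$ — both supplied by the Moyal $*$-algebra structure of $(\S(\R^d),\star_\theta)$ and by the trace computation already carried out for Proposition \ref{trace}. If one wished to avoid invoking that Weyl quantization maps Schwartz functions to trace-class operators, the genuine (though still routine) input would be the factorization identity $\S(\R^d)=\S(\R^d)\star_\theta\S(\R^d)$, which can be obtained from a bounded approximate identity in $\S_\theta$.
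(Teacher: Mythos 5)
Your argument is correct and is essentially the same as the paper's: both factor an $L_1$ element as a product of two $L_2$ elements (you do this explicitly via the polar decomposition $x=u|x|$, the paper just invokes the factorization directly), approximate each factor in $L_2$ by elements of $\S_\theta$, and conclude by noncommutative H\"older together with the fact that $\S_\theta$ is a subalgebra. The extra care you take in noting $\S_\theta\subseteq L_1(\R_\theta)$ and the Moyal closure of $\S_\theta$ under products is implicit but unstated in the paper's one-line estimate.
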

\begin{proof}If $a\in L_1(\R_\theta)$, then $a=a_1a_2$ for some $a_1,a_2\in L_2(\R_\theta)$ and $\norm{a_1}{2}=\norm{a_2}{2}=\norm{a}{1}^{\frac{1}{2}}$. Then we can find $f_1,f_2\in S(\R^d)$ such that $\norm{\la_\theta(f_j)-a_j}{2}\le \epsilon, j=1,2$. Then
\begin{align*}\norm{a-\la_\theta(f_1)\la_\theta(f_2)}{}&\le \norm{a_1a_2-a_1\la_\theta(f_2)}{1}+\norm{a_1\la_\theta(f_2)-\la_\theta(f_1)\la_\theta(f_2)}{1}\\&\le \norm{a_1}{2}\epsilon +\norm{f_2}{2}\epsilon\le (2\norm{a}{1}^{\frac{1}{2}}+\epsilon)\epsilon\pl. \qedhere
\end{align*}
 \end{proof}
The noncommutative Lorentz space $L_{p,\infty}(\R_\theta)$ is the space of measurable operators $a$ affiliated to $\R_\theta$ such that the following quasi-norm is finite
\[\norm{a}{L_{p,\infty}}^p=\sup_{t>0}t^p\tau_\theta(1_{|a|>t})\pl,\]
where $1_{|a|>t}$ denote the spectral projection of $|a|$. In other words, $a\in L_{p,\infty}(\R_\theta)$ if $\tau_\theta(1_{|a|>t})$ is asymptotically at most $O(t^{-p})$. For $det(\theta)\neq 0$, the above (weak) $L_p$ spaces are nothing but the (weak) Schatten $p$-spaces.
\begin{prop}\label{interg}Denote $|x|:=(\sum_{j}x_j^2)^{\frac12}$ and $\lan x \ran:=(1+\sum_j x_j^2)^{\frac12}$. For all $\theta$,
\begin{enumerate}
 \item[i)] $\bc{x}^{-1}\in L_{d,\infty}(\R_\theta)$.
  \item[ii)]
$\displaystyle \tau_\theta(e^{-t|x|^2}) = t^{-\frac{d}{2}} det(\frac{\pi it\theta}{\sinh (it\theta)})^{1/2}$ for $t>0$.
\end{enumerate}
Here the function $\displaystyle \mu \mapsto \frac{\pi \mu}{\sinh  \mu}$ is a real function continuously extended to $\mu=0$ and $\displaystyle \frac{\pi i\theta  }{\sinh (i\theta )}$ is the function calculus for self-adjoint matrix $i\theta$.
\end{prop}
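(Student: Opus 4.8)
The plan is to prove (ii) first by an explicit heat-trace computation and then to deduce (i) from it by an elementary Tauberian argument. For (ii) I would proceed in four steps: reduce $\theta$ to a block-diagonal normal form, factor the trace over the resulting tensor decomposition, evaluate each factor, and recognise the answer as the stated determinant. Since $\theta$ is real skew-symmetric there is an orthogonal $T$ for which $\tilde\theta:=T\theta T^t$ is block-diagonal, built from $2\times2$ blocks $\begin{bmatrix}0&-\mu_j\\ \mu_j&0\end{bmatrix}$ ($j=1,\dots,n$, with $\mu_j>0$) together with a zero block of size $d-2n$, the numbers $\pm\mu_1,\dots,\pm\mu_n$ and $d-2n$ zeros being the eigenvalues of the self-adjoint matrix $i\theta$. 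By Proposition~\ref{change} the $*$-isomorphism $\Phi_T$ sends the generators $\tilde x_j$ of $\R_{\tilde\theta}$ to $\sum_k T_{jk}x_k$, so $\sum_j \tilde x_j^2$ is carried to $\sum_j x_j^2$ because $T^tT=I$, and by Proposition~\ref{trace}(i), $\tau_\theta\circ\Phi_T=|\det T|^{-1}\tau_{\tilde\theta}=\tau_{\tilde\theta}$; hence we may assume $\theta=\tilde\theta$. In this normal form $\R_\theta$ is the von Neumann tensor product of the rank-two factors $\R_{\theta_j}$ (with $\theta_j=\begin{bmatrix}0&-\mu_j\\ \mu_j&0\end{bmatrix}$) and the commutative factor $\R_0\cong L_\infty(\R^{d-2n})$, $\tau_\theta$ is the corresponding product trace, and $|x|^2$ is the sum of the commuting operators $x_{2j-1}^2+x_{2j}^2$ ($j=1,\dots,n$) and the multiplication operator $\sum_{k>2n}x_k^2$; so $\tau_\theta(e^{-t|x|^2})$ is the product of the traces of $e^{-t(x_{2j-1}^2+x_{2j}^2)}$ over $j$ with that of $e^{-t\sum_{k>2n}x_k^2}$.

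The commutative factor contributes $\int_{\R^{d-2n}}e^{-t|u|^2}\,du=(\pi/t)^{(d-2n)/2}$. In the $j$-th rank-two factor, $x_{2j-1}$ and $x_{2j}$ form a canonical commutation pair with $|[x_{2j-1},x_{2j}]|=\mu_j$; rescaling by $\sqrt{\mu_j}$ via Proposition~\ref{change} and applying the Stone--von Neumann theorem (as in Section~\ref{sectionpre}), $\R_{\theta_j}$ is identified with $B(L_2(\R))$ so that $x_{2j-1}^2+x_{2j}^2$ becomes $\mu_j$ times the harmonic oscillator $-d^2/d\bx^2+\bx^2$ (with spectrum $\{2k+1:k\ge0\}$), and — using Proposition~\ref{trace}(i) and the normalisation $\tau_{\theta_0}=2\pi\, tr$ recorded in the proof of Proposition~\ref{trace} — $\tau_{\theta_j}$ becomes $2\pi\mu_j$ times the operator trace. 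Hence the $j$-th factor equals $2\pi\mu_j\sum_{k\ge0}e^{-t\mu_j(2k+1)}=\pi\mu_j/\sinh(t\mu_j)$, and multiplying,
\[
\tau_\theta(e^{-t|x|^2})=(\pi/t)^{(d-2n)/2}\prod_{j=1}^n\frac{\pi\mu_j}{\sinh(t\mu_j)}.
\]
Since $i\theta$ has eigenvalues $\pm\mu_j$ and $0$, and $\mu\mapsto\pi\mu/\sinh\mu$ (continuously extended by $\pi$ at $0$) is positive and even, the functional calculus of $it\theta$ gives $\det\bigl(\tfrac{\pi it\theta}{\sinh(it\theta)}\bigr)^{1/2}=\pi^{(d-2n)/2}\prod_{j=1}^n\tfrac{\pi t\mu_j}{\sinh(t\mu_j)}$, and since $t^{-d/2}\pi^{(d-2n)/2}\prod_j\tfrac{\pi t\mu_j}{\sinh(t\mu_j)}=(\pi/t)^{(d-2n)/2}\prod_j\tfrac{\pi\mu_j}{\sinh(t\mu_j)}$, the displayed quantity equals $t^{-d/2}\det\bigl(\tfrac{\pi it\theta}{\sinh(it\theta)}\bigr)^{1/2}$; this is (ii).

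For (i): the function $\mu\mapsto\pi\mu/\sinh\mu$ is bounded on $\R$ (value $\pi$ at $0$, tending to $0$ at $\pm\infty$), so $C_\theta:=\sup_{t>0}\det\bigl(\tfrac{\pi it\theta}{\sinh(it\theta)}\bigr)^{1/2}<\infty$, and by (ii), $\tau_\theta(e^{-t|x|^2})\le C_\theta\,t^{-d/2}$ for all $t>0$. For $s>0$ and any $t>0$, since $1+|x|^2<s^{-2}$ on the range of $1_{\bc{x}^{-1}>s}$, one has the operator inequality $1_{\bc{x}^{-1}>s}\le e^{\,ts^{-2}-t}\,e^{-t|x|^2}$; applying $\tau_\theta$ and taking $t=s^2$ gives $\tau_\theta(1_{\bc{x}^{-1}>s})\le e^{\,1-s^2}C_\theta\,s^{-d}\le eC_\theta\,s^{-d}$. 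Hence $\norm{\bc{x}^{-1}}{L_{d,\infty}}^d=\sup_{s>0}s^d\,\tau_\theta(1_{\bc{x}^{-1}>s})\le eC_\theta<\infty$, i.e.\ $\bc{x}^{-1}\in L_{d,\infty}(\R_\theta)$.

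I expect the chief difficulty to be bookkeeping rather than conceptual: keeping the normalisation constants straight through the two changes of variables (the orthogonal reduction and the scalings $\sqrt{\mu_j}$), and checking that $e^{-t|x|^2}$ genuinely lies in $L_1(\R_\theta)$ — it is $\tau_\theta$-integrable but, once $\theta$ is degenerate, not trace class in the operator sense — so that factoring the trace over the tensor decomposition is legitimate. The flat case $\theta=0$, where both sides collapse to the Gaussian integral $(\pi/t)^{d/2}$, is a useful sanity check.
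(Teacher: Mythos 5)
Your proof of (ii) is correct and takes essentially the same route as the paper: reduce $\theta$ to a block-diagonal normal form, factor the trace over the resulting tensor decomposition into rank-two CCR factors and a commutative factor, compute the harmonic-oscillator heat trace $2\pi\mu_j\sum_k e^{-t\mu_j(2k+1)}=\pi\mu_j/\sinh(t\mu_j)$ and the Gaussian $(\pi/t)^{(d-2n)/2}$, and identify the product with the determinant via the spectrum $\{\pm\mu_j\}\cup\{0\}$ of $i\theta$. The only cosmetic difference is that you keep the $\mu_j$'s in the block form and rescale within each block, whereas the paper reduces directly to the $\pm1$ standard form in one change of variables; the bookkeeping of $|\det T|=(\mu_1\cdots\mu_n)^{-1}$ comes out the same.

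For (i), your route is genuinely different from the paper's, and correct. The paper estimates $\tau_\theta(1_{H\le\mu})\lesssim\mu^{d/2}$ directly by counting harmonic-oscillator eigenvalue degeneracies $\binom{N+n-1}{N}$ against the Euclidean ball volume in the commutative factor, and then reads off the weak-$L_d$ bound for $H^{-1/2}=\bc{x}^{-1}$. You instead deduce the same distributional estimate from (ii) via the exponential Chebyshev bound $1_{\bc{x}^{-1}>s}\le e^{t(s^{-2}-1)}e^{-t|x|^2}$ and then optimize $t=s^2$. This is a clean Tauberian-style shortcut: it avoids the binomial bookkeeping entirely, at the cost of requiring (ii) to be proved first (the paper proves (i) independently of (ii)). Both give the same final bound $\tau_\theta(1_{\bc{x}^{-1}>s})\lesssim s^{-d}$, and your observation that $\sup_t\det\bigl(\tfrac{\pi it\theta}{\sinh(it\theta)}\bigr)^{1/2}=\pi^{d/2}<\infty$ is exactly what makes the constant uniform. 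Your closing cautionary remark — that $e^{-t|x|^2}$ is $\tau_\theta$-integrable without being trace class on $L_2(\R^d)$ once $\theta$ is degenerate — is a real point and is handled correctly by factoring the product trace over the tensor decomposition, as you do.
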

\begin{proof}
 Let us first consider that $\theta$ is the standard form \eqref{sf} of rank $2n$.
 We have shown in Proposition \ref{trace} that there is (up to a factor $(2\pi)^n$) a trace preserving $*$-isomorphism $\pi:\R_\theta \to B(L_2(\R^n))\overline{\ten} L_\infty(\R^{d-2n})$ on $L_2(\R^{d-n})$ such that for $1\le j\le n, 1\le k\le d-2n$
\[x_j \mapsto D_{\by_j}\pl , \pl  x_{j+n}\mapsto \by_j\pl ,\pl  x_{2n+k}\mapsto \by_{n+k} \pl.\]
where $D_{\by_j}$ and $\by_j$ are the self-adjoint derivative and position operators on $L_2(\R^{d-n})$
\[ D_{\by_j}g= -i\frac{\p g}{\p \by_j} \pl, (y_jg)(\by)=\by_j g(\by)\pl.\]
Then $\bc{x}^2$ is unitary equivalent to (a multiple) of the following operator on $L_2(\R^{d-n})$,
\[H:=(\sum_{j=1}^n D_{\by_j}^2+ \by_j^2)\ten id_{L_2(\R^{d-2n})} +id_{L_2(\R^n)}\ten (1+\sum_{l=n+1}^{d-n} \by_l^2)\pl.\]
The first part is the Hamiltonian of $n$-dimemsional quantum harmonic oscillator and the second part is a multiplier on $L_2(\R^{d-2n})$. It is known (see \cite[Chapter 11]{Hall}) that $H_1:=(\sum_{j=1}^n D_{\by_j}^2+ \by_j^2)$ has discrete spectrum $\mu_N=2N+n$ and the degeneracy of $\mu_N$ is $\binom{N+n-1}{N}$.
Combined with the continuous part on $L_\infty(\R^{d-2n})$, we have
\begin{align*}\tau_\theta(1_{H\le \mu})&=(2\pi)^n\sum_{
2N\le \mu-n}\binom{N+n-1}{N}\int_{\R^{d-2n}} {1}_{(1+|\by|^2)\le \mu -2N-n}d\by \\ &\lesssim \pl\mu\cdot\mu^{n-1}\cdot\mu^{\frac{d-2n}{2}}= \mu^{\frac{d}{2}} \pl.\end{align*}
Thus $\displaystyle \tau_\theta( 1_{ H^{-1/2} >\mu})\lesssim \mu^{-d}$ which implies $H^{-1/2}\in L_{d,\infty}$. The case for general $\theta$ follows from the change of variable in Proposition \ref{trace}. Moreover, if $T$ is a real invertible matrix such that $T\theta T^t$ is the standard form \eqref{sf}, then $det(T)=(\mu_1\mu_2\cdots\mu_n)^{-1}$, where $\mu_1,\mu_2,\cdots,\mu_n$ are imaginary parts of eigenvalues of $\theta$.  Thus, by the isomorphism in \eqref{Q}, we have
\begin{align*}
\tau_\theta(e^{-t|x|^2})&=\mu_1\mu_2\cdots \mu_n (2\pi)^n \cdot tr(e^{-t\sum_{j=1}^n\mu_j(D_{\by_j}^2+\by_j^2)}) \cdot \int_{\R^{d-2n}} e^{-t\sum_{j=n+1}^{d-n}\by_j^2}d\by_{n+1}\cdots d\by_{d-n}
\\ &= \mu_1\mu_2\cdots \mu_n (2\pi)^n\cdot  \big(\prod_{j=1}^{n}\sum_{k=0}e^{-t\mu_j(1+2k)}\big)\cdot(\frac{\pi}{t})^{\frac{d-2n}{2}}
\\&  =  \big(\prod_{j=1}^{n}\frac{2\pi t\mu_{j}}{e^{t\mu_j}-e^{-t\mu_j}}\big)(\pi)^{\frac{d-2n}{2}}t^{-\frac{d}{2}}
\\&  =  t^{-\frac{d}{2}}\big(\prod_{j=1}^{n}\frac{\pi t\mu_{j}}{\sinh t\mu_j}\big)(\pi)^{\frac{d-2n}{2}}
\\&  =  t^{-\frac{d}{2}} det(\frac{\pi it\theta}{\sinh (it\theta)})^{1/2}\pl.
\end{align*}
The last equality follows from $\displaystyle \lim_{\mu\to 0}\frac{\pi \mu}{\sinh (\mu)}=\pi$.
\end{proof}

Let $D_{\bx_1},\cdots, D_{\bx_d}$ be the partial derivatives operator
$\displaystyle D_{\bx_j}f=-i\frac{\partial}{\partial \bx_j}f$, which are unbounded self-adjoint operators on $L_2(\R^d)$ with a common core $\S(\R^d)$. On $\R_\theta$, we define for $\la_\theta(f)$ in $\S_\theta\subset B(L_2(\R^d))$ the partial derivatives
    \[ D_j\la_\theta(f):=[D_{\bx_j},\la_\theta(f)]=\la_\theta(D_{\bx_j}f).\]
Here $\be_j=(0,\cdots,1,\cdots,0)$ is the $j$-th standard basis of $\R^d$. Since $D_{\bx_j}$ is the same as $D_j$ for $\theta=0$, we will often write $D_{\bx_j}$ simply as $D_j$. Let $\S'(\R^d)$ be the space of tempered distribution on $\R^d$. In \cite{moyal1,moyal2} (see also \cite{moyalplane}), Moyal product and the Weyl quantization are weakly extended to $\S'(\R^d)$ as follows,
\[\lan T\star_\theta f, g\ran = \lan T, f\star_\theta g\ran\pl, \lan f\star_\theta T, g\ran = \lan T,  g\star_\theta f\ran\pl.\]
where the bracket is the pairing between $\S(\R^d)$ and $\S'(\R^d)$. For $T\in S'(\R^d)$, $\la_\theta(T)$ is the quantized operator $\la_\theta(T) f= T\star_\theta f$ and satisfies
\begin{align*}\la_\theta(T)\la_\theta(f)=\la_\theta(T\star_\theta f) ,  \la_\theta(f)\la_\theta(T)=\la_\theta(f\star_\theta T)\pl.\end{align*}
For all $T\in \S'(\R^d)$, $\la_\theta(T)$ commutes with the right Moyal multiplication hence affiliates to $\R_\theta$. We will use the multiplier algebra introduced in \cite{moyal2},
\begin{align*}
\M_\theta=\{\la_\theta(T) \pl | \pl T\in \S'(\R^d),  \la_\theta(T) \S_\theta\subset \S_\theta, \S_\theta\la_\theta(T)\subset \S_\theta \}\pl.
\end{align*}
The pairing between $\S(\R^d)$ and $\S'(\R^d)$ coincides with the $\tau_\theta$-trace duality for the quantization. Namely for $\la_\theta(T)\in \M_\theta, \la_\theta(f)\in \S_\theta$,
\[\textstyle \tau_\theta(\la_\theta(T)\la_\theta(f))=\tau_\theta(\la_\theta(T\star_\theta f))=\int T\star_\theta f= \lan T, f\ran \]
In particular, $\M_\theta$ contains the noncommutative polynomials of $x_1,\cdots, x_d$ as the quantized coordinate function $\bx_j$,
\[\textstyle\la_\theta(\bx_j)=x_j\pl, \pl x_j \la_\theta(f)=\la_\theta(\bx_j f)+\frac{1}{2}\sum_{k}\theta_{jk}D_k\la_\theta(f)\pl.\]
The transference automorphism $\al_\bx$ and the partial derivatives $D_j$ weakly extend to $\M_\theta$
\begin{align*}\lan \al_\bx(a), \la_\theta(f)\ran:=\lan a, \al_{-\bx}\la_\theta(f)\ran\pl,
\lan D_j(a), \la_\theta(f)\ran= \lan a, D_j\la_\theta(f)\ran \pl.\end{align*}
Viewing $a\in \M_\theta$ as an unbounded operator densely defined on $S(\R^d)\subset L_2(\R^d)$, the weak derivatives satisfies $D_j(a)=[D_j,a]$.

\section{Asymptotic degrees}\label{sectionasy}
\label{asymptotic} In this section, we introduce a notation of ``asymptotic degrees'' to measure the ``growth'' of unbounded elements in $\R_\theta$, which serves as a key technical tool for later discussions. The idea is inspired from
the abstract $\Psi DO$s introduced by Connes and Moscovici in \cite{CM90,CM95}. We briefly recall the basic setting here. Let $D$ be a (possibly unbounded) self-adjoint operator on a Hilbert space $H$ such that $|D|$ is strictly positive. For each $s\in\R$, put $H^s= Dom(|D|^s)$ with inner product
\[\lan v_1,v_2\ran_{H^s}:=\lan |D|^sv_1,|D|^sv_2\ran_H\pl, \pl v_1,v_2\in Dom(|D|^s)\]
Let $H^{\infty}=\cap_{s\in\Z} H^s$. Because $Dom(e^{|D|^2})\subset H^{\infty}$, $H^{\infty}$ is a dense subspace of $H$. Let $F$ be a closed operator on $H$ such that $H^\infty\subset Dom(F), F(H^\infty)\subset H^\infty$. Because $|D|^{-s}: H^0 \to H^s$ is an isometric isomorphism, one sees that
\[\norm{F: H^s\to H^{s-r}}{}=\norm{|D|^{s-r}F|D|^{-s}}{}\]
For a fixed $r\in \R$, $F$ extends to a bounded operator from $H^s$ to $H^{s-r}$ for any $s$ if and only if $|D|^{s-r}F|D|^{-s}$ are bounded on $H$. Such $F$ is considered as an abstract $\Psi$DO of order $r$.

We use the above idea to characterize the asymptotic degree (we use the word ``degree'' to distinguish with the notation ``order'' for $\Psi$DOs) of elements in $\M_\theta$. We choose the strictly positive operator $D$ as $\bc{x}:=(1+\sum_{j}x_j^2)^{\frac{1}{2}}$.
\begin{defi}
We say an operator $a\in \M_\theta$ is of {\bf asymptotic degree} $r$ if for any $s\in\R$, \[\bc{x}^{s}a\bc{x}^{-s-r}\] extends to a bounded operator in $B(L_2(\R_\theta))$ (hence also in $\R_\theta\subset B(L_2(\R_\theta))$). We denote $O^r$ the set of all elements of asymptotic degree $r$ and write $O^{-\infty}=\cap_{r\in\mathbb{Z}} O^r$. 
\end{defi}
Let $L_2^s(\R_\theta)$ be the Hilbert space completion of $\S_\theta$ with respect to the inner product
\[\lan \la_\theta(f),\la_\theta(g)\ran_s = \tau_\theta(\la_\theta(f)^*\bc{x}^{2s}\la_\theta(g)) \pl.\] It is clear that $a\in O^r$ if and only if for any $s\in \R$, the left multiplication operator $\la_\theta(f)\mapsto a\la_\theta(f)$ extends continuously from $L_2^s(\R_\theta) $ to $L_2^{s-r}(\R_\theta)$. The following theorem estimates the degrees of some common elements. We introduce the standard notation of multi-indices that for $\al=(\al_1,\al_2,\cdots,\al_d)$, \[x^\al:=x_1^{\al_1}x_2^{\al_2}\cdots x_d^{\al_d}\pl,  \pl  D^\al:=D_1^{\al_1}D_2^{\al_2}\cdots D_d^{\al_d}\pl.\]
Note that the product $x^\al$ is ordered because $x_j$'s are noncommutative.
\begin{theorem}\label{order}For all multi-indices $\al$ and $r\in \R$,
\begin{align*}x^{\al}\in O^{|\al|}\pl, \pl
[x^{\al},\bc{x}^r]\in O^{r+|\al|-2}\pl,\pl
D^{\al}(\bc{x}^r)\in O^{r-|\al|}\pl.\end{align*}
\end{theorem}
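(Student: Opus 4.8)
The plan is to establish the three claims in sequence, reducing each to a statement about boundedness of operators of the form $\bc{x}^s (\,\cdot\,) \bc{x}^{-s-r}$ on $L_2(\R_\theta)$, and using the change-of-variables isomorphism of Proposition \ref{change} together with the Stone--von Neumann picture of Proposition \ref{trace} to move to a concrete Schr\"odinger representation. After applying $\Phi_T$ we may assume $\theta$ is in the standard form \eqref{sf}, so that $\bc{x}^2$ is unitarily equivalent to a (multiple of a) number operator plus a multiplication operator, $H = \sum_{j=1}^n (D_{\by_j}^2 + \by_j^2) \oplus (1 + \sum_l \by_l^2)$ on $L_2(\R^{d-n})$, as in the proof of Proposition \ref{interg}. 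The key point is that all the estimates are then "local" facts about the harmonic oscillator and about polynomial multiplication/differentiation on $\R^{d-2n}$, uniform in the scaling constants $\mu_j$.

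First I would treat $x^\al \in O^{|\al|}$, which after reduction amounts to showing that for each coordinate $x_j$ (creation/annihilation-type operator in the oscillator directions, multiplication by $\by_l$ in the commutative directions), the operator $\bc{x}^s x_j \bc{x}^{-s-1}$ is bounded for all $s\in\R$. In the oscillator picture this follows from the standard commutation $[a,a^*]=1$: writing $x_j$ in terms of ladder operators, $a^* H^\beta = H^\beta (H - 2)^{-\beta}\cdot H^\beta\cdots$ — more cleanly, one checks $\bc{x} x_j \bc{x}^{-1}$ is bounded because $x_j$ shifts the oscillator grading by one and $\bc{x}^2$ acts as $2N + \text{const}$, so the quotient of consecutive eigenvalues is bounded; then interpolate/iterate in $s$. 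In the commutative directions $\by_l \bc{\by}^{-1}$ is obviously bounded. Iterating over the entries of $\al$ gives $x^\al\in O^{|\al|}$. The third claim, $D^\al(\bc{x}^r) \in O^{r - |\al|}$, is handled similarly: $D_j$ acts on the polynomial $\bc{x}^r$ by the weak-derivative rule $D_j(a) = [D_j, a]$, and one computes (or estimates) $D_j(\bc{x}^2) = \tfrac{1}{i}\sum_k(\ldots)$-type expression lying in $O^1$, then uses a Leibniz-type / chain-rule bound for fractional powers — concretely, write $\bc{x}^r = c_r\int_0^\infty t^{-r/2-1}(e^{-t\bc{x}^2} - 1_{\{r<0\}})\,dt$ (or the resolvent representation $\bc{x}^{-2m}$ and analytic continuation) and differentiate under the integral, using that $e^{-t\bc{x}^2}$ is trace class with good bounds from Proposition \ref{interg} and that each $D_j$ hitting $e^{-t\bc{x}^2}$ costs a factor controlled by $t^{1/2}\bc{x}$.

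For the middle claim, $[x^\al, \bc{x}^r] \in O^{r + |\al| - 2}$, the naive bound from the first claim only gives $O^{r+|\al|}$; the gain of $2$ is the crux and is where I expect the real work to be. The mechanism is that the leading-order term in the commutator cancels: $\bc{x}^r$ "almost commutes" with $x^\al$ up to lower order because $[\bc{x}^2, x_j]$ is first order (it equals a linear combination of $D_k$'s plus a constant, from the CCR), so each commutator with $\bc{x}^2$ drops the degree by one rather than keeping it. To make this precise I would first do the case $r = 2$ by direct computation: $[x^\al, \bc{x}^2] = \sum (\text{terms with one fewer }x\text{ and one }D_k\text{, or constants})$, which by the first and third claims lies in $O^{|\al|}$ — wait, we want $O^{|\al|}$, and indeed $r+|\al|-2 = |\al|$ when $r=2$, so this matches. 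For general $r$ I would again use the integral/holomorphic functional calculus representation of $\bc{x}^r$ in terms of $e^{-t\bc{x}^2}$ (or $(\lambda + \bc{x}^2)^{-1}$), commute $x^\al$ through using the Duhamel-type identity $[x^\al, e^{-t\bc{x}^2}] = -\int_0^t e^{-(t-s)\bc{x}^2}[x^\al,\bc{x}^2]e^{-s\bc{x}^2}\,ds$, and then bound each factor using the $r=2$ case plus Proposition \ref{interg}; the telescoping in $|\al|$ is handled by induction on $|\al|$, peeling off one $x_j$ at a time via $[x_j x^{\al'}, \bc{x}^r] = x_j[x^{\al'},\bc{x}^r] + [x_j,\bc{x}^r]x^{\al'}$. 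The main obstacle will be organizing these Duhamel/integral estimates so that the degree bookkeeping is uniform in $t$ (resp.\ $\lambda$) and the integrals converge — in particular making sure the "gain of $2$" survives the functional calculus and is not lost to a $t^{-1}$ blow-up near $t = 0$; the resolvent of the harmonic oscillator having compact (indeed Schatten-class) inverse, together with the explicit spectral gap $2$, is what ultimately makes this work.
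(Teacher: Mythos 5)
Your reduction ``after applying $\Phi_T$ we may assume $\theta$ is in the standard form \eqref{sf}'' has a real gap. The asymptotic degree is measured against the fixed weight $\bc{x}^2 = 1+\sum_j x_j^2$, and the isomorphism $\Phi_T$ of Proposition \ref{change} sends the generators $\tilde x_j$ of $\R_{\tilde\theta}$ to $\sum_k T_{jk}x_k$, hence sends $\bc{\tilde x}^2$ to $1+\sum_j (Tx)_j^2$, which is \emph{not} the weight $\bc{x}^2$ of $\R_\theta$ unless $T$ is orthogonal. Bringing $\theta$ to the normalized form \eqref{sf} requires a non-orthogonal $T$ in general, so after your reduction you are proving a statement about a transformed weight, and bridging back would require knowing that $\bc{Tx}^{s}\bc{x}^{-s}$ is bounded for all $s\in\R$ --- a claim of the same type and difficulty as Theorem \ref{order} itself. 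The repair is to use only orthogonal $T$, which brings $\theta$ to the block form $\operatorname{diag}(\mu_1 J,\ldots,\mu_n J, 0_{d-2n})$ (with $J$ the $2\times 2$ standard symplectic block and varying $\mu_j>0$) while leaving $\bc{x}^2$ intact; your parenthetical ``uniform in the scaling constants $\mu_j$'' suggests you had something like this in mind, but as stated the reduction changes the weight.

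Granting that repair, your route (oscillator spectral ladder estimates for $x^\al$, heat semigroup plus Duhamel for the two commutator claims) is genuinely different from the paper's. The paper never changes variables and never invokes the oscillator spectrum: it stays abstract in $\R_\theta$, uses the resolvent integral $A^{-s}=C_s\int_0^\infty (t+A)^{-1}t^{-s}\,dt$ instead of the heat kernel, and the identity $[a,(t+\Delta)^{-1}]=(t+\Delta)^{-1}[\Delta,a](t+\Delta)^{-1}$ produces the commutator in one stroke with no nested $\int_0^t\cdots ds$ integral, which is what keeps the induction on $|\al|$ tractable (packaged in the $I_s(a_1,\ldots,a_l)$ notation and the accompanying lemma). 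Both proofs hinge on the fact you correctly isolate: the CCR gives $[\bc{x}^2,x_j]=-2i\sum_k\theta_{jk}x_k$ and $[\bc{x}^2,D_j]=2ix_j$, both in $O^1$, so each commutator with $\bc{x}^2$ drops the degree by one; that, not compactness of the oscillator resolvent, is the source of the ``gain of $2$'' (indeed the argument must also cover the commutative directions where the resolvent of $\by_l^2$ is not compact). You would also need an extra step to handle $r$ outside a bounded range, since the representation of $\bc{x}^r$ you quote is only valid for $r\in(0,2)$; the paper does this by writing $r$ as a sum of pieces in $(-2,0]\cup 2\mathbb{N}$ and applying the Leibniz rule.
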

\begin{proof}We divide the proof into several steps. \\
\emph{Step 1.}: $[D_j,\bc{x}^{-r}]\bc{x}^{r+1}, [x_j,\bc{x}^{-r}]\bc{x}^{r+1}$ are bounded for $0< r<2$.\\
We use the fractional power for a positive operator $A$,
\[A^{-s}= C_s\int_0^\infty (t+A)^{-1} t^{-s}dt \pl, \pl 0<s <1\pl,\]
where $C_s$ is a nonzero constant depending on $s$. Since the constant does not affect the boundedness, we suppress all constant $C_s$'s. Denote $\Delta:=\bc{x}^2=1+\sum_j x_j^2$. For $0< r<2$,
\begin{align*}
[D_j, \bc{x}^{-r}]&=\int_0^{\infty}  [D_j, (t+\Delta)^{-1}]t^{-\frac{r}{2}}dt\\
&=\int_0^{\infty}  (t+\Delta)^{-1}[(t+\Delta),D_j](t+\Delta)^{-1}t^{-\frac{r}{2}}dt\\
&=2i\int_0^{\infty}  (t+\Delta)^{-1}x_j(t+\Delta)^{-1}t^{-\frac{r}{2}}dt\\
&=2i\int_0^{\infty}  x_j(t+\Delta)^{-2}t^{-\frac{r}{2}}dt+ 2i\int_0^{\infty}  [(t+\Delta)^{-1}, x_j](t+\Delta)^{-1}t^{-\frac{r}{2}}dt\\
&=2i\int_0^{\infty}  x_j(t+\Delta)^{-2}t^{-\frac{r}{2}}dt+ 2i\int_0^{\infty} (t+\Delta)^{-1} [x_j,(t+\Delta)](t+\Delta)^{-2}t^{-\frac{r}{2}}dt\\
&=2ix_j\int_0^{\infty}  (t+\Delta)^{-2}t^{-\frac{r}{2}}dt+ 2\sum_{k}\theta_{jk}\int_0^{\infty} (t+\Delta)^{-1} x_k(t+\Delta)^{-2}t^{-\frac{r}{2}}dt
\end{align*}
For the first integral,
\begin{align*}2ix_j\int_0^{\infty} (t+\Delta)^{-2}t^{-\frac{r}{2}}dt\cdot \ddd^{\frac{1+r}{2}}=2ix_j \ddd^{-1-\frac{r}{2}}\ddd^{\frac{1+r}{2}} =2ix_j\ddd^{-\frac{1}{2}}\end{align*}
is bounded. For the second integral,
\begin{align*}\norm{\int_0^{\infty} (t+\Delta)^{-1} x_k(t+\Delta)^{-2}t^{-\frac{r}{2}}dt\bc{x}^{1+r}}{}&\le \int_0^{\infty} \norm{(t+\Delta)^{-2+\frac{r}{2}}}{}t^{-\frac{r}{2}} d t
\\&\le \int_0^{\infty} (t+1)^{-2+\frac{r}{2}}t^{-\frac{r}{2}} d t<\infty
\end{align*}
converges absolutely. For the commutator with $x_j$, we have
\begin{align*}[x_j,\bc{x}^{-r}]=&\int  (t+\Delta)^{-1}[(t+\Delta),x_j](t+\Delta)^{-1}t^{-\frac{r}{2}}dt
\\ &= 2i\sum_{k}\theta_{jk}\int  (t+\Delta)^{-1}x_k(t+\Delta)^{-1}t^{-\frac{r}{2}}dt
= 2i\sum_{k}\theta_{jk}[D_j, \bc{x}^{-r}].
\end{align*}Then $[x_j,\bc{x}^{-r}]\bc{x}^{r+1}$ for $0<r<2$ which is bounded by previous case. In particular, we also obtained \[\bc{x}^{-r}x_j\bc{x}^{r+1}=[\bc{x}^{-r},x_j]\bc{x}^{r+1}+x_j\bc{x}^{-1}\] is bounded for $0<r<2$.\\
\emph{Step 2.} $[x_j, \bc{x}^{-r}]\bc{x}^{r+1},[D_j, \bc{x}^{-r}]\bc{x}^{r+1}$ are bounded for all $r$.\\
First for $-2<r<0$, the bounededness follows from
\[[x_j, \bc{x}^{-r}]\bc{x}^{r+1}=[x_j,\bc{x}^{-r-2}]\bc{x}^{r+3}+2i\sum_{k}\theta_{jk}\bc{x}^{-r-2}x_k\bc{x}^{r+1}\pl.
\]
Then we have the initial case for $-2<r<2$ and use the the following induction steps $r\to -r+1$ for $r <0$ and $r\to -r-1$ for $r>0$,
\begin{align*}[x_j,\bc{x}^{r}]\bc{x}^{-r+1}&=\bc{x}[x_j,\bc{x}^{r-1}]\bc{x}^{-r+1}+[x_j,\bc{x}]\\&=\bc{x}^{r}[\bc{x}^{-r+1},x_j]+[x_j,\bc{x}]\\
[x_j,\bc{x}^{r}]\bc{x}^{-r+1}&=\bc{x}^{-1}[x_j,\bc{x}^{r+1}]\bc{x}^{-r+1}+[x_j,\bc{x}^{-1}]\bc{x}^2\\&=\bc{x}^{r}[\bc{x}^{-r-1},x_j]\bc{x}^2+[x_j,\bc{x}^{-1}]\bc{x}^2
\\&=\bc{x}^{r}[\bc{x}^{-r-1},x_j]-\bc{x}^{-1}[\bc{x}^{2},x_j] +[x_j,\bc{x}^{-1}]\bc{x}^2\pl.
\end{align*} The argument for $[D_j, \bc{x}^{-r}]\bc{x}^{r+1}$ is similar. \\
\emph{Step 3.} $x^\al \in O^{|\al|}$ and $[x^\al,\bc{x}^r]\in O^{|\al|+r-2}$ for all $\al$ and $r$.\\
First, by Step 2 we have that for all $s$
\begin{align*}\bc{x}^sx_j\bc{x}^{-s-1}&=[\bc{x}^s,x_j]\bc{x}^{-s-1}+x_j\bc{x}^{-1}\\
\bc{x}^{-s}[x_j,\bc{x}^{r}]\bc{x}^{-r+s+1}&=[x_j,\bc{x}^{r-s}]\bc{x}^{-r+s+1}+[x_j,\bc{x}^{-s}]\bc{x}^{s+1}\\
\bc{x}^{-s}[D_j,\bc{x}^{r}]\bc{x}^{-r+s+1}&=[D_j,\bc{x}^{r-s}]\bc{x}^{-r+s+1}+[D_j,\bc{x}^{-s}]\bc{x}^{s+1}
\end{align*}
are all bounded. This implies
\[x_j\in O^1\pl, \pl[x_j,\bc{x}^{r}]\in O^{r-1}\pl,\pl[D_j,\bc{x}^{r}]\in O^{r-1}\pl.\]
Thus $x^\al \in O^{|\al|}$ by product. For $[x^\al,\bc{x}^{r}]$, we use the induction step that by the Leibniz's rule
\begin{align*}[x_j x^\al ,\bc{x}^r]=x_j[x^{\al},\bc{x}^r]+
[x_j,\bc{x}^r]x^{\al}\pl,
\end{align*}
and $[x_j,x^\al]$ is a polynomial of order less than $|\al|$.
\emph{Step 4.} $D^\al(\bc{x}^r)\in O^{r-|\al|}$ for all $r\in\R$.\\
We first do induction on $|\al|$ for $-2<r=-2s<0$. For $0<s<1$, we introduce the following notation
\[I_s(a_1,a_2,\cdots,a_l):=\int_0^\infty t^{-s}(t+\ddd)^{-1}a_1(t+\ddd)^{-1}a_2(t+\ddd)^{-1}\cdots (t+\ddd)^{-1}a_l(t+\ddd)^{-1}dt\pl.\]
 For $|\al|=1$, $[D_j,\bc{x}^{-2s}]=2iI_s(x_j)$. Note that by Leibniz rules \begin{align}[D_j,I_\al(a_1,\cdots,a_l)]=&\sum_{1\le k\le l}I_\al(a_1,\cdots,\underbrace{[D_j,a_k]}_{k\text{th}},\cdots,a_l)\nonumber \\&+\sum_{1\le k\le l+1}I_\al(a_1,\cdots,\underbrace{[\ddd,D_j]}_{k\text{th}},a_k,\cdots,a_l)\pl.\label{induction}
\end{align}
Then all higher order derivatives of $\bc{x}^{-2s}$ are sum of
$I_s(a_1,a_2,\cdots,a_l)$ terms with $a_1,\cdots,a_l\in\{1,x_1,\cdots,x_n\}$. Moreover, their degree can be tracked inductively. Let $s_k$ be the degree of $a_k$. We show in the next lemma that $I_s(a_1,\cdots,a_l)$ is at most of degree $-2l-2s+\sum_{k}s_k$. Now assume that for $|\al|\le N$, $D^\al(\bc{x}^r)$ is a sum of the terms $I_s(a_1,a_2,\cdots,a_l)$ with $-2l-2s+\sum_{k}s_k\le r-|\al|$. Then $[D_j,D^\al(\bc{x}^r)]$ is a sum of commutators as \eqref{induction}. The degree of the first part in \eqref{induction} is lowered by $1$ because $[D_j,x_j]=-i$ and $[D_j,1]=0$, and the second part has the degrees at most \[-2(l+1)-2s+(1+\sum_{k}s_k)=-2l-2s-1+\sum_{k}s_k\] because $[\ddd,D_j]=2ix_j$ and the length $l$ is increased by $1$. Thus by induction on $|\al|$ we prove the case $-2<r<0$. For general $r$, one can always write $r=r_1+r_2+\cdots+ r_l$ as a finite sum of $r_k\in (-2,0]\cup 2\mathbb{N}$. Then by Leibniz rule
\[D_\al(\bc{x}^{r})= \sum_{\al_1+\cdots+\al_l=\al} \binom{\al}{\al_1,\cdots, \al_n} D_{\al_1}(\bc{x}^{r_1})\cdots D_{\al_l}(\bc{x}^{r_l})\pl,\]
where $\binom{\al}{\al_1,\cdots, \al_n}=\al!(\al_1!)^{-1}\cdots (\al_d!)^{-1}$ is the multi-nomial coefficient. For positive integer $m$, $D_\al(x^{2m})$ is a polynomial of degree $2m-|\al|$ and the term
$D_\al(\bc{x}^{r_k}), -2<r_k<0$ has degree at most $r_k-|\al|$ as proved above. Therefore, $D_\al(\bc{x}^{r})$ is of degree at most $\sum_k r_k-|\al_k|=r-|\al|$.
\end{proof}
The following lemma is inspired from the abstract $\Psi$DO calculus in \cite{higson}.
\begin{lemma}Let $0<s<1$ and let $I_s$ be the notation
\begin{align*}I_s(a_1,a_2,\cdots,a_l):=\int_0^\infty t^{-s}(t+\ddd)^{-1}a_1(t+\ddd)^{-1}a_2(t+\ddd)^{-1}\cdots (t+\ddd)^{-1}a_l(t+\ddd)^{-1}dt\pl.\end{align*}
Then\begin{enumerate}
\item[i)] if $a_k\in O^{s_k}$, $I_s(a_1,a_2,\cdots,a_l)\in O^{-2l-2s+\sum_{k}s_k+\epsilon}$ for any $\e>0$
\item[ii)] if $a_k\in \{1,x_1,x_2,\cdots,x_n\}$,
$I_\al(a_1,a_2,\cdots,a_l)\in O^{-2l-2s+\sum_{k}s_k}.$
\end{enumerate}
\end{lemma}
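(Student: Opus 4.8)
The plan is to reduce everything to one spectral estimate plus bookkeeping of $\bc{x}$-exponents. Since $\ddd=\bc{x}^{2}\ge 1$, functional calculus gives $\|\ddd^{c}(t+\ddd)^{-k}\|=\sup_{\lambda\ge 1}\lambda^{c}(t+\lambda)^{-k}\lesssim (1+t)^{c-k}$ for reals $0\le c\le k$, with constant locally bounded in $(c,k)$; since powers of $\bc{x}$ commute with $(t+\ddd)^{-1}$ this yields $\|\bc{x}^{a}(t+\ddd)^{-1}\bc{x}^{b}\|=\|\ddd^{(a+b)/2}(t+\ddd)^{-1}\|\lesssim (1+t)^{(a+b)/2-1}$ whenever $0\le a+b\le 2$. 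I will also use the elementary identity $\int_{0}^{\infty}t^{-s}(t+\ddd)^{-k}\,dt=c_{s,k}\ddd^{1-s-k}$ and the easy fact $O^{e}O^{e'}\subseteq O^{e+e'}$.

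For part (i), fix $\sigma\in\R$, set $r=-2l-2s+\sum_{k}s_{k}+\e$, and estimate $\bc{x}^{\sigma}I_{s}(a_{1},\dots,a_{l})\bc{x}^{-\sigma-r}$. First I would choose reals $p_{0},\dots,p_{l}$ and $q_{0},\dots,q_{l}$ with $p_{0}=\sigma$, $q_{l}=-\sigma-r$, $p_{k}+q_{k-1}=s_{k}$ for $1\le k\le l$, and all sums $p_{i}+q_{i}$ equal to $v:=(2l+2s-\e)/(l+1)$; the recursion $q_{i}=v-s_{i}+q_{i-1}$ solves these and the boundary value $q_{l}=-\sigma-r$ then comes out automatically. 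Inserting factors $\bc{x}^{\pm q_{i}},\bc{x}^{\pm p_{i}}$ and telescoping turns the integrand into a product,
\[
\bc{x}^{\sigma}\,I_{s}(a_{1},\dots,a_{l})\,\bc{x}^{-\sigma-r}=\int_{0}^{\infty}t^{-s}\,F_{0}G_{1}F_{1}G_{2}\cdots G_{l}F_{l}\,dt ,
\]
where $G_{k}=\bc{x}^{-q_{k-1}}a_{k}\bc{x}^{-p_{k}}$ is bounded and $t$-independent (because $a_{k}\in O^{s_{k}}$ and $p_{k}+q_{k-1}=s_{k}$) and $F_{i}=\bc{x}^{p_{i}}(t+\ddd)^{-1}\bc{x}^{q_{i}}=\ddd^{v/2}(t+\ddd)^{-1}$ has $\|F_{i}\|\lesssim (1+t)^{v/2-1}$ provided $0<v<2$, which holds for $\e$ small. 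Hence the norm is $\lesssim\int_{0}^{\infty}t^{-s}(1+t)^{(l+1)(v/2-1)}\,dt=\int_{0}^{\infty}t^{-s}(1+t)^{s-1-\e/2}\,dt<\infty$ (convergence at $0$ uses $s<1$, at $\infty$ uses $\e>0$); larger $\e$ only weakens the conclusion, and $l=0$ is the identity $I_{s}()=c_{s}\bc{x}^{-2s}$.

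For part (ii), with $a_{k}\in\{1,x_{1},\dots,x_{n}\}$ and $\nu=\sum_{k}s_{k}$ the number of factors equal to some $x_{j}$, running the argument of (i) with $\e=0$ produces exactly $\int_{0}^{\infty}t^{-s}(1+t)^{s-1}\,dt$, which diverges logarithmically, so the sharp exponent is out of reach that way. Instead I would extract a main term: using $[\ddd,x_{j}]=2i\sum_{k}\theta_{jk}x_{k}$, commute each factor $x_{j}$ in $I_{s}(a_{1},\dots,a_{l})$ all the way to the far left, past every resolvent to its left. The fully commuted term is $c_{s,l+1}\,(x_{j_{1}}\cdots x_{j_{\nu}})\,\bc{x}^{-2l-2s}$ (using $\int_{0}^{\infty}t^{-s}(t+\ddd)^{-(l+1)}dt=c_{s,l+1}\ddd^{-l-s}$), which lies in $O^{\nu}\cdot O^{-2l-2s}=O^{\nu-2l-2s}$ by Theorem \ref{order}. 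Every commutation step peels off a remainder which is again an $I_{s}$-integral, but with one extra resolvent and with its factors monomials in the $x_{j}$'s (or $1$) of total degree $\nu$; by part (i) such a remainder lies in $O^{\nu-2(l+1)-2s+\e}\subseteq O^{\nu-2l-2s}$ once $\e<1$. Summing the finitely many remainders then finishes the proof.

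I expect the delicate step to be the sharpness in part (ii): the crude power count is off by a logarithm at the critical degree, and the resolution is to recognise that the top-degree part of $I_{s}$ is the explicit operator $(x_{j_{1}}\cdots x_{j_{\nu}})\bc{x}^{-2l-2s}$, whose degree is pinned down by Theorem \ref{order}, so that only strictly lower-degree remainders survive and those are absorbed by the non-sharp estimate (i). Verifying that every commutator remainder is genuinely an $I_{s}$-integral of the asserted shape, with the correct count of resolvents and of $x$-factors, is the one point I would check most carefully.
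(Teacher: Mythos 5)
Your proof is correct and takes essentially the same approach as the paper: for (i) both proofs bound the integrand by inserting $\bc{x}$-weights between the resolvents (you distribute them evenly so each factor contributes $(1+t)^{v/2-1}$, whereas the paper pairs all but one resolvent with $\bc{x}^{2}$ and puts all the $t$-decay into a single factor $(t+\ddd)^{-1+s-\e/2}$; both reduce to the same convergent integral), and for (ii) both commute the $x_{j}$'s out of the integral and control the resulting $I_{s}$-remainders of length $l+1$ by part (i), you moving them left and the paper moving the last non-scalar entry right.
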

\begin{proof}
Let $q,r\in \R$ with $-q+r=-2l-2s+\sum_{k}s_k+\epsilon$.
\begin{align*}&\bc{x}^{q}\int_0^\infty t^{-s}(t+\ddd)^{-1}a_1(t+\ddd)^{-1}a_2(t+\ddd)^{-1}\cdots (t+\ddd)^{-1}a_l(t+\ddd)^{-1}dt\bc{x}^{-r}
\\=&\int_0^\infty t^{-s}(t+\ddd)^{-1+\al-\epsilon/2} \bc{x}^{q}(t+\ddd)^{-s+\epsilon/2} a_1(t+\ddd)^{-1}\cdots (t+\ddd)^{-1}a_l(t+\ddd)^{-1}\bc{x}^{-r}dt
\end{align*}
Note that
\begin{align*}&\norm{\bc{x}^{q}(t+\ddd)^{-s+\epsilon/2} a_1(t+\ddd)^{-1}a_2(t+\ddd)^{-1}\cdots (t+\ddd)^{-1}a_n(t+\ddd)^{-1}\bc{x}^{-r}}{}
\\ \le &\norm{\bc{x}^{2q-\e}(t+\ddd)^{-q+\epsilon/2}}{}\norm{ \bc{x}^{q-2s+\e} a_1\bc{x}^{-q+2s-\e-s_1}}{}\norm{\bc{x}^2(t+\ddd)^{-1}}{}
\\\cdots &\norm{\bc{x}^2(t+\ddd)^{-1}}{}\norm{\bc{x}^{q+\sum_{k\le l-1}s_k -2(n-1)-2s+\e}a_l\bc{x}^{-q-\sum_{k\le l}s_k+2s+2(n-1)-\e}}{}\norm{\bc{x}^2(t+\ddd)^{-1}}{}
\\\le &\norm{ \bc{x}^{q-2s+\e} a_1\bc{x}^{-q+2s-\e-s_1}}{}\cdots \norm{\bc{x}^{q+\sum_{k\le l-1}s_k -2(l-1)-2s+\e}a_l\bc{x}^{-q-\sum_{k\le l}s_k+2s+2(l-1)-\e}}{}
\end{align*}
which is uniformly bounded. Thus
\begin{align*}&\norm{\bc{x}^{q}\int_0^\infty t^{-s}(t+\ddd)^{-1}a_1(t+\ddd)^{-1}a_2(t+\ddd)^{-1}\cdots (t+\ddd)^{-1}a_n(t+\ddd)^{-1}dt\bc{x}^{-r}}{}
\\ \lesssim & \int_0^\infty \norm{t^{-q}(t+\ddd)^{-1+s-\epsilon/2}}{}dt \le \int_0^\infty t^{-s}(t+1)^{-1+s-\epsilon/2}{}dt <\infty \pl.
\end{align*}
For ii), note that
\[I_s(\underbrace{1,\cdots,1}_{l})=\int_0^\infty (t+\ddd)^{-l}t^{-s} dt = C_s \bc{x}^{-2(l-1)-2s}\]
Let $k$ be the last position in $I_s(a_1,\cdots,a_{l})$ such that $a_k$ is not scalar. That is, for all $n\le k$, $a_n=x_{j_n}$ for some $1\le j_n\le d$ and $a_m=1$ for all $k<m\le l$.
\begin{align*}
&I_s(\underbrace{a_1,\cdots,a_{k-1},x_j,1,\cdots,1}_{l})\\=&I_s(\underbrace{a_1,\cdots,a_{k-1},1,x_j,1,\cdots,1}_{l})+I_s(\underbrace{a_1,\cdots,a_{k-1},1,[\ddd,x_j],1,\cdots,1}_{l+1})
\\=&I_s(\underbrace{a_1,\cdots,a_{k-1},1,\cdots,1}_{l})x_j+
\sum_{k+1\le m\le l+1}I_s(a_1,\cdots,a_{k-1},1,\cdots ,\underbrace{[\ddd,x_j]}_{m\pl\text{th}},\cdots,1)
\end{align*}
Note that $[\ddd,x_j]= -2i\sum_{k}\theta_{kj}x_k$. Then by i), the second part belongs to $O^{-2l-2+\sum_{k}s_k-2s+\e}\subseteq O^{-2l+\sum_{k}s_k-2s}$. We then finish the proof by the induction on the last non-scalar position.
\end{proof}
\begin{prop}\label{indu}
i) Let $s\in \R$. If $D^\al(a)\bc{x}^{-s}$ is bounded for all $\al$, then $a\in O^s$. \\
ii) $\S_\theta=\{a\in \R_\theta\pl|\pl  D^\al (a) \in O^{-\infty} \pl \text{for all} \pl \al  \}$. Moreover, the map $f\mapsto \la_\theta(f)$ is bi-continuous from $\S(\R^d)$ equipped with the standard semi-norms to $\S_\theta$ with the semi-norms $\norm{D^\al(\cdot)\bc{x}^{2n}}{}$ for all $\al$ and $n$. In particular, $\bc{x}^r\S_\theta\subset \S_\theta$ for any $r$.
\end{prop}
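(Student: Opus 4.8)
The plan is to prove part (i) first, since part (ii) will follow from it together with standard facts about the Schwartz class and the mapping properties of $\la_\theta$. For (i), fix $s\in\R$ and suppose $D^\al(a)\bc{x}^{-s}$ is bounded for every multi-index $\al$; I must show $\bc{x}^p a \bc{x}^{-p-s}$ is bounded for every $p\in\R$. The natural strategy is a commutator-expansion argument that moves the weight $\bc{x}^p$ across $a$. Writing $\bc{x}^p a \bc{x}^{-p-s} = \bc{x}^p a \bc{x}^{-p}\cdot\bc{x}^{-s}$ reduces matters to controlling $\bc{x}^p a\bc{x}^{-p}$ as an operator of degree $0$ relative to the weight $\bc{x}^{s}$ on the right. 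One expands
\[
\bc{x}^p a \bc{x}^{-p} = a + [\bc{x}^p, a]\bc{x}^{-p},
\]
and iterates: the commutator $[\bc{x}^p,a]$ should, by a Leibniz-type expansion in the derivations $D_j$ (using that $D_j(\bc{x}^p)\in O^{p-1}$ by Theorem \ref{order} and that $[D_j,a]=D_j(a)$), be re-expressed as a sum $\sum_j D_j(a)\cdot(\text{something in }O^{p-1}) + (\text{lower order remainder})$. Since each $D_j(a)\bc{x}^{-s}$ is bounded by hypothesis, and the hypothesis is closed under further differentiation (as $D^\al D^\beta(a)\bc{x}^{-s}$ is again assumed bounded), one sets up an induction on $|p|$: the initial range, say $-1\le p\le 1$, is handled by a single commutator expansion with the integral-representation estimates of Step 1--2 of Theorem \ref{order}, and the inductive step $p\to p\mp 1$ mimics the induction already used there. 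The key technical input throughout is that conjugation by $\bc{x}^t$ preserves $\M_\theta$ and that all the weight commutators $[\bc{x}^t,x_j]$, $[\bc{x}^t, D_j]$ lie in the expected $O$-classes, so every remainder term in the expansion is controlled by a boundedness hypothesis on some $D^\al(a)\bc{x}^{-s}$.

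The main obstacle I anticipate is bookkeeping the iterated commutator expansion so that the induction actually closes: one must verify that after peeling off $[\bc{x}^p,a]$, the leftover weight powers land in a range where either the inductive hypothesis on a lower $|p|$ applies or the term is a product of a boundedness-from-hypothesis factor $D^\al(a)\bc{x}^{-s}$ with a genuinely bounded weight expression. A clean way to organize this is to prove the stronger statement ``$D^\al(a)\bc{x}^{-s}$ bounded for all $\al$ $\iff$ $\bc{x}^q D^\al(a)\bc{x}^{-q-s}$ bounded for all $\al$ and all $q$'' by simultaneous induction on $|q|$, which makes the differentiated terms available at each stage; this is the same bootstrap pattern as in Theorem \ref{order}, Step 3.

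For part (ii), one inclusion is essentially (i) plus the observation that $\S_\theta$ elements have all derivatives rapidly decreasing: if $\la_\theta(f)\in\S_\theta$ then $D^\al(\la_\theta(f))=\la_\theta(D_{\bx}^\al f)$, and since $D_{\bx}^\al f$ is Schwartz, a direct estimate (or the integral-operator computation in the proof of Proposition \ref{trace}, showing that quantized Schwartz functions have rapidly decaying ``matrix coefficients'') gives $D^\al(\la_\theta(f))\bc{x}^{2n}$ bounded for all $n$, i.e. $D^\al(\la_\theta(f))\in O^{-\infty}$. Conversely, if $a\in\R_\theta$ with $D^\al(a)\in O^{-\infty}$ for all $\al$, then in particular $D^\al(a)\bc{x}^{2n}$ is bounded for all $\al,n$; one must produce $f\in\S(\R^d)$ with $a=\la_\theta(f)$. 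Here I would use that $a\in\R_\theta$ already means $a=\la_\theta(T)$ for a tempered distribution $T$ (as recalled before the definition of $\M_\theta$), and then the decay/smoothness of all $D^\al(a)$ against the weights $\bc{x}^{2n}$ forces $T$ to be Schwartz: the conditions $D^\al(a)\bc{x}^{2n}\in\R_\theta$ translate, under $\la_\theta$, into $D_{\bx}^\al f$ decaying faster than any polynomial, which is exactly membership in $\S(\R^d)$. The bi-continuity statement then follows by tracking constants through these equivalences — the standard Schwartz seminorms of $f$ are comparable to the operator norms $\norm{D^\al(\la_\theta(f))\bc{x}^{2n}}{}$ — and $\bc{x}^r\S_\theta\subset\S_\theta$ is immediate from (i) and the fact that $\bc{x}^r\in\M_\theta$ multiplies the appropriate weighted spaces correctly, together with the just-proved characterization. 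The subtle point in (ii) is the converse direction: upgrading ``$a$ affiliated to $\R_\theta$ with rapidly decaying weighted derivatives'' to ``$a$ is literally a quantized Schwartz function,'' which I expect to require invoking the distributional Weyl calculus and the identification of $\S_\theta$ with $\la_\theta(\S(\R^d))$ rather than any purely operator-theoretic estimate.
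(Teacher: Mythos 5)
Your overall strategy for part (i) is in the spirit of the paper's bootstrap argument, but two concrete steps would fail as written. First, the reduction of general $s$ to $s=0$ via the factorization $\bc{x}^p a\bc{x}^{-p-s}=(\bc{x}^p a\bc{x}^{-p})\bc{x}^{-s}$ is not a reduction at all: bounding $\bc{x}^p a\bc{x}^{-p}$ for all $p$ is the statement $a\in O^0$, which you should not expect when $s\neq 0$. The paper instead sets $b:=a\bc{x}^{-s}$, shows by Leibniz and Theorem \ref{order} that $D^\al(b)$ is bounded for all $\al$, and applies the $s=0$ case to $b$; then $b\in O^0$ is exactly $a\in O^s$. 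Second, there is no Leibniz expansion of the commutator $[\bc{x}^p,a]$ directly in the derivations $D_j$; the actual mechanism is to pass to $a^{(1)}=[\Delta,a]$ with $\Delta=\bc{x}^2$, which (using $[x_j,a]=i\sum_l\theta_{jl}D_l(a)$ for $a\in\M_\theta$) is a sum of terms $x_jD_l(a)$ and $D_l(a)x_j$, then write $[a,\bc{x}^{-r}]=I_{r/2}([\Delta,a])$ with the integral representation $I_{r/2}$ and peel off one more time via $a^{(2)}=[\Delta,[\Delta,a]]$ to control the remainder. Your sketch ``$[\bc{x}^p,a]=\sum_j D_j(a)\cdot(\text{something in }O^{p-1})+\ldots$'' is not what the integral trick delivers, and the degree counting does not close without the $a^{(1)}, a^{(2)}$ bookkeeping.

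For part (ii), the forward inclusion is fine, but the converse is exactly where your sketch stops short. The paper's route is: $a\in O^r$ with $r<-d/2$ forces $a\in L_2(\R_\theta)\cong L_2(\R^d)$, so $a=\la_\theta(f)$ for some $f\in L_2(\R^d)$; then $D^\al(a)=\la_\theta(D^\al f)$ distributionally, so all derivatives of $f$ lie in $L_2$, hence $f$ is in every Sobolev space $H^s(\R^d)$ and by Sobolev embedding $f\in C_0^\infty(\R^d)$ with all derivatives bounded. Only after that does one transfer the operator weight bounds to pointwise polynomial decay of $f$ via $\la_\theta(\bx_jf)=x_j\la_\theta(f)-\tfrac{i}{2}\sum_k\theta_{jk}\la_\theta(D_jf)$ and induction on the monomial order. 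Your one-line claim that the operator conditions $D^\al(a)\bc{x}^{2n}\in\R_\theta$ ``translate into $D_\bx^\al f$ decaying faster than any polynomial'' conceals precisely this work: left and right multiplication by $x_j$ in the Weyl calculus is not pointwise multiplication and carries derivative correction terms, and one first needs smoothness of $f$ before decay can be extracted. Likewise the bi-continuity claim needs the Fourier-side estimate $\norm{D^\beta(f)\bc{\bx}^{2m}}{\infty}\lesssim\norm{\bc{\bxi}^n\widehat{D^\beta(f)\bc{\bx}^{2m}}}{2}\norm{\bc{\bxi}^{-n}}{2}$ and then quantization; you flag both of these as ``subtle'' but do not resolve them, and they are the content of the proof.
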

\begin{proof} i) Define the notation
\begin{align*}  &a^{(1)}:=[\ddd,a]= i\sum_{l}\theta_{jl}(x_jD_{l}(a)+D_{l}(a)x_j);\\
&a^{(2)}:=[\ddd,[\ddd,a]]= -2\sum_{l}\sum_{m}\theta_{jl}\theta_{mj}(x_mD_{l}(a)+D_{l}(a)x_m)
\\&-\sum_{l,m}\theta_{jl}\theta_{km} (x_jx_kD_{l}D_{m}(a)+x_jD_{l}D_{m}(a)x_k+x_kD_{l}D_{m}(a)x_j+D_{l}D_{m}(a)x_kx_j)
\end{align*}
We first give the proof for $s=0$. Assume that $D^\al(a)$ is bounded for all $\al$.
Then $a^{(1)}\bc{x}^{-1}$ is bounded because
\begin{align*}
x_jD_{l}(a)\bc{x}^{-1}&=D_{l}(a)x_j\bc{x}^{-1}+[x_j,D_{l}(a)]\bc{x}^{-1} \\&=D_{l}(a)x_j\bc{x}^{-1}-\sum_{k}\theta_{jk}D_{k}D_{l}(a)\bc{x}^{-1}\pl.\end{align*}
and similarly one can verify that $a^{(2)}\bc{x}^{-2}$ is bounded.
Then for $0<r<2$,
\begin{align*}
[a,\bc{x}^{-r}]\bc{x}^{r}&=I_{\frac{r}{2}}([\ddd,a])\bc{x}^{r}=I_{\frac{r}{2}}(a^{(1)})\bc{x}^{r}
\\&=a^{(1)}I_{\frac{r}{2}}(1)\bc{x}^{r}+I_{\frac{r}{2}}(a^{(2)},1)\bc{x}^{r}
=a^{(1)}\bc{x}^{-1}+I_{\frac{r}{2}}(a^{(2)},1)\bc{x}^{r}\pl.
\end{align*}
The second part is bounded because
\begin{align*}\norm{I_{\frac{r}{2}}(a^{(2)},1)\bc{x}^{r}}{}&\le \int_0^\infty {t^{-\frac{r}{2}}}\norm{(\ddd+t)^{-1}}{} \norm{a^{(2)}(t+\ddd)^{-1}}{}\norm{\bc{x}^{r}(t+\ddd)^{-1}}{}d t
\\ &\lesssim \int_0^\infty t^{-\frac{r}{2}}\norm{\bc{x}^{r}(t+\ddd)^{-2}}{}d t
\le
\int_0^\infty t^{-\frac{r}{2}}(t+1)^{-2+\frac{r}{2}}d t<\infty
\end{align*}
Thus we have $\bc{x}^{-r}a\bc{x}^{r}$ is bounded for $0\le r\le 2$, and for $-2\le r\le 0$ by taking the adjoint.
Moreover, the same argument applies to $D^\beta(a)$ for all $\beta$.
Consider $b=\bc{x}^{-r}a\bc{x}^{r}$.
\[D^\al(b)=\sum_{\al_1+\al_2+\al_3=\al}\binom{\al}{\al_1,\al_2,\al_3}D^{\al_1}(\bc{x}^{-r})D^{\al_2}(a)D^{\al_3}(\bc{x}^{r})\pl.\]
 is bounded for all $\al$ by Leibniz rule and Theorem \ref{order}. Thus we have shown that $\bc{x}^{-r}a\bc{x}^{r}$ bounded for $-4\le r\le 4$. By induction this can be extended for all $r\in\R$ which proves the case $s=0$. For general $s$, we have
\[D^\al(a\bc{x}^{-s})=\sum_{\al_1+\al_2=\al}\binom{\al}{\al_1,\al_2}D^{\al_1}(a)D^{\al_2}(\bc{x}^{-s})\pl,\]
which the assumption $D^\al(a)\bc{x}^{-s}$ is  bounded and $D^{\al_2}(\bc{x}^{-s})\in O^{s-|\al|}$ by Theorem \ref{order}. Thus by the case of $s=0$, we know $a\bc{x}^{-s}\in O^0$ which implies $a\in O^s$.

For ii), we first show that for $f\in \S(\R^d)$, $\la_\theta(f)\bc{x}^{2m}$
is bounded for all positive integers $m$. Note that $\bc{x}^{2m}$ is a polynomial of $x$ with degree $2m$. And
\begin{align*}&x_j\la_\theta(f)=\la_\theta(x_j f + \frac{i}{2}\sum_{k}\theta_{jk}\p_jf) \pl, \\
&\la_\theta(f)x_j=(x_j\la_\theta(\bar{f}))^*=(\la_\theta(x_j \bar{f} +\frac{i}{2}\sum_{k}\theta_{jk}\overline{\p_jf}))^*=\la_\theta(x_j f)  -\frac{i}{2}\sum_{k}\theta_{jk}\la_\theta(\p_jf)
\end{align*}
Then $\la_\theta(f)\bc{x}^{2m}$ are again in $\S_\theta$ hence bounded. Therefore for any $r>0$,
$\la_\theta(f)\bc{x}^{r}$
is bounded and similarly for the derivatives $D^\al(\la_\theta(f))$. Thus by i), $D^\al(\la_\theta(f))\in O^{-\infty}$ for all $\al$. For the other direction, $a\in O^{r}$ for $r<-\frac{d}{2}$ implies
\[\norm{a}{2}\le \norm{\bc{x}^{r}}{2}\norm{\bc{x}^{-r}a}{\infty}<\infty \pl.\]
Thus $a=\la_\theta(f)$ for some $f\in L_2(\R^d)$ and $D^\al(a)=\la_\theta( D_\al(f))$ in the distribution sense.
Then all the derivatives of $f$ belongs to $L_2(\R^d)$ and hence $f$ is in the Sobolev space $H^s(\R^d)=\{f \pl | \pl (1+\ddd)^s f\in L_2(\R^d)\}$ for all $s$. Using Sobolev embedding theorem, $f\in C_0^\infty(\R^d)$ with all derivatives bounded. To see $\bx^\beta f$ are bounded functions for $\beta$, we use induction on $|\beta|$ and
\begin{align}\label{lm}\la_\theta(\bx_j f)=x_j\la_\theta(f)-\frac{i}{2}\sum_{k}\theta_{jk}\la_\theta(D_jf)
\pl.\end{align}
Similarly we know that $D_\al(f)\bx^\beta$ are bounded for all $\al,\beta$. To show the semi-norms are equivalent, let $f\in \S(\R^d)$ and denote $\hat{f}$ as its Fourier transform. Let $n$ be the smallest even integer greater than $\frac{d}{2}$,
\begin{align*} \norm{D^\beta(f)\bc{\bx}^{2m}}{\infty}\le \norm{\widehat{D^\beta(f)\bc{\bx}^{2m}}}{1}
\le \norm{\bc{\bxi}^{n}\widehat{D^\beta(f)\bc{\bx}^{2m}}}{2}\norm{\bc{\bxi}^{-n}}{2}\pl.\end{align*}
Let $\bc{\bxi}^{n}\widehat{D^\beta(f)\bc{\bx}^{2m}}\in \S(\R^d)$ be the Fourier transform of $g$. $g$ can be expressed as a linear combination of $\bx^\beta D^\al(f)$ with $|\al|$ up to $n$, $\beta$ up to $2m$. Therefore,
\begin{align*}
\norm{D^\beta(f)\bc{\bx}^{2m}}{\infty} &\lesssim \pl
\norm{\la_\theta(g)}{2}\lesssim \pl\norm{\la_\theta(g)\bc{x}^{n}}{\infty}\\&\lesssim \pl \sup\{ \norm{D^\al\la_\theta(f)x^\beta}{\infty}\pl |\pl  |\al|\le n,|\beta|\le n+2m\}\pl.
\end{align*}
Finally, we note that $D^\al\la_\theta(f)\in\S_\theta \subset O^{-\infty}$ and by Theorem \ref{order} $D^\al\bc{x}^r\in O^{r-|\al|}$. By product rule, $D^\al(\bc{x}^r\la_\theta(f))\in O^{-\infty}$ for all $\al$. Then
$\bc{x}^r\S_\theta\subset\S_\theta$. \qedhere
\end{proof}
\begin{lemma}\label{bi}Let $\by\in \R^d$. Denote $\bc{x+\by}:=(1+\sum_{j}(x_j+\by_j)^2)^{\frac{1}{2}}$. Then
\begin{enumerate}\item[i)] $\al_{\by}(\bc{x}^r)=\bc{x+\by}^r$.
\item[ii)] for any $0<r\le 2n$ with $n$ integer, there exists a constant $c_{r,n}$ such that
\[\norm{\bc{x+\by}^r\bc{x}^{-r}}{\infty}\le c_{r,n}\bc{\by}^{2n}\pl ,\pl  \norm{\bc{x}^r\bc{x+\by}^{-r}}{\infty}\le c_{r,n}\bc{\by}^{2n}\pl.\]
\end{enumerate}
\end{lemma}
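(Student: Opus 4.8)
The plan is to treat (i) as a formal computation with the transference automorphism $\al_\by$ and functional calculus, and to reduce (ii) to a single operator inequality for \emph{even integer} powers, which in turn follows at once from the monomial estimates of Theorem \ref{order}. I do not expect a serious obstacle; the one delicate point is the passage from a bounded ``sandwiched'' estimate to an inequality between unbounded positive operators.

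\textbf{Part (i).} Recall from Proposition \ref{trace}(ii) that $\al_\by$ is a normal $*$-automorphism of $\R_\theta$, implemented by the shift unitary $U_\by$ on $L_2(\R_\theta)\cong L_2(\R^d)$, with $\al_\by(\la_\theta(\bxi))=e^{i\bxi\cdot\by}\la_\theta(\bxi)$. Taking $\bxi=t\be_j$ and using $\la_\theta(t\be_j)=e^{itx_j}$ gives $\al_\by(e^{itx_j})=e^{it\by_j}e^{itx_j}=e^{it(x_j+\by_j)}$ for all $t\in\R$ (the scalar $\by_j$ commutes with $x_j$), whence $\al_\by(x_j)=U_\by x_j U_\by^*=x_j+\by_j$. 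Since conjugation by $U_\by$ commutes with Borel functional calculus, $\al_\by(\bc{x}^2)=1+\sum_j\al_\by(x_j)^2=1+\sum_j(x_j+\by_j)^2=\bc{x+\by}^2$, and applying $t\mapsto t^{r/2}$ yields $\al_\by(\bc{x}^r)=(\bc{x+\by}^2)^{r/2}=\bc{x+\by}^r$, which is (i). Replacing $\by$ by $-\by$ and using $\al_{-\by}\circ\al_\by=\mathrm{id}$, we also get $\al_{-\by}(\bc{x+\by}^{s})=\bc{x}^{s}$ for every $s\in\R$.

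\textbf{Part (ii), even integer powers.} For each positive integer $m$ expand
\[\bc{x+\by}^{2m}=\Big(\bc{x}^2+2\sum_j\by_j x_j+|\by|^2\Big)^m=\sum_\al c_\al(\by)\,x^\al,\]
a finite sum over ordered monomials $x^\al$ with $|\al|\le 2m$; reordering the $x_j$'s via $[x_j,x_k]=-i\theta_{jk}$ produces only $\by$-independent scalars, and each coefficient $c_\al(\by)$ is a polynomial in $\by$ of degree $\le 2m$, hence $|c_\al(\by)|\lesssim\bc{\by}^{2m}$ as $\bc{\by}\ge 1$. Then $\bc{x}^{-m}\bc{x+\by}^{2m}\bc{x}^{-m}=\sum_\al c_\al(\by)\,\bc{x}^{-m}x^\al\bc{x}^{-m}$, and by Theorem \ref{order} ($x^\al\in O^{|\al|}$) each summand $\bc{x}^{-m}x^\al\bc{x}^{-m}=\big(\bc{x}^{-m}x^\al\bc{x}^{m-|\al|}\big)\bc{x}^{|\al|-2m}$ is bounded, the last factor having norm $\le 1$ because $|\al|\le 2m$. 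Thus $\bc{x}^{-m}\bc{x+\by}^{2m}\bc{x}^{-m}$ is bounded with norm $\le C_m\bc{\by}^{2m}$; equivalently (this is the delicate passage to spell out, via the fact that this operator is $(\bc{x+\by}^m\bc{x}^{-m})^*(\bc{x+\by}^m\bc{x}^{-m})$ and $\bc{x}^{-m}$ is bounded), $\bc{x+\by}^m\bc{x}^{-m}$ is bounded and $\bc{x+\by}^{2m}\le C_m\bc{\by}^{2m}\bc{x}^{2m}$ as positive self-adjoint operators.

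\textbf{Part (ii), general $r$.} For $0<r\le 2n$, apply the previous step with $m=2n$ and then L\"owner--Heinz monotonicity: since $\bc{x+\by}^{4n}\le C_{2n}\bc{\by}^{4n}\bc{x}^{4n}$ and $t\mapsto t^{r/2n}$ is operator monotone ($r/2n\in(0,1]$), raising both sides to the power $r/2n$ and pulling out the positive scalar $C_{2n}\bc{\by}^{4n}$ gives $\bc{x+\by}^{2r}\le C_{2n}^{r/2n}\bc{\by}^{2r}\bc{x}^{2r}$; conjugating by $\bc{x}^{-r}$ gives $\bc{x}^{-r}\bc{x+\by}^{2r}\bc{x}^{-r}\le C_{2n}^{r/2n}\bc{\by}^{2r}$, so $\bc{x+\by}^r\bc{x}^{-r}$ is bounded with
\[\|\bc{x+\by}^r\bc{x}^{-r}\|^2=\|\bc{x}^{-r}\bc{x+\by}^{2r}\bc{x}^{-r}\|\le C_{2n}^{r/2n}\bc{\by}^{2r},\]
whence $\|\bc{x+\by}^r\bc{x}^{-r}\|\le C_{2n}^{r/4n}\bc{\by}^{r}\le c_{r,n}\bc{\by}^{2n}$ with $c_{r,n}=\max(1,C_{2n})$, using $r\le 2n$ and $\bc{\by}\ge 1$. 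The second inequality follows by applying this (already established) first inequality with $-\by$ in place of $\by$ and then the isometric automorphism $\al_\by$: by (i), $\al_\by(\bc{x-\by}^r\bc{x}^{-r})=\bc{x}^r\bc{x+\by}^{-r}$, so $\|\bc{x}^r\bc{x+\by}^{-r}\|=\|\bc{x-\by}^r\bc{x}^{-r}\|\le c_{r,n}\bc{-\by}^{2n}=c_{r,n}\bc{\by}^{2n}$. The only points that need care are the degree bookkeeping in the monomial expansion and the (routine but, for unbounded operators, slightly delicate) passage from the bounded sandwiched estimate to $\bc{x+\by}^{2m}\le C_m\bc{\by}^{2m}\bc{x}^{2m}$.
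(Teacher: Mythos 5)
Your proof is correct, and for part (ii) it takes a genuinely different route from the paper. For part (i) you both rest on $\al_\by(\bc{x}^2)=\bc{x+\by}^2$; the paper then passes to non‑integer powers via the operator integral $\bc{x}^{-s}=C_s\int_0^\infty(t+\bc{x}^2)^{-1}t^{-s/2}\,dt$, while you use directly that the spatial $*$-automorphism $\al_\by$ commutes with Borel functional calculus, which is cleaner and equivalent. The substantive divergence is in part (ii). The paper treats $r=2n$ by expanding $\bc{x+\by}^{2n}$ as a polynomial in $x$ with $\by$-controlled coefficients, and then handles general $0<r\le 2n$ by writing $\bc{x}^{-r}-\bc{x+\by}^{-r}$ through the resolvent integral with $(t+\bc{x}^{2n})^{-1}$ and estimating the resulting singular integral. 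You instead bound the sandwiched operator $\bc{x}^{-m}\bc{x+\by}^{2m}\bc{x}^{-m}$ for even powers by invoking Theorem \ref{order} ($x^\al\in O^{|\al|}$), then interpolate to fractional $r$ by L\"owner--Heinz operator monotonicity applied to the form inequality $\bc{x+\by}^{4n}\le C_{2n}\bc{\by}^{4n}\bc{x}^{4n}$. Your route is shorter and conceptually crisper once the operator-monotonicity input is accepted; it also makes the constant's $\by$-dependence transparent. The paper's integral-representation method is more elementary (needing only resolvent estimates) and is the same machinery used elsewhere in Section 3, so it is more self-contained in that context. You correctly flagged and handled the one delicate spot: converting the bounded estimate on $\bc{x}^{-m}\bc{x+\by}^{2m}\bc{x}^{-m}=T^*T$ (with $T=\bc{x+\by}^m\bc{x}^{-m}$) into the form inequality $\bc{x+\by}^{2m}\le C_m\bc{\by}^{2m}\bc{x}^{2m}$ between unbounded positive operators, which is exactly what L\"owner--Heinz requires.
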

\begin{proof}
It is clear that $\bc{\al_{\by}(x)}^2= 1+\sum_j(x_j+ \by_j)^2=\al_{\by}(\bc{x}^2)$. Then by the fact $\al_{\by}$ is a $*$-isomorphism on $\M_\theta$, $\al_{\by}(\bc{x}^{-2})=\bc{\al_{\by}(x)}^{-2}$.
Then we apply the operator integral for $0< s< 2$,
\[\bc{x}^{-s}= C_r\int_0^\infty (t+\bc{x}^2)^{-1} t^{-\frac{s}{2}}dt \pl. \]
Then the general case follows from writing $r=2n-s$. For ii), for $r=2$,
\begin{align*}&\norm{\bc{x+\by}^2\bc{x}^{-2}}{}\le \norm{1+\sum_{j}2\by_jx_j\bc{x}^{-2}+\sum_{j}\by_j^2\bc{x}^{-2}}{}\lesssim \bc{\by}^2
\\
&\norm{(\bc{x}^{-2}-\bc{x+\by}^2)(t+\bc{x}^2)^{-1}}{}\le \norm{\sum_{j}2\by_jx_j(t+\bc{x}^2)^{-1}+\sum_{j}\by_j^2(t+\bc{x}^2)^{-1}}{}\lesssim t^{-\frac{1}{2}}\bc{\by}^2
\end{align*}
For $r=2n$, $\bc{x}^{2n}$ is a $2n$-degree polynomial of $x_j$ whose largest coefficient is the constant term $\bc{\by}^2n$. By
a similar argument for $\bc{x}^{2n}$, we have
\begin{align*}&\norm{\bc{x+\by}^{2n}\bc{x}^{-2n}}{}\lesssim \bc{\by}^{2n}\pl, \pl \norm{(\bc{x}^{-2n}-\bc{x+\by}^{2n})(t+\bc{x}^{2n})^{-1}}{}\lesssim t^{-\frac{1}{2n}}\bc{\by}^2\pl.
\end{align*}
Using the transference,
\[\norm{\bc{x}^{2n}\bc{x+\by}^{-2n}}{}=\norm{\al_{\by}(\bc{x-\by}^{2n}\bc{x}^{-2n})}{}=\norm{\bc{x}^{2n}\bc{x+\by}^{-2n}}{}\lesssim \bc{\by}^{2n}\]
This proves the inequality for $r=2n$ even integers. For general positive $r$, choose integer $n$ such that $0<r< 2n-1$,  consider $1-\bc{x}^r\bc{x+\by}^{-r}=\bc{x}^r(\bc{x}^{-r}-\bc{x+\by}^{-r})$. Take $s=\frac{r}{2n}<1-\frac{1}{2n}$, we have
\begin{align} \label{inte}&\bc{x}^r( \bc{x}^{-r}-\bc{x+\by}^{-r})\nonumber\\=& C_s \bc{x}^r\int_0^\infty \Big((t+\bc{x}^{2n})^{-1}-(t+\bc{x+\by}^{2n})^{-1}\Big) t^{-s}dt\pl.\nonumber
\\=&C_s \int_0^\infty \Big(  \bc{x}^r(t+\bc{x}^{2n})^{-1}\Big) \Big((\bc{x+\by}^{2n}-\bc{x}^{2n})(t+\bc{x+\by}^{2n})^{-1}\Big) t^{-s}dt\pl.
\end{align}
Note that $\norm{\bc{x}^r(t+\bc{x}^{2n})^{-1}}{}\le (t+1)^{s-1}$ and
\begin{align*}
&\norm{(\bc{x+\by}^{2n}-\bc{x}^{2n})(t+\bc{x}^{2n})}{}\lesssim t^{-\frac{1}{2n}}\bc{\by}^{2n}\pl.
\end{align*}
Therefore,
\begin{align*} &\norm{\bc{x}^r( \bc{x}^{-r}-\bc{x+\by}^{-r})}{}\lesssim \int_0^\infty (1+t)^{s-1}t^{-\frac{1}{2n}-s}\bc{\by}^{2n}dt\lesssim  \bc{\by}^{2n}
\end{align*}
This proves the inequality for $\bc{x}^r\bc{x+\by}^{-r}$ and the other case follows from transference.
\end{proof}
Using the above lemma, we show that quantized partial derivatives defined in Section \ref{diff} are indeed the vector derivatives of transference action.
\begin{prop} \label{strongd}Let $\be_j=(0,\cdots, 1, \cdots, 0)$ be the $j$-th basis vector.\begin{enumerate}
\item[i)] for $\la_\theta(f)\in \S_\theta$,
$\displaystyle D_j\la_\theta(f)=-i\lim_{h\to 0}\frac{1}{h}(\al_{h\be_j}(\la_\theta(f))-\la_\theta(f)) $ in $\S_\theta$.
\item[ii)]Let $m\in \R$. If $a \in \M_\theta$ and $ D^\al(a)\bc{x}^{m} \in \R_\theta$ for all $|\al|\le 2$, then
\[ \lim_{h\to 0} \frac{1}{h} \norm{\Big(\al_{h\be_j}(a)-a-hD_j(a)\Big)\bc{x}^m}{\infty}=0\pl. \]
\end{enumerate}
\end{prop}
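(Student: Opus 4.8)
The plan for (i) is to transport the statement through the quantization map. Since $\al_{h\be_j}(\la_\theta(f))=\la_\theta(\tau_h f)$ with $(\tau_h f)(\bx)=f(\bx+h\be_j)$, the difference quotient $\tfrac1h(\al_{h\be_j}(\la_\theta(f))-\la_\theta(f))$ is simply $\la_\theta$ of the scalar difference quotient $\tfrac1h(\tau_h f-f)$. So it is enough to check that $\tfrac1h(\tau_h f-f)\to \p_j f$ in the Fr\'echet topology of $\S(\R^d)$ and then invoke the bicontinuity of $f\mapsto\la_\theta(f)$ from $\S(\R^d)$ onto $\S_\theta$ proved in Proposition \ref{indu}(ii); since $\la_\theta(\p_j f)=iD_j\la_\theta(f)$, multiplying by $-i$ gives (i). The Schwartz-space convergence I would get by writing $\tfrac1h(\tau_h f-f)-\p_j f=\int_0^1\big(\p_j f(\cdot+sh\be_j)-\p_j f\big)\,ds$, expanding once more to an integral of $\p_j^2 f$ along the segment, and for each seminorm $\sup_\bx|\bx^\beta\p^\al(\,\cdot\,)|$ absorbing the weight $\bx^\beta$ into $\bc{\bx+sh\be_j}^{|\beta|}$ up to a harmless power of $\bc{h}$ (valid for $|h|\le1$); the rapid decay of $f$ then produces an $O(|h|)$ bound.

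For (ii) I would Taylor expand the operator-valued curve $h\mapsto\al_{h\be_j}(a)$ to second order about $h=0$. Differentiating the one-parameter group (using (i), and the fact that $D_j$ commutes with every $\al_\by$) shows its first two derivatives are the appropriate scalar multiples of $\al_{h\be_j}(D_j(a))$ and $\al_{h\be_j}(D_j^2(a))$, so Taylor's formula with integral remainder should yield an identity of the form
\[\big(\al_{h\be_j}(a)-a-h\,D_j(a)\big)\bc{x}^m=-\int_0^h(h-t)\,\al_{t\be_j}\!\big(D_j^2(a)\big)\,\bc{x}^m\,dt\]
in $\R_\theta$ (with the constants normalized consistently with (i)), the integral being a norm-convergent Bochner integral. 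Observe that the hypotheses $D^\al(a)\bc{x}^m\in\R_\theta$ for $|\al|\le2$, together with Lemma \ref{bi}, guarantee that $a\bc{x}^m$, $D_j(a)\bc{x}^m$, $D_j^2(a)\bc{x}^m$ and each $\al_{t\be_j}(a)\bc{x}^m$ are bounded operators, so every term is a genuine element of $\R_\theta$, and $t\mapsto\al_{t\be_j}(D_j^2(a))\bc{x}^m$ is norm-continuous and (by the estimate below) uniformly bounded on $[0,1]$.

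The step I expect to be the real obstacle is making that identity rigorous, since $a$ is merely affiliated to $\R_\theta$. I would argue weakly: test both sides against the functionals $\tau_\theta(\la_\theta(g)^*(\,\cdot\,)\la_\theta(f))$, $\la_\theta(f),\la_\theta(g)\in\S_\theta$. Using $\bc{x}^m\la_\theta(f)\in\S_\theta$ (Proposition \ref{indu}(ii)) and $\tau_\theta$-trace duality, the matrix element of $h\mapsto\al_{h\be_j}(a)\bc{x}^m$ reduces to a pairing of $a\in\S'(\R^d)$ with the $\S_\theta$-valued curve $h\mapsto\al_{-h\be_j}\la_\theta(\chi)$, which is smooth by (i); the scalar Taylor formula for that curve, combined with $D_j\al_\by=\al_\by D_j$, gives the matrix-element version of the identity, and density of $\S_\theta$ in $L_2(\R_\theta)$ upgrades it to the operator identity since all operators involved are bounded. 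Finally, for the remainder estimate I would use $\al_{t\be_j}(\bc{x}^m)=\bc{x+t\be_j}^m$ (Lemma \ref{bi}(i)) to write $\al_{t\be_j}(D_j^2(a))\bc{x}^m=\al_{t\be_j}\!\big(D_j^2(a)\bc{x}^m\big)\cdot\bc{x+t\be_j}^{-m}\bc{x}^m$; the first factor has norm $\norm{D_j^2(a)\bc{x}^m}{\infty}$ because $\al_{t\be_j}$ is an isometric $*$-isomorphism, and the second has norm at most $c_{m,n}\bc{t\be_j}^{2n}\le2^nc_{m,n}$ for $|t|\le1$ by Lemma \ref{bi}(ii) (choosing $n$ with $|m|\le2n$ and treating $m\ge0$, $m<0$ separately). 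Hence the remainder has norm $\le C|h|^2/2$, and dividing by $|h|$ and letting $h\to0$ finishes (ii).
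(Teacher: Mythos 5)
Your proposal is correct and establishes the same result, but it is organized a little differently from the paper's argument. In both parts the paper uses only \emph{first-order} Taylor expansion with integral remainder: for (i) it writes $\al_\by(f)-f=\sum_k\by_k\int_0^1\al_{t\by}(iD_kf)\,dt$, subtracts $h\la_\theta(iD_jf)$, and lets the remaining integrand $\al_{th\be_j}\la_\theta(iD_jf)-\la_\theta(iD_jf)$ tend to zero using continuity of $\by\mapsto\al_\by(\la_\theta(D_jf))$ in $\S_\theta$; for (ii) it proves the analogous first-order identity weakly, dominates $\norm{\al_\by(D_ja)\bc{x}^m}{}$ via Lemma~\ref{bi} exactly as you do, and then derives the result from norm continuity of $\by\mapsto\al_\by(D_ja)\bc{x}^m$ (whose proof is another application of the same first-order Taylor formula to $D_ja$, which is where the second derivatives enter). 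You instead expand once more to a \emph{second-order} Taylor formula with remainder $\int_0^h(h-t)\al_{t\be_j}(D_j^2(a))\bc{x}^m\,dt$ and bound it directly by $C|h|^2$ using the isometry of $\al_{t\be_j}$ and Lemma~\ref{bi}(ii). The hypotheses consumed are the same ($D^\al(a)\bc{x}^m$ bounded for $|\al|\le2$), but your route yields an explicit $O(|h|)$ rate rather than mere convergence, and your weak-duality justification of the operator Taylor identity (testing against $\tau_\theta(\la_\theta(g)^*(\cdot)\la_\theta(f))$ and using $\bc{x}^m\S_\theta\subset\S_\theta$) makes rigorous what the paper dispatches with the phrase ``holds weakly.'' One small point to keep straight when you normalize constants: with the paper's convention $D_j=-i\partial_j$ and $\al_\by(f)=f(\cdot+\by)$, the derivative of $h\mapsto\al_{h\be_j}(a)$ at $0$ is $iD_ja$, so the subtracted term should be $h\,iD_j(a)$ (this factor of $i$ appears in the paper's proof but is suppressed in the displayed statement of (ii)).
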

\begin{proof}For a Schwartz function $f\in \S(\R^d)$, we have that
\[f(\bx+\by)-f(\bx)=\sum_j\int_{0}^1 \by_j (\partial_jf)(\bx+t\by)dt\pl.\]
In terms of the function $f$, we have
\[ \al_\by(f)-f=\sum_j\int_{0}^1 \by_j \al_{t\by}(iD_jf)dt\pl.\]
Since $\{\al_{t\by}(iD_jf)\pl |\pl 0\le t\le 1\}$ is uniformly bounded for every semi-norm of $\S(\R^d)$, we have $\by\to \al_\by(f)$ is continuous in $\S(\R^d)$. Because $\S_\theta$ and $\S(\R^d)$ have equivalent semi-norms, we have $\by\mapsto \al_\by(\la_\theta(f))=\la_\theta(\al_\by f)$ is also continuous.
\begin{align*}\frac{1}{h}\Big(\al_{h\be_j}(\la_\theta(f))-\la_\theta(f)-h\la_\theta(iD_jf)\Big)&=\int_{0}^1   \al_{t\bh_j}\la_\theta(iD_jf) -\la_\theta(iD_jf)dt\\&=\int_{0}^1  \Big(\al_{th\be_j}\la_\theta(iD_jf)-\la_\theta(iD_jf)\Big)dt \end{align*}
which goes to $0$ in $\S_\theta$ for $h\to 0$ because of the continuity of $\by\to\al_{\by}(\la_\theta(D_jf))$. For ii), we have the integral
\begin{align}\label{in1} \al_{\by}(a)\bc{x}^m-a\bc{x}^m=\sum_j \by_j\int_{0}^1 \al_{t\by}(iD_ja)\bc{x}^m dt\pl.\end{align}
which holds weakly. Suppose $a\bc{x}^m$ and $D_j(a)\bc{x}^m$ are bounded. Then
\[\norm{\al_{\by}(D_ja)\bc{x}^m}{}\le \norm{\al_{\by}(D_ja\bc{x}^m)}{}\norm{\bc{x+\by}^{-m}\bc{x}^m}{}\le \norm{D_ja\bc{x}^m}{}\bc{\by}^{2n}\pl. \]
for some $2n>|m|$. So $\al_{\by}(D_ja)\bc{x}^m$ is uniformly bounded for small $\by$, which by the integral \eqref{in1} implies $\by\mapsto \al_\by(a)\bc{x}^m$ is continuous in norm. Now if $ D^\al(a)\bc{x}^{m}$ bounded for all $|\al|\le 2$, then
\begin{align*}
\norm{\frac{1}{h}\Big(\al_{h\be_j}(a)-a-hD_j(a)\Big)\bc{x}^m}{\infty}\le \int_{0}^1  \norm{\big(\al_{th\be_j}(iD_ja)-iD_ja\big)\bc{x}^m}{\infty}dt
\end{align*}
This goes $0$ in norm as $h\to 0$ because $\by \to \al_\by(D_ja)\bc{x}^m$ is continuous.
\end{proof}

The next proposition gives an approximation of identity for $L_p(\R_\theta)$.
\begin{prop}\label{ai}There exists a sequence $f_n\in \S(\R^d)$ independent of $\theta$ such that i) for any $a\in \E_\theta$ and $p=\infty$; and ii) for any $a\in L_p(\R_\theta)$ and $1\le p< \infty$,
\[ \lim_{n\to \infty}\norm{a\la_\theta(f_n)-a}{p}=\lim_{n\to \infty}\norm{\la_\theta(f_n)a-a}{p}=0\pl.\]
\end{prop}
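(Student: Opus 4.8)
\emph{Proof idea.} The plan is to produce a single, $\theta$-independent sequence and to verify the convergence first on the dense $*$-subalgebra $\S_\theta$. Fix a real-valued $f\in\S(\R^d)$ with $f(0)=1$ (for instance the Gaussian $f(\bx)=e^{-|\bx|^2}$) and set $f_n(\bx)=f(\bx/n)$; then $\widehat{f_n}(\bxi)=n^d\hat f(n\bxi)$, so $\norm{\widehat{f_n}}{1}=\norm{\hat f}{1}$ for every $n$. From the Weyl quantization formula $\la_\theta(h)=(2\pi)^{-d}\int_{\R^d}\hat h(\bxi)\la_\theta(\bxi)\,d\bxi$ and $\norm{\la_\theta(\bxi)}{\infty}=1$ one gets $\norm{\la_\theta(h)}{\infty}\le(2\pi)^{-d}\norm{\hat h}{1}$ for every $h\in\S(\R^d)$; in particular $C:=\sup_n\norm{\la_\theta(f_n)}{\infty}\le(2\pi)^{-d}\norm{\hat f}{1}<\infty$. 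By the module property of the spaces $L_p(\R_\theta)$, left and right multiplication by $\la_\theta(f_n)$ are then bounded on every $L_p(\R_\theta)$ and on $\E_\theta$ with norm $\le C$, uniformly in $n$.

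The core step is to show that $f_n\star_\theta g\to g$ in the Fréchet topology of $\S(\R^d)$ for each $g\in\S(\R^d)$. I would argue on the Fourier side: combining $\la_\theta(f_n)\la_\theta(g)=\la_\theta(f_n\star_\theta g)$, the quantization formula, the relation $\la_\theta(\bxi)\la_\theta(\bet)=e^{\frac i2\bxi\cdot\theta\bet}\la_\theta(\bxi+\bet)$, the substitution $\bxi+\bet\mapsto\bz$ (using $\bxi\cdot\theta\bxi=0$), and finally $\bxi\mapsto\bv/n$, one obtains
\[
\widehat{f_n\star_\theta g}(\bz)=\frac1{(2\pi)^d}\int_{\R^d}\hat f(\bv)\,\hat g\!\Big(\bz-\tfrac{\bv}{n}\Big)\,e^{\frac i{2n}\bv\cdot\theta\bz}\,d\bv .
\]
For fixed $\bz$ and any multi-index $\al$, differentiating under the integral sign and applying dominated convergence gives $\partial^\al\widehat{f_n\star_\theta g}(\bz)\to f(0)\,\partial^\al\hat g(\bz)=\partial^\al\hat g(\bz)$, since the Leibniz terms in which a derivative falls on the phase carry a factor $O(|\bv|^{|\al|}n^{-|\al|})\to 0$. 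Uniform control of every seminorm follows from Peetre's inequality $(1+|\bz-\bv/n|)^{-M'}\le(1+|\bz|)^{-M'}(1+|\bv|)^{M'}$ (valid for $n\ge 1$) together with the rapid decay of $\hat f$, which bounds $\sup_{\bz}(1+|\bz|)^{M}|\partial^\al\widehat{f_n\star_\theta g}(\bz)|$ independently of $n$. Boundedness in every seminorm together with pointwise convergence of all derivatives forces convergence in $\S(\R^d)$, and applying the inverse Fourier transform (continuous on $\S(\R^d)$) yields $f_n\star_\theta g\to g$. As $f_n$ is real, $\overline{g\star_\theta f_n}=\overline{f_n}\star_\theta\bar g=f_n\star_\theta\bar g\to\bar g$, hence also $g\star_\theta f_n\to g$ in $\S(\R^d)$. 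This is the step I expect to be the main obstacle: the oscillatory factor $e^{\frac i{2n}\bv\cdot\theta\bz}$ couples the two variables, so the uniform-in-$n$ seminorm bounds genuinely require the Peetre-type estimate rather than a naive one.

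Next I would record that the quantization map controls each $L_p(\R_\theta)$-norm by Schwartz seminorms. For $p=\infty$, $\norm{\la_\theta(h)}{\infty}\le(2\pi)^{-d}\norm{\hat h}{1}\le(2\pi)^{-d}\norm{\bc{\bxi}^{-(d+1)}}{L_1(\R^d)}\,\norm{\bc{\bxi}^{d+1}\hat h}{\infty}$, which is bounded by a fixed seminorm of $h$. For $1\le p<\infty$, pick an even integer $2m>d/p$; since $\bc{x}^{-1}\in L_{d,\infty}(\R_\theta)$ by Proposition \ref{interg}(i), we have $\bc{x}^{-2m}\in L_p(\R_\theta)$, and
\[
\norm{\la_\theta(h)}{p}=\norm{\big(\la_\theta(h)\bc{x}^{2m}\big)\bc{x}^{-2m}}{p}\le\norm{\la_\theta(h)\bc{x}^{2m}}{\infty}\,\norm{\bc{x}^{-2m}}{p}\lesssim_{p}\ \text{(finitely many seminorms of }h\text{)},
\]
where the last bound is the bicontinuity of the map $f\mapsto\la_\theta(f)$ from Proposition \ref{indu}(ii). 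Applying both estimates to $h=f_n\star_\theta g-g$, which tends to $0$ in $\S(\R^d)$, we conclude that for every $g\in\S(\R^d)$ and every $1\le p\le\infty$,
\[
\norm{\la_\theta(f_n)\la_\theta(g)-\la_\theta(g)}{p}=\norm{\la_\theta(f_n\star_\theta g-g)}{p}\to 0,\qquad\norm{\la_\theta(g)\la_\theta(f_n)-\la_\theta(g)}{p}=\norm{\la_\theta(g\star_\theta f_n-g)}{p}\to 0 .
\]

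Finally I would conclude by density. For $p=\infty$, $\S_\theta$ is dense in $\E_\theta$; for $1\le p<\infty$, $\S_\theta$ is dense in $L_p(\R_\theta)$ (the cases $p=1,2$ are recorded above, and the general case follows by the same factorization argument, cf.\ \cite{GJP17}). Given $a$ in $\E_\theta$ (resp.\ in $L_p(\R_\theta)$) and $\eps>0$, choose $\la_\theta(g)\in\S_\theta$ with $\norm{a-\la_\theta(g)}{p}<\eps$; then
\[
\norm{\la_\theta(f_n)a-a}{p}\le\norm{\la_\theta(f_n)\big(a-\la_\theta(g)\big)}{p}+\norm{\la_\theta(f_n)\la_\theta(g)-\la_\theta(g)}{p}+\norm{\la_\theta(g)-a}{p}\le(C+1)\eps+\norm{\la_\theta(f_n)\la_\theta(g)-\la_\theta(g)}{p},
\]
which is $<(C+2)\eps$ once $n$ is large; the same estimate with $\la_\theta(f_n)$ on the right gives $\norm{a\la_\theta(f_n)-a}{p}\to 0$. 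This proves the proposition.
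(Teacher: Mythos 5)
Your proof is correct, and it follows a genuinely different route than the paper's. The paper chooses $f_n=\check\phi_n$ where $\phi_n(\bxi)=n^d\phi(n\bxi)$ with $\phi$ compactly supported, and establishes the core convergence only in the $L_\infty$-norm by estimating $\norm{\hat g-\hat g_n}{1}$ directly: on the shrinking support of $\phi_n$ the twisting phase $e^{\frac{i}{2}\bxi\cdot\theta(\bet-\bxi)}$ is uniformly close to $1$, so a simple $\eps/3$ split suffices. It then bootstraps to $L_1$ using $\bc{x}^{d+1}\la_\theta(g)\in\S_\theta$ plus H\"older with $\bc{x}^{-d-1}\in L_1$, and covers $1<p<\infty$ by interpolation before running the density step. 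You instead take dilated Gaussians $f_n(\bx)=f(\bx/n)$, prove the substantially stronger claim that $f_n\star_\theta g\to g$ in the Fr\'echet topology of $\S(\R^d)$, and then read off $L_p$-convergence for all $p$ in one stroke from the bicontinuity of $\la_\theta$ (Proposition \ref{indu}(ii)) together with $\bc{x}^{-2m}\in L_p$ for $2m>d/p$. Because your $\hat f$ is not compactly supported, you cannot use the paper's ``phase close to $1$ on the support'' shortcut and instead need the Peetre inequality and dominated convergence to control all seminorms uniformly in $n$. The trade-off is clear: the paper does less work at the level of the Schwartz class (only an $L^1(\hat{\cdot})$ estimate) but has to repeat the bootstrap for each $p$ and then interpolate, while you do more work once and the $L_p$ cases fall out uniformly. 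Both choices of $f_n$ are $\theta$-independent, and both give the uniform bound $\sup_n\norm{\la_\theta(f_n)}{\infty}<\infty$ needed for the $\eps/3$ density argument, so the final conclusion is the same. The only step you should spell out a little more carefully if writing this up is the passage from ``uniform boundedness in all seminorms plus pointwise convergence of all derivatives'' to Fr\'echet convergence of $\widehat{f_n\star_\theta g}$; this is a standard but not trivial compactness-type argument (split $|\bz|\le R$ versus $|\bz|>R$, use the tail decay from the higher-order seminorm bound on the far region and equicontinuity from the bound on $\partial^{\al+\be_j}$ on the compact region), and it deserves a sentence.
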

\begin{proof}Let $\phi\in \S(\R^d)$ be a smooth positive function such that $\phi$ supported on $|\bx|\le 1$ and $\int \phi=(2\pi)^d$. Take $\phi_n=n^d\phi(n\bx)$ and the inverse Fourier transform $\check{\phi}_n$. We first show that for any $\la_\theta(g)\in \S_\theta$, $\norm{\la_\theta(g)\la_\theta(\check{\phi}_n)-\la_\theta(g)}{\infty} \to 0$.
Indeed
\begin{align*} \la_\theta(g)\la_\theta(\check{\phi}_n)=&\Big(\frac{1}{2\pi^d}\int_{\R^d} \hat{g}(\bxi)\la_\theta(\bxi)d\bxi\Big) \Big(\frac{1}{2\pi^d}\int_{\R^d} \phi_n(\bet)\la_\theta(\bet)d\bet\Big)\\
=&\frac{1}{2\pi^{2d}}\int_{\R^d} \int_{\R^d}  \hat{g}(\bxi)\phi_n(\bet)e^{\frac{i}{2}\bxi\theta\bet}\la_\theta(\bxi+\bet)d\bxi d\bet\\
=&\frac{1}{2\pi^{2d}}\int_{\R^d} \Big(\int_{\R^d}  \hat{g}(\bxi)\phi_n(\bet-\bxi)e^{\frac{i}{2}\bxi\theta(\bet-\bxi)}d\bxi\Big) \la_\theta(\bet) d\bet:=\la_\theta(g_n)
\end{align*}
where $\displaystyle \hat{g}_n=\frac{1}{2\pi^{d}}\int_{\R^d}  \hat{g}(\bxi)\phi_n(\bet-\bxi)e^{\frac{i}{2}\bxi\theta(\bet-\bxi)}d\bxi$. Given $\epsilon>0$, we can find $R$ and $n$ large such that $\displaystyle\int_{|\bxi|<R} |\hat{g}(\bxi)|<\frac{\epsilon}{3}$ and $\displaystyle|1-e^{\frac{i}{2}\bxi\theta\bet}|< \frac{\epsilon}{3||\hat{g}||_{1}} $ for all $|\bxi|<R$. Then,
\begin{align*}\norm{\hat{g}-\hat{g}_n}{1}=& \frac{1}{2\pi^{d}}\int_{\R^d} |\hat{g}(\bet)-\int_{\R^d}  \hat{g}(\bxi)\phi_n(\bet-\bxi)e^{\frac{i}{2}\bxi\theta(\bet-\bxi)}d\bxi|d\bet\\
\le &\frac{1}{2\pi^{d}}\int_{\R^d} \int_{\R^d}  | \hat{g}(\bxi)\phi_n(\bet-\bxi)(1-e^{\frac{i}{2}\bxi\theta(\bet-\bxi)})|d\bxi d\bet\\
 \le & \frac{1}{2\pi^{d}}\int_{|\bxi|>R}  \int_{\R^d}  | \hat{g}(\bxi)\phi_n(\bet-\bxi)(1-e^{\frac{i}{2}\bxi\theta(\bet-\bxi)})| d\bet d\bxi \\&+ \frac{1}{2\pi^{d}}\int_{|\bxi|<R}  \int_{\R^d}  | \hat{g}(\bxi)\phi_n(\bet-\bxi)(1-e^{\frac{i}{2}\bxi\theta(\bet-\bxi)})| d\bet d\bxi
\\ \le & \frac{1}{2\pi^{d}}\int_{|\bxi|>R}  \int_{\R^d}  2| \hat{g}(\bxi)|\phi_n(\bet-\bxi) d\bet d\bxi+ \frac{1}{2\pi^{d}}\int_{|\bxi|<R}  \int_{\R^d} \epsilon| \hat{g}(\bxi)|\phi_n(\bet-\bxi) d\bet d\bxi
\\ \le & \frac{2\epsilon}{3}+\frac{\epsilon}{3}=\epsilon
\end{align*}
Hence $\norm{\la_\theta(g_n)-\la_\theta(g)}{\infty}\le \norm{\hat{g}_n-\hat{g}}{1}\to 0$. For $1\le p<\infty$, we apply the argument for $\bc{x}^{d}\la_\theta(g)$. Note that $\bc{x}^{d+1}\la_\theta(g)\in \S_\theta$ by Proposition \ref{indu}. Thus we have
\[\norm{\la_\theta(g)\la_\theta(f_n)-\la_\theta(g)}{p}\le\norm{\bc{x}^{d+1}(\la_\theta(g)\la_\theta(f_n)-\la_\theta(g))}{\infty} \norm{\bc{x}^{-d-1}}{p}\to 0\pl.\] Given $a\in L_1(\R_\theta)$, we choose $g\in \S_\theta$ so that $\norm{\la_\theta(g)-a}{1}\le \epsilon/3$. Note that for all $n$, \[\norm{\la_\theta(\check{\phi}_n)}{\infty}\le \norm{\phi_n}{1}=1\pl.\]
Then for $n$ large enough,
\begin{align}\norm{a-a\la_\theta(\check{\phi}_n)}{1}\le& \norm{a-\la_\theta(g)}{1}+\norm{\la_\theta(g)-\la_\theta(g)\la_\theta(\check{\phi}_n)}{1}+\norm{\la_\theta(g)\la_\theta(\check{\phi}_n)-a\la_\theta(\check{\phi}_n)}{1}
\nonumber\\\le & \norm{a-\la_\theta(g)}{1}+\norm{\la_\theta(g)-\la_\theta(g)\la_\theta(\check{\phi}_n)}{1}+\norm{\la_\theta(g)-a}{1}\norm{\la_\theta(\check{\phi}_n)}{\infty}
\nonumber\\\le & \frac{\epsilon}{3}+\frac{\epsilon}{3}+\frac{\epsilon}{3}=\epsilon\label{3e}
\end{align}
The argument for $\infty$-norm and $a\in \E_\theta$ is similar. For $1<p<\infty$, we use interpolation inequality that
\begin{align*}
\norm{a-a\la_\theta(\check{\phi}_n)}{p}\le \norm{a-a\la_\theta(\check{\phi}_n)}{1}^{\frac{1}{p}}\norm{a-a\la_\theta(\check{\phi}_n)}{\infty}^{1-\frac{1}{p}}\to 0\pl.
\end{align*}
for any $a\in L_1(\R_\theta)\cap L_\infty(\R_\theta)$. Since $L_1\cap L_\infty$ is dense in $L_p$, the argument  for general $a\in L_p$ is similar to \eqref{3e}.
\end{proof}

\section{Pseudo-differential Calculus for Non-commutative Derivatives}
\label{phido}
On $\R^d$ the CCR relation for covariant derivatives corresponds to a constant curvature form. Consider connection
\begin{align}\textstyle \nabla:\mathbb{C}^\infty(\R^d)\to \Omega^1(\R^d)\pl, \pl \nabla f=df+\frac{i}{2}\sum_{j,k}\theta_{j,k}'\bx_jd\bx_k \label{connection}\end{align}
with curvature form $d\omega=\frac{i}{2}\sum_{j,k}\theta_{jk}d\bx_j\wedge d\bx_k\pl$. The self-adjoint covariant derivatives $\nabla_{j}=\nabla_{-\frac{\partial}{\partial_j}}$ satisfy that \[\textstyle\nabla_{j}f= -i\frac{\partial}{\partial \bx_j}(f)- \sum_{k}\frac{1}{2}\theta_{jk}'\bx_k \pl,\pl  [\nabla_{j},\nabla_{k}]= -i\theta_{jk}'\pl.\]
The physical meaning behind this is a constant magnetic field perpendicular to the space $\R^d$. 
In this section, we develop the symbol calculus of $\Psi$DOs of the above structure for a noncommutative $\R_\theta$. Let $\R_\theta$ be the quantum Euclidean space generated by $[x_j,x_k]=-i\theta_{jk}$. 
We equipped $\R_\theta$ with noncommuting covariant derivatives $\xi_j$ satisfying
\begin{align} [\xi_j,x_k]=-i\delta_{jk}, [ \xi_j,\xi_k]=-i\theta_{jk}'\pl.  \label{relation} \end{align}
where $\delta$ is the Kronecker delta notation. For $\theta'=0$,
\cite{GJP17} establish the $\Psi$DOs as operators on $L_2(\R_\theta)$ via $\xi_j=D_j$. For general $\theta$ and $\theta'$, $x_j$'s and $\xi_k$'s satisfying above commutation relations together generate a $2d$-dimensional quantum Euclidean space $\R_\Theta$ with parameter $\Theta=\left[\begin{array}{cc}\theta & -I\\ I &\theta' \end{array}\right]$. 
In general $x_j$'s and $\xi_k$'s do not admit a canonical representation on $L_2(\R_\theta)$ because $\Theta$ can be singular. Hence we consider the $\Psi$DOs as operators (densely) defined on $L_2(\R_\Theta)\cong L_2(\R_\theta)\ten_2 L_2(\R_{\theta'})$ affiliated to $\R_\Theta$ . Here $\ten_2$ is the Hilbert space tensor product.




\subsection{Abstract symbols }
In the classical case for $\R^d$, a symbol of order $m$ is a smooth bi-variable function $a\in C^\infty(\R^d\times \R^d)$ such that the
\begin{align}|D_x^\al D_\xi^\beta (a)(\bx,\bxi)|\le C_{\al,\beta}(1+|\bxi|^2)^{(m-|\beta|)/2}\pl.\label{om}\end{align}
In our setting, the symbols are operators affiliated to the von Neumann algebra tensor product $\R_\theta\overline{\ten}\R_{\theta'}$. Let us denote $\R_{\theta,\theta'}:=\R_\theta\overline{\ten}\R_{\theta'}$, $\M_{\theta,\theta'}$ for the multiplier algebra of $\R_{\theta,\theta'}$ and $\S_{\theta,\theta'}$ for the Schwartz class. $\R_{\theta,\theta'}$ is a $2d$-dimensional quantum Euclidean space with parameter matrix $\left[\begin{array}{cc}\theta & 0\\ 0 &\theta' \end{array}\right]$, in which $x$ and $\xi$ variables are mutually commuting, i.e. $[x_j,\xi_k]=0$ for all $j,k$. We specify the canonical partial derivatives for $x$ variables by $D_{x_1}, \cdots, D_{x_d}$ and for $\xi$ variables by $D_{\xi_1}, \cdots, D_{\xi_d}$. That is, for $a\in \M_{\theta,\theta'}$
\[D_{x_j}(a)=[D_{j}\ten 1,a]\pl , \pl D_{\xi_j}(a)=[1\ten D_j,a]\pl.\]
We index the transference action by the position: $\al_\by\ten \al_{\bet}(a)=\al^1_{\bet}\al^2_\by(a)$.
We use the standard multi-derivative notation that for $\al=(\al_1,\al_2,\cdots, \al_d)\in \mathbb{N}^d$ , \[D_x^\al(a)=D_{x_1}^{\al_1}D_{x_2}^{\al_2}\cdots D_{x_n}^{\al_d}\pl, \pl D_\xi^\al(a)=D_{\xi_1}^{\al_1}D_{\xi_2}^{\al_2}\cdots D_{\xi_d}^{\al_d} (a) \pl.\]
Write $\bc{\xi}:=(1+\sum_{j}\xi_j^2)^{\frac{1}{2}}$ where $\xi_j$'s are the non-commuting generators for $\R_{\theta'}$. We start with the abstract reformulation of the definition \eqref{om}.
\begin{defi}\label{abstract}For a real number $m$, define $\Sigma^m$ as the set of all operators $a \in \M_{\theta,\theta'}$ such that for all $\al,\beta$,
\[D_x^\al D_\xi^\beta (a)\bc{\xi}^{|\beta|-r}\]
extends to be a bounded operator in $\R_{\theta,\theta'}$. We call $\Sigma^m$ the space of symbols of order $m$ and write $\Sigma^{-\infty}=\cap_{m}\Sigma^m, \Sigma^{\infty}=\cup_{m}\Sigma^m$.
\end{defi}
Apriori it is not clear that the above definition satisfy the properties that $\Sigma^m\cdot \Sigma^n=\Sigma^{m+n}$ and $(\Sigma^m)^*=\Sigma^m$. To resolve it, we use the asymptotic degree discussed in Section \ref{asymptotic}.
\begin{defi}Given two real numbers $s$ and $r$,
we say an operator $a \in \M_{\theta, \theta'}$ is of bi-degree $(s,r)$ if for all $s', r'\in \R$
\[\bc{x}^{s'} \bc{\xi}^{r'} a \bc{x}^{-s'-s} \bc{\xi}^{-r'-r}\]
extends to a bounded element in $\R_{\theta,\theta'}$. We denote $O^{s,r}$ the set of all elements of bi-degree $(s,r)$ and write $O^{-\infty,r}= \cap_{s\in \R} O^{s,r}, O^{-\infty,-\infty}= \cap_{s,r\in \R} O^{s,r}$.
\end{defi}
Note that in $\R_{\theta,\theta'}$, $\bc{x}$ and $\bc{\xi}$ commute so the order of the product $ \bc{x}^{s} \bc{\xi}^{r}$ does not matter. The ``bi-degree'' gives a characterization of abstract symbols.
\begin{theorem}
\label{char}Let $m$ be a real number and $a\in \M_{\theta,\theta'}$. Then $a \in \Sigma^m$ if and only if for all $\al, \beta$, \[D^\al_xD^\beta_\xi(a)\in O^{0,m-|\beta|}\pl. \]
\end{theorem}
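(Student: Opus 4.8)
The plan is to prove the two implications separately, working with the definition of $\Sigma^m$ in terms of boundedness of $D_x^\al D_\xi^\beta(a)\bc{\xi}^{|\beta|-m}$ and the definition of $O^{0,m-|\beta|}$ in terms of boundedness of $\bc{x}^{s'}\bc{\xi}^{r'}\, b\, \bc{x}^{-s'}\bc{\xi}^{-r'-(m-|\beta|)}$ for all $s',r'$. The key observation is that $\R_{\theta,\theta'}=\R_\theta\overline\ten\R_{\theta'}$ is itself a $2d$-dimensional quantum Euclidean space in which the $x$-variables and $\xi$-variables commute, so that $\bc{x}$ and $\bc{\xi}$ commute and the one-variable results of Section \ref{sectionasy} (Theorem \ref{order} and Proposition \ref{indu}) apply separately to the two tensor legs. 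In particular $D_x^\gamma$ acts only on the first leg and $D_\xi^\gamma$ only on the second, and conjugating by $\bc{x}^{s'}$ commutes past everything built from $\xi$, and vice versa.

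\textbf{The ``only if'' direction.} Assume $a\in\Sigma^m$, i.e. $D_x^\al D_\xi^\beta(a)\bc{\xi}^{|\beta|-m}$ is bounded for all $\al,\beta$. Fix $\al,\beta$ and put $b=D_x^\al D_\xi^\beta(a)$; I must show $b\in O^{0,m-|\beta|}$, i.e. $\bc{x}^{s'}\bc{\xi}^{r'}\, b\, \bc{x}^{-s'}\bc{\xi}^{-r'-(m-|\beta|)}$ is bounded for all $s',r'$. First I handle the $\bc{x}$-conjugation: since $\bc{x}$ lives in the first leg and commutes with $\bc{\xi}$, and since $D_\xi$ commutes with $\bc{x}$-conjugation, I reduce to showing $\bc{x}^{s'}\, c\, \bc{x}^{-s'}$ is bounded for all $s'$, where $c$ ranges over $D_x^{\al'}D_\xi^\beta(a)\bc{\xi}^{|\beta|-m}$ with $|\al'|\le|\al|$ (the extra derivatives arising from the Leibniz expansion of $D_x^\al$ applied after commuting). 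These are bounded by hypothesis for \emph{every} $\al'$, so by part i) of Proposition \ref{indu} applied to the first tensor leg, $c\in O^0$ in the $x$-variables, which is exactly boundedness of $\bc{x}^{s'}c\bc{x}^{-s'}$. For the $\bc{\xi}$-conjugation I argue similarly in the second leg: I need $\bc{\xi}^{r'}D_x^\al D_\xi^\beta(a)\bc{\xi}^{-r'-(m-|\beta|)}$ bounded, and writing $\bc{\xi}^{-r'-(m-|\beta|)}=\bc{\xi}^{-r'}\bc{\xi}^{|\beta|-m}$ and commuting $\bc{\xi}^{-r'}$ leftward through the bounded operator $D_x^\al D_\xi^\beta(a)\bc{\xi}^{|\beta|-m}$ produces a finite sum of terms of the form (bounded)$\cdot\bc{\xi}^{r'}[\bc{\xi}^{-r'},\,\cdot\,]$-type corrections, each controlled by Theorem \ref{order} (commutators $[\xi^\gamma,\bc{\xi}^r]\in O^{r+|\gamma|-2}$ and $D_\xi^\gamma(\bc{\xi}^r)\in O^{r-|\gamma|}$ in the second leg), together with the hypothesis applied to all higher multi-indices $\beta'$. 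Assembling the two directions of conjugation (they commute) gives $b\in O^{0,m-|\beta|}$.

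\textbf{The ``if'' direction.} Conversely assume $D_x^\al D_\xi^\beta(a)\in O^{0,m-|\beta|}$ for all $\al,\beta$. Taking $s'=r'=0$ in the definition of $O^{0,m-|\beta|}$ immediately gives that $D_x^\al D_\xi^\beta(a)\bc{\xi}^{-(m-|\beta|)}=D_x^\al D_\xi^\beta(a)\bc{\xi}^{|\beta|-m}$ is bounded for all $\al,\beta$, which is precisely the defining condition for $a\in\Sigma^m$. So this direction is essentially immediate.

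\textbf{Main obstacle.} The routine bookkeeping is in the ``only if'' direction: keeping track of how commuting $\bc{x}^{s'}$ and $\bc{\xi}^{r'}$ (and their negative powers) past $D_x^\al D_\xi^\beta(a)$ generates Leibniz-type sums, and verifying each summand is covered either by the hypothesis (for some larger multi-index) or by the degree estimates of Theorem \ref{order} and Proposition \ref{indu}. The conceptual heart is simply that Proposition \ref{indu}(i)---``if $D^\al(a)\bc{x}^{-s}$ is bounded for all $\al$ then $a\in O^s$''---upgrades leg-wise boundedness of all derivatives to genuine conjugation bounds, and that this lemma applies independently to each tensor factor because $\bc{x}$ and $\bc{\xi}$ commute. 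I expect no genuine difficulty beyond organizing these two applications cleanly and noting that derivatives in one set of variables do not disturb conjugations in the other.
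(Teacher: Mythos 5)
Your overall plan is the same as the paper's: the ``if'' direction is immediate by taking $s'=r'=0$, and the ``only if'' direction should be reduced to applying Proposition \ref{indu}(i) in each tensor leg separately, using that $\bc{x}$ and $\bc{\xi}$ commute. You correctly identify this in your ``Main obstacle'' paragraph and carry it out in the $x$-leg.

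However, the argument you actually write for the $\bc{\xi}$-conjugation does not work. You claim that pushing $\bc{\xi}^{-r'}$ leftward past the bounded operator $D_x^\al D_\xi^\beta(a)\bc{\xi}^{|\beta|-m}$ produces a ``finite sum'' of commutator corrections controlled by Theorem \ref{order}. For a general real exponent $r'$ and a general element of $\M_{\theta,\theta'}$, the commutator $[\bc{\xi}^{-r'},\cdot]$ is \emph{not} a finite Leibniz sum --- it is an operator integral (the $I_s(\cdot)$ expressions from Section \ref{asymptotic}), and there is no finite combinatorial expansion to control. This is precisely the content that Proposition \ref{indu}(i) packages, so attempting to redo it ad hoc here is a genuine gap in the written proof. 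The repair is simply to apply Proposition \ref{indu}(i) to the $\xi$-leg in the same way you applied it to the $x$-leg: for every $\gamma$, $D_\xi^\gamma\big(D_x^\al D_\xi^\beta(a)\big)\bc{\xi}^{-(m-|\beta|)} = D_x^\al D_\xi^{\beta+\gamma}(a)\bc{\xi}^{|\beta|-m}$ is bounded, since the $\Sigma^m$ hypothesis gives boundedness of $D_x^\al D_\xi^{\beta+\gamma}(a)\bc{\xi}^{|\beta|+|\gamma|-m}$ and $\bc{\xi}^{-|\gamma|}$ is bounded; Proposition \ref{indu}(i) with $s=m-|\beta|$ then yields that $D_x^\al D_\xi^\beta(a)$ has $\xi$-degree $m-|\beta|$, and commutativity of $\bc{x}$ with $\bc{\xi}$ lets you combine this with the $x$-degree-$0$ statement to get $O^{0,m-|\beta|}$. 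Also a small slip in the $x$-leg step: the multi-indices $\al'$ that arise when invoking Proposition \ref{indu}(i) range over all $\al'\ge\al$ (componentwise), not $|\al'|\le|\al|$; no Leibniz expansion occurs because $\bc{\xi}$ sits in the other tensor factor and passes through $D_x^\al$. Neither slip is fatal since the $\Sigma^m$ hypothesis covers all multi-indices, but as written the $\bc{\xi}$-step is not a proof and should be replaced by the symmetric application of Proposition \ref{indu}(i).
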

\begin{proof} The sufficiency is clear by the definition. Let $a\in \Sigma^m$. It follows from the Lemma \ref{indu} that for all $\al, \beta$, $D^\al_xD^\beta_\xi(a)$ is of degree $0$ for $x$ and degree ${m-|\beta|}$ for $\xi$. Because $\bc{x}$ and $\bc{\xi}$ commute, we have $D^\al_xD^\beta_\xi(a)\in O^{0,m-|\beta|}$.
\end{proof}

\begin{prop}\label{taylor}
$\Sigma^m$ equipped with the seminorms $\norm{\cdot}{\al,\beta}:=\norm{D_x^\al D_\xi^\beta (\cdot)\bc{\xi}^{|\beta|-m}}{}$ is a Frechet spaces. In particular, for $a\in \Sigma^m$, $D_{x_j}(a)$ and $D_{\xi_j}(a)$ are the vector derivatives
\[ D_{x_j}(a)=i\lim_{h\to 0} \frac{1}{h}(\al^1_{h\be_j}(a)-a)\pl, D_{\xi_j}(a)=i\lim_{h\to 0} \frac{1}{h}(\al^2_{h\be_j}(a)-a)\pl, \]
where the limit converges in the $\Sigma^m$.
\end{prop}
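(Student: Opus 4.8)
Since the seminorms $\norm{\cdot}{\al,\beta}$ are indexed by the countable set of pairs of multi-indices, the topology on $\Sigma^m$ is metrizable and locally convex, so only completeness and the vector-derivative formulas require an argument. For completeness I would realize $\Sigma^m$ as a closed subspace of the Fr\'echet product $\prod_{\al,\beta}\R_{\theta,\theta'}$ through the map $a\mapsto\big(D_x^\al D_\xi^\beta(a)\bc{\xi}^{|\beta|-m}\big)_{\al,\beta}$, which is an isometry onto its image for the product topology. Given a Cauchy sequence $(a_n)$, let $b_{\al,\beta}\in\R_{\theta,\theta'}$ be the norm limit of $D_x^\al D_\xi^\beta(a_n)\bc{\xi}^{|\beta|-m}$ and set $a:=b_{0,0}\bc{\xi}^m$. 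Using the stability $\bc{\xi}^r\S_{\theta,\theta'}\subset\S_{\theta,\theta'}$, the characterization $\S_{\theta,\theta'}=\{b:D^\gamma(b)\in O^{-\infty,-\infty}\ \text{for all}\ \gamma\}$ (the $2d$-dimensional analogue of Proposition \ref{indu}), and the Leibniz rule, one checks that $a_n g\to ag$ in $\S_{\theta,\theta'}$ for every $g\in\S_{\theta,\theta'}$, and symmetrically on the right; hence $a\in\M_{\theta,\theta'}$ and $a_n\to a$ in trace duality against $\S_{\theta,\theta'}$.

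The crux is then the interchange of the limit with the weak derivations. Since $D_{x_j}$ and $D_{\xi_j}$ on $\M_{\theta,\theta'}$ are defined by $\lan D_{x_j}(b),\la(f)\ran=\lan b,D_{x_j}\la(f)\ran$ (and the analogous formula for $\xi$), the convergences $a_n\to a$ and $D_x^\al D_\xi^\beta(a_n)\bc{\xi}^{|\beta|-m}\to b_{\al,\beta}$ in trace duality force $D_x^\al D_\xi^\beta(a)\bc{\xi}^{|\beta|-m}=b_{\al,\beta}$ for all $\al,\beta$ (the $\bc{\xi}$-weight is carried across the pairing using $\bc{\xi}^r\S_{\theta,\theta'}\subset\S_{\theta,\theta'}$ once more). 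Hence $a\in\Sigma^m$ and $a_n\to a$ in $\Sigma^m$, so $\Sigma^m$ is Fr\'echet.

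For the vector-derivative statement I would reduce to Proposition \ref{strongd}. The transference actions commute with all $D_x^{\al'}$ and $D_\xi^{\beta'}$ (translations commute with differentiation), $\al^1$ commutes with $\bc{\xi}$ and $\al^2$ with $\bc{x}$ (they act on disjoint sets of generators), and $D_{x_j}$, $D_x^{\al'}$ likewise commute with $\bc{\xi}$. Fixing $a\in\Sigma^m$, multi-indices $\al',\beta'$, and $c:=D_x^{\al'}D_\xi^{\beta'}(a)\bc{\xi}^{|\beta'|-m}$, these commutations give
\[D_x^{\al'}D_\xi^{\beta'}\big(\al^1_{h\be_j}(a)-a\big)\bc{\xi}^{|\beta'|-m}=\al^1_{h\be_j}(c)-c\pl,\qquad D_x^{\al'}D_\xi^{\beta'}\big(D_{x_j}(a)\big)\bc{\xi}^{|\beta'|-m}=D_{x_j}(c)\pl.\]
Now $c$ and all of its $x$-derivatives $D_x^\gamma(c)=D_x^{\gamma+\al'}D_\xi^{\beta'}(a)\bc{\xi}^{|\beta'|-m}$ are bounded since $a\in\Sigma^m$, so Proposition \ref{strongd}(ii), applied in the $x$-variables with trivial weight (its proof goes through verbatim with the inert $\bc{\xi}$-factor present, because $\al^1$ commutes with $\bc{\xi}$), shows that $\tfrac1h(\al^1_{h\be_j}(c)-c)$ converges in operator norm to a scalar multiple of $D_{x_j}(c)$. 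By the displayed identities, $\tfrac1h(\al^1_{h\be_j}(a)-a)$ then converges in the seminorm $\norm{\cdot}{\al',\beta'}$ for each $\al',\beta'$, hence in $\Sigma^m$, with limit the corresponding multiple of $D_{x_j}(a)$, which is the formula asserted in the statement; the identity for $D_{\xi_j}$ follows by exchanging the two sets of variables. The one genuinely delicate point is the limit--differentiation interchange above, for which the trace-duality definition of the weak derivations together with the density and stability properties of $\S_{\theta,\theta'}$ from Proposition \ref{indu} is exactly what is needed; the uniform-in-$h$ estimate in the second part is already packaged into Proposition \ref{strongd}(ii), so there only the commutator bookkeeping remains.
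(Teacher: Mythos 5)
Your proposal is correct and follows essentially the same route as the paper: take a Cauchy sequence, pass to norm limits $b_{\al,\beta}$ of the weighted derivatives, define the candidate limit $a=b_{0,0}\bc{\xi}^m$, transfer the identities $D_x^\al D_\xi^\beta(a)\bc{\xi}^{|\beta|-m}=b_{\al,\beta}$ through trace duality against $\S_{\theta,\theta'}$ using $\bc{\xi}^r\S_{\theta,\theta'}=\S_{\theta,\theta'}$ (Proposition \ref{indu}), and invoke Proposition \ref{strongd} for the vector-derivative formula. The only meaningful difference is cosmetic: where the paper simply says the vector-derivative claim is ``a consequence of applying Proposition \ref{strongd} to $\R_{\theta,\theta'}$,'' you spell out the commutation bookkeeping (transference actions commute with $D_x^{\al'}$, $D_\xi^{\beta'}$; $\al^1$ commutes with $\bc{\xi}$; $D_{x_j}$ annihilates $\bc{\xi}^r$) that reduces the convergence in each seminorm $\norm{\cdot}{\al',\beta'}$ to the unweighted estimate of Proposition \ref{strongd}(ii). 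This is a useful clarification rather than a different argument, and the ``closed subspace of $\prod_{\al,\beta}\R_{\theta,\theta'}$'' framing is just a structural repackaging of the same completeness proof.
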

\begin{proof}
Let $a_n\in \Sigma^m$ be a converging sequence in $\Sigma^m$ with respect to all the seminorms $\norm{\cdot}{\al,\beta}$. Then there exists $b_{\al,\beta}\in \R_{\theta,\theta'}$ such that
\[\norm{D_x^\al D_\xi^\beta (a_n)\bc{\xi}^{|\beta|-m}-b_{\al,\beta}}{\infty}\to 0\pl \pl \text{as} \pl \pl n\to \infty\pl.\]
Denote that $c_{\al,\beta}=b_{\al,\beta}\bc{\xi}^{m-|\beta|}$ and $C_{0,0}=b_{0,0}\bc{\xi}^{m}$. Let $\la_{\theta,\theta'}(f)\in \S_{\theta,\theta'}$.
\begin{align*}\lan c_{\al,\beta}, \bc{\xi}^{|\beta|-m}\la_{\theta,\theta'}(f)\ran&=\lan b_{\al,\beta} \bc{\xi}^{|\beta|-m},\la_{\theta,\theta'}(f) \ran=\lan b_{\al,\beta},\la_{\theta,\theta'}(f)\ran
\\&=\lim_{n\to\infty}\lan D_x^\al D_\xi^\beta (a_n)\bc{\xi}^{|\beta|-m},\la_{\theta,\theta'}(f) \ran\\&= \lim_{n\to\infty}\lan a_n\bc{\xi}^{-m},\bc{\xi}^{m}D_x^\al D_\xi^\beta (\bc{\xi}^{|\beta|-m}(\la_{\theta,\theta'}(f))\ran \\&=\lan b_{0,0},\bc{\xi}^{m}D_x^\al D_\xi^\beta (\bc{\xi}^{|\beta|-m}\la_{\theta,\theta'}(f) )\ran \\&=\lan D_x^\al D_\xi^\beta (c_{0,0}),\bc{\xi}^{|\beta|-m}\la_{\theta,\theta'}(f) \ran \pl.\end{align*}
Note that the set $\bc{\xi}^{|\beta|-m}\S_{\theta,\theta'}=\S_{\theta,\theta'}$ by Proposition \ref{indu}. We have $c_{\al,\beta}=D_x^\al D_\xi^\beta (c_{0,0})$ weakly. To see that $c_{0,0}$ is again in the multiplier algebra $\M_{\theta,\theta'}$, it suffices to show that for any $\la_{\theta,\theta'}(f)\in \S_{\theta,\theta'}$,
\[\norm{ D_x^\al D_\xi^\beta (c_{0,0}\la_{\theta,\theta'}(f))(1+\sum_j x_j^2+\xi_j)^\gamma}{}  \]
is bounded for any $\al,\beta,\gamma$. This follows from Leibniz rule and the fact  $\la_{\theta,\theta'}(f)$ and all its derivatives $D_x^\al D_\xi^\beta(\la_{\theta,\theta'}(f))$ are in $O^{-\infty,-\infty}$. The vector derivatives are consequence of applying Proposition \ref{strongd} to $\R_{\theta,\theta'}$.
\end{proof}
\begin{cor}For all multi-indices $\al$ and real numbers $m,n$,
\begin{enumerate}
\item[i)]$\xi^\al \in \Sigma^{|\al|}$, $\bc{\xi}^m \in \Sigma^{m}$;
\item[ii)]if $a\in \Sigma^m$, then $a^*\in \Sigma^m$;
\item[iii)]if $a\in \Sigma^m, b\in \Sigma^n$, then $ab\in \Sigma^{m+n}$.
\end{enumerate}
\end{cor}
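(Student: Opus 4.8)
The strategy is to reduce each assertion to the characterization in Theorem \ref{char}, namely that $a\in\Sigma^m$ iff $D_x^\al D_\xi^\beta(a)\in O^{0,m-|\beta|}$ for all $\al,\beta$, and then to run the arguments purely at the level of bi-degrees, imitating Theorem \ref{order} and Proposition \ref{indu} but in the $\R_{\theta,\theta'}$ setting with the $\xi$-variables playing the role that $x$ played there. For (i), the claim $\xi^\al\in\Sigma^{|\al|}$ and $\bc{\xi}^m\in\Sigma^m$ is the exact analogue of the statements $x^\al\in O^{|\al|}$ and $D^\al(\bc{x}^r)\in O^{r-|\al|}$ from Theorem \ref{order}, now applied inside the second tensor factor $\R_{\theta'}$: one checks that $D_x^\al D_\xi^\beta(\xi^\gamma)$ is, by the Leibniz rule, a noncommutative polynomial in the $\xi$'s of degree at most $|\gamma|-|\beta|$ (it vanishes if $|\beta|>|\gamma|$, and $D_x$ annihilates it), hence lies in $O^{0,|\gamma|-|\beta|}$ by the $\xi$-analogue of Theorem \ref{order}; similarly $D_\xi^\beta(\bc{\xi}^m)\in O^{0,m-|\beta|}$ directly from Theorem \ref{order} transported to $\R_{\theta'}$, and $D_x$ kills $\bc{\xi}^m$. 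Then Theorem \ref{char} gives membership in the respective symbol classes.

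For (ii), adjoint invariance, I would observe that taking adjoints commutes with the derivations $D_{x_j},D_{\xi_j}$ up to sign (since $D_j = [D_{\bx_j},\cdot]$ with $D_{\bx_j}$ self-adjoint, one has $D_j(a)^* = -D_j(a^*)$, so $D_x^\al D_\xi^\beta(a^*) = (-1)^{|\al|+|\beta|}(D_x^\al D_\xi^\beta(a))^*$), and that $\bc{\xi}$ is self-adjoint. Thus $D_x^\al D_\xi^\beta(a^*)\bc{\xi}^{|\beta|-m} = (-1)^{|\al|+|\beta|}\bigl(\bc{\xi}^{|\beta|-m}D_x^\al D_\xi^\beta(a)\bigr)^*$, so it suffices to know that $b\in O^{0,r}$ implies $\bc{\xi}^{-r}b\in O^{0,0}$ (from the definition of $O^{0,r}$ by taking $r'=0$ and absorbing), whence $\bc{\xi}^{-r}b$ is bounded, its adjoint is bounded, and one then needs $b^*\bc{\xi}^{-r}\in O^{0,0}$; this last point is exactly the content of Proposition \ref{indu}(i) applied in $\R_{\theta,\theta'}$ — it is the statement that one can move weights past an operator at the cost of lower-order (here: equal-order, bounded) corrections, using that $[\bc{\xi}^{s},b]$ drops degree. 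So (ii) follows once Proposition \ref{indu} is invoked with $\bc{x}$ replaced by $\bc{x}\bc{\xi}$ throughout.

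For (iii), multiplicativity, apply the Leibniz rule: $D_x^\al D_\xi^\beta(ab) = \sum \binom{\al}{\al_1,\al_2}\binom{\beta}{\beta_1,\beta_2} D_x^{\al_1}D_\xi^{\beta_1}(a)\,D_x^{\al_2}D_\xi^{\beta_2}(b)$. Each factor $D_x^{\al_1}D_\xi^{\beta_1}(a)\in O^{0,m-|\beta_1|}$ and $D_x^{\al_2}D_\xi^{\beta_2}(b)\in O^{0,n-|\beta_2|}$ by hypothesis and Theorem \ref{char}; since $O^{0,r}\cdot O^{0,s}\subseteq O^{0,r+s}$ (immediate from the definition of bi-degree: conjugate the product by $\bc{\xi}$-weights, insert $\bc{\xi}^{\pm(r'-r)}$ in the middle), the product lies in $O^{0,(m+n)-|\beta_1|-|\beta_2|}=O^{0,(m+n)-|\beta|}$, and summing over the finitely many terms stays in that class. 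Theorem \ref{char} then yields $ab\in\Sigma^{m+n}$.

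The main obstacle is (ii): one genuinely needs the $\R_{\theta,\theta'}$-version of Proposition \ref{indu}(i) to commute the weight $\bc{\xi}^{|\beta|-m}$ from one side of $D_x^\al D_\xi^\beta(a)$ to the other while preserving boundedness — everything else is a bookkeeping exercise with the Leibniz rule and the elementary closure properties of the $O^{0,r}$ filtration. I would first record, as a short lemma (or by citing Proposition \ref{indu} verbatim with $\bc{x}\rightsquigarrow\bc{x}\bc{\xi}$), that $b\in O^{0,r}$ is equivalent to $D_x^\al D_\xi^\beta(b)\bc{\xi}^{-r}$ bounded for all $\al,\beta$ together with $D_x^\al D_\xi^\beta(b)$ having $x$-degree $0$, and that this class is $*$-closed; then (i), (ii), (iii) all fall out of Theorem \ref{char} plus Leibniz.
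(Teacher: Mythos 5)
Your proposal is correct and follows essentially the same route as the paper: (i) is the $\xi$-variable instance of Theorem \ref{order}, (iii) is Leibniz plus the multiplicativity $O^{0,r}\cdot O^{0,s}\subseteq O^{0,r+s}$ and Theorem \ref{char}, and (ii) hinges on the adjoint identity $D_x^\al D_\xi^\beta(a^*)=(-1)^{|\al|+|\beta|}(D_x^\al D_\xi^\beta(a))^*$. Your treatment of (ii) is somewhat more roundabout than needed: once you observe $D_x^\al D_\xi^\beta(a^*)\bc{\xi}^{|\beta|-m}=\pm\bigl(\bc{\xi}^{|\beta|-m}D_x^\al D_\xi^\beta(a)\bigr)^*$, the claim follows directly because the class $O^{0,m-|\beta|}$ from Theorem \ref{char} is manifestly $*$-invariant (taking adjoints just swaps the left and right weight exponents, which range symmetrically over $\R$ in the definition of bi-degree), so the detour through Proposition \ref{indu}(i) — which would in any case require verifying control of all derivatives of $b^*$ — is superfluous.
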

\begin{proof}i) is a direct consequence of Theorem \ref{order}. ii) follows from the fact that \[D_x^\al D_\xi^\beta (a^*)=(-1)^{|\al|+|\beta|}\Big(D_x^\al D_\xi^\beta (a)\Big)^*\pl.\] For iii), by the Leibniz rule
\begin{align}
\label{sum}
  D_x^\al D_\xi^\beta (ab)=\sum_{\al_1+\al_2= \al,\pl \beta_1+\beta_2= \beta}\binom{\al}{\al_1,\al_2}\binom{\beta}{\beta_1,\beta_2}D_x^{\al_1} D_\xi^{\beta_1} (a)D_x^{
\al_2} D_\xi^{\beta_2} (b)\pl.
\end{align}
Using Theorem \ref{char},
\[D_x^{\al_1} D_\xi^{\beta_1} (a)\in O^{0,m-|\beta_1|}\pl, \pl D_x^{
\al_2} D_\xi^{\beta_2} (b)\in O^{0,n-|\beta_2|}\pl.\]
Hence all summands in \eqref{sum} are belongs to $O^{0,m+n-|\beta_1|-|\beta_2|}=O^{0,m+n-|\beta|}$. Again by Theorem \ref{char}, $ab\in \Sigma^{n+m}$.
\end{proof}

\subsection{Comultiplications} One key tool that will be used in the proof of our symbol calculus is the the comultiplication maps of $\R_\theta$ and $\R_{\theta,\theta'}$. The comultiplication map of $\R^d$ as an abelian group is
\[\si: L_\infty(\R^d)\to L_\infty(\R^d\times \R^d)\cong L_\infty(\R^d\times \R^d)\pl, \pl \si(f)(\bx,\by)=f(\bx+\by)\pl.\]
Algebraically, $\si(u(\bxi))=u(\bxi)\ten u(\bxi)$ where $u(\bxi)$ is the unitary function $u(\bxi)(\bx)=e^{i\bxi\cdot \bx}$. For $\R_\theta$, we consider the a deformed comultiplication map \[\si_\theta:\R_\theta\to L_\infty(\R^n)\overline{\ten} \R_\theta\pl, \pl \si_\theta(\la_\theta(\bxi))=u(\bxi)\ten\la_\theta(\bxi)\pl,\]
where $\overline{\ten}$ is the von Neumann algebra tensor product. $L_\infty(\R^n)\overline{\ten} \R_\theta$ can be identified with $\R_\theta$-valued functions $L_\infty(\R^d,\R_\theta)$, and at a point $\bx\in \R^d$,
\[\si_\theta(\la_\theta(\bxi))(\bx)=e^{i\bx\cdot \bxi}\la_\theta(\bxi)=\al_\bx(\la_\theta(\bxi)) \pl. \]
A different co-multiplication map is used in \cite{GJP17} to studied $\Psi$DOs of $\R_\theta$ with commuting derivatives.

\begin{prop}\label{comulti}The map $\si_\theta: \S_\theta\to L_\infty(\R^d, \R_\theta)$
\begin{align*}     \si_\theta(\la_\theta(f))(\bx)=\al_\bx(\la_\theta(f))\pl, 
\end{align*}
\begin{enumerate}
\item[i)] extends to an injective normal $*$-homomorphism from $\R_\theta$ to $L_\infty(\R^d, \R_\theta)$.
\item[ii)] extends to an injective algebraic $*$-homomorphism from $\M_\theta$ to $L_\infty(\R^d, \M_\theta)$. Moreover, for all $a\in \M_\theta$, $
 \pl \si_\theta(D_j a)= D_{x_j}(\si_\theta(a))=D_{\bx_j}(\si_\theta(a))\pl.
$
\item[iii)] extends to an complete isometry $V_\theta$ right from $L_2(\R_\theta)^c$ to $L_2^c(\R^d)\ten_{wh} \R_\theta$.
    Here $\ten_{wh}$ denotes the $W^*$-Haagerup tensor product (see \cite{blechersmith92}) and $L_2^c(\R^d)$ is the column space.
\end{enumerate}
\end{prop}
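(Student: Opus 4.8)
The plan is to realize $\si_\theta$ spatially through the transference action. Recall from Proposition~\ref{trace} that $\al_\bx=\mathrm{Ad}(U_\bx)$ on $\R_\theta$, where $(U_\bx f)(\by)=f(\by+\bx)$ on $L_2(\R^d)\cong L_2(\R_\theta)$, that $\al_\bx$ is trace preserving, and that $\al_\bx(\la_\theta(\bxi))=e^{i\bxi\cdot\bx}\la_\theta(\bxi)$. Identifying $L_2(\R^d)\ten_2 L_2(\R_\theta)$ with the square-integrable fields $\zeta\colon\R^d\to L_2(\R_\theta)$, set $V:=\int^{\oplus}_{\R^d}U_\bx^*\,d\bx$, i.e.\ $(V\zeta)(\bx)=U_\bx^*\zeta(\bx)$; this is a unitary in $L_\infty(\R^d)\overline{\ten}B(L_2(\R_\theta))$. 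A direct computation gives $V^*(1\ten a)V=\int^{\oplus}_{\R^d}\al_\bx(a)\,d\bx$ for $a\in\R_\theta$ (value $\al_\bx(a)$ at the point $\bx$), and testing on $a=\la_\theta(\bxi)$ reproduces $u(\bxi)\ten\la_\theta(\bxi)$, so $\si_\theta(a):=V^*(1\ten a)V$ is exactly the map in the statement, and it takes values in $L_\infty(\R^d)\overline{\ten}\R_\theta=L_\infty(\R^d,\R_\theta)$ since $\bx\mapsto\al_\bx(a)=U_\bx aU_\bx^*$ is strongly continuous and bounded by $\norm{a}{}$. Because $\mathrm{Ad}(V^*)$ is a spatial (hence normal) $*$-isomorphism and $a\mapsto 1\ten a$ is a normal $*$-isomorphism onto $1\ten\R_\theta$, $\si_\theta$ is a normal $*$-homomorphism, and it is injective because $V$ is unitary (equivalently $\si_\theta(a)(0)=a$). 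This gives (i).

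For (ii), the same formula $\si_\theta(a)=V^*(1\ten a)V$ makes sense for $a\in\M_\theta$ regarded as a densely defined operator affiliated to $\R_\theta$, and produces the field $\bx\mapsto\al_\bx(a)$, where $\al_\bx$ is the weak extension to $\M_\theta$ recalled in Section~\ref{sectionpre}. Each $\al_\bx$ is a $*$-automorphism of $\M_\theta$, and since $\si_\theta(a)\si_\theta(b)=\si_\theta(ab)$ with $ab\in\S_\theta$ whenever $b\in\S_\theta$, the field $\si_\theta(a)$ multiplies $\si_\theta(\S_\theta)$ into itself; hence $\si_\theta$ is an injective algebraic $*$-homomorphism $\M_\theta\to L_\infty(\R^d,\M_\theta)$, injectivity once more by evaluation at $\bx=0$. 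For the intertwining of derivatives it suffices, by linearity and the weak density of $\S_\theta$, to check $a=\la_\theta(f)$: then $\si_\theta(\la_\theta(f))(\bx)=\la_\theta\big(f(\cdot+\bx)\big)$, and applying $-i\,\p/\p\bx_j$ in the $\bx$-variable yields $\la_\theta\big((-i\p_jf)(\cdot+\bx)\big)=\si_\theta(D_j\la_\theta(f))(\bx)$; since the canonical derivative on the commutative leg $L_\infty(\R^d)=\R_0$ is precisely $-i\,\p/\p\bx_j$, this is the asserted $\si_\theta(D_ja)=D_{x_j}(\si_\theta(a))=D_{\bx_j}(\si_\theta(a))$.

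For (iii), use the concrete description of the $W^*$-Haagerup tensor product of a column Hilbert space with a von Neumann algebra \cite{blechersmith92}: after fixing an orthonormal basis $\{e_i\}_i$ of $L_2(\R^d)$ inside $L_1\cap L_2$, an element of $L_2^c(\R^d)\ten_{wh}\R_\theta$ is a column $(b_i)_i$ over $\R_\theta$ with $\sum_i b_i^*b_i$ convergent $\sigma$-weakly, of norm $\norm{\sum_i b_i^*b_i}{}^{1/2}$. For $a\in\S_\theta$ put $V_\theta(a):=(b_i)_i$ with $b_i:=\int_{\R^d}\overline{e_i(\bx)}\,\al_\bx(a)\,d\bx$, the $L_2^c(\R^d)$-column of the decomposable operator $\si_\theta(a)$. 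Using $\sum_i e_i(\bx)\overline{e_i(\by)}=\delta(\bx-\by)$ (made rigorous by truncating the sum) one gets $\sum_i b_i^*b_i=\int_{\R^d}\al_\bx(a)^*\al_\bx(a)\,d\bx=\int_{\R^d}\al_\bx(a^*a)\,d\bx$ $\sigma$-weakly, so everything reduces to the averaging identity
\[ \int_{\R^d}\al_\bx(a^*a)\,d\bx \;=\; \tau_\theta(a^*a)\cdot 1_{\R_\theta}. \]
Writing $a^*a=\la_\theta(g)$, $g\in\S(\R^d)$, one has $\int_{|\bx|\le R}\al_\bx(\la_\theta(g))\,d\bx=\la_\theta(g_R)$ with $g_R(\by)=\int_{|\bx|\le R}g(\by+\bx)\,d\bx$, a positive increasing net; since $g_R(\by)\to\int_{\R^d}g$ for every $\by$ with $\sup_R\norm{g_R}{\infty}<\infty$, $\la_\theta(\mathbf1)=1_{\R_\theta}$, $\S_\theta$ is dense in $L_1(\R_\theta)$, and $\tau_\theta(\la_\theta(g_R)\la_\theta(h))=\int_{\R^d}g_R\,h\to(\int_{\R^d}g)(\int_{\R^d}h)$ for $h\in\S(\R^d)$ by the trace duality of Section~\ref{sectionpre} and dominated convergence, the net is bounded by $\tau_\theta(a^*a)\cdot1$ and converges $\sigma$-weakly to $\tau_\theta(a^*a)\cdot1$. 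Hence $\norm{V_\theta(a)}{}=\tau_\theta(a^*a)^{1/2}=\norm{a}{L_2(\R_\theta)}$, so $V_\theta$ extends to an isometry of $L_2(\R_\theta)$ into $L_2^c(\R^d)\ten_{wh}\R_\theta$; running the same computation with matrix entries $a_{jk}\in\S_\theta$ replaces the right side by the $\R_\theta$-amplification of the Gram matrix $\big[\tau_\theta(\sum_l a_{lj}^*a_{lk})\big]_{jk}$, of norm $\norm{[a_{jk}]}{M_n(L_2(\R_\theta)^c)}^2$, which gives the complete isometry; finally $V_\theta$ is compatible with the module structures because $\si_\theta(ab)=\si_\theta(a)\si_\theta(b)$.

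The main obstacle is part (iii): identifying $\si_\theta(a)$ with a genuine element of the $W^*$-Haagerup tensor product, and justifying the $\sigma$-weak (rather than $L_1$) sense in which both $\sum_i b_i^*b_i$ and $\int_{\R^d}\al_\bx(a^*a)\,d\bx$ converge. In particular the averaging identity is an identity of oscillatory integrals, and the exchange of the $\bx$-integral with the integrals defining $\star_\theta$ must be handled by truncating to $|\bx|\le R$ first and exploiting the monotonicity of the resulting positive nets. Parts (i) and (ii), by contrast, are essentially formal once the spatial implementation $\si_\theta=\mathrm{Ad}(V^*)|_{1\ten\R_\theta}$ is in place.
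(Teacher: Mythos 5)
For parts (i) and (ii) your spatial implementation $\si_\theta=\mathrm{Ad}(V^*)|_{1\ten\R_\theta}$ with $V=\int_{\R^d}^{\oplus}U_\bx^*\,d\bx$ is valid and in fact more self-contained than the paper, which cites \cite[Corollary 1.4]{GJP17} for normality. One caveat on (ii): you reduce the derivative identity to $a\in\S_\theta$ ``by linearity and weak density'' of $\S_\theta$ in $\M_\theta$, but since $D_j$ on $\M_\theta$ is defined only weakly, that passage is not automatic; the paper's direct computation of the vector derivative of the field $\bx\mapsto\al_\bx(a)$ avoids this.

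The real issue is in (iii). You reduce the isometry to the averaging identity $\int_{\R^d}\al_\bx(a^*a)\,d\bx=\tau_\theta(a^*a)\cdot1$, and close it by arguing that the truncated averages $\la_\theta(g_R)=\int_{|\bx|\le R}\al_\bx(a^*a)\,d\bx$ form a positive increasing net that is bounded by $\tau_\theta(a^*a)\cdot1$ because $\sup_R\norm{g_R}{\infty}<\infty$. That inference is a genuine gap: for $\theta\neq0$ a uniform $L_\infty$ bound on the Weyl symbols $g_R$ does \emph{not} control the operator norm $\norm{\la_\theta(g_R)}{}$, since Weyl quantization does not send $L_\infty(\R^d)$ into $\R_\theta$. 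Without the operator-norm bound, the convergence $\tau_\theta(\la_\theta(g_R)\la_\theta(h))\to(\int g)(\int h)$ for $h\in\S(\R^d)$ cannot be upgraded to $\sigma$-weak convergence: an increasing net of positive operators in a von Neumann algebra need not have a bounded supremum. The bound $\la_\theta(g_R)\le\tau_\theta(a^*a)\cdot1$ does hold, but proving it requires a Plancherel-type computation; writing $a=\la_\theta(f)$ and unwinding the Moyal product in the Fourier variable gives
\[
\int_{\R^d}\lan\eta,\al_\bx(a^*a)\eta\ran\,d\bx=\norm{f}{L_2}^2\,\norm{\eta}{L_2}^2\qquad\text{for every }\eta\in L_2(\R_\theta)\pl,
\]
which yields the boundedness and the averaging identity in one stroke. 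The paper's proof avoids the truncation altogether by passing to the Fourier side at the outset: it identifies $V_\theta(\la_\theta(f))$ with the $\R_\theta$-valued field $\bxi\mapsto\hat f(\bxi)\la_\theta(\bxi)$, and then the $\R_\theta$-valued inner product $\int|\hat f(\bxi)|^2\,\la_\theta(\bxi)^*\la_\theta(\bxi)\,d\bxi=\norm{\hat f}{2}^2\cdot1$ is immediate from the unitarity of $\la_\theta(\bxi)$, with no boundedness argument needed. Your real-space averaging picture is conceptually sound, but to make it rigorous you must replace the $L_\infty$-symbol estimate with this Fourier/Plancherel computation.
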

\begin{proof}
i) follows from the fact that at each point $\bx\in \R^d$, $\al_\bx$ is a $*$-automorphism of $\R_\theta$. The normality was proved in \cite[Corollary 1.4]{GJP17}.  ii) is similar to i). For the derivatives,
let $D_{\bx_j}$ denote the $j$th partial derivatives for $\R_d$ and $D_{x_j}$ denote the partial derivatives on $\R_\theta$. For all $\bx\in\R^d$ and $a\in \M_\theta$,
\begin{align*}D_{\bx_j}(\si_\theta(a))(\bx)&=\lim_{h\to 0}-\frac{i}{h}\big(\al_{\bx+h\be_j}(a)-\al_{\bx}(a)\big)=D_{x_j}(\al_\bx(a))=\al_\bx(D_{x_j}a)\pl.
\end{align*}
For iii), let $b=\sum_{k}  b_{k}\la_\theta(f_{k})$ with $b_k\in \mathbb{C}$ and $\la_\theta(f_{k})$ being an orthonormal set in $L_2(\R_\theta)$. Then $\norm{b}{L_2(\R_\theta)}^2=\sum_{k}|b_k|^2$.
The norm of $L^c_2(\R^d)\ten_{wh} R_\theta$ is given by the $\R_\theta$-valued inner product that for $f,g\in L_2(\R^d)$ and $a,c\in \R_\theta$
\begin{align*} &\lan f\ten a, g\ten c\ran_{\R_\theta}=\lan f,g\ran_{L_2(\R^d)} a^*c\pl,\pl \norm{B}{L^c_2(\R^d)\ten_{wh} \R_\theta}=\norm{\lan B, B\ran_{\R_\theta}}{\R_\theta}
\end{align*}
Note that on the Fourier transform side, 
\begin{align*}V_\theta(\la_\theta(f))(\bxi)=\hat{f}(\bxi) \la_\theta(\bxi)\pl.
\end{align*}
Therefore,
\begin{align*}&\norm{V_\theta(\sum_{k} b_k\la_\theta(f_{k}))}{L^c_2(\R^d)\ten_{wh} \R_\theta}=\norm{\sum_{k,k'}b_k\bar{b}_{k'} \int \hat{f}_k(\bxi) \overline{\hat{f}}_{k'}(
\bxi) \la_\theta(\bxi)\la_\theta(\bxi)^* d \bxi }{\R_\theta}
\\= &\norm{(\sum_{k} |b_k|^2)1 }{\R_\theta}= \sum_{k} |b_k|^2\pl. \end{align*}
Replacing $b_k\in \mathbb{C}$ with matrices $b_k\in M_n$ in the above argument gives the complete isometry.
\end{proof}

Let us write $\la_{\theta,\theta'}(\bet,\by):=\la_\theta(\bet)\ten \la_{\theta'}(\by)$ for the generators of $\R_{\theta,\theta'}:=\R_\theta\overline{\ten}\R_{\theta'}$. The quantization map for $\R_{\theta,\theta'}$ is
\[\la_{\theta,\theta'}(F)=(2\pi)^{-2d}\int_{\R^{2d}}\hat{F}(\bet,\by)\la_{\theta,\theta'}(\bet,\by)d\eta d\by\pl,\]
where $\displaystyle \hat{F}(\bet,\by)=\int_{\R^{2d}} F(\bx,\bxi) e^{-i(\bx\bet+\bxi\by)}d\bx d\bxi$ is the Fourier transform.
 By the Proposition \ref{comulti}, we can dilate the symbols affiliated to $\R_{\theta,\theta'}$ to operator valued symbols, \begin{align*}\si_\theta\ten \si_{\theta'}: \R_{\theta,\theta'}\to L_\infty (\R^d\times \R^d, \R_\theta\overline{\ten}\R_{\theta'})\pl, \pl \la_{\theta,\theta'}(F)(\bx,\by)=\al_\bx^1\al_\by^2(\la_{\theta,\theta'}(F)), \end{align*}
 where $\al^1$ (resp. $\al^2$) is the transference action on $\R_\theta$ (resp. $\R_{\theta'}$). For the $\Psi$DOs, we consider the comultiplication maps for  $\R_\Theta$ with $\Theta=\left[\begin{array}{cc}\theta & -I_n\\ I_n & \theta'\end{array}\right]$.
We use the following quantization for $\R_\Theta$,
\[\la_\Theta(F)=(2\pi)^{-2d}\int_{\R^d}\int_{\R^d}\hat{F}(\bet,\by)\la_\theta(\bet)\la_{\theta'}(\by)d\eta d\by\pl,\pl F\in \S(\R^d\times \R^d)\pl.\]
Note that the unitary generators in $\R_{\Theta}$ satisfy the commutation relation \[\la_\theta(\bet)\la_{\theta'}(\by)=e^{i\bet\by}\la_{\theta'}(
\by)\la_{\theta}(\bet)\pl.\]
%
%
We have the Hilbert space isometry between two quantizations,
\[W:L_2(\R_\Theta)\to L_2(\R_{\theta,\theta'})\pl, \pl W\ket{\la_\Theta(F)}=\ket{\la_{\theta,\theta'}(F)}\pl.\]
Here and in the following, we will use the ``ket'' notation $|\cdot\ran$ to emphasis $L_2$ vector.
 \begin{prop} \label{cothe1}Define the unitary \[u_\theta(\by):L_2(\R_\theta)\to L_2(\R_\theta)\pl, v_\theta(\by)\ket{\la_\theta(f)}=\ket{\la_\theta(\al_yf)}\pl.\] The map $\si_\Theta:\S_\Theta \to B(L_2(\R_\theta))\overline{\ten }\R_{\theta'}$
\begin{align*}\la_\Theta(F) \mapsto (2\pi)^{-2d}\int_{\R^{2d}} \hat{F}(\bet,\by)\la_\theta(\bet)v_\theta(\by)\ten\la_{\theta'}(\by) d\bet d\by\pl
\end{align*}
\begin{enumerate}
\item[i)] satisfies that $\si_\Theta(\la_{\Theta}(F))=W\la_\Theta(F)W^*$ by viewing \[\S_\Theta \subset B(L_2(\R_\Theta))\pl, \pl B(L_2(\R_\theta))\overline{\ten }\R_{\theta'}\subset B(L_2(\R_\theta)\ten_2L_2(\R_{\theta'}))\pl.\]
\item[ii)] extends to an injective normal $*$-homomorphism from $\R_\Theta$ to  $B(L_2(\R_\theta))\overline{\ten} \R_{\theta'}$.
\end{enumerate}
\end{prop}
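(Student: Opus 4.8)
The plan is to realise $\si_\Theta$ as the spatial $\ast$-homomorphism implemented by the unitary $W$, in exactly the spirit of the proof of Proposition \ref{comulti}; the new ingredient $v_\theta(\by)$ is present only to absorb the noncommutativity $\la_\theta(\bet)\la_{\theta'}(\by)=e^{i\bet\by}\la_{\theta'}(\by)\la_\theta(\bet)$ of $\R_\Theta$, which is absent in the commuting model $\R_{\theta,\theta'}$. First I would record that for each $(\bet,\by)$ the unitary $\Lambda(\bet,\by):=\la_\theta(\bet)v_\theta(\by)\ten\la_{\theta'}(\by)$ lies in $B(L_2(\R_\theta))\overline{\ten}\R_{\theta'}$: its second leg is the left-regular unitary $\la_{\theta'}(\by)\in\R_{\theta'}$, while $v_\theta(\by)$ (the unitary implementation of the transference action on the GNS space) does not lie in $\R_\theta$ in general, which is exactly why the target algebra must be enlarged to $B(L_2(\R_\theta))\overline{\ten}\R_{\theta'}$. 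Since $v_\theta$ is a strongly continuous unitary representation of $\R^d$ implementing the transference action, conjugation by $v_\theta(\by)$ multiplies $\la_\theta(\bet)$ by precisely the scalar occurring in the $\R_\Theta$–relation; combining this with the $\theta$– and $\theta'$–cocycles of $\la_\theta,\la_{\theta'}$, a direct computation gives
\[
\Lambda(\bet,\by)\Lambda(\bet',\by')=e^{-i\bet'\by}\,e^{\frac{i}{2}\bet\theta\bet'}\,e^{\frac{i}{2}\by\theta'\by'}\,\Lambda(\bet+\bet',\by+\by'),
\]
i.e. $\{\Lambda(\bet,\by)\}$ is a projective representation of $\R^{2d}$ with exactly the cocycle of $\{\la_\theta(\bet)\la_{\theta'}(\by)\}\subset\R_\Theta$. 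Integrating $\hat F$ against this (the integrals converge in norm since $\hat F\in\S(\R^{2d})$) shows that $\si_\Theta$ is a $\ast$-homomorphism of $(\S_\Theta,\star_\Theta)$ into $B(L_2(\R_\theta))\overline{\ten}\R_{\theta'}$.

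For (i) I would pass to the common model $L_2(\R^{2d})$ in which both $\ket{\la_\Theta(F)}$ and $\ket{\la_{\theta,\theta'}(F)}$ are identified with $\hat F$ --- each quantization is an $L_2$-isometry onto $L_2(\R^{2d})$ by the trace formula of Proposition \ref{trace} --- so that $W$ becomes the identity. In this model, left multiplication by $\la_\theta(\bet_0)\la_{\theta'}(\by_0)$ on $L_2(\R_\Theta)$ is computed from the $\R_\Theta$–commutation relations to be a ``shifted-phase'' operator $\hat F(\bet,\by)\mapsto \omega(\bet_0,\by_0;\bet,\by)\,\hat F(\bet-\bet_0,\by-\by_0)$ for an explicit unimodular $\omega$; on the other side, writing $L_2(\R_{\theta,\theta'})\cong L_2(\R^d)\ten_2 L_2(\R^d)$ and using that $\la_\theta,\la_{\theta'}$ act by shifted-phase operators in the respective Fourier variables while $v_\theta(\by_0)$ acts by multiplication by a plane wave in the first Fourier variable, the operator $\Lambda(\bet_0,\by_0)$ is computed to be literally the same shifted-phase operator. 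Hence $W\,\la_\theta(\bet_0)\la_{\theta'}(\by_0)\,W^\ast=\Lambda(\bet_0,\by_0)$, and integrating against $\hat F$ gives $W\la_\Theta(F)W^\ast=\si_\Theta(\la_\Theta(F))$ for all $F\in\S(\R^{2d})$ (both sides being norm-bounded there), which is (i).

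Part (ii) is then formal. The conjugation $\operatorname{Ad}_W$ is a normal $\ast$-isomorphism $B(L_2(\R_\Theta))\to B(L_2(\R_{\theta,\theta'}))$; restricted to $\R_\Theta$ acting by the (faithful, normal) left-regular representation on $L_2(\R_\Theta)$, it is a normal injective $\ast$-homomorphism onto $W\R_\Theta W^\ast$, which by (i) extends the formula defining $\si_\Theta$ on $\S_\Theta$. Finally $W\R_\Theta W^\ast=W(\S_\Theta)''W^\ast=(W\S_\Theta W^\ast)''=\si_\Theta(\S_\Theta)''\subseteq B(L_2(\R_\theta))\overline{\ten}\R_{\theta'}$, the last inclusion because this von Neumann algebra is weakly closed and contains every $\Lambda(\bet,\by)$, hence all of $\si_\Theta(\S_\Theta)$. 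The one genuine point requiring care is bookkeeping: pinning down the sign and phase conventions (the exact normalization of $v_\theta$ and the precise matrix $\Theta$) so that the cocycle of $\{\Lambda(\bet,\by)\}$ matches that of $\{\la_\theta(\bet)\la_{\theta'}(\by)\}$ in $\R_\Theta$ and not its complex conjugate; once that identity is secured, the spatial-implementation and double-commutant steps carry the rest with no further analytic difficulty.
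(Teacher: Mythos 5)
Your proposal is essentially the same as the paper's: reduce by linearity to the generator identity $W\,\la_\theta(\bet_0)\la_{\theta'}(\by_0)\,W^* = \la_\theta(\bet_0)v_\theta(\by_0)\ten\la_{\theta'}(\by_0)$, verify it by a direct computation on test vectors $\ket{\la_{\theta,\theta'}(G)}$ (equivalently, after identifying both $L_2$-spaces with $L_2(\R^{2d})$ via $\hat F$), and then get part (ii) for free since $\mathrm{Ad}_W$ is a normal $*$-isomorphism of $B(L_2)$-algebras that carries $\R_\Theta=\S_\Theta''$ into the weakly closed algebra $B(L_2(\R_\theta))\overline{\ten}\R_{\theta'}$. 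The only point worth flagging is the sign in your cocycle: computing with $v_\theta(\by)\la_\theta(\bet')v_\theta(\by)^{-1}=e^{i\bet'\by}\la_\theta(\bet')$ gives $e^{+i\bet'\by}$ rather than $e^{-i\bet'\by}$ in your product formula for $\Lambda$, and matching this against the $\R_\Theta$-side cocycle requires fixing a definite convention for $\Theta$ (the paper itself uses $\begin{pmatrix}\theta & I_d\\ -I_d & \theta'\end{pmatrix}$ in the introduction but $\begin{pmatrix}\theta & -I\\ I & \theta'\end{pmatrix}$ in Section 4, and the commutation relation written there is consistent with the former); you already acknowledge this bookkeeping caveat, so it does not undermine the argument.
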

\begin{proof} By linearity, it suffices to verify that $W\la_\theta(\bet_0)\la_{\theta'}(\by_0)W^*= \la_\theta(\bet_0) v_\theta(\by_0)\ten \la_{\theta'}(\by_0)$. Indeed, for $\la_{\theta,\theta'}(G)\in \S_{\theta,\theta'}$,
\begin{align*}
W\la_\theta(\bet_0)\la_{\theta'}(\by_0)W^*\ket{\la_{\theta,\theta'}(G)}=& W\la_\theta(\bet_0)\la_{\theta'}(\by_0)\ket{\la_{\Theta}(G)}=W\ket{\la_{\Theta}(G_1)}\end{align*}
where
\begin{align*}\la_\Theta(G_1)=&\int_{\R^{2d}}\hat{G}(\bet,\by) \la_\theta(\bet_0)\la_{\theta'}(\by_0) \la_\theta(\bet)\la_{\theta'}(\by)d\by d\bet\\
=& \int_{\R^{2d}} \hat{G}(\bet-\bet_0,\by-\by_0)e^{i\bet\by_0}e^{\frac{i}{2}(\bet\theta\bet_0+\by\theta'\by_0)} \la_\theta(\bet)\la_{\theta'}(\by) d\by d\bet\pl.
 \end{align*}
 Then
 \begin{align*}
 &W\ket{\la_\Theta(G_1)}=\ket{\la_{\theta,\theta'}(G_1)}=\Big(\la_\theta(\bet_0) v_\theta(\by_0)\ten \la_{\theta'}(\by_0)\Big)\ket{\la_{\theta,\theta'}(G)}\pl. \qedhere
\end{align*}
\end{proof}
Now let us consider the GNS-construction of $B(L_2(\R_\theta))$ with respect to its standard trace. Define for a Schwartz function $F$ the operator
\[ T_F=(2\pi)^{-2d}\int_{\R^{2d}} \hat{F}(\bet,\by)\la_\theta(\bet)v_\theta(\by)d\bet d\by\pl\pl.\]
For $\ket{\la_\theta(f)}\in L_2(\R_\theta)$,
\begin{align*}T_F\ket{\la_\theta(f)}= (2\pi)^{-2d}\int \hat{F}(\bet,\by)\la_\theta(\bet)v_\theta(\by) d\bet d\by \ket{\la_\theta(f)}= :\ket{\la_\theta(g)}
\end{align*}
where $T_F$  has the following kernel representation,
\[ \widehat{g}(\bet)= (2\pi)^{-2d}\int \hat{F}(\bet-\bxi,\by)e^{i\by\bet}e^{\frac{i}{2}\bet\theta\bxi} d\by  \hat{f}(\bxi)d\bxi \pl.\]
Since $F\in \S(\R^{2d})$, $T_F$ is trace class and \[tr(T_F)= (2\pi)^{-2d}\int \hat{F}(0,\by)e^{i\by\bet} d\by d\bet = (2\pi)^{-d}\int F\pl. \] One calculates that
\begin{align*}   T_F^*T_F=(2\pi)^{-4d} &\int_{\R^{2d}}\Big(\int_{\R^{2d}} \overline{\hat{F}}(\bet_1,\by_1)\hat{F}(\bet+\bet_1,\by+\by_1) e^{-\frac{i}{2}\bet\theta\bet_1} e^{-i\bet_1\by} d\bet_1 d\by_1\Big) \la_\theta(\bet)v_\theta(\by) d\bet d\by
     \end{align*}
Hence $tr(T_F^*T_F)=\displaystyle (2\pi)^{-2d}\int_{\R^{2d}} \overline{\hat{F}}(\bet_1,\by_1)\hat{F}(\bet_1,\by_1)d\bet_1 d\by_1= (2\pi)^{-2d}\norm{F}{2}^2$\pl. Up to a scalar we have a Hilbert space isometry \[V: L_2(B(L_2(\R_\theta)),tr)\to L_2(\R^d,L_2(\R_\theta))\pl ,
V(T_F)(\bx)= \la_\theta(F(\bx,\cdot)) \pl.\] Write $\tilde{\pi} $
as the GNS construction of $B(L_2(\R_\theta))$ on $L_2(B(L_2(\R_\theta)),tr)$. Then $\pi(\cdot)=V\tilde{\pi}(\cdot)V^*$ gives a normal faithful $*$-homomorphism form $B(L_2(\R_\theta))$ to $B(L_2(\R^d))\overline{\ten}\R_\theta$ as follow,
\[ \pi(T_F):=V\tilde{\pi}(T_F)V^*= (2\pi)^{-2d}\int_{\R^{2d}} \hat{F}(\bet,\by)v(\bet)u(\by)\ten \la_\theta(\bet)d\bet d\by \in B(L_2(\R^d))\overline{\ten}\R_\theta\pl,\]
where $v(\bet)$ is translation unitary on $L_2(\R^d)$. Combining $\pi$ with the co-multiplication $\si_\Theta$, we obtain another co-multiplication of $\R_\Theta$.
\begin{prop} \label{48}The map $\tilde{\si}_\Theta: \S_\Theta \to  B(L_2(\R^d))\overline{\ten}\R_{\theta,\theta'} $
\begin{align*} \la_\Theta(F)\longmapsto (2\pi)^{-2d}\int \hat{F}(\bet,\by)\Big(u(\bet)v(\by)\ten \la_{\theta,\theta'}(\bet,\by)\Big) d\bet d\by\pl
\end{align*}
 \begin{enumerate}\item[i)]extends to a normal injective $*$-homomorphism from $\R_\Theta$ to $B(L_2(\R^d))\overline{\ten}\R_{\theta,\theta'}$.
 \item[ii)] satisfies the intertwining relation  $(V_\theta\ten id_{\R_{\theta'}})\tilde{\si}_\Theta(\cdot)=\si_\Theta(\cdot) (V_\theta\ten id_{\R_{\theta'}})$ for the isometry \[V_\theta\ten id_{\R_{\theta'}}: L_2^c(\R_\theta)\ten_{wh}\R_{\theta'}\to L_2^c(\R^d)\ten_{wh}(\R_{\theta}\overline{\ten}\R_{\theta'})\pl.\]
 \end{enumerate}
\end{prop}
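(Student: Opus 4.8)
The plan is to realize $\tilde{\si}_\Theta$ as a composition of maps already in hand and to read off both assertions from that. First I would check, on the unitary generators $\la_\theta(\bet)\la_{\theta'}(\by)$ of $\R_\Theta$, the factorization
\[\tilde{\si}_\Theta = (\pi\ten id_{\R_{\theta'}})\circ\si_\Theta\pl,\]
where $\si_\Theta\colon\R_\Theta\to B(L_2(\R_\theta))\,\overline{\ten}\,\R_{\theta'}$ is the comultiplication of Proposition \ref{cothe1} and $\pi\colon B(L_2(\R_\theta))\to B(L_2(\R^d))\,\overline{\ten}\,\R_\theta$ is the normal faithful $*$-homomorphism obtained from the GNS representation of $B(L_2(\R_\theta))$ in the preceding paragraph. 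Indeed $\si_\Theta$ sends $\la_\theta(\bet)\la_{\theta'}(\by)$ to $\la_\theta(\bet)v_\theta(\by)\ten\la_{\theta'}(\by)$, while $\pi$ sends the first leg $\la_\theta(\bet)v_\theta(\by)$ to $u(\bet)v(\by)\ten\la_\theta(\bet)$; reindexing the three tensor legs $L_2(\R^d)$, $L_2(\R_\theta)$, $L_2(\R_{\theta'})$ turns this into $u(\bet)v(\by)\ten\la_{\theta,\theta'}(\bet,\by)$, which is the defining formula for $\tilde{\si}_\Theta$. (A small point to reconcile here is that the symbols $u$ and $v$ for the translation and multiplication unitaries on $L_2(\R^d)$ are used with opposite meanings in the two paragraphs.) Granted the factorization, part (i) is immediate: $\si_\Theta$ is normal and injective by Proposition \ref{cothe1}(ii), $\pi$ is normal and faithful by construction, tensoring a normal faithful $*$-homomorphism with $id_{\R_{\theta'}}$ preserves normality and faithfulness, and a composition of normal injective $*$-homomorphisms is again one.

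For part (ii) I would verify on the same weak-$*$ dense set of generators that $V_\theta\ten id_{\R_{\theta'}}$ intertwines the action of $\si_\Theta(a)$ on $L_2(\R_{\theta,\theta'})$ with that of $\tilde{\si}_\Theta(a)$ on $L_2(\R^d)\ten_2 L_2(\R_{\theta,\theta'})$, and then pass to all $a\in\R_\Theta$ by normality. On the Fourier side one has $V_\theta(\la_\theta(f))(\bxi)=\hat{f}(\bxi)\la_\theta(\bxi)$, so evaluating $(V_\theta\ten id_{\R_{\theta'}})\,\si_\Theta(\la_\theta(\bet)\la_{\theta'}(\by))\,\ket{\la_{\theta,\theta'}(G)}$ and $\tilde{\si}_\Theta(\la_\theta(\bet)\la_{\theta'}(\by))\,(V_\theta\ten id_{\R_{\theta'}})\,\ket{\la_{\theta,\theta'}(G)}$ both reduce to the same elementary change of variables inside the defining Fourier integrals, entirely parallel to the computation in the proof of Proposition \ref{cothe1}. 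Since both sides of the asserted relation depend normally on $a$ and the generators $\la_\theta(\bet)\la_{\theta'}(\by)$ span a weak-$*$ dense $*$-subalgebra of $\R_\Theta$, the identity propagates from generators to all of $\R_\Theta$. That $V_\theta\ten id_{\R_{\theta'}}$ is a complete isometry with the stated domain $L_2^c(\R_\theta)\ten_{wh}\R_{\theta'}$ and codomain $L_2^c(\R^d)\ten_{wh}(\R_\theta\,\overline{\ten}\,\R_{\theta'})$ follows from Proposition \ref{comulti}(iii) together with the functoriality and associativity of the $W^*$-Haagerup tensor product \cite{blechersmith92}.

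The step I expect to demand the most care is not any estimate but the bookkeeping: keeping the three tensor legs straight (and reconciling the clashing local conventions for $u$ and $v$), confirming that $\pi\ten id_{\R_{\theta'}}$ genuinely maps into, and is faithful onto, a von Neumann subalgebra of $B(L_2(\R^d))\,\overline{\ten}\,\R_{\theta,\theta'}$, and matching the Hilbert-space identifications $W$, $V$ and $V_\theta$ so that the intertwining relation in (ii) typechecks against the module structures $L_2^c(\R_\theta)\ten_{wh}\R_{\theta'}$ and $L_2^c(\R^d)\ten_{wh}(\R_\theta\,\overline{\ten}\,\R_{\theta'})$. None of these points is deep, but each is a place where a careless argument would go astray.
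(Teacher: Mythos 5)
Your proposal is correct and takes essentially the same route as the paper: part (i) via the factorization $\tilde{\si}_\Theta=(\pi\ten id_{\R_{\theta'}})\circ\si_\Theta$, and part (ii) by a direct Fourier-side computation extended by normality, the only cosmetic difference being that the paper carries out the verification on Schwartz elements $\la_\Theta(F)$ rather than on the unitary generators. You are also right about the $u$/$v$ convention clash between the proof of Proposition \ref{trace} and the paragraph defining $\pi$ immediately before this proposition---that is a genuine typographical slip in the paper, and your bookkeeping step would indeed need to reconcile it.
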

\begin{proof}i) We verify that $\tilde{\si}_\Theta=(\pi\ten id_{\R_{\theta'}})\circ\si_\Theta$. Indeed
\begin{align*}
(\pi\ten id_{\R_{\theta'}})\circ&\si_\Theta(\la_\Theta(F))
= \pi\ten id_{\R_{\theta'}}\Big(  (2\pi)^{-2d}\int_{\R^{2d}} \hat{F}(\bet,\by)\la_\theta(\bet)v_\theta(\by)\ten\la_{\theta'}(\by) d\bet d\by \Big)
\\=& (2\pi)^{-2d}\int \hat{F}(\bet,\by)\Big(u(\bet)v(\by)\ten \la_\theta(\bet)\ten  \la_{\theta'}(\by)\Big) d\bet d\by= \tilde{\si}_\Theta(\la_\Theta(F)). 
\end{align*}
For ii),
recall that $B(L_2(\R_\theta))\overline{\ten}\R_{\theta'}$ is canonically isomorphic to
the adjointable $\R_\theta'$-module map $\mathcal{L}(L_2^c(\R_\theta)\ten_{wh}\R_{\theta'})$ and similarly $B(L_2(\R^d))\overline{\ten}\R_{\theta}\overline{\ten}\R_{\theta'}\cong \mathcal{L}(L_2^c(\R_\theta)\ten_{wh}\R_{\theta,\theta'})$ as $\R_{\theta,\theta'}$-module map (see \cite{lance95}). The complete isometry $V_\theta$ in Proposition \ref{comulti} give an isometry \[V_\theta\ten id_{\theta'}: L_2^c(\R_\theta)\ten_{wh}\R_{\theta'}\to L_2^c(\R^d)\ten_{wh}(\R_{\theta}\overline{\ten}\R_{\theta'})\pl.\]
We verify that the intertwining relation  $(V_\theta\ten id)\si_\Theta(\cdot)=\tilde{\si}_\Theta(\cdot) (V_\theta\ten id)$. For any $\la_\Theta(F)\in \S_\Theta$ and $\la_{\theta,\theta'}(G)\in \S_{\theta,\theta'}$, we have
$\si_\Theta(\la_\Theta(F))\ket{\la_{\theta,\theta'}(G)}=\ket{\la_{\theta,\theta'}(G_1)}$ where
\begin{align*}
\hat{G}_1(\bet,\by)=(2\pi)^{-2d}\int\hat{F}(\bet-\bet_1,\by-\by_1) \hat{G}(\bet_1,\by_1)e^{i\bet_1(\by-\by_1)}e^{\frac{i}{2}\bet \theta \bet_1}e^{\frac{i}{2}\by \theta \by_1}  d\bet_1 d\by_1
\end{align*}
On the other hand, one verifies that
\begin{align*}
&\tilde{\si}_\Theta\ten id(\la_\Theta(F)) V_\theta \ket{\la_{\theta,\theta'}(G)}=\ket{\int \hat{G}_1(\bet,\by)u(\bet)\ten  \la_{\theta,\theta'}(\bet,\by)d\bet d\by}\\
=&V_\theta \ten id \Big(\si_\Theta(\la_\Theta(F))\ket{\la_{\theta,\theta'}(G)}\Big)
\end{align*}
We see that the representation $(V_\theta\ten id)^*\si_\Theta(\cdot)(V_\theta\ten id)$ is a restriction of $\tilde{\si}_\Theta$.
\end{proof}
\subsection{Pseudo-differential operator calculus}Recall that on $\R^d$ the pseudo-differential operator of a symbol $a(\bx,\bxi)$ is given by the singular integral form
\begin{align}\label{cop} op_0(a)(f)(\bx):=\frac{1}{(2\pi)^d}\int_{\R^d} e^{i\bx\cdot \xi}a(\bx,\bxi)\hat{f}(\bxi)d\bxi\pl, f\in \S(\R^d) \end{align}
In \cite{GJP17} the $\Psi$DOs on $\R_\theta$ are defined as  \begin{align}
\label{qop} op_\theta(a)(\la_\theta(f))=\frac{1}{(2\pi)^d}\int_{\R^d} a(\bxi)\la_\theta(\bxi)\hat{f}(\bxi)d\bxi\pl, f\in \S(\R^d)\pl. \end{align}
where $a:\R^d\to \R_\theta$ is the symbol as a $\R_\theta$-valued function. The $\Psi$DOs in our setting are operators densely defined on $L_2(\R_{\theta,\theta'})\cong L_2(\R_\theta)\ten_2 L_2(\R_{\theta'})$. For a symbol $a_1\ten a_2$ with $a_1\in \R_\theta, a_2\in \R_{\theta'}$, we define that
\[ Op(a_1\ten a_2)=\si_\Theta(a_1a_2)\in B(L_2(\R_{\theta,\theta'}))\]
where $a_1a_2$ is the product in $\R_\Theta$ by viewing $\R_\theta,\R_\theta'\subset\R_\Theta$ as subalgebras and $\si_\Theta$ is the representation of $\R_\Theta$ on $L_2(\R_{\theta,\theta'})$ defined in Proposition \ref{cothe1}.
\begin{defi}\label{defin}
For a symbol $a\in \Sigma^m$, we define the operator $Op(a):\S_{\theta,\theta'} \to \S_{\theta,\theta'}$ as follows,
\begin{align*}
 &Op(a)\la_{\theta,\theta'}(F)=\frac{1}{(2\pi)^{2d}}\int_{\R^{2d}}  \al_{\bet}^2(a) \hat{F}(\bet,\by)\la_{\theta,\theta'}(\bet,\by)d\bet d\by
 \end{align*}
We denote by $op^m$ the set of all $\Psi$DOs of order $m$.\label{def}
\end{defi}
We justifies the above definition below.
\begin{prop}\label{psuedo}For a symbol $a\in \Sigma^m$, $Op(a)$ is a continuous map form $\S_{\theta,\theta'}$ to $\S_{\theta,\theta'}$ and $Op(a)$ is an operator affiliated to $\si_\Theta(\R_\Theta)\subset B(L_2(\R_{\theta,\theta'}))$. In particular, if $a_1\in \R_\theta$ and $a_2\in\R_{\theta'}$,  $Op(a_1\ten a_2)=\si_\Theta(a_1a_2)$.
\end{prop}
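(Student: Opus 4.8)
The plan is to read everything off the defining integral in Definition~\ref{defin}, using the asymptotic-degree calculus of Section~\ref{sectionasy} for the analytic part and the comultiplication $\si_\Theta$ of Proposition~\ref{cothe1} for the algebraic part.

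\textbf{Step 1 (the identity $Op(\la_{\theta,\theta'}(F))=\si_\Theta(\la_\Theta(F))$).} For $F\in\S(\R^{2d})$ we have $\la_{\theta,\theta'}(F)\in\S_{\theta,\theta'}\subset\Sigma^{-\infty}$, and I would evaluate the right-hand side of Definition~\ref{defin} on a generic $\la_{\theta,\theta'}(G)$, compute Fourier transforms, and match the result with $W\la_\Theta(F)W^{*}=\si_\Theta(\la_\Theta(F))$. The mechanism is that the unitaries $v_\theta(\by)$ in the formula for $\si_\Theta$ implement the transference $\al_\by$ on $L_2(\R_\theta)$, and moving them past the left multiplication by $\la_\theta(\bet)$ onto the input vector produces exactly the twist $\al^2_\bet$ sitting in front of $a$ in Definition~\ref{defin}; all the $\Theta$-cocycle exponentials cancel. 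Taking $F=g\ten h$ gives $Op(\la_\theta(g)\ten\la_{\theta'}(h))=\si_\Theta(\la_\theta(g)\la_{\theta'}(h))$ for $g,h\in\S(\R^d)$; since both sides are normal in each variable and $\S_\theta$ (resp.\ $\S_{\theta'}$) is $\sigma$-weakly dense in $\R_\theta$ (resp.\ $\R_{\theta'}$), this passes to arbitrary $a_1\in\R_\theta$, $a_2\in\R_{\theta'}$, which is the last assertion of the proposition. In particular, for Schwartz symbols $Op(\la_{\theta,\theta'}(F))$ is bounded, lies in $\si_\Theta(\R_\Theta)$, and --- because $\la_\Theta(F)\in\S_\Theta\subset\M_\Theta$ preserves $\S_\Theta$ while $W$ carries $\S_\Theta$ onto $\S_{\theta,\theta'}$ --- maps $\S_{\theta,\theta'}$ continuously into itself.

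\textbf{Step 2 (continuity $\S_{\theta,\theta'}\to\S_{\theta,\theta'}$ for general $a\in\Sigma^m$).} From $\al^2_\bet(\bc{\xi}^{r})=\bc{\xi+\bet}^{r}$, the commutation of $\al^2$ with $D_x$ and $D_\xi$, and Lemma~\ref{bi}(ii), the seminorm estimate $\|D_x^\al D_\xi^\beta\al^2_\bet(a)\,\bc{\xi}^{|\beta|-m}\|\le\|\bc{\xi+\bet}^{m-|\beta|}\bc{\xi}^{|\beta|-m}\|\,\|D_x^\al D_\xi^\beta(a)\bc{\xi}^{|\beta|-m}\|\lesssim\bc{\bet}^{2n}\|a\|_{\al,\beta}$ shows that $\bet\mapsto\al^2_\bet(a)$ is a smooth (by Proposition~\ref{taylor}) $\Sigma^m$-valued map with at most polynomial growth. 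Integrating the $\by$-variable in Definition~\ref{defin} rewrites $Op(a)\la_{\theta,\theta'}(G)=(2\pi)^{-d}\int_{\R^d}\al^2_\bet(a)\,(\la_\theta(\bet)\ten\la_{\theta'}(g_\bet))\,d\bet$, where $\bet\mapsto g_\bet$ is a Schwartz map into $\S(\R^d)$ built from $G$. I would then differentiate under the integral (Leibniz, with $D_x^\gamma\la_\theta(\bet)=\bet^\gamma\la_\theta(\bet)$, $D_\xi^\delta\la_{\theta'}(g_\bet)=\la_{\theta'}(D^\delta g_\bet)$, $D_x^\gamma D_\xi^\delta\al^2_\bet(a)=\al^2_\bet(D_x^\gamma D_\xi^\delta a)$), insert $\bc{\xi}^{\pm m}$-weights, and use $D_x^\gamma D_\xi^\delta(a)\in O^{0,m-|\delta|}$ together with $\bc{(x,\xi)}^{P}\le\bc{x}^{P}\bc{\xi}^{P}$ and Proposition~\ref{indu} ($\bc{\xi}^{r}\S_{\theta'}\subset\S_{\theta'}$) to dominate each Schwartz seminorm of the integrand by a polynomial in $\bet$ times finitely many Schwartz seminorms of $G$; the rapid decay of $g_\bet$ and its derivatives in $\bet$ kills the polynomial factors, and the decay of the output in the $x$-variables is produced by the $\bet$-integration against the unitaries $\la_\theta(\bet)$ (an inverse Fourier transform on the $\R_\theta$-leg). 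This simultaneously shows $Op(a)\la_{\theta,\theta'}(G)\in\S_{\theta,\theta'}$ and gives the continuity; alternatively one can pass through $\tilde\si_\Theta$ of Proposition~\ref{48}, realising $Op(a)$ as a classical $\R_{\theta,\theta'}$-valued pseudodifferential operator on $\R^d$.

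\textbf{Step 3 (affiliation), and the main obstacle.} The commutant of $\si_\Theta(\R_\Theta)=W\R_\Theta W^{*}$ is generated by the transported right multiplications $R_G:=W\rho_\Theta(\la_\Theta(G))W^{*}$ with $G\in\S(\R^{2d})$, and $R_G\ket{\la_{\theta,\theta'}(F)}=\ket{\la_{\theta,\theta'}(F\star_\Theta G)}$, so each $R_G$ preserves the core $\S_{\theta,\theta'}$. On $\S_{\theta,\theta'}$ I would verify $Op(a)R_G=R_G\,Op(a)$ directly on the Fourier side: in Definition~\ref{defin} the symbol $a$ enters to the left of the generators $\la_{\theta,\theta'}(\bet,\by)$, while $R_G$ acts by the $\Theta$-twisted product on the right, so the two operations commute once the $\Theta$-cocycle factors are matched; closing $Op(a)$ and using the invariance of the core then gives that $\overline{Op(a)}$ is affiliated to $\si_\Theta(\R_\Theta)$. (One could instead approximate: by Step~1 and Proposition~\ref{ai}, $\la_{\theta,\theta'}(\check\phi_n)\,a\,\la_{\theta,\theta'}(\check\phi_n)$ is a Schwartz symbol converging to $a$ in $\Sigma^m$, so by Step~2 its image converges to $Op(a)$ pointwise on the core, with every term in $\si_\Theta(\R_\Theta)$.) The estimates in Step~2 are routine given Section~\ref{sectionasy}; I expect the genuine difficulty to be the bookkeeping with the three twist structures --- $\star_\theta$, $\star_{\theta'}$, and the full $\star_\Theta$ with its cross-term $[\xi_j,x_k]=-i\delta_{jk}$ --- in the two Fourier computations of Steps~1 and~3, where all the exponential cocycle factors must cancel on the nose.
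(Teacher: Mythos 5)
Your overall architecture matches the paper's — prove the algebraic identity that $Op$ agrees with $\si_\Theta$ on nice symbols, then show that $Op(a)$ preserves $\S_{\theta,\theta'}$, then verify affiliation by commutation with right multiplications — but the execution diverges in two places where the paper makes a specific, essential choice that your proposal elides.

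First, the continuity step. You propose direct Schwartz-seminorm estimates on the integrand, with the remark that ``the decay of the output in the $x$-variables is produced by the $\bet$-integration against the unitaries.'' The paper does not estimate; it runs a closure argument. Defining $\Omega=\{Op(a)\la_{\theta,\theta'}(F):a\in\Sigma^0,\, F\in\S(\R^{2d})\}$, it shows algebraically that $\Omega$ is stable under $D_{x_j}$, $D_{\xi_j}$ \emph{and} under right multiplication by $x_j$ and $\xi_j$ (by passing derivatives in $\bet$, $\by$ onto $\hat{F}$ and onto $a$, producing sums of terms of the same form), and then invokes Proposition~\ref{indu} to conclude $\Omega\subset\S_{\theta,\theta'}$. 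The stability under right multiplication by $x^\beta$ is precisely what replaces your Fourier-decay heuristic, and it is the part that is not ``routine'' — it uses the operator-side identity $\la_\theta(\bet)x_j=D_{\bet_j}\la_\theta(\bet)-\tfrac12\sum_k\theta_{jk}\bet_k\la_\theta(\bet)$ and an integration by parts in $\bet$ that converts polynomial growth into derivatives on $\hat F$. Your sketch would need to reproduce exactly this mechanism to control the $x$-decay, and as stated it is not carried out. General $m$ is then reduced to $m=0$ by the factorisation $a=(a\bc{\xi}^{-m})\bc{\xi}^{m}$, which you do not mention.

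Second, the final identity $Op(a_1\ten a_2)=\si_\Theta(a_1 a_2)$ for arbitrary $a_1\in\R_\theta$, $a_2\in\R_{\theta'}$. You propose to prove it for Schwartz $a_1,a_2$ and then pass to the limit by $\sigma$-weak density. This is a genuinely different route from the paper, and it has a gap: for a generic bounded $a_1$ the tensor $a_1\ten a_2$ is not in any $\Sigma^m$, so $Op(a_1\ten a_2)$ is not defined by Definition~\ref{defin} as you have been using it, and even granting a weak interpretation of the integral, you would need to verify that the map $(a_1,a_2)\mapsto Op(a_1\ten a_2)$ acting on a fixed test vector is separately $\sigma$-weakly continuous — which is not immediate. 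The paper sidesteps this entirely: it computes $Op(a_1\ten a_2)\big(\la_\theta(f_1)\ten\la_{\theta'}(f_2)\big)$ directly for arbitrary bounded $a_1,a_2$ using the exchange identity $a_2\,\la_\theta(\bet)=\la_\theta(\bet)\,\al_{\bet}(a_2)$, and immediately lands on $W^*(a_1a_2)W$ applied to the test vector. That identity is the key to the last assertion, and it does not appear in your outline. Finally, your back-up approximation in Step~3 — that $\la_{\theta,\theta'}(\check\phi_n)\,a\,\la_{\theta,\theta'}(\check\phi_n)\to a$ in the Fréchet topology of $\Sigma^m$ — is unsupported: Proposition~\ref{ai} gives convergence in $\E_\theta$ and in $L_p$, not in the symbol seminorms $\|D_x^\al D_\xi^\beta(\cdot)\bc{\xi}^{|\beta|-m}\|_\infty$, and for $m>0$ (or for bounded non-compact $a$) there is no reason for such convergence to hold. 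Your primary route for affiliation (commutation with the right regular representation on the core) is the paper's route and is fine.
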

\begin{proof}In the calculation below, the normalization constant $(2\pi)^{-d}$ will be omitted. Recall from Proposition \ref{cothe1} that
\[W:L_2(\R_\Theta)\to L_2(\R_{\theta,\theta'})\pl, \pl W\ket{\la_\Theta(F)}=\ket{\la_{\theta,\theta'}(F)}\pl,\]
  is the isometry such that $W^*\si_\Theta(\cdot )W$ is the left regular representation of $\R_\Theta$ on $L_2(\R_\Theta)$. To verify that $Op(a)$ is affiliated to $\si_\Theta(\R_\Theta)$, it suffices to show that $WOp(a)W^*$ commutes with right multiplication of $\R_\Theta$. For any $\bet_0,\by_0\in\R^d$,
\begin{align*}
\la_\Theta(F)\la_\theta(\bet_0)\la_{\theta'}(\by_0)=& \Big(\int_{\R^{2d}}  \hat{F}(\bet,\by)\la_\theta(\bet)\la_{\theta'}(\by)d\bet d\by\Big)\la_\theta(\bet_0)\la_\theta'(\by_0)\\
=& \int_{\R^{2d}}  \hat{F}(\bet,\by)e^{i\by\bet_0}\la_\theta(\bet)\la_\theta(\bet_0)\la_{\theta'}(\by)\la_{\theta'}(\by_0)d\bet d\by\pl.
\end{align*}
Then $W(\la_\Theta(F)\la_\theta(\bet_0)\la_{\theta'}(\by_0))=\al_{\bet_0}^2\big(\la_{\theta,\theta'}(F)\big)\la_{\theta,\theta'}(\bet_0,\by_0)$.
We verify that
\begin{align*}
&Op(a)W\Big(\la_\Theta(F)\la_\theta(\bet_0)\la_{\theta'}(\by_0)\Big)
\\=& Op(a)\Big(\al_{\bet_0}^2\big(\la_{\theta,\theta'}(F)\big)\la_{\theta,\theta'}(\bet_0,\by_0)\Big)
\\=& \int_{\R^{2d}}  \al_{\bet+\bet_0}^2(a) \hat{F}(\bet,\by)e^{i\by\bet_0}e^{\frac{i}{2}\bet\theta\bet_0}e^{\frac{i}{2}\by\theta'\by_0}\la_{\theta,\theta'}(\bet+\bet_0,\by+\by_0)d\bet d\by
\\=& \Big(\int_{\R^{2d}}  \al_{\bet+\bet_0}^2(a) \hat{F}(\bet,\by)\al_{\bet_0}^2(\la_{\theta,\theta'}(\bet,\by))d\bet d\by \Big )\la_{\theta,\theta'}(\bet_0,\by_0)
\\=& \al_{\bet_0}^2\Big(\int_{\R^{2d}}  \al_{\bet}^2(a) \hat{F}(\bet,\by)\la_{\theta,\theta'}(\bet,\by)d\bet d\by \Big ) \la_{\theta,\theta'}(\bet_0,\by_0)
\\=& \al_{\bet_0}^2\Big(Op(a)\la_{\theta,\theta'}(F) \Big ) \la_{\theta,\theta'}(\bet_0,\by_0)\pl.
\end{align*}
Hence \[W^*Op(a)W\Big(\la_\Theta(F)\la_\theta(\bet_0)\la_{\theta'}(\by_0)\Big)=\Big(W^*Op(a)W\la_\Theta(F)\Big)\la_\theta(\bet_0)\la_{\theta'}(\by_0)\pl,\]
which implies $Op(a)$ is affiliated to the representation on $\si(\R_\Theta) \subset B(L_2(\R_\theta)\ten_2 L_2(\R_{\theta'}))$.

Now we show that $Op(a):\S_{\theta,\theta'}\to \S_{\theta,\theta'}$ is continuous. Let us first assume that $a\in \Sigma^0$ is a zero order symbol. Then $a$ is bounded in $ \R_{\theta,\theta'}$ and $\norm{a}{\infty}=\norm{\al_{\bet}^2(a)}{\infty}$ for all $\bet$. Thus the singular integral
\[ \norm{\int_{\R^{2d}}  \al_{\bet}^2(a) \hat{F}(\bet,\by)\la_{\theta,\theta'}(\bet,\by)d\bet d\by}{\infty}\le \norm{\hat{F}}{1}\norm{a}{\infty}\]
converges in $\R_{\theta,\theta'}$. Write the set
$\Omega:=\{Op(a)\la_\Theta(F) \pl | F\in \S(\R^{2d})\pl, a\in \Sigma^0\} \subset \R_{\theta,\theta'}$. For derivatives, we know
$D_{x_j}(\la_\theta(\bet))= \bet_j\la_\theta(\bet) \pl,\pl D_{\xi_j}(\la_{\theta'}(\by))=\by_j\la_{\theta'}(\by)$
and $D_x^\beta D_\xi^\gamma(a) \in \Sigma^{-|\gamma|}$. Using product rules in the integral,
\begin{align*}&D_{\xi_j}\Big(Op(a)\la_{\theta,\theta'}(F)\Big)
\\=&D_{\xi_j}\Big(\int_{\R^{2d}}  \al_{\bet}^2(a) \hat{F}(\bet,\by)\la_\theta(\bet)\ten \la_{\theta'}(\by)d\bet d\by \Big)
\\=&\int_{\R^{2d}}  \al_{\bet}^2(D_{\xi_j}a) \hat{F}(\bet,\by)\la_{\theta,\theta'}(\bet,\by)d\bet d\by
+\int_{\R^{2d}}  \al_{\bet}^2(a) \hat{F}(\bet,\by)\by_j\la_{\theta,\theta'}(\bet,\by)d\bet d\by
\\=&Op(D_{\xi_j}a)\la_{\theta,\theta'}(F)+Op(a)\la_{\theta,\theta'}(D_{\bxi_j}F),
\end{align*}
which is again in the set $\Omega$ hence bounded in $\R_{\theta,\theta'}$. By induction, $D_x^\beta D_\xi^\gamma (Op(a)\la_{\theta,\theta'}(F))$ is in $\Omega$ for any $\beta,\gamma$. On the other hand, let $h\in\R$ and $\be_j=(0,\cdots,1,\cdots,0)$
\begin{align*} \la_\theta(\bet)e^{ix_jh}=e^{-\frac{i}{2}\sum_k h\theta_{jk}\bet_k}\la_\theta(\bet +h\be_j)\pl, \la_{\theta'}(\by)e^{i\xi_jh}=e^{-\frac{i}{2}\sum_k h\theta'_{jk}\by_k}\la_\theta(\by +h\be_j)\pl.
\end{align*}
Taking derivatives at $h=0$,
\begin{align*}
\la_\theta(\bet)x_j=D_{\bet_j}(\la_\theta(\bet))-\frac{1}{2}\sum_k \theta_{jk}\bet_k \la_\theta(\bet)\pl, \la_\theta(\by)\xi_j=D_{\by_j}(\la_{\theta'}(\by))-\frac{1}{2}\sum_k \theta'_{jk}\bet_k \la_{\theta'}(\by)\pl.
\end{align*}
holds weakly. Then
\begin{align*} &\Big(Op(a)\la_{\theta,\theta'}(F)\Big)x_j\\
=& \int  \al_{\bet}^2(a) \hat{F}(\bet,\by)D_{\bet_j}(\la_{\theta,\theta'}(\bet,\by))d\bet d\by
- \frac{1}{2}\int  \al_{\bet}^2(a) \hat{F}(\bet,\by)(\sum_k \theta_{jk}\bet_k)\la_{\theta,\theta'}(\bet,\by)d\bet d\by\\
=& -\int  \al_{\bet}^2(D_{\xi_j}a) \hat{F}(\bet,\by)(\la_{\theta,\theta'}(\bet,\by))d\bet d\by
-\int  \al_{\bet}^2(a) (D_{\bet_j}\hat{F})(\bet,\by)(\la_{\theta,\theta'}(\bet,\by))d\bet d\by\\
&- \frac{1}{2}\int  \al_{\bet}^2(a) \hat{F}(\bet,\by)(\sum_k \theta_{jk}\bet_k)\la_{\theta,\theta'}(\bet,\by)d\bet d\by\\
=&-Op(D_{\xi_j}a)\la_{\theta,\theta'}(F) -Op(a)\la_{\theta,\theta'}(\bxi_jF) -\frac{1}{2}\sum_k \theta'_{jk}Op( a)\la_{\theta,\theta'}(D_{\bxi_k}F)
\end{align*}
which is again in the set $\Omega$. By induction, $\Omega$ is stable under right multiplication of polynomials $x^\beta\xi^\gamma$. By Proposition \ref{indu}, we know $\Omega\subset \S_{\theta,\theta'}$ because for all $\beta_1,\beta_2,\gamma_1,\gamma_2$ \[\norm{D_x^{\beta_1}D_\xi^{\gamma_1}(Op(a)\la_{\theta,\theta'}(F))x^{\beta_2}\xi^{\gamma_2}}{\infty}<\infty\pl.\]
Moreover, one can track that these norms are controlled by the semi-norms of $a\in\Sigma^0$ and $\la_{\theta,\theta'}(F)\in S_{\theta,\theta'}$. Thus we proved $Op(a):\S_{\theta,\theta'}\to \S_{\theta,\theta'}$ is continuous for $0$-order $\Psi$DO. Now consider $b\in \Sigma^m$ with $m$ being an even integer, we know $b=b\bc{\xi}^{-m} \bc{\xi}^{m}$ and $b\bc{\xi}^{-m}$ is a zero order symbol, $\bc{\xi}^{m}$ is a polynomial. Note that for $a\in \Sigma^0$,
\begin{align*}&Op(a\xi_j)\la_{\theta,\theta'}(F)
\\=&\int_{\R^{2d}}  \al_{\bet}^2(a\xi_j) \hat{F}(\bet,\by)\la_{\theta,\theta'}(\bet,\by)d\bet d\by \\
=& \int_{\R^{2d}}  (\xi_j+\bet_j)\al_{\bet}^2(a) \hat{F}(\bet,\by)\la_{\theta,\theta'}(\bet,\by)d\bet d\by\\
=& \int_{\R^{2d}}  \xi_j\al_{\bet}^2(a) \hat{F}(\bet,\by)\la_{\theta,\theta'}(\bet,\by)d\bet d\by+\int_{\R^{2d}}  \al_{\bet}^2(a) \hat{F}(\bet,\by)\bet_j\la_{\theta,\theta'}(\bet,\by)d\bet d\by
\\=&\xi_jOp(a)\la_{\theta,\theta'}(F)+Op(a)\la_{\theta,\theta'}(D_{\bx_j}F)
\end{align*}
which is again in $\Omega$. Moreover, the continuity of $Op(a\xi_j)$ follows from the continuity of $Op(a)$. By induction, we obtain that $Op(a):\S_{\theta,\theta'}\to \S_{\theta,\theta'}$ is continuous for $Op(a)\in\Sigma^m$ for all $m$.
Finally, we verify the property that $Op(a_1\ten a_2)=\si(a_1a_2)$. It suffices to consider test functions $\la_{\theta,\theta'}(F)=\la_\theta(f_1) \ten\la_{\theta'}(f_2)$ with $F(\bx,\bxi)=f_1(\bx)f_2(\bxi)$. Then
\begin{align*}
Op(a_1\ten a_2)\la_{\theta,\theta'}(F)=& \int  \big(a_1\ten \al_{\bet}(a_2)\big)   \hat{f_1}(\bet)\hat{f_2}(\by)  \big(\la_\theta(\bet) \ten \la_{\theta'}(\by)\big)     d\bet d \by\\
=& \int \hat{f_1}(\bet) a_1  \la_\theta(\bet) \ten  (\al_{\bet}(a_2)\la_{\theta'}(f_2))    d\bet \\
=& W^*(\int  \hat{f_1}(\bet) a_1 \la_\theta(\bet)  \al_{\bet}(a_2)\la_{\theta'}(f_2)    d\bet) \\
=& W^*(a_1 a_2 \int  \hat{f_1}(\bet) \la_\theta(\bet) \la_{\theta'}(f_2)    d\bet) \\
=& W^*(a_1 a_2 \la_\theta(f_1) \la_{\theta'}(f_2))= W^*(a_1 a_2)W \Big(\la_\theta(f_1)\ten \la_{\theta'}(f_2)\Big) \pl.
\end{align*}
Here we use the fact that for $a_2\in \M_{\theta'}$, $a_2\la_\theta(\bet)= \la_\theta(\bet)\al_{\bet}(a_2)$ . This property be easily verified for $a_2\in\S_{\theta'}$ and then extends to $\M_{\theta'}$.
\end{proof}
Based on the above proposition, we can equivalently consider $Op(a)$ are operators affiliated to $\R_\Theta$ and $Op(a)\in \R_\Theta$ if it is bounded. The connection between our setting and $\Psi$DOs on $\R^d$ and $\R_\theta$ can be made explicit via the following commuting diagram. \\
\begin{tikzpicture}
  \matrix (m) [matrix of math nodes,row sep=3em,column sep=5em,minimum width=2em]
  {
     \Sigma^0\subset \R_{\theta,\theta'}  & \R_\Theta & \\
     \R_{\theta}\bar{\ten}L_\infty(\R^d,\R_{\theta'})  &  B(L_2(\R_\theta))\bar{\ten}\R_{\theta'}  &     \mathcal{L}(L_2^c(\R_\theta)\ten_{wh} \R_{\theta'})           \\
     L_\infty(\R^d\times \R^d, \R_{\theta,\theta'} ) &
     \mathcal{L}(L_2^c(\R^d)\ten_{wh} \R_{\theta,\theta'} ) \\};
  \path[-stealth]
    (m-1-1) edge node [left] {$id\ten\si_{\theta'}$} (m-2-1)
            edge node [above] {$Op$} (m-1-2)
    (m-2-1) edge 
            node [above] {$ op_\theta\ten id_{\R_{\theta'}}$} (m-2-2)
            edge node [left] {$\si_{\theta}\ten id$} (m-3-1)
    (m-1-2) edge node [left] {$\si_\Theta$} (m-2-2)
    (m-2-2) edge node [above] { }(m-2-3)
    (m-3-2) edge node [below] {$\ \ \ \ V_\theta(\cdot)V_\theta^*$ }(m-2-3)
    (m-3-1.east|-m-3-2) edge 
            node [above] {$ op_0\ten id_{\R_{\theta,\theta'}}$} (m-3-2);
\end{tikzpicture}\\
Here $\si_\theta,\si_{\theta'},\si_\Theta$ are the co-multiplication maps discussed in section 3.2. The composition $\si_\Theta\circ Op$ gives the definition \ref{defin}. On the second row, the co-multiplication
$id\ten \si_{\theta'}(a)(\bet)=\al^2_{\bet}(a)$ gives $\R_{\theta'}$-valued symbol, and Definition \ref{defin} is then coincides with the $\R_{\theta'}$-valued operator map $op_\theta\ten id$ on $\R_\theta$ in \eqref{qop}. Via the identification
$B(L_2(\R_\theta))\overline{\ten} \R_{\theta'}\cong \mathcal{L}(L_2(\R_\theta)^c\ten_{wh} \R_{\theta'})$ (\cite{lance95}),
this also gives operators on Hilbert $\R_{\theta'}$-module $L_2(\R_\theta)^c\ten_{wh} \R_{\theta'}$. On the bottom row, we have a $\R_{\theta,\theta'}$-valued classical symbol $\si_\theta\ten \si_\theta'(a)(\bx,\bxi)=\al^1_{\bx}\al^2_{\bxi}(a)$, and
$op_0\ten id_{\theta,\theta'}$ is the $\R_{\theta,\theta'}$-valued operator map on $\R^d$ in \eqref{cop}. The $\Psi$DOs are $\R_{\theta,\theta'}$-linear operators on the Hilbert module $L_2(\R^d)^c\ten_{wh} \R_{\theta,\theta'}$. By Proposition \ref{48}, we have the Hilbert space isometry
\[ V_\theta \ten id_{\R_{\theta'}}: L_2(\R_\theta)^c\ten_{wh} \R_{\theta'}\to L_2(\R^d)^c\ten_{wh} \R_{\theta,\theta'}\pl.\]
Moreover, for a symbol $a\in \Sigma^0$, the operator $Op(a)$ can be viewed as a restriction of the $\R_{\theta,\theta'}$-valued $\Psi$DO $op_0\ten id(\si_{\theta,\theta'}(a))$ as follows,
\begin{align*}
&op_0\ten id\big(\si_\theta\ten \si_\theta'(a)\big) \Big(V_\theta \ten id(\la_{\theta,\theta'}(F))\Big)\\
=&(2\pi)^{-d}\int e^{i\bx\bxi}\al^1_{\bx}\al^2_{\bxi}(a) \hat{F}(\bxi,\by)\la_{\theta,\theta'}(\bxi,\by)d\bxi d\by\pl.\\
=&\al_{\bx}\Big((2\pi)^{-d}\int \al^2_{\bxi}(a) \hat{F}(\bxi,\by)\la_{\theta,\theta'}(\bxi,\by)d\bxi d\by \Big)=V_\theta\ten id(Op(a)\la_{\theta,\theta'}(F))
\end{align*}
This enable us to reduce the $L_2$-boundedness to the operator-valued case. For that we recall the operator-valued Calderon-Vallicourt theorem proved by Merklen in \cite{merklen}.
\begin{theorem}[Theorem 2.1 of \cite{merklen}]\label{ovcv}
 Let $\A$ be a $C^*$-algebra and $CB^\infty(\R^d\times\R^d\pl, \A)$ be the set of smooth $\A$-valued functions with bounded
derivatives of all orders. Then for any $a\in CB^\infty(\R^d\times\R^d\pl, \A)$,
\[op(a)f(\bx)=\frac{1}{(2\pi)^d}\int_{\R^d} e^{i\bx\cdot \xi}a(\bx,\bxi)\hat{f}(\bxi)d\bxi\pl, f\in \S(\R^d,\A)\pl\]
extends to a bounded operator on the Hilbert $\A$-module $L_2(\R^d,\A)$. Moreover, there exists a constant $C$ independent of $a$, such that
\[\norm{op(a)}{}\le C\sup \{\norm{D^\al_{\bx} D^\beta_{\bxi} (a)}{\infty} \pl | \pl 0\le \al,\beta\le (1,1,\cdots, 1)  \}\pl.\]
\end{theorem}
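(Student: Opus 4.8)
The plan is to run the Cotlar--Stein almost-orthogonality method --- the standard proof of the Calder\'on--Vaillancourt theorem --- with coefficients in $\A$. Two preliminaries set this up. First, the Cotlar--Stein lemma holds verbatim for a countable family $\{T_\nu\}$ of adjointable operators on a Hilbert $C^*$-module $E$: if $\sup_\mu\sum_\nu\|T_\mu^*T_\nu\|^{1/2}\le M$ and $\sup_\mu\sum_\nu\|T_\mu T_\nu^*\|^{1/2}\le M$, then $\sum_\nu T_\nu$ converges strictly and $\|\sum_\nu T_\nu\|\le M$; the only ingredients of the classical proof are the $C^*$-identity $\|T\|^{2n}=\|(T^*T)^n\|$, submultiplicativity, and the triangle inequality in $\mathcal{L}(E)$. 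Second --- an alternative I would at least mention --- fixing a faithful representation $\A\subset B(H_0)$ one has $L_2(\R^d,\A)\otimes_\A H_0\cong L_2(\R^d,H_0)$ and $\|op(a)\|_{\mathcal{L}(L_2(\R^d,\A))}=\|op(a)\otimes 1\|_{B(L_2(\R^d,H_0))}$, with $op(a)\otimes 1$ nothing but $op(a)$ for $a$ regarded as a bounded $B(H_0)$-valued symbol; so the statement reduces to the operator-valued Calder\'on--Vaillancourt theorem on the Hilbert space $L_2(\R^d,H_0)$, whose proof is the scalar one with $|\cdot|$ replaced throughout by $\|\cdot\|_{B(H_0)}$.

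For the Cotlar--Stein argument I would first reduce, by a routine approximation (replacing $a$ by $a(\bx,\bxi)\chi(\bx/R)\chi(\bxi/R)$, whose relevant seminorms are bounded uniformly in $R$ by the Leibniz rule), to the case where $a$ has compact support, so that $op(a)$ is a priori bounded and only the uniform estimate is at stake. Pick $\phi\in C_c^\infty(\R^d)$, $0\le\phi\le1$, $\supp\phi\subset[-1,1]^d$, $\sum_{k\in\Z^d}\phi(\cdot-k)\equiv1$, and split $a=\sum_{j,k\in\Z^d}a_{jk}$ with $a_{jk}(\bx,\bxi)=a(\bx,\bxi)\phi(\bx-j)\phi(\bxi-k)$; then the seminorms $\|D^\al_\bx D^\beta_\bxi a_{jk}\|_\infty$ ($0\le\al,\beta\le(1,\cdots,1)$) are $\le C\,\tnorm{a}$ uniformly in $j,k$, where $\tnorm{a}:=\sup\{\|D^\al_\bx D^\beta_\bxi a\|_\infty:0\le\al,\beta\le(1,\cdots,1)\}$. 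With $T_{jk}:=op(a_{jk})$ one has $op(a)=\sum_{jk}T_{jk}$ on $\S(\R^d,\A)$, and each $T_{jk}$ is bounded by $C\,\tnorm{a}$ uniformly: its $\A$-valued Schwartz kernel $K_{jk}(\bx,\by)=(2\pi)^{-d}\int e^{i(\bx-\by)\bxi}a_{jk}(\bx,\bxi)\,d\bxi$ satisfies $\sup_\bx\|K_{jk}(\bx,\cdot)\|_{L_2(\R^d,\A)}\lesssim\tnorm{a}$ (Plancherel in $\bxi$ together with the bounded $\bxi$-support) and, symmetrically, $\sup_\by\|K_{jk}(\cdot,\by)\|_{L_2(\R^d,\A)}\lesssim\tnorm{a}$, the latter using one derivative in each $\bx$-coordinate --- exactly what the hypothesis supplies, in accordance with the mixed-norm Sobolev embedding $W^{(1,\cdots,1),2}(\R^d)\hookrightarrow L^\infty(\R^d)$ that underlies the componentwise form of the assumption.

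The core estimate to establish is the almost-orthogonality. Writing out kernels, the $\bxi$-integration in $T_{jk}T_{j'k'}^*$ produces a Dirac mass leaving the factor $\phi(\bxi-k)\phi(\bxi-k')$, whence $T_{jk}T_{j'k'}^*=0$ unless $k,k'$ lie within distance $2$, and dually $T_{jk}^*T_{j'k'}=0$ unless $j,j'$ lie within distance $2$. On the surviving diagonal band one integrates by parts in the frequency variable --- one derivative per coordinate, again all the hypothesis allows and all that is needed --- to obtain decay of the form $\|K_{T_{jk}T_{j'k'}^*}(\bx,\by)\|_\A\lesssim\tnorm{a}^2\,\phi(\bx-j)\phi(\by-j')\prod_m(1+|\bx_m-\by_m|)^{-1}$, and the transposed bound for $T_{jk}^*T_{j'k'}$, from which one reads off operator-norm estimates whose square roots are summable over the grid. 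Feeding these into the Hilbert-module Cotlar--Stein lemma gives $\|op(a)\|\le C\,\tnorm{a}$ on the dense range $op(a)\S(\R^d,\A)$, and hence the asserted bounded extension to $L_2(\R^d,\A)$ with $C$ independent of $a$.

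The step I expect to be the main obstacle is the summability in the Cotlar--Stein sums under the minimal, first-order mixed regularity: the crude Schur bound on a single $T_{jk}T_{j'k'}^*$ sits at the borderline of non-summability over $\Z^d\times\Z^d$, so one must combine the exact orthogonality in the ``spare'' index with an integration-by-parts scheme tailored to the product structure of the seminorms --- this is precisely why the hypothesis is phrased componentwise, $\al,\beta\le(1,\cdots,1)$, rather than in terms of total order --- following Merklen's refinement of the classical argument. Carrying the $\A$-valued norms through all of this, by contrast, costs nothing, since $\|\cdot\|_\A$ is $*$-invariant and submultiplicative and each scalar estimate has a literal $\A$-valued counterpart.
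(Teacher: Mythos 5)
The paper does not prove this statement; it is imported verbatim as Theorem 2.1 of \cite{merklen} and used as a black box in the proof of Theorem~\ref{l2}, so there is no internal proof to compare against. Taking your proposal on its own terms, the primary Cotlar--Stein plan has a genuine gap, and it is the very one you flag and then defer to ``Merklen's refinement'': under the hypothesis $\al,\beta\le(1,\dots,1)$ the almost-orthogonality sums do not converge. Concretely, with the tiles $a_{jk}=a\,\phi(\cdot-j)\phi(\cdot-k)$, the exact orthogonality already pins $|k-k'|\lesssim 1$ in the $T_{jk}T_{j'k'}^*$ sum, so the remaining sum runs over $j'\in\Z^d$ alone. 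The kernel of $T_{jk}T_{j'k'}^*$ is $(2\pi)^{-d}\int e^{i(\bx-\by)\cdot\bxi}\,a_{jk}(\bx,\bxi)\,a_{j'k'}(\by,\bxi)^*\,d\bxi$, supported with $\bx$ in a unit cube about $j$ and $\by$ in a unit cube about $j'$, and one integration by parts per $\bxi$-coordinate --- which is all the hypothesis supplies --- yields at best $\|T_{jk}T_{j'k'}^*\|\lesssim \tnorm{a}^2\prod_m(1+|j_m-j'_m|)^{-1}$, whether one estimates by Schur or by Hilbert--Schmidt. Then $\sum_{j'\in\Z^d}\|T_{jk}T_{j'k'}^*\|^{1/2}\lesssim\tnorm{a}\prod_m\sum_{n\in\Z}(1+|n|)^{-1/2}=\infty$. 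This is not borderline, and it is not rescued by ``the orthogonality in the spare index'': the spare index $k'$ is already pinned and contributes nothing further. The transposed estimate for $T_{jk}^*T_{j'k'}$ fails for the same reason (the $\bz$-integration-by-parts gives only $\prod_m(1+|k_m-k'_m|)^{-1}$). With this tiling one would need roughly three derivatives per coordinate (so that $\prod_m\sum_n(1+|n|)^{-3/2}<\infty$) to close the Cotlar--Stein sums.

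The underlying issue is that $\al,\beta\le(1,\dots,1)$ is Hwang's condition (I.~L.~Hwang, \emph{The $L^2$-boundedness of pseudodifferential operators}, Trans.\ Amer.\ Math.\ Soc.\ 302 (1987), 55--76), and already the scalar theorem at this regularity is not obtained by Cotlar--Stein: Hwang's proof is a direct iterated one-dimensional oscillatory-integral argument that exploits precisely the tensor-product structure of these seminorms. The ``alternative I would at least mention'' in your write-up is therefore the one that actually works, and it is essentially Merklen's route: fix a faithful nondegenerate representation $\A\subset B(H_0)$, use the isometric $*$-homomorphism $\mathcal{L}(L_2(\R^d,\A))\hookrightarrow B\big(L_2(\R^d,\A)\ten_\A H_0\big)\cong B(L_2(\R^d,H_0))$, identify $op(a)\ten 1$ with the $B(H_0)$-valued quantization, and run Hwang's argument with $|\cdot|$ replaced by $\|\cdot\|_{B(H_0)}$. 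That reduction does carry some content (a priori adjointability of $op(a)$ as a module map, the identification of norms), but it is routine; the heart of the matter is Hwang's scalar argument, not Cotlar--Stein, and the obstruction you noticed is a wall rather than a borderline.
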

Then $L_2$-boundedness theorem in our setting follows from the commuting diagram.
\begin{theorem}[$L_2$-boundedness]\label{l2}Let $a\in \Sigma^0$ be a symbol of order $0$. Then $Op(a)$ extends to a bounded operator on $L_2(\R_{\theta,\theta'})$.
\end{theorem}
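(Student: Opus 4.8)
The plan is to deduce the theorem from the operator-valued Calder\'on--Vaillancourt estimate (Theorem~\ref{ovcv}) by means of the commuting diagram relating $Op(a)$ to the classical $\R_{\theta,\theta'}$-valued $\Psi$DO $op_0\ten id$ applied to the dilated symbol $\si_\theta\ten\si_{\theta'}(a)$. Concretely, I would first pass from the abstract symbol $a\in\Sigma^0$ to the operator-valued classical symbol $b:=\si_\theta\ten\si_{\theta'}(a)$, which is given pointwise by $b(\bx,\bxi)=\al^1_{\bx}\al^2_{\bxi}(a)$, and check that $b\in CB^\infty(\R^d\times\R^d,\R_{\theta,\theta'})$ with all derivative-norms controlled by finitely many $\Sigma^0$-seminorms of $a$. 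By Theorem~\ref{char} we have $D_x^\al D_\xi^\beta(a)\in O^{0,-|\beta|}$; since $-|\beta|\le 0$, writing $D_x^\al D_\xi^\beta(a)=\big(D_x^\al D_\xi^\beta(a)\bc{\xi}^{|\beta|}\big)\bc{\xi}^{-|\beta|}$ shows each of these operators is bounded, with norm dominated by a seminorm of $a$. By Proposition~\ref{comulti}(ii), applied on each tensor factor, the classical partial derivatives of the dilation satisfy $D_{\bx}^\al D_{\bxi}^\beta b=\si_\theta\ten\si_{\theta'}\big(D_x^\al D_\xi^\beta(a)\big)$, and since $\al^1_{\bx},\al^2_{\bxi}$ are $*$-automorphisms this gives $\|D_{\bx}^\al D_{\bxi}^\beta b(\bx,\bxi)\|_\infty=\|D_x^\al D_\xi^\beta(a)\|_\infty$ uniformly in $(\bx,\bxi)$. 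Hence $b$ is smooth with bounded derivatives of all orders.

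Next I would apply Theorem~\ref{ovcv} with $\A=\R_{\theta,\theta'}$: the operator $op_0\ten id_{\R_{\theta,\theta'}}(b)$ extends to a bounded operator on $L_2(\R^d,\R_{\theta,\theta'})$, hence on $L_2^c(\R^d)\ten_{wh}\R_{\theta,\theta'}$, with norm $\lesssim \sup\{\|D_x^\al D_\xi^\beta(a)\|_\infty\,:\,0\le\al,\beta\le(1,1,\cdots,1)\}$. Then, using the commuting diagram (the displayed identity preceding Theorem~\ref{ovcv}) together with Proposition~\ref{48}, the Hilbert-module isometry $V_\theta\ten id_{\R_{\theta'}}\colon L_2^c(\R_\theta)\ten_{wh}\R_{\theta'}\to L_2^c(\R^d)\ten_{wh}\R_{\theta,\theta'}$ intertwines $Op(a)$ with $op_0\ten id(b)$ on the dense set $\S_{\theta,\theta'}$. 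Consequently $Op(a)$ extends to a bounded adjointable operator on $L_2^c(\R_\theta)\ten_{wh}\R_{\theta'}$ with the same norm bound, i.e.\ it defines an element of $\mathcal L(L_2^c(\R_\theta)\ten_{wh}\R_{\theta'})\cong B(L_2(\R_\theta))\overline{\ten}\R_{\theta'}$. Since this von Neumann algebra acts on the Hilbert space $L_2(\R_\theta)\ten_2 L_2(\R_{\theta'})=L_2(\R_{\theta,\theta'})$ through the normal faithful (hence isometric) standard representation, and this action agrees with $Op(a)$ on $\S_{\theta,\theta'}$, I would conclude that $Op(a)$ extends to a bounded operator on $L_2(\R_{\theta,\theta'})$ with $\|Op(a)\|\lesssim\sup\{\|D_x^\al D_\xi^\beta(a)\|_\infty\,:\,0\le\al,\beta\le(1,1,\cdots,1)\}$.

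The step I expect to require the most care is this last identification: moving from the Hilbert $\R_{\theta'}$-module picture --- where Theorem~\ref{ovcv} and the isometry $V_\theta$ naturally live --- back to the Hilbert space $L_2(\R_{\theta,\theta'})$, and verifying that the adjointable module operator produced by this argument really does correspond, under $\mathcal L(L_2^c(\R_\theta)\ten_{wh}\R_{\theta'})\cong B(L_2(\R_\theta))\overline{\ten}\R_{\theta'}\subset B(L_2(\R_{\theta,\theta'}))$, to the densely defined operator $Op(a)$ of Definition~\ref{defin}. Here I would use that $Op(a)$ is already known to be affiliated to $\si_\Theta(\R_\Theta)$ (Proposition~\ref{psuedo}), together with the explicit formulas for $\si_\Theta$ and $\tilde\si_\Theta$ from Propositions~\ref{cothe1} and~\ref{48}, to pin down the identification; the remaining points (that $\S_{\theta,\theta'}$ embeds in the module and that the two actions coincide on it) are routine bookkeeping, as is the verification that $b$ inherits bounded derivatives from the $\Sigma^0$-condition.
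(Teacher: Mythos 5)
Your proposal is correct and follows essentially the same route as the paper: dilate the abstract symbol to an operator-valued classical symbol via the comultiplication maps, verify its derivatives are uniformly bounded using the $\Sigma^0$-seminorms of $a$, apply the operator-valued Calder\'on--Vaillancourt theorem (Theorem~\ref{ovcv}), and transfer the bound back to $L_2(\R_{\theta,\theta'})$ through the intertwining isometry $V_\theta\ten id_{\R_{\theta'}}$. The paper compresses the final identification between the Hilbert-module picture and $B(L_2(\R_{\theta,\theta'}))$ into ``diagram chasing''; you have merely made that step explicit.
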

\begin{proof} By definition of $\Sigma^0$, $a$ and all its derivatives $D_x^\al D_\xi^\beta(a)$ are in $\R_{\theta,\theta'}$. Then $\si_\theta\ten \si_{\theta'}(a)\in L_\infty(\R^d\times \R^d, \R_{\theta,\theta'})$ and for any $\al,\beta$,
\[\norm{D_{\bx}^\al D_{\bxi}^\beta(\si_{\theta,\theta'}(a))}{}= \norm{\si_{\theta,\theta'}(D_x^\al D_\xi^\beta(a))}{}\]
are bounded. Thus $\si_{\theta,\theta'}(a)$ is a $\R_{\theta,\theta'}$-valued symbol with all derivatives bounded. Then by
Theorem \ref{ovcv}, we know $op_0\ten id (\si_{\theta,\theta'}(a))$ is a bounded element in $B(L_2(\R^d))\overline{\ten} \R_{\theta,\theta'}$. By diagram chasing, \[\norm{Op(a)}{}=\norm{V_\theta Op(a)V_\theta ^*}{B(L_2(\R_\theta))\overline{\ten}\R_{\theta'}}\le \norm{op \Big(\si_{\theta}\ten\si_\theta'(a)\Big)}{\mathcal{L}(L_2(\R^d, \R_{\theta,\theta'}))}\] and the norm estimates follows from Theorem \ref{ovcv}.
\end{proof}
We now discuss the composition formula. Let us first identify the formula by a heuristic argument. Given two classical operator valued symbol $a,b\in C^\infty(\R^d\times\R^d, \A)$, the composition symbol in the usual Euclidean case is
\[c(\bx,\bxi)=\frac{1}{(2\pi)^d}\int_{\R^{2d}} a(\bx,\bet)b(\by,\bxi)e^{ i(\bet-\bxi)\cdot (\bx-\by)}d \bet d\by .\]
Given symbols $a,b$ affiliated to $\R_{\theta,\theta'}$, the co-multiplication $\si_{\theta,\theta'}$ gives us operator-valued symbol
\[ \sigma_{\theta,\theta'}(a)(\bx,\bxi)=\al_{\bx}^1\al_{\bxi}^2(a)\pl, \sigma_{\theta,\theta'}(b)(\bx,\bxi)=\al_{\bx}^1\al_{\bxi}^2(b)\pl.\]
The operator-valued composition symbol is
\begin{align*}C(\bx,\bxi)=&\frac{1}{(2\pi)^d}\int_{\R^{2d}} \al_{\bx}^1\al_{\bet}^2(a)\al_{\by}^1\al_{\bxi}^2(b)e^{ i(\bet-\bxi)\cdot (\bx-\by)}d \bet d\by
\\ =&\al_{\bx}^1\al_{\bxi}^2\Big(\frac{1}{(2\pi)^d}\int_{\R^{2d}} \al_{\bet-\bxi}^2(a)\al_{\by-\bx}^1(b)e^{ i(\bet-\bxi)\cdot (\bx-\by)}d \bet d\by \Big)
\\ =&\al_{\bx}^1\al_{\bxi}^2\Big(\frac{1}{(2\pi)^d}\int_{\R^{2d}} \al_{\bet}^2(a)\al_{\by}^1(b)e^{ -i\bet\by}d \bet d\by \Big)=\si_{\theta,\theta'}(c)
\end{align*}
where $c$ is a $\M_{\theta,\theta'}$-valued singular integral,
\[c=\frac{1}{(2\pi)^d}\int_{\R^{2d}} \al^2_{\bet}(a)\al^1_{\by}(b)e^{- i \bet \cdot \by}d \bet d\by\pl .\]
We first justify this singular integral and prove its formal series of the following definition.
\begin{defi}
Let $m_j, j\ge 0$ be a decreasing sequence of real numbers and $a_j\in \Sigma^{m_j}$. We write a $m_0$ order symbol $a\sim \sum_{j\ge 0}a_j$ if for any $N$, $a-\sum_{N\le m_j}a_j\in \Sigma^{N}$.
\end{defi}
The proof adapts the argument for the classical case by Stein \cite{stein} to the operator-valued setting.
\begin{theorem}[Composition formula] \label{compo} Let $a\in \Sigma^m$ and $b\in \Sigma^n$. Then there exists a symbol $c\in \Sigma^{m+n}$ such that $Op(c)=Op(a)Op(b)$ and
\[c\sim \sum_{\al} \frac{ i^{|\al|}}{\al !}D_\xi^\al (a)D_x^\al(b)\pl. \]
\end{theorem}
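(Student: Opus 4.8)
The plan is to exhibit $c$ by an explicit oscillatory integral, to check the operator identity $Op(c)=Op(a)Op(b)$ as the rigorous form of the heuristic preceding the theorem, to prove $c\in\Sigma^{m+n}$ by adapting Stein's argument \cite{stein} with the asymptotic-degree calculus of Section \ref{sectionasy} playing the bookkeeping role, and finally to extract the expansion by Taylor-developing the transference translates. Concretely I would take as candidate
\[
c:=\frac{1}{(2\pi)^d}\int_{\R^{2d}}\al^2_{\bet}(a)\,\al^1_{\by}(b)\,e^{-i\bet\cdot\by}\,d\bet\,d\by ,
\]
read as an oscillatory integral valued in $\M_{\theta,\theta'}$. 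Granting $c\in\Sigma^{m+n}$ (the core step), the equality $Op(c)=Op(a)Op(b)$ on the core $\S_{\theta,\theta'}$ is obtained directly: expanding $Op(a)\big(Op(b)\la_{\theta,\theta'}(F)\big)$ by Definition \ref{defin}, commuting $\al^2_{\bet}(a)$ past the generators with the intertwining rule $a'\la_\theta(\bet)=\la_\theta(\bet)\al_{\bet}(a')$ used in Proposition \ref{psuedo}, and applying Fubini (legitimate once the integrals are regularized as below) collapses the iterated integral onto $Op(c)\la_{\theta,\theta'}(F)$ after the change of variables $\bet\mapsto\bet+\bxi$, $\by\mapsto\by+\bx$ appearing in the heuristic; equivalently one verifies that $\si_\theta\ten\si_{\theta'}(c)$ is the operator-valued composed symbol of $\si_\theta\ten\si_{\theta'}(a)$ and $\si_\theta\ten\si_{\theta'}(b)$ and uses the reduction of $Op$ to $op_0\ten id$ via $V_\theta\ten id$ recorded in the commuting diagram after Proposition \ref{psuedo}.

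To make the integral meaningful and to bound it, I would insert a Schwartz cutoff $\chi(\e\bet,\e\by)$ with $\chi(0,0)=1$ and integrate by parts before sending $\e\to0$. Using $e^{-i\bet\by}=\bc{\by}^{-2M}(1-\Delta_{\bet})^{M}e^{-i\bet\by}$ transfers $2M$ copies of $D_\xi$ onto $a$; since $D_\xi^\beta a\in\Sigma^{m-|\beta|}$ and elements of $\Sigma^{s}$ with $s\le 0$ are bounded, once $2M\ge m$ the translated symbols stay bounded, uniformly in $\bet$, while the factor $\bc{\by}^{-2M}$ gives $\by$-integrability as soon as $2M>d$. Symmetrically $e^{-i\bet\by}=\bc{\bet}^{-2L}(1-\Delta_{\by})^{L}e^{-i\bet\by}$ transfers $2L$ copies of $D_x$ onto $b$ (which do not change the $\xi$-order, so $\al^1_{\by}(D_x^\delta b)\bc{\xi}^{-n}$ is bounded uniformly in $\by$ because $\al^1_{\by}$ fixes $\bc{\xi}$) and supplies $\bc{\bet}^{-2L}$; taking $2L>d$ gives $\bet$-integrability. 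Hence $c_\e\to c$ in operator norm, uniformly in $\e$ and independently of $\chi$, and $c$ is affiliated to $\R_{\theta,\theta'}$. For the order one differentiates under the integral: $D_x^\gamma$, $D_\xi^\delta$ pass through $\si_\theta\ten\si_{\theta'}$, acting by $D_x^\gamma(\al^2_{\bet}(a))=\al^2_{\bet}(D_x^\gamma a)$, $D_\xi^\delta(\al^2_{\bet}(a))=\al^2_{\bet}(D_\xi^\delta a)\in\Sigma^{m-|\delta|}$ and likewise on $b$, with all translates $\al^2_{\bet}(\cdot)$, $\al^1_{\by}(\cdot)$ distorting $\bc{\xi}$- and $\bc{x}$-powers only polynomially in $\bc{\bet},\bc{\by}$ by Lemma \ref{bi}; re-running the integration-by-parts estimate and repackaging the bounds via the bi-degree characterization of Theorem \ref{char} yields $D_x^\gamma D_\xi^\delta(c)\bc{\xi}^{|\delta|-m-n}\in\R_{\theta,\theta'}$ for all $\gamma,\delta$, i.e.\ $c\in\Sigma^{m+n}$. (If convenient one first reduces to $m=n=0$ by factoring out $\bc{\xi}$-powers, but carrying the orders through directly also works.)

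For the asymptotic expansion I would Taylor-develop $\bet\mapsto\al^2_{\bet}(a)$ at $\bet=0$ using the vector derivatives of Proposition \ref{taylor}: for each $N$,
\[
\al^2_{\bet}(a)=\sum_{|\al|\le N}\frac{(-i)^{|\al|}}{\al!}\,\bet^\al D_\xi^\al(a)+\sum_{|\al|=N+1}\frac{(N{+}1)(-i)^{N+1}}{\al!}\,\bet^\al\!\int_0^1(1-t)^{N}\al^2_{t\bet}(D_\xi^\al a)\,dt .
\]
For the polynomial terms I write $\bet^\al e^{-i\bet\by}=(i\partial_{\by})^\al e^{-i\bet\by}$, integrate by parts in $\by$ using $\partial_{\by_j}\al^1_{\by}(b)=\al^1_{\by}(-iD_{x_j}b)$ (Proposition \ref{strongd}), and apply Fourier inversion $\frac{1}{(2\pi)^d}\int e^{-i\bet\by}(\cdot)\,d\bet\,d\by=(\cdot)|_{\by=0}$, which after collecting signs produces exactly $\frac{i^{|\al|}}{\al!}D_\xi^\al(a)D_x^\al(b)$. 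Absorbing $\bet^\al$ onto $\al^1_{\by}(b)$ in the same way, the remainder term is (up to the compact $t$-integral, uniformly in $t\in[0,1]$) an oscillatory integral of exactly the type handled in the previous paragraph, but with the pair $(D_\xi^\al a,D_x^\al b)\in\Sigma^{m-N-1}\times\Sigma^{n}$ in place of $(a,b)$; the order statement already proved therefore places it in $\Sigma^{(m-N-1)+n}=\Sigma^{m+n-N-1}$. Hence $c-\sum_{|\al|\le N}\frac{i^{|\al|}}{\al!}D_\xi^\al(a)D_x^\al(b)\in\Sigma^{m+n-N-1}\subset\Sigma^{m+n-N}$ for every $N$, which is precisely $c\sim\sum_{\al}\frac{i^{|\al|}}{\al!}D_\xi^\al(a)D_x^\al(b)$.

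The main obstacle is the order estimate $c\in\Sigma^{m+n}$. The difficulty is that the integrand involves the transference translates $\al^2_{\bet}(a)$ and $\al^1_{\by}(b)$, whose operator norms depend on $\bet$ and $\by$; the decay gained from integration by parts in $\bet$ comes at the cost of lowering the $\xi$-order of $a$, while integration by parts in $\by$ gives $\bet$-decay but leaves the order of $b$ untouched, so one must choose the number of integrations by parts and balance the resulting powers of $\bc{\bet}$, $\bc{\by}$ and $\bc{\xi}$ so that the operator-norm integrand is absolutely integrable, uniformly in the regularization parameter, and this must persist under every derivative $D_x^\gamma D_\xi^\delta$. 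This is exactly the point where the asymptotic-degree/bi-degree calculus of Section \ref{sectionasy} (boundedness of $\Sigma^{\le 0}$, stability of orders under $D_x$, their descent under $D_\xi$, and the polynomial comparisons of Lemma \ref{bi}) carries the weight; the comultiplication $\si_\theta\ten\si_{\theta'}$ is used mainly to recast everything as an honest $\R_{\theta,\theta'}$-valued version of Stein's classical argument, but the possibility that $m$ and $n$ are positive means each classical estimate has to be re-examined within this calculus rather than quoted.
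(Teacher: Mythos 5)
Your overall strategy --- regularize the oscillatory integral, establish the order by integration by parts, then Taylor-expand the transference translate --- is the right one and is essentially the same Stein-style argument the paper uses, adapted via the comultiplication and asymptotic-degree calculus. But the \emph{order of steps} in your write-up introduces a real gap at the crucial point.

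The gap is in the integration-by-parts estimate you use to prove $c\in\Sigma^{m+n}$ directly. From $e^{-i\bet\by}=\bc{\by}^{-2M}(1-\Delta_{\bet})^{M}e^{-i\bet\by}$, transferring $(1-\Delta_{\bet})^{M}$ to the symbol produces $\al^{2}_{\bet}\big((1+\Delta_{\xi})^{M}a\big)$, \emph{not} $\al^{2}_{\bet}(\Delta_{\xi}^{M}a)$. This is a sum of $\Delta_\xi^{j}a\in\Sigma^{m-2j}$ for $j=0,\dots,M$, and the $j=0$ term is $a$ itself, still of order $m$. So for $m>0$ the translated integrand does \emph{not} become bounded ``once $2M\ge m$''; the worst term $\al^{2}_{\bet}(a)\bc{\by}^{-2M}$ is not even a bounded operator, and no choice of $M$ helps. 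Your suggested reduction to $m=n=0$ ``by factoring out $\bc{\xi}$-powers'' is also not a clean fix, because the transference does not preserve $\bc{\xi}$: $\al^{2}_{\bet}(a\bc{\xi}^{-m})=\al^{2}_{\bet}(a)\bc{\xi+\bet}^{-m}$, so one picks up $\bet$-dependent factors that must still be balanced.

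What actually closes the argument --- and is what the paper does --- is to make the Taylor expansion of $\al_{\bet}^{2}(a)$ \emph{part of} the convergence proof rather than an afterthought, and to split the $\by$-integral into a piece near $\by=0$ and one bounded away from it. Near $\by=0$ (the paper's $b_\epsilon=\phi(\epsilon\by)\al^1_\by(b)$ with $\phi$ compactly supported and $\equiv 1$ near $0$), Taylor-develop $\al^{2}_{\bet}(a)$: the polynomial terms become exactly the leading $\frac{i^{|\al|}}{\al!}D_\xi^\al(a)D_x^\al(b)$ via the Fourier-inversion identity $\int\bet^\beta\hat{b}_\epsilon(\bet)\,d\bet=(2\pi)^{d}D_x^\beta b$, and the integral remainder involves $D_\xi^\beta a$ with $|\beta|=N+1$, which lies in $\Sigma^{m-N-1}$ and so \emph{is} bounded (after composing with $\bc{\xi}^{-(m-N-1)}$) once $N$ is large enough; at that point your integration-by-parts estimate becomes correct and gives the tame $\Sigma^{m+n-N-1}$ bound. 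Away from $\by=0$ (the paper's annulus $b_2=b_{\epsilon'}-b_\epsilon$, supported in $1/\epsilon<|\by|<2/\epsilon'$), one may safely replace $\bc{\by}^{-2M}(1-\Delta_{\bet})^M$ by $|\by|^{-2m_1}\Delta_{\bet}^{m_1}$, which has \emph{no} constant term, so it really does produce $\al^{2}_{\bet}(\Delta_\xi^{m_1}a)$ with $\Delta_\xi^{m_1}a\in\Sigma^{m-2m_1}$ bounded for $2m_1>m$, while $|\by|^{-2m_1}$ is finite on the support; combined with $(1+\Delta_{\by})^{m_2}\bc{\bet}^{-2m_2}$ this gives both the Cauchy estimate in the regularization parameter and the symbol bound. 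Membership $c\in\Sigma^{m+n}$ then follows because $c$ is the sum of the explicit leading terms (each in $\Sigma^{m+n}$ by the symbol product rules) and a low-order remainder --- not from a one-shot integration by parts applied to $a$ itself. If you reorganize your proposal so that the Taylor expansion and the annular integration by parts jointly prove both convergence and the order estimate, the remaining ingredients you list (the intertwining $a'\la_\theta(\bet)=\la_\theta(\bet)\al_\bet(a')$ to get $Op(c)=Op(a)Op(b)$, differentiation under the integral using $\al^{1}_{\by}$, $\al^{2}_{\bet}$ and Lemma \ref{bi}) do carry through and match the paper.
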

\begin{proof} Let $\phi$ be a positive function on $\R^d$ such that $\phi(\bx)=1$ for $|\bx|\le 1$ and $\phi(\bx)=0$ for $|\bx|> 2$. Write
\[c=\lim_{\epsilon\to 0}\frac{1}{(2\pi)^d}\int \al^2_{\bet}(a)b_\epsilon(\by)e^{-i \bet \cdot \by}d \bet d\by\pl,\]
where for each $\epsilon$, $b_\epsilon(\by)=\phi(\epsilon\by) \al_\by^2(b)$ is compactly supported. This is a Bochner integral, because the integrand function
$(\bet,\by) \mapsto \al^2_{\bet}(a)b_\epsilon(\by)e^{-i \bet \cdot \by}$ is smooth in the Frechet space $\Sigma^{m+n}$ by Proposition \ref{taylor}. We first prove that the above integral converges in $\Sigma^{m+n}$ and admit the series expansion. For the compactly supported $b_\epsilon\in C(\R^d,\Sigma^n)$, the Fourier transform with value in the Frechet space $\Sigma^n$ is well-defined,
\[ \hat{b}_\epsilon(\bet)=\int b_\epsilon(\by)e^{-i\by\bet}d\by\pl.\]
Note that for any compactly supported $b$, $\displaystyle \int b(\by)e^{-i\bet\by}d\bet d\by=(2\pi)^d b(0)$. Then for any $\beta$,
\begin{align}\label{cal}\int \bet^\beta\hat{b}_\epsilon(\bet)d\bet &=(-1)^{|\beta|}\int b_\epsilon(\by)D^\beta_\by (e^{-i\by\bet})d\by d\bet\nonumber
=\int D^\beta_\by(\phi(\epsilon\by ) \al^1_\by(b)) e^{-i\by\bet}d\by d\bet\nonumber
\\ &=\sum_{\beta_1+\beta_2=\beta}\binom{\beta}{\beta_1,\beta_2}\int \epsilon^{|\beta_1|}(D^{\beta_1}\phi)(\epsilon\by) \al^1_\by(D_x^{\beta_2}b) e^{-i\by\bet}d\by d\bet\nonumber
\\ &=(2\pi)^d\sum_{\beta_1+\beta_2=\beta}\binom{\beta}{\beta_1,\beta_2}\epsilon^{|\beta_1|}(D^{\beta_1}\phi)(0) D_x^{\beta_2}b =(2\pi)^dD_x^{\beta}b
\end{align}  We also have
\begin{align*} D_x^\beta D_\xi^\gamma(\hat{b}_\e(\bet))=&D_x^\beta D_\xi^\gamma(\int \phi_\epsilon(\by) \al_\by(b)e^{-i\by\bet}d\by)\\
=&\int \phi_\epsilon(\by) \al_\by(D_x^\beta D_\xi^\gamma b)e^{-i\by\bet}d\by
= \widehat{D_x^\beta D_\xi^\gamma b}_\epsilon(\bet)\pl.
\end{align*}
 We write $c= c_1+c_2$ with
\[c_1=\frac{1}{(2\pi)^d}\int \al^2_{\bet}(a)b_\epsilon(\by)e^{-i \bet \cdot \by}d \bet d\by=\frac{1}{(2\pi)^d}\int \al_{\bet}^2(a)\hat{b}_\epsilon(\bet)d \bet\]
By Proposition \ref{taylor}, we use Taylor expansion with value in the Frechet space $\Sigma^m$,
\begin{align}\label{su}\al_{\bet}(a)=\sum_{|\beta|\le N} \frac{i^{|\beta|}(D_\xi^\beta a)\bet^\beta}{\beta !}+(N+1)\sum_{|\beta|= N+1} \frac{i^{|\beta|}}{\beta!}\bet^\beta\int_{0}^1 \al_{t\bet}(D_\xi^{\beta} a)(1-t)^N d t\pl.\end{align}
Using the calculation \eqref{cal}, the first part leads to
\[\frac{1}{(2\pi)^d}\int \sum_{|\beta|\le N} \frac{D_\xi^\beta a}{\beta !}\bet^\beta\hat{b}_\epsilon(\bet)d \bet=\sum_{|\beta|\le N} \frac{ i^{|\beta|}}{\beta !}D_\xi^\beta a D_x^\beta b \]
which gives the leading terms. For the second term in \eqref{su}, we have
$|\beta|=N+1$ and
\begin{align*}&\norm{\int_{0}^1\al_{t\bet}^2(D_\xi^\al a) (1-t)^N d t\bc{\xi}^{-m+N+1}}{}\\ \le & \int_{0}^{1}(1-t)^N\norm{\al_{t\bet}^2 \big(D_\xi^\beta (a)\bc{\xi}^{-m+N+1}\big) }{}\cdot\norm{\bc{\xi+t\bet}^{m-N-1}\bc{\xi}^{-m+N+1}}{}dt \\ \le & \int_{0}^{1}(1-t)^N\norm{D_\xi^\beta (a)\bc{\xi}^{-m+N+1} }{}\cdot\norm{\bc{\xi+t\bet}^{m-N-1}\bc{\xi}^{-m+N+1}}{}dt \\
\lesssim & \int_{0}^{1}(1-t)^N(t\bc{\bet})^{\lceil -m+N+1\rceil}dt \le A_{N,m}\bc{\bet}^{\lceil -m+N+1\rceil}\pl.
\end{align*}
Here $A_{N,m}$ is some positive constant only depends on $N,m$, and $\lceil r\rceil$ denote the smallest even integer greater than $|r|$.
On the other hand for any $\beta$,
\begin{align*}
\hat{b}_\epsilon(\bet)\bet^\beta=\sum_{\beta_1+\beta_2=\beta}\frac{\beta!}{\beta_1!\beta_2!}\int D_\by^{\beta_1}\phi_\epsilon(\by) \al_\by^2(D_x^{\beta_2}(b)) e^{-i\by\bet} d\by
\end{align*}
For each term
\begin{align*}
&\norm{\bc{\xi}^{m-N-1}  D_\by^{\beta_1}\phi_\epsilon(\by) \al_\by^1(D_x^{\beta_2}(b))\bc{\xi}^{-n-m+N+1}}{}
\\\le & |D_\by^{\beta_1}\phi_\epsilon(\by)|\pl\cdot\norm{\al_\by^1\big(\bc{\xi}^{m-N-1}D_x^{\beta_2}(b)\bc{\xi}^{-n-m+N+1}\big)}{}
\end{align*}
Here we used the assumption that $b, D_x^{\beta_2}(b)\in \Sigma^{n}$. Because $D_\by^{\beta_1}(\phi_\epsilon(\by))$
is a compactly supported function of $\by$, we have for any positive integer $l$,
\[\norm{\bc{\xi}^{m-N-1}\hat{b}_\epsilon(\bet)\bc{\xi}^{-n-m+N+1}}{}\le B_{n,m,N}(1+|\bet|^{-l})\pl,\]
where $B_{l,n,m,N}$ is a constant depending on $(l,n,m,N)$ and $\epsilon$. Thus, by choosing large enough $l$,
\begin{align*}
\norm{\int_{\R^d}\Big( \int_{0}^1 \al_{t\bet}(D^\beta_\xi a)(1-t)^N d t \Big ) \bet^\beta \hat{b}_\epsilon(\bet)d\eta \bc{\xi}^{-m-n+N+1}}{} \lesssim \int \bc{\bet}^{\lceil m-N-1\rceil}(1+|\bet|^{-l})d\bet < \infty \pl.
\end{align*}
Similar argument applies for derivatives,
\begin{align*}
& D_x^{\gamma_1} D_\xi^{\gamma_2}\Big(\int_{\R^d}\big( \int_{0}^1 \al_{t\eta}^2(D_\xi^\beta a)(\bet)(1-t)^N d t \big ) \bet^\beta \hat{b}_\epsilon(\bet)d\bet\Big)\\
\end{align*}
Therefore we obtain that
\begin{align*}c_1=\sum_{|\beta|\le N} \frac{(i)^{-|\beta|}}{\beta !}D_\xi^\beta a D_x^\beta b  + c_3
\end{align*} where $c_3$ is a remainder term in $\Sigma^{n+m-N-1}$.
Now take $\epsilon' <\epsilon$ and \[b_2(\by):=b_{\epsilon'}(\by)-b_\epsilon(\by)=(\phi(\epsilon'\by)-\phi(\epsilon\by))\al_\by(b)\] which is supported on $1/\epsilon<|\by|<2/\epsilon'$. Note that in above argument, we actually show that the singular integral
$\int \al_{\bet}(a)b(\by)e^{ i \bet\cdot \by}d\bet d\by$ converges absolutely if $b$ is compactly supported. Then for each $j$, we can use integration by parts
\begin{align*} \int \al_{\bet}(a)\by_j|\by|^{-2}b_2(\by)e^{ i \bet\cdot \by}d\bet d\by
=&\int \al_{\bet}(a)|\by|^{-2}b_2(\by)D_{\bet_j}e^{ i \bet\cdot \by}d\bet d\by
\\=&\int D_{\bet_j}(\al_{\bet})(a)|\by|^{-2}b_2(\by)e^{ i \bet\cdot \by}d\bet d\by
\\=&\int \al_{\bet}(D_{\xi_j}a)|\by|^{-2}b_2(\by)e^{ i \bet\cdot \by}d\bet d\by\pl.
\end{align*}
Here we used the property $D_{\bet_j}(\al_{\bet})(a)=\al_{\bet}(D_{\xi_j}a).$ Denote $\Delta_{\bet}=\sum_{j}D_{{\bet}_j}^2$, $\Delta_\xi=\sum_{j}D_{\xi_j}^2$ and $\Delta_\by=\sum_{j}D_{\by_j}^2$. Because $\Delta_{\bet} (\al^1_{\bet}(a))=\al_{\bet}^1(\Delta_\xi a)$, using the standard trick in singular integral,
\begin{align*} & \int \al_{\bet}(a)b_2(\by)e^{ i \bet\cdot \by}d\bet d\by
 = \int \al_{\bet}(\Delta_\xi^{m_1} a)|\by|^{-2m_1}b_2(\by)e^{ -i \bet \by}d\bet d\by
\\ =& \int \al_{\bet}(\Delta_\xi^{m_1} a)(1+\Delta_{\by})^{m_2}(|\by|^{-2m_1}b_2(\by))\bc{\bet}^{-2m_2}e^{ -i \bet \by}d\bet d\by
\end{align*}
Here $|\by|^{-2m_1}b_2(\by)$ has no singularity because $b_2$ is supported away from $\by=0$. Because $a\in\Sigma^m, b\in \Sigma^n$,
\[\Delta_\xi^{m_1}(a)\in \Sigma^{m-2m_1}\pl, (1+\Delta_{\by})^{m_2}(|\by|^{-2m_1}b_2(\by))\in \Sigma^{n}\pl.\]
We have
\begin{align}\label{est}
&\norm{\al_{\bet}(\Delta_\xi^{m_1} a)\bc{\xi}^{-m+2m_1}}{}\le \tilde{A}_{m,m_1} \bc{\bet}^{\lceil-m+2m_1\rceil}\pl, \nonumber\\
&\norm{\bc{\xi}^{m-2m_1}(1+\Delta_{\by})^{m_2}(|\by|^{-2m_1}b_2(\by))\bc{\xi}^{-m+2m_1-n}}{}\le \tilde{B}_{m,m_1,n}(1+|\by|)^{-2m_1} {\chi}_{\{ \frac{1}{\epsilon}<|\by|<\frac{2}{\epsilon'}\}}
\end{align}
for some constants $\tilde{A}_{m,m_1}$ and $\tilde{B}_{m,m_1,n}$. We can choose $m_1,m_2$ large enough such that $2m_1>N+1$ and then the integral
\begin{align*}\norm{\int \al_{\bet}(a)b_2(\by)e^{-i \bet\by}d\bet d\by\cdot \bc{\xi}^{-m-n+N+1}}{}\le \int   |{\bet}|^{\lceil-m+2m_1\rceil}\bc{\bet}^{-2m_2} (1+|\by|)^{-2m_1} d\bet d\by <\infty
\end{align*}
converges absolutely. The argument for the derivatives are similar. Hence
\[\int \al_{\bet}(a)b_2(\by)e^{-i \bet\by}d\bet d\by\in \Sigma^{n+m-N-1}\pl,\]
which is of lower order of the leading terms. Note that the above estimates is uniform for $0<\epsilon', \epsilon <1$ and when $\epsilon',\epsilon \to 0$, the norm estimates \eqref{est} goes to $0$. So when $\epsilon\to 0$, the remainder $c_2$
converges to $0$ in $\Sigma^{n+m-N-1}$. This implies
\[c=\lim_{\epsilon \to 0}\int \al_{\bet}^2(a)\phi(\epsilon\by)\al_{\by}^1(b)e^{i\bet \by}d\bet d\by\]
converges in $\Sigma^{m+n}$.

Write $\displaystyle c_\epsilon=\int \al_{\bet}^2(a)\phi(\epsilon\by)\al_{\by}^1(b)e^{i\bet \by}d\bet d\by$. We now show that for any $\la_{\theta,\theta'}(F)\in \S_{\theta,\theta'}$,
\begin{align*}
Op(a)Op(b)\la_{\theta,\theta'}(F)=\lim_{\epsilon\to 0} Op(c_\epsilon)\la_{\theta,\theta'}(F)= Op(c)\la_{\theta,\theta'}(F)
\end{align*}
Indeed, since the integral in $c_\epsilon$ converges absolutely
\begin{align*}
Op(c_\epsilon)\la_{\theta,\theta'}(F)=& \int \al_{\bet_1}^2\Big(\int \phi(\epsilon\by)\al_{\bet}^2(a)\al_{\by}^1(b)e^{-i\bet \by}d\bet d\by\Big)\hat{F}(\bet_1,\by_1)\la_{\theta,\theta'}(\bet_1,\by_1)d\bet_1 d\by_1\\
=& \int\phi(\epsilon\by)e^{-i\bet \by}\al_{\bet+\bet_1}^2(a)\al_{\by}^1\al_{\bet}^2(b)\hat{F}(\bet_1,\by_1)\la_{\theta,\theta'}(\bet_1,\by_1)d\bet_1 d\by_1d\bet d\by\\
=& \int\phi(\epsilon\by)e^{-i(\bxi-\bet_1) \by}\al_{\bxi}^2(a)\al_{\by}^1\al_{\bet_1}^2(b)\hat{F}(\bet_1,\by_1)\la_{\theta,\theta'}(\bet_1,\by_1)d\bet_1 d\by_1d\bxi d\by\\
=& \int\phi(\epsilon\by)e^{-i\bxi\by}\al_{\bxi}^2(a)\al_{\by}^1\Big(\int \al_{\bet_1}^2(b)\hat{F}(\bet_1,\by_1)\la_{\theta,\theta'}(\bet_1,\by_1)d\bet_1 d\by_1\Big)d\bxi d\by\\
=& \int\phi(\epsilon\by)e^{-i\bxi\by}\al_{\bxi}^2(a)\al_{\by}^1\Big(Op(b)\la_{\theta,\theta'}(F)\Big)d\bxi d\by\pl.
\end{align*}
Then it suffices to show that for any $\la_{\theta,\theta'}(G)$,
\[   \lim_{\epsilon\to 0} \int\phi(\epsilon\by)e^{-i\bxi\by}\al_{\bxi}^2(a)\al_{\by}^1\Big(\la_{\theta,\theta'}(G)\Big)d\bet d\by=Op(a)\la_{\theta,\theta'}(G)\pl. \]
Let $\hat{\phi}$ be the Fourier transform of $\phi$.
\begin{align*}
\int\phi(\epsilon\by)e^{-i\bxi\by}\al_{\by}^1(\la_{\theta,\theta'}(G)) d\by
= &\int\phi(\epsilon\by)e^{-i(\bxi-\bet_1)\by}\hat{G}(\bet_1,\by_1) \la_{\theta,\theta'}(\bet_1,\by_1) d\by d\by_1 d\bet_1
\\=& \int \frac{1}{\epsilon^d}\hat{\phi}(\frac{\bxi-\bet_1}{\epsilon})\hat{G}(\bet_1,\by_1) \la_{\theta,\theta'}(\bet_1,\by_1)d\by_1 d\bet_1
\end{align*}
Here $\displaystyle\frac{1}{\epsilon^d}\hat{\phi}(\frac{\cdot}{\epsilon})$ approximates the delta function,
\begin{align*}\int\phi(\epsilon\by)e^{-i\bxi\by}\al_{\bxi}^2(a)\al_{\by}^1\Big(\la_{\theta,\theta'}(G)\Big)d\bet d\by&= \int \frac{1}{\epsilon^d}\hat{\phi}(\frac{\bxi}{\epsilon})Op(\al^2_{\bxi}a)\la_{\theta,\theta'}(G)  d\bxi
\\
= &\int \frac{1}{\epsilon^d}\hat{\phi}(\frac{\bxi}{\epsilon})\al^2_{\bxi}\Big(Op(a)\al^2_{-\bxi}\la_{\theta,\theta'}(G) \Big) d\bxi\pl.
\end{align*}
Since $\bxi\to \al^2_{\bxi}\Big(Op(a)\al^2_{-\bxi}\la_{\theta,\theta'}(G)\Big)$ is continuous in $\S_{\theta,\theta'}$. When $\epsilon \to 0$, the above integral converges to $Op(a)\la_{\theta,\theta'}(G)$ in $\S_{\theta,\theta'}$.
\end{proof}
\subsection{Integrability and trace formula}
In the rest of this section we discuss the integrability of $\Psi$DOs whose symbols is integrable in the first component $\R_\theta$.
\begin{defi}[Tame symbols]
An element $a \in \M_{\theta, \theta'}$ is a {\bf tame symbol} of order $m$ if there exists a $r>d$ such that for any $\al, \beta$ and $\gamma$,
\[\bc{x}^r D_x^\al D_\xi^\beta (a)\bc{\xi}^{|\beta|-m}\]
extends to bounded element in $\R_{\theta,\theta'}$.
We write $\Sigma^m_{tame}$ the set of all tame symbols of order $m$ and $\Sigma_{tame}^{-\infty}:=\cap_{r} \Sigma_{tame}^r$.
\end{defi}
\begin{prop}
A symbol $a\in \Sigma_{tame}^m$ if and only if there exists $r>d$ such that for all $\al,\beta$, $D^\al_x D^\beta_\xi(a)\in O^{-r,m-|\beta|}$. Moreover, if $b\in \Sigma^n$, $ab, ba\in\Sigma_{tame}^{n+m}$.
\end{prop}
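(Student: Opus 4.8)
The plan is to reduce both assertions to the characterization of ordinary symbols already established in Theorem \ref{char}, using multiplication by $\bc{x}^{\pm r}$ as the bridge and Theorem \ref{order} to control the resulting weights.

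\emph{The equivalence.} The implication ``$D_x^\al D_\xi^\beta(a)\in O^{-r,m-|\beta|}$ for all $\al,\beta$ $\Longrightarrow$ $a\in\Sigma_{tame}^m$'' is immediate: choosing the free exponents $(r,0)$ in the defining relation of $O^{-r,m-|\beta|}$ shows that $\bc{x}^{r}D_x^\al D_\xi^\beta(a)\bc{\xi}^{|\beta|-m}$ is bounded, which is precisely the tameness estimate (the extra index $\gamma$ written in that definition is vacuous). For the converse, suppose $a\in\Sigma_{tame}^m$ with parameter $r>d$. I would first show $\bc{x}^{r}a\in\Sigma^m$: since $\M_{\theta,\theta'}$ is an algebra and $\bc{x}^r\in\M_\theta\subset\M_{\theta,\theta'}$, the product $\bc{x}^r a$ lies in $\M_{\theta,\theta'}$; as $D_\xi$ annihilates $\bc{x}^r$, the Leibniz rule gives
\[
D_x^\al D_\xi^\beta(\bc{x}^r a)\,\bc{\xi}^{|\beta|-m}=\sum_{\al_1+\al_2=\al}\binom{\al}{\al_1,\al_2}\big(D_x^{\al_1}(\bc{x}^r)\bc{x}^{-r}\big)\big(\bc{x}^{r}D_x^{\al_2}D_\xi^\beta(a)\bc{\xi}^{|\beta|-m}\big),
\]
where each first factor is bounded because $D_x^{\al_1}(\bc{x}^r)\in O^{r-|\al_1|}\subseteq O^{r}$ by Theorem \ref{order} (and $\bc{x}^{-r}$ is bounded for $r>0$), and each second factor is bounded by tameness. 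Hence $\bc{x}^r a\in\Sigma^m$, and Theorem \ref{char} gives $D_x^{\al_2}D_\xi^\beta(\bc{x}^r a)\in O^{0,m-|\beta|}$ for all $\al_2,\beta$. Writing $a=\bc{x}^{-r}(\bc{x}^r a)$ and applying the Leibniz rule once more, $D_x^\al D_\xi^\beta(a)$ is a finite sum of products $D_x^{\al_1}(\bc{x}^{-r})\,D_x^{\al_2}D_\xi^\beta(\bc{x}^r a)$ with $D_x^{\al_1}(\bc{x}^{-r})\in O^{-r-|\al_1|,0}$ by Theorem \ref{order}; since bi-degrees add under multiplication and $O^{s_1,t}\subseteq O^{s_2,t}$ whenever $s_1\le s_2$, each such product lies in $O^{-r-|\al_1|,\,m-|\beta|}\subseteq O^{-r,m-|\beta|}$, which yields $D_x^\al D_\xi^\beta(a)\in O^{-r,m-|\beta|}$.

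\emph{Closure under multiplication.} Let $a\in\Sigma^m_{tame}$ with parameter $r>d$ and $b\in\Sigma^n$. By the Leibniz rule \eqref{sum}, $D_x^\al D_\xi^\beta(ab)$ is a finite sum of products $D_x^{\al_1}D_\xi^{\beta_1}(a)\,D_x^{\al_2}D_\xi^{\beta_2}(b)$ with $\al_1+\al_2=\al$, $\beta_1+\beta_2=\beta$; by the equivalence just proved $D_x^{\al_1}D_\xi^{\beta_1}(a)\in O^{-r,m-|\beta_1|}$, and by Theorem \ref{char} $D_x^{\al_2}D_\xi^{\beta_2}(b)\in O^{0,n-|\beta_2|}$, so each product lies in $O^{-r,\,m+n-|\beta|}$. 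Thus $D_x^\al D_\xi^\beta(ab)\in O^{-r,m+n-|\beta|}$ for all $\al,\beta$, and the equivalence gives $ab\in\Sigma^{m+n}_{tame}$ (with the same $r>d$); the case of $ba$ is identical, using $O^{0,n-|\beta_2|}\cdot O^{-r,m-|\beta_1|}\subseteq O^{-r,m+n-|\beta|}$. The computations are routine once this reduction is set up; the only step requiring a little care is verifying that $\bc{x}^r a$ is a genuine element of $\M_{\theta,\theta'}$ lying in $\Sigma^m$, so that Theorems \ref{char} and \ref{order} may be applied, together with the bookkeeping of bi-degrees in the Leibniz expansions.
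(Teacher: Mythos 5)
Your proof is correct and takes essentially the same route as the paper, which disposes of this proposition with the one-line remark that it is a direct consequence of Theorem \ref{char}; you have simply spelled out the bookkeeping. Your extra maneuver of passing through $\bc{x}^r a\in\Sigma^m$ before applying Theorem \ref{char} and then peeling off $\bc{x}^{-r}$ via Leibniz is a clean and valid way to carry out the upgrade from the finite family of tameness estimates to the full bi-degree statement, and the observation that the stray index $\gamma$ in the definition of $\Sigma^m_{tame}$ is vacuous is accurate.
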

\begin{proof}This is a direct consequence of Theorem \ref{char}.
\end{proof}
\begin{lemma}Let $a\in L_2(\R_\theta)$ and $b\in L_2(\R_{\theta'})$. Then $ab\in L_2(\R_\Theta)$ and $\norm{ab}{L_2(\R_\Theta)}=\norm{a}{L_2(\R_\theta)}
\norm{b}{L_2(\R_{\theta'})}$.
\end{lemma}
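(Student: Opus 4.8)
The plan is to reduce to quantized Schwartz functions, where the product can be computed explicitly, and then pass to the general case by density.

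\textbf{Step 1: the Schwartz case.} Suppose first $a=\la_\theta(f)$ and $b=\la_{\theta'}(g)$ with $f,g\in\S(\R^d)$, and let $F=f\ten g\in\S(\R^{2d})$ be the function $F(\bx,\bxi)=f(\bx)g(\bxi)$, so that $\hat F(\bet,\by)=\hat f(\bet)\hat g(\by)$. Viewing $\R_\theta,\R_{\theta'}\subset\R_\Theta$ and using Fubini, the quantization $\la_\Theta$ factors as
\[
ab=\la_\theta(f)\la_{\theta'}(g)=\frac{1}{(2\pi)^{2d}}\int_{\R^{2d}}\hat f(\bet)\hat g(\by)\,\la_\theta(\bet)\la_{\theta'}(\by)\,d\bet\,d\by=\la_\Theta(F)\in\S_\Theta\subset L_2(\R_\Theta),
\]
and by the same computation $\la_{\theta,\theta'}(F)=\la_\theta(f)\ten\la_{\theta'}(g)$. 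Applying the Hilbert space isometry $W\colon L_2(\R_\Theta)\to L_2(\R_{\theta,\theta'})$, $W\ket{\la_\Theta(F)}=\ket{\la_{\theta,\theta'}(F)}$, and the identification $L_2(\R_{\theta,\theta'})\cong L_2(\R_\theta)\ten_2 L_2(\R_{\theta'})$ under which $\la_{\theta,\theta'}(F)$ corresponds to the elementary tensor $\ket{\la_\theta(f)}\ten\ket{\la_{\theta'}(g)}$, we get
\[
\norm{ab}{L_2(\R_\Theta)}=\norm{\la_{\theta,\theta'}(F)}{L_2(\R_{\theta,\theta'})}=\norm{\la_\theta(f)}{L_2(\R_\theta)}\,\norm{\la_{\theta'}(g)}{L_2(\R_{\theta'})}=\norm{a}{L_2(\R_\theta)}\,\norm{b}{L_2(\R_{\theta'})}.
\]
This proves the lemma when $a\in\S_\theta$ and $b\in\S_{\theta'}$, and moreover identifies $\ket{ab}=W^*\big(\ket{\la_\theta(f)}\ten\ket{\la_{\theta'}(g)}\big)$.

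\textbf{Step 2: density.} By Step 1 the bilinear map $(\la_\theta(f),\la_{\theta'}(g))\mapsto\la_\theta(f)\la_{\theta'}(g)$ from $\S_\theta\times\S_{\theta'}$ to $L_2(\R_\Theta)$ satisfies $\norm{\la_\theta(f)\la_{\theta'}(g)}{2}=\norm{\la_\theta(f)}{2}\,\norm{\la_{\theta'}(g)}{2}$; splitting a difference as $\la_\theta(f_1-f_2)\la_{\theta'}(g_1)+\la_\theta(f_2)\la_{\theta'}(g_1-g_2)$ shows it is uniformly continuous on products of balls, so — since $\S_\theta$ is dense in $L_2(\R_\theta)$ and $\S_{\theta'}$ is dense in $L_2(\R_{\theta'})$ — it extends uniquely to a continuous bilinear map $\Phi\colon L_2(\R_\theta)\times L_2(\R_{\theta'})\to L_2(\R_\Theta)$, namely $\Phi(a,b)=W^*\big(\ket a\ten\ket b\big)$, and by continuity $\norm{\Phi(a,b)}{L_2(\R_\Theta)}=\norm{a}{L_2(\R_\theta)}\norm{b}{L_2(\R_{\theta'})}$. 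It then remains to check that $\Phi(a,b)$ coincides with the operator product $ab$ as a closed operator affiliated to $\R_\Theta$: choosing $f_n,g_n\in\S(\R^d)$ with $\la_\theta(f_n)\to a$ in $L_2(\R_\theta)$ and $\la_{\theta'}(g_n)\to b$ in $L_2(\R_{\theta'})$, one has $\la_\theta(f_n)\la_{\theta'}(g_n)\to\Phi(a,b)$ in $L_2(\R_\Theta)$, and one verifies — using Proposition \ref{strongd}, the continuity of the translation actions $\al^1,\al^2$, and the boundedness estimates of Section \ref{sectionasy} — that $\la_\theta(f_n)\la_{\theta'}(g_n)\,\xi\to(ab)\xi$ for $\xi$ in the common core $\S_\Theta$, which pins down the two. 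Hence $ab=\Phi(a,b)\in L_2(\R_\Theta)$ with the asserted norm equality.

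\textbf{Main obstacle.} The delicate point is the last identification in Step 2. In contrast to the commuting case, the individual factors $a,b$ are in general \emph{not} $\tau_\Theta$-measurable: for instance when $\theta=\theta'=0$ one has $\R_\Theta\cong B(L_2(\R^d))$ with the $x_j,\xi_j$ realized as unbounded multiplication and Fourier-multiplier operators, so a generic $a\in L_2(\R_\theta)$, viewed inside $\R_\Theta$, lies in no $L_{p,\infty}(\R_\Theta)$. Consequently one cannot appeal to joint continuity of multiplication in the measure topology of $\R_\Theta$; the product $ab$ is well-behaved only by a Cwikel-type cancellation (``a function of position times a function of momentum is Hilbert--Schmidt''), which is exactly what the tensor-product/GNS picture provided by $W$ encodes. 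Supplying a core for $ab$ on which the approximants $\la_\theta(f_n)\la_{\theta'}(g_n)$ converge strongly is therefore the only step demanding genuine care; the rest is the bookkeeping of Step 1.
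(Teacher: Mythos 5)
Your proof is correct and follows essentially the same underlying idea as the paper: both rely on the $L_2$-isometry of the Weyl quantization map. Where you differ is in the bookkeeping: the paper establishes the Schwartz-level identity by proving the trace factorization $\tau_\Theta(\la_\theta(f)\la_{\theta'}(g))=\tau_\theta(\la_\theta(f))\,\tau_{\theta'}(\la_{\theta'}(g))$ and then applying trace cyclicity to $\tau_\Theta(\la_{\theta'}(g)^*\la_\theta(f)^*\la_\theta(f)\la_{\theta'}(g))=\tau_\Theta(\la_\theta(f)^*\la_\theta(f)\la_{\theta'}(g)\la_{\theta'}(g)^*)$, whereas you pass through the explicit isometry $W\colon L_2(\R_\Theta)\to L_2(\R_{\theta,\theta'})\cong L_2(\R_\theta)\ten_2 L_2(\R_{\theta'})$ and the factorization $\la_\theta(f)\la_{\theta'}(g)=\la_\Theta(f\ten g)$. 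These are two presentations of the same observation, and both are correct; the paper's version is more elementary (no recourse to Proposition \ref{cothe1}), yours is arguably more transparent about why the norm factors.

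Your ``main obstacle'' paragraph correctly flags a genuine subtlety that the paper's one-line ``follows from density'' glosses over, namely that $a$ and $b$ individually are typically not $\tau_\Theta$-measurable, so that the limit $\Phi(a,b)=W^*(\ket{a}\ten\ket{b})$ cannot be matched with the operator product $ab$ by a routine joint-continuity argument in the measure topology of $\R_\Theta$. However, the resolution you sketch in Step 2 --- checking $\la_\theta(f_n)\la_{\theta'}(g_n)\,\xi\to(ab)\xi$ strongly for $\xi\in\S_\Theta$ --- is not clearly available: for fully unbounded $a\in L_2(\R_\theta)$ and $b\in L_2(\R_{\theta'})$, it is not evident that $b$ maps $\S_\Theta$ into the domain of $a$, or even that $a$ and $b$ are densely defined as individual operators on $L_2(\R_\Theta)$. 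The citations you invoke (Proposition \ref{strongd} and the estimates of Section \ref{sectionasy}) concern derivative convergence and $\bc{x}$-degree bounds, which do not directly supply such a core. In practice this is harmless --- the lemma is applied in the paper (Corollary \ref{int}, Lemma \ref{prodint}) to factors like $\bc{x}^{-r}$ and $\bc{\xi}^{m}$ that are at least bounded, so the product is unambiguous --- but if you want the general statement to assert the operator-theoretic identification, the core argument needs to be given explicitly rather than deferred to ``one verifies''.
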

\begin{proof}It can be verified from the definition of $tr_\Theta$ that for $f\in \S_\theta, g\in \S_{\theta'}$
\[tr_\Theta(\la_\theta(f)\la_{\theta'}(g))=tr_\theta(\la_\theta(f))tr_{\theta'}(\la_\theta(g))\pl.\]
Then we have
\begin{align*}
\norm{\la_\theta(f)\la_{\theta'}(g)}{L_2(\R_\Theta)}^2
=& tr_\Theta(\la_{\theta'}(g)^*\la_\theta(f)^*\la_\theta(f)\la_{\theta'}(g))
= tr_\Theta(\la_\theta(f)^*\la_\theta(f)\la_{\theta'}(g)\la_{\theta'}(g)^*)
\\=& tr_\theta(\la_\theta(f)^*\la_\theta(f))tr_{\theta'}(\la_{\theta'}(g)\la_{\theta'}(g)^*)
\\=&\norm{\la_\theta(f)}{L_2(\R_\theta)}^2\norm{\la_{\theta'}(g)}{L_2(\R_{\theta'})}^2
\end{align*}
The assertion for general $a\in L_2(\R_\theta), b\in L_2(\R_{\theta'})$ follows from density.
\end{proof}
\begin{cor}\label{int}Let $a\in S_{tame}^m$. Then
\begin{enumerate}
\item[i)]
$Op(a)\in L_2(\R_\Theta)$ if $m<-\frac{d}{2}$;
\item[ii)]
$Op(a)\in L_1(\R_\Theta)$ if $m<-d$.
\end{enumerate}
\end{cor}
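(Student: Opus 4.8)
The idea is to transport the question from the operator $Op(a)$, which is affiliated to $\R_\Theta$, back to the symbol $a$ itself, using that the quantization is essentially a Plancherel isometry in degree zero. Recall from Proposition \ref{psuedo} that $Op(a_1\ten a_2)=\si_\Theta(a_1 a_2)$ for $a_1\in\R_\theta$, $a_2\in\R_{\theta'}$. Combining this with the preceding lemma and the identity $\tau_\Theta\big((a_1^*b_1)(b_2 a_2^*)\big)=\tau_\theta(a_1^*b_1)\,\tau_{\theta'}(a_2^*b_2)$ (traciality together with $\tau_\Theta(\la_\theta(f)\la_{\theta'}(g))=\tau_\theta(\la_\theta(f))\tau_{\theta'}(\la_{\theta'}(g))$), one checks that the ``dequantization'' $a_1\ten a_2\mapsto a_1 a_2$ is isometric from the algebraic tensor product $L_2(\R_\theta)\ten L_2(\R_{\theta'})$ into $L_2(\R_\Theta)$; since $\si_\Theta$ is an isometric normal $*$-homomorphism, $Op$ extends to an isometry $L_2(\R_{\theta,\theta'})\to L_2(\R_\Theta)$, and a routine approximation by $a\,\la_{\theta,\theta'}(\check{\phi}_n)\in\S_{\theta,\theta'}$ (the approximate units of Proposition \ref{ai}) shows this extension agrees with the operator of Definition \ref{defin} whenever the symbol happens to lie in $L_2(\R_{\theta,\theta'})$. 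So for (i) it suffices to prove that a tame symbol of order $m<-\tfrac d2$ belongs to $L_2(\R_{\theta,\theta'})$.

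For that I would use the bi-degree description of tameness from the preceding proposition (via Theorem \ref{char}): if $a\in\Sigma^m_{tame}$ then $a\in O^{-r,m}$ for some $r>d$, and choosing the parameters $s'=r$, $r'=-m$ in the definition of $O^{-r,m}$ shows that $\bc{x}^{r}\bc{\xi}^{-m}a=(\bc{x}^{r}\ten\bc{\xi}^{-m})a$ is bounded. Hence $a=(\bc{x}^{-r}\ten\bc{\xi}^{m})\,c$ with $c:=(\bc{x}^{r}\ten\bc{\xi}^{-m})a\in\R_{\theta,\theta'}$ bounded. By Proposition \ref{interg}, $\bc{x}^{-1}\in L_{d,\infty}(\R_\theta)$ is bounded, so $\bc{x}^{-\rho}\in L_q(\R_\theta)$ for every $\rho>d/q$, and likewise $\bc{\xi}^{m}\in L_q(\R_{\theta'})$ for $|m|>d/q$; since $\|u\ten v\|_{L_q(\R_{\theta,\theta'})}=\|u\|_{L_q(\R_\theta)}\|v\|_{L_q(\R_{\theta'})}$, this gives $\bc{x}^{-r}\ten\bc{\xi}^{m}\in L_2(\R_{\theta,\theta'})$ (using $r>d>\tfrac d2$ and $|m|>\tfrac d2$), whence $a=(\bc{x}^{-r}\ten\bc{\xi}^{m})c\in L_2(\R_{\theta,\theta'})$ because $L_2\cdot L_\infty\subset L_2$. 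This proves (i). Running the same computation with $q=1$, the hypothesis $m<-d$ (and the $r>d$ built into tameness) gives $\bc{x}^{-r}\ten\bc{\xi}^{m}\in L_1(\R_{\theta,\theta'})$ and hence $a\in L_1(\R_{\theta,\theta'})$.

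For (ii) one cannot finish by the same soft transfer, because the dequantization is not multiplicative and is \emph{not} bounded on all of $L_1$ -- already for $\theta=\theta'=0$ this is the classical fact that an $L^1(\R^{2d})$ symbol need not Weyl-quantize to a trace-class operator -- so the smoothness encoded in ``tame'' must enter. Instead I would factor $Op(a)$ as a product of two pieces of half the order. Writing $a=(\bc{x}^{-r}\ten\bc{\xi}^m)c$ as above and splitting the weight, $\bc{x}^{-r}\ten\bc{\xi}^m=(\bc{x}^{-r/2}\ten\bc{\xi}^{m/2})(\bc{x}^{-r/2}\ten\bc{\xi}^{m/2})$, set $g_1=\bc{x}^{-r/2}\ten\bc{\xi}^{m/2}$ and $g_2=(\bc{x}^{-r/2}\ten\bc{\xi}^{m/2})c$; both are tame symbols of order $m/2<-\tfrac d2$ with $x$-decay $r/2>\tfrac d2$, so $Op(g_1),Op(g_2)\in L_2(\R_\Theta)$ by part (i). Since $Op(g_1)=\si_\Theta(\bc{x}^{-r/2}\bc{\xi}^{m/2})$ (a product of two positive injective operators in $\R_\Theta$) is invertible as a closed operator, the composition calculus of Theorem \ref{compo} produces a symbol $b_2$ of order $m/2$ with $Op(b_2)=Op(g_1)^{-1}Op(a)$; tracking the $x$- and $\xi$-degrees through the composition (the bi-degree bookkeeping behind Theorem \ref{compo}) shows $b_2$ is tame of order $m/2<-\tfrac d2$ with $x$-decay $\ge r/2>\tfrac d2$, hence $Op(b_2)\in L_2(\R_\Theta)$ by (i). Then $Op(a)=Op(g_1)Op(b_2)\in L_2(\R_\Theta)\cdot L_2(\R_\Theta)\subset L_1(\R_\Theta)$.

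The main obstacle is exactly this last step of (ii): one must realise $Op(a)$ as a genuine composition of two half-order $\Psi$DOs, rather than invoke a (false) $L_1$-boundedness of the dequantization, and the delicate point is to verify that inverting $Op(g_1)$ stays inside a symbol class on which Theorem \ref{compo} applies and that the resulting $b_2$ remains tame of order $m/2$. Part (i), by contrast, is essentially immediate once the $L_2$-Plancherel property of $Op$ and the factorization $a=(\bc{x}^{-r}\ten\bc{\xi}^m)c$ are in hand.
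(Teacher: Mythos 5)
Your proposal is correct; for (i) it takes a genuinely different route, and for (ii) a similar but in fact sharper one. The paper proves (i) without ever invoking the Plancherel property of the quantization: it factors
\[Op(a)=\big(\bc{x}^{-r}\bc{\xi}^{m}\big)\cdot\big(\bc{\xi}^{-m}Op(\bc{x}^{r}a)\big)\pl,\]
the first factor lying in $L_2(\R_\Theta)$ by the preceding lemma and the second being a zero-order $\Psi$DO, bounded by Theorem~\ref{l2}. You instead place the symbol $a$ itself in $L_2(\R_{\theta,\theta'})$ and transport it across $W$, which avoids Theorem~\ref{l2} entirely. The ``routine approximation'' does go through, though it deserves the half line it needs: with $a_n=a\la_{\theta,\theta'}(\check{\phi}_n)$, Minkowski's inequality in the defining integral gives $\norm{Op(a_n-a)\la_{\theta,\theta'}(G)}{2}\le\norm{a_n-a}{2}\norm{\hat{G}}{1}\to 0$, so $Op(a_n)\to Op(a)$ strongly on $\S_{\theta,\theta'}$, while $W^*\ket{a_n}\to W^*\ket{a}$ in $L_2(\R_\Theta)$ and $Op(a_n)=\si_\Theta\big(W^*\ket{a_n}\big)$ for the Schwartz elements $a_n$, so the two constructions agree.

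For (ii) both proofs share the same architecture --- express $Op(a)$ as a product of two Hilbert--Schmidt $\Psi$DOs --- and differ only in the choice of split. You halve the $\bc{x}$-weight, writing $Op(a)=(\bc{x}^{-r/2}\bc{\xi}^{m/2})\big(\bc{\xi}^{-m/2}Op(\bc{x}^{r/2}a)\big)$, so the second factor's symbol keeps $x$-decay $r/2>d/2$ no matter how negative $m$ is. The paper instead sets $n=m/2$ and writes $Op(a)=(\bc{x}^{m/2}\bc{\xi}^{m/2})\big(\bc{\xi}^{-m/2}Op(\bc{x}^{-m/2}a)\big)$; the second factor's symbol there has $x$-decay $r-|m|/2$, which for $|m|\gg d$ falls below $d/2$, so your split is the more robust one. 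Both proofs then cite ``part (i)'' for a second factor whose decay is only guaranteed $>d/2$, not the $>d$ written into the definition of $\Sigma^m_{tame}$; what is really used is the step inside the proof of (i) that only needs $\bc{x}^{-\rho}\in L_2(\R_\theta)$, i.e.\ $\rho>d/2$. Two minor points: the $b_2$ produced by Theorem~\ref{compo} from $Op(g_1)^{-1}Op(a)$ is not your cofactor $g_2$ (the composition introduces lower-order correction terms), so $g_2$ is a red herring and should be dropped; and inverting $Op(g_1)$ is not a delicate step --- it simply moves $\bc{\xi}^{-m/2}$ and $\bc{x}^{r/2}$ to the other side, reducing the claim to a direct application of Theorem~\ref{compo} to the symbols $\bc{\xi}^{-m/2}\in\Sigma^{-m/2}$ and $\bc{x}^{r/2}a\in\Sigma^{m}$.
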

\begin{proof}We know from the algebraic property that $Op(\la_{\theta}(f_1)\ten \la_{\theta'}(f_2))=\la_{\theta}(f_1) \la_{\theta'}(f_2)$ for $f_1,f_2\in \S(\R^d)$. The $Op$ is a $L_2$-isometry and trace preserving on $\S_{\theta,\theta'}$. Let $a\in \Sigma_{tame}^m$. Then for some $r>d$,
\begin{align*}Op(a)=&\bc{x}^{-r}\bc{\xi}^{m}\bc{\xi}^{-m}\bc{x}^{r}Op(a )=\bc{x}^{-r}\bc{\xi}^{m}\bc{\xi}^{-m}Op(\bc{x}^{r}a)\\=&\Big(\bc{x}^{-r}\bc{\xi}^{m}\Big)\Big(\bc{\xi}^{-m}Op(\bc{x}^{r}a)\Big)\pl.\end{align*}
By symbol calculus, $\bc{\xi}^{-m}Op(\bc{x}^{r}a)$ is a $\Psi$DO of order $0$ hence in $\R_\Theta$. For $m<-d/2$, $\norm{\bc{\xi}^{m}}{L_2(\R_{\theta'})}<\infty$ and $\norm{\bc{x}^{-r}}{L_2(\R_{\theta})}<\infty$. Then $\bc{x}^{-r}\bc{\xi}^{m}\in L_2(\R_\Theta)$ and
\begin{align*}\norm{Op(a)}{2}\le \norm{\bc{x}^{-r}\bc{\xi}^{m}}{2}\norm{\bc{\xi}^{-m}Op(\bc{x}^{r}a)}{\infty}
\end{align*}
For $m<-d$, choose $n=\frac{m}{2}$,
\begin{align*}Op(a)=\Big(\bc{x}^{n}\bc{\xi}^{n}\Big)\Big(\bc{\xi}^{-n}Op(\bc{x}^{-n}a)\Big)\pl.\end{align*}
$\bc{\xi}^{-n}Op(\bc{x}^{-n}a)$ is a tame $\Psi$DO of order less than $d/2$ hence in $L_2(\R_\Theta)$ and $\bc{x}^{-n}\bc{\xi}^{-n}$ is also in $L_2(\R_\Theta)$ by the discussion in $i)$.
 \end{proof}
We end this section with the trace formula.
\begin{prop}\label{trin}Suppose a symbol $a\in L_1(\R_{\theta,\theta'})$ and its operator $Op(a)\in L_1(\R_\Theta)$. Then
\[\tau_\Theta(Op(a))=\tau_{\theta,\theta'}(a)\pl.\]
\end{prop}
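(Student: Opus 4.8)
The plan is to check the identity first on the quantized Schwartz class $\S_{\theta,\theta'}$, where both sides are computed explicitly, and then to extend it to the general case by a density argument performed in a topology that controls the symbol and its pseudodifferential operator simultaneously.

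For $a=\la_{\theta,\theta'}(F)$ with $F\in S(\R^{2d})$, unwinding Definition \ref{defin} (or applying Proposition \ref{psuedo} to the elementary tensors $\la_\theta(\bet)\ten\la_{\theta'}(\by)$ and integrating against $\hat F$, using continuity of $Op$) gives $Op(\la_{\theta,\theta'}(F))=\si_\Theta(\la_\Theta(F))$, so that $W^*Op(\la_{\theta,\theta'}(F))W$ is left multiplication by $\la_\Theta(F)$ on $L_2(\R_\Theta)$, with $W$ the isometry of Proposition \ref{cothe1}. Since this left regular representation realizes $\R_\Theta$ in standard form with respect to $\tau_\Theta$, the operator trace coincides with the von Neumann algebra trace, so $\tau_\Theta(Op(\la_{\theta,\theta'}(F)))=\tau_\Theta(\la_\Theta(F))=\int_{\R^{2d}}F$; on the other hand $\tau_{\theta,\theta'}(\la_{\theta,\theta'}(F))=\int_{\R^{2d}}F$ by the defining formula of the canonical trace. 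Hence the two sides agree on $\S_{\theta,\theta'}$.

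For the general case, set $\mathcal D:=\{\,b\in\M_{\theta,\theta'}: b\in L_1(\R_{\theta,\theta'}),\ Op(b)\in L_1(\R_\Theta)\,\}$ with the norm $\|b\|_{\mathcal D}:=\|b\|_{L_1(\R_{\theta,\theta'})}+\|Op(b)\|_{L_1(\R_\Theta)}$. Both $b\mapsto\tau_{\theta,\theta'}(b)$ and $b\mapsto\tau_\Theta(Op(b))$ are $\|\cdot\|_{\mathcal D}$-continuous (as $|\tau(\cdot)|\le\|\cdot\|_1$), and by the previous step they agree on $\S_{\theta,\theta'}\subset\mathcal D$; so it suffices to show $\S_{\theta,\theta'}$ is $\|\cdot\|_{\mathcal D}$-dense in $\mathcal D$. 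Given $a\in\mathcal D$ (so $a\in\Sigma^m$ for some $m$), I would use the $\theta$-independent approximate identity $(\check\phi_n)\subset S(\R^{2d})$ of Proposition \ref{ai} in dimension $2d$. On the operator side, $Op(a)\,\si_\Theta(\la_\Theta(\check\phi_n))=Op(a)\,Op(\la_{\theta,\theta'}(\check\phi_n))=Op\big(a\#\la_{\theta,\theta'}(\check\phi_n)\big)$ by Theorem \ref{compo}, and this converges to $Op(a)$ in $L_1(\R_\Theta)$, because under $W$ it corresponds to right multiplication of $\tilde a:=W^*Op(a)W\in L_1(\R_\Theta)$ by $\la_\Theta(\check\phi_n)$, which is a right approximate identity for $L_1(\R_\Theta)$. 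On the symbol side, $c_n:=a\#\la_{\theta,\theta'}(\check\phi_n)$ has, by the expansion of Theorem \ref{compo}, leading term $a\,\la_{\theta,\theta'}(\check\phi_n)$ (which tends to $a$ in $L_1(\R_{\theta,\theta'})$) plus corrections built from $D_x^\beta\la_{\theta,\theta'}(\check\phi_n)=\la_{\theta,\theta'}(D^\beta\check\phi_n)$ with $|\beta|\ge1$; since $\check\phi_n(\bz)$ is a spread-out mollifier, $\|D^\beta\check\phi_n\|_\infty\to0$ and $\|\la_{\theta,\theta'}(D^\beta\check\phi_n)\|_\infty\to0$, so one checks $c_n\to a$ in $L_1(\R_{\theta,\theta'})$; and each $c_n$ — already a smoothing symbol — can be further approximated in $\|\cdot\|_{\mathcal D}$ by elements of $\S_{\theta,\theta'}$ (using, e.g., the decomposition $\bc{x}^{-r}\bc{\xi}^{s}b$ of tame-type symbols of Corollary \ref{int} together with $\bc{x}^{-r}\S_\theta\subset\S_\theta$, $\bc{\xi}^{s}\S_{\theta'}\subset\S_{\theta'}$ from Proposition \ref{indu}, and the hypothesis $Op(c_n)\in L_1(\R_\Theta)$ to control the $\bc{x}$-directions).

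The main obstacle is exactly this simultaneous control: $Op$ is $L_2$-isometric but is \emph{not} bounded $L_1(\R_{\theta,\theta'})\to L_1(\R_\Theta)$, so a bare $L_1(\R_{\theta,\theta'})$-approximation of $a$ need not give an $L_1(\R_\Theta)$-approximation of $Op(a)$. The remedy is to approximate \emph{on the $\R_\Theta$ side}, by composing $Op(a)$ with quantized mollifiers, which is an $L_1(\R_\Theta)$-continuous operation that, via the composition formula, is realized by an honest modification $c_n$ of the symbol; one must then verify that the same $c_n$ converges to $a$ in $L_1(\R_{\theta,\theta'})$ and reduce to a class on which the base-case computation applies verbatim. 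Making these two convergences compatible is the technically delicate point, and I expect it to require a careful bookkeeping of the error terms in Theorem \ref{compo} against the scaling of the mollifier.
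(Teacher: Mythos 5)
Your base case is correct: for $a=\la_{\theta,\theta'}(F)$ one indeed has $Op(a)=\si_\Theta(\la_\Theta(F))$ and both sides equal $\int_{\R^{2d}}F$, by the definition of $\tau_\Theta$ and $\tau_{\theta,\theta'}$. The extension step, however, has a genuine gap, which you yourself flag. To push the identity from $\S_{\theta,\theta'}$ to all of $\mathcal D$ you would need $\S_{\theta,\theta'}$ to be dense in $\mathcal D$ in the combined norm $\|\cdot\|_{L_1(\R_{\theta,\theta'})}+\|Op(\cdot)\|_{L_1(\R_\Theta)}$, and your mechanism for producing the approximants $c_n=a\#\la_{\theta,\theta'}(\check\phi_n)$ does not run under the stated hypotheses. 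Applying Theorem~\ref{compo} already requires $a\in\Sigma^m$ for some $m$ (the paper's proof uses only $a\in L_1(\R_{\theta,\theta'})$ and $Op(a)\in L_1(\R_\Theta)$, so one should not smuggle in an extra symbol-class assumption). Worse, even granting $a\in\Sigma^m$, the correction terms $D_\xi^\alpha(a)\,D_x^\alpha(\la_{\theta,\theta'}(\check\phi_n))$ with $|\alpha|\ge1$ need $D_\xi^\alpha(a)\in L_1(\R_{\theta,\theta'})$ for your sup-norm decay of $D^\alpha\check\phi_n$ to yield $L_1$ convergence; nothing in the hypotheses gives you that. And the remainder in the asymptotic expansion lands in $\Sigma^{m-N-1}$, which is a boundedness class, not an $L_1$ class — passing from order bounds to trace-class bounds is precisely what Corollary~\ref{int} requires additional (tame) decay in the $x$-variable for. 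So the two simultaneous convergences you need are not established, and I do not see a cheap way to establish them.

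The paper sidesteps all of this by not reducing to the Schwartz class at all. Instead, for arbitrary $a\in L_1(\R_{\theta,\theta'})$ and any $F\in\S(\R^{2d})$, it evaluates $\tau_\Theta\big(Op(a)\la_\Theta(F)\big)$ directly from the singular-integral definition of $Op$, using Fubini (valid because $\|\al^2_\eta(a)\la_{\theta,\theta'}(\eta,y)\|_1=\|a\|_1$ with $\hat F\in L_1$) and the trace invariance of the transference action. This produces the closed identity $\tau_\Theta\big(Op(a)\la_\Theta(F)\big)=\tau_{\theta,\theta'}\big(a\,\la_{\theta,\theta'}(F')\big)$ with $\hat{F'}(\eta,y)=e^{-i\eta y}\hat F(\eta,y)$. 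It then takes $F=F_n$ to be the approximate identity of Proposition~\ref{ai} with $\hat F_n$ supported near the origin, so that $\|\la_{\theta,\theta'}(F_n)-\la_{\theta,\theta'}(F'_n)\|_\infty\le\|\hat F_n-\hat F'_n\|_1\to0$ and the twist $e^{-i\eta y}$ disappears in the limit. The only continuity needed is $L_1$-continuity of right multiplication on both sides — no density of $\S_{\theta,\theta'}$ in $\mathcal D$, no composition formula, no control of remainder terms. I would recommend you abandon the two-step base-case/density scheme and carry out this direct computation instead.
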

\begin{proof}
Using the definition of $Op(a)$,
\begin{align*}\tau_\Theta(Op(a)\la_\Theta(F))=&\tau_{\theta,\theta'}\Big( \int_{\R^{2d}} \hat{F}(\bet,\by)\al^2_{\bet}(a)\la_{\theta,\theta'}(\bet,\by)d\bet d\by \Big)
\\ =& \int_{\R^{2d}} \hat{F}(\bet,\by)\Big(\tau_{\theta,\theta'}  ( \al^2_{\bet}(a)\la_{\theta,\theta'}(\bet,\by)) \Big) d\bet d\by
\\ =& \int_{\R^{2d}} \hat{F}(\bet,\by)\tau_{\theta,\theta'}\Big(  a\al_{-\bet}^2(\la_{\theta,\theta'}(\bet,\by)) \Big) d\bet d\by
\\ =& \int_{\R^{2d}} \hat{F}(\bet,\by)e^{-i\bet\by}\Big(\tau_{\theta,\theta'}  ( a\la_{\theta,\theta'}(\bet,\by))\Big) d\bet d\by
\\ =& \tau_{\theta,\theta'}  ( a \la_{\theta,\theta'}(F'))\pl,
\end{align*}
where $F'$ has the Fourier transform $\hat{F'}(\bet,\by)=\hat{F}(\bet,\by)e^{-i\bet\by}$.
Here we use the Fubini theorem because $a\in L_1(\R_{\theta,\theta'})$. Let $F_n\in \S(\R^{2d})$ be a sequence of Schwartz function in Proposition \ref{ai}. Then $\la_\Theta(F_n)$ (resp. $\la_{\theta,\theta'}(F_n)$) is an approximation of identity in $L_1(\R_\Theta)$ (resp. $L_1(\R_{\theta,\theta'})$). Take $F_n'\in \S(\R^{2d})$ such that $\hat{F_n'}(\bet,\by)=\hat{F_n}(\bet,\by)e^{-i\bet\by}$. Note that $\norm{\hat{F}_n}{1}=1$ and $\hat{F}_n$ is supported in $|(\bet,\by)|\le \frac{1}{n}$. When $n\to 1$,
\[\norm{\la_{\theta,\theta'}(F_n)-\la_{\theta,\theta'}(F_n')}{\infty}\le \norm{\hat{F'}_n-\hat{F}_n}{1}=\int_{\R^{2d}} \hat{F}_n(\bet,\by)|1-e^{-i\bet\by}|d\bet d\by\to 0 \pl. \]
 Therefore,
\begin{align*}
\tau_\Theta(Op(a))=&\lim_{n\to \infty} \tau_\Theta(Op(a)\la_\Theta(F_n))=\lim_{n\to \infty} \tau_{\theta,\theta'}(a\la_{\theta,\theta'}(F_n'))=\lim_{n\to \infty} \tau_{\theta,\theta'}(a\la_{\theta,\theta'}(F_n))\nonumber\\=&\tau_{\theta,\theta'}(a)\pl . \qedhere
\end{align*}
\end{proof}

\section{Local Index formula}\label{sectionloc}In this section we discuss the spectral triple structure on $\R_\theta$ equipped with noncommuting partial derivatives. We first recall the definitions of semi-finite spectral triple from \cite{CGRSmemo}.
  We shall show that the non-commuting derivatives in Section \ref{phido} gives a natural example of semi-finite spectral triple. The main results of this chapter is a simplified index formula and we calculate it for the Bott projector as an example.

\subsection{Semifinite spectral triple} Let $\N$ be a von Neumann algebra equipped with a normal faithful semi-finite trace $\tau$. The $\tau$-compact operators $ \K(\N,\tau)$ is the norm completion of $L_1(\N,\tau)\cap \N$ in $\N$. In our case $\K(\R_\theta,\tau_\theta)=\E_\theta$. The following definitions of semi-finite spectral triple is from \cite{CGRSmemo}.
\begin{defi}A semi-finite spectral triple $(\A,H,D)$, relative to a semi-finite tracial von Neumann algebra $(\N,\tau)$, is by given a Hilbert space $H$, a $*$-subalgebra $\A$ of $\N$ acting on $H$, and a densely defined unbounded self-adjoint operator $D$ affiliated to $\N$ such that
\begin{enumerate}
\item[i)] $a\cdot \operatorname{dom} D\subset \operatorname{dom} D$ for all $a\in \A$, so that $da:=[D,a]$ is densely defined. Moreover, $da$ extends to a bounded operator in $\N$ for all $a\in \A$;
\item[ii)] $a(1+D^2)^{-1/2}\in \K(\N,\tau)$.
\end{enumerate}
$(\A,H,D)$ is {\bf even} if there is an operator $\gamma\in \N$ such that for all $a\in \A$, \[\gamma=\gamma^*, \gamma^2=1, \gamma a=a \gamma , \text{ and }  D\gamma+\gamma D=0.\]
$(\A,H,D)$ is {\bf finitely summable} if there exists $s>0$ such that ${a(1+D^2)^{-\frac{s}{2}}\in L_1(\N, \tau)}$ for all $a\in \A$. Then
\[p=\inf\{s>0 | \pl \text{for all}\pl a\in \A, a(1+D^2)^{-\frac{s}{2}}\in L_1(\N, \tau)\}\]
 is called the spectral dimension of $(\A,H,D)$.
\end{defi}
The subalgebra $\A$ plays the role of smooth functions. The main difference to the compact case is the condition ii), which simplifies to that $(1+D^2)^{-1/2}$ is compact. The semi-finiteness allow locally compact space equipped with non-finite measure. We recall the following sufficient condition for the {\bf smooth summability} of a semi-finite spectral triple and refer to \cite{CGRSmemo} for the detailed definition.
\begin{prop}[Proposition 2.21. of \cite{CGRSmemo}] Let $(\A,H,D)$ be a spectral triple of spectral dimension $p$ relative to $(\N,\tau)$. If for all $a \in \A \cup [D,\A]$, $k\in \mathbb{N}^+$ and $s > p$,
\[(1 + D^2)^{-\frac{s}{4}} L^k(a)(1 + D^2)^{-\frac{s}{4}} \in L_1(\N,\tau),\]
then $(A,H,D)$ is {\bf smoothly summable}. Here $L(T):= (1 + D^2 )^{-\frac{1}{2}} [D^2 ,T]$ and $L^k(T)=L (L^{k-1}(T))$.
\end{prop}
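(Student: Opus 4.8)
As the label indicates, this is \cite[Proposition 2.21]{CGRSmemo}, and the honest course is to invoke that reference; but let me describe the structure of the argument. Recall that ``smoothly summable'' bundles two requirements: that the triple be $QC^\infty$, i.e.\ $a,[D,a]\in\bigcap_{n\ge 0}\operatorname{dom}(\delta^n)$ for every $a\in\A$ with $\delta(T):=[(1+D^2)^{1/2},T]$; and that for all such $a$, all $n$ and all $s>p$ the operators $\delta^n(a)$ and $\delta^n([D,a])$ lie in the weighted trace ideals of \cite{CGRSmemo} (in particular $(1+D^2)^{-s/4}\delta^n(a)(1+D^2)^{-s/4}\in L_1(\N,\tau)$, and the one-sided variants). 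The hypothesis is stated instead through $L(T)=(1+D^2)^{-1/2}[D^2,T]$ (and implicitly its companion $R(T)=[D^2,T](1+D^2)^{-1/2}$), which is convenient because $L^k(a)$ is an explicit finite combination of operators $(1+D^2)^{-j}[D^2,[D^2,\cdots[D^2,a]\cdots]]$, computable from the commutation relations. The plan is: (i) transfer between the derivations $\delta$ and $L,R$; then (ii) redistribute fractional powers to pass from the two-sided $L^k$-conditions to the ideal conditions above.

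For (i), I would write $(1+D^2)^{1/2}$ as a resolvent integral and commute $T$ through the resolvents, exactly as in the proofs of Theorem \ref{order} and Proposition \ref{indu}; iterating yields, for each $n$, a finite expansion of $\delta^n(T)$ into terms $(1+D^2)^{-r}W(T)$ and $W(T)(1+D^2)^{-r}$ with $W$ a word in $L,R$ and $r\ge 0$, plus a remainder of arbitrarily negative order, and conversely each $L^k(T),R^k(T)$ is such a sum in the $\delta^j(T)$. Since $R(T)=-L(T^*)^*$, the $R$-words reduce to $L$-words on adjoints, so every $\delta^n$-statement about $a$ and $[D,a]$ — boundedness as well as trace-ideal membership — is controlled by the corresponding statements for $L^k(a),L^k(a^*),L^k([D,a]),L^k([D,a]^*)$; this is where the hypothesis \emph{for all $k\in\mathbb N^+$} is used.

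For (ii), substituting the expansion from (i) into e.g.\ $(1+D^2)^{-s/4}\delta^n(a)(1+D^2)^{-s/4}$ produces terms $(1+D^2)^{-s/4}W(a)(1+D^2)^{-r-s/4}$; I would then pull the leftover fractional weights through the word $W$ by the asymptotic expansion $(1+D^2)^{z}T(1+D^2)^{-z}\sim\sum_{m\ge 0}\binom zm L^m(T)$, reducing each term — up to further error terms of the same two-sided weighted form, but for larger $k$ — to an operator $B_1\big((1+D^2)^{-t/4}L^k(a)(1+D^2)^{-t/4}\big)B_2$ with $B_1,B_2\in\N$ bounded and $t>p$, so that the noncommutative H\"older inequality and the hypothesis place it in $L_1(\N,\tau)$. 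The one delicate point — which I expect to be the real obstacle — is arranging this bookkeeping so that the infinitely many error terms are all absorbed, since neither $(1+D^2)^{z}$ nor $L^k(a)$ is bounded and the commutator expansions close up only asymptotically; once that is done, $QC^\infty$ together with the membership of all $\delta^n(a),\delta^n([D,a])$ in the required ideals — that is, smooth summability in the sense of \cite{CGRSmemo} — follows.
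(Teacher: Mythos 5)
The paper gives no proof here: the statement is quoted verbatim as Proposition 2.21 of \cite{CGRSmemo} and used as a black box in the following theorem, so there is no in-paper argument to compare your sketch against. Your outline is a reasonable reconstruction of what Carey--Gayral--Rennie--Sukochev actually do. You correctly identify the two ingredients: the equivalence between the derivation $\delta=[(1+D^2)^{1/2},\cdot]$ appearing in the definition of smooth summability and the operators $L,R$, obtained by resolvent-integral expansion; and the redistribution of the fractional weights $(1+D^2)^{-s/4}$ to land in the weighted $L_1$-ideal. The relation $R(T)=-L(T^*)^*$ that lets you pass from $R$-words to $L$-words on adjoints is correct and is indeed how the hypothesis on $\A\cup[D,\A]$ (which is $*$-closed after taking both $a$ and $[D,a]$) suffices.

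One technical caveat: the expansion you wrote, $(1+D^2)^{z}T(1+D^2)^{-z}\sim\sum_{m}\binom{z}{m}L^m(T)$, is not quite the correct formula. The Connes--Moscovici expansion reads $(1+D^2)^{z}T(1+D^2)^{-z}\sim\sum_{m}\binom{z}{m}\nabla^m(T)(1+D^2)^{-m}$ with $\nabla(T)=[D^2,T]$, and the factors $(1+D^2)^{-m}$ must then be commuted past $\nabla^m(T)$ to produce $L$- or $R$-words, generating further (lower-order) correction terms. This is precisely the finite-order bookkeeping you flag as delicate, but it is a genuinely finite process at each order: the remainder after $N$ terms has weight $(1+D^2)^{-N/2}$, so for $N$ large it automatically lands in the trace ideal, and the hypothesis ``for all $k\in\mathbb{N}^+$'' is exactly what supplies the needed membership for the leading terms at every order. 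There are not infinitely many unresolved errors to absorb; the expansion is truncated and the tail is controlled by weight. Apart from that imprecision the sketch is sound, and as you say, the honest move in this paper is simply to cite the reference, which is what the authors do.
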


Quantum Euclidean space $\R_\theta$ equipped with its natural partial derivative $D_j$'s were studied as the prototypical example of semi-finite spectral triple in \cite{moyalplane, CGRSmemo}. The rest of this subsection is to show that the non-commuting derivatives also gives a semi-finite spectral triple structure of $\R_\theta$. First, we choose the smooth subalgebra $\A$ to be the noncommutative Sobolev space
\[W^{1,\infty}(\R_\theta)=\{a \pl | \pl D^\al(a)\in L_1(\R_\theta) \pl \text{for all} \pl \al\}\pl. \]
In the classical case $W^{1,\infty}(\R^d)\subset C^\infty_0(\R^d)$  by Sobolev embedding theorem (c.f. \cite{loukas}). The next lemma is a weaker analog on $\R_\theta$.
\begin{lemma} If $D^\al(a)\in L_1(\R_\theta)$ for all $\al$, then $D^\al(a)\in L_p(\R_\theta)$ for all $1\le p\le \infty$ and $\al$. In particular, the unitalization $W^{1,\infty}(\R_\theta)^{\sim}:=(W^{1,\infty}(\R_\theta)+\C) $ is a dense $*$-subalgebra of $\E_\theta^{\sim}$ closed under holomorphic function calculus.
\end{lemma}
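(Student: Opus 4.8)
The plan is to establish the $L_p$-bootstrap first and then deduce the algebraic ``in particular'' clause. For the bootstrap, since the noncommutative $L_p$-norms are log-convex, $\|b\|_p\le\|b\|_1^{1/p}\|b\|_\infty^{1-1/p}$ (the interpolation inequality already used in Proposition~\ref{ai}), it is enough to show that if $b\in L_1(\R_\theta)$ has all its distributional derivatives $D^\al(b)$ in $L_1(\R_\theta)$, then $b\in\R_\theta$; applying this to $b=D^\beta(a)$ (whose derivatives are the $D^{\al+\beta}(a)\in L_1(\R_\theta)$) then gives $D^\beta(a)\in L_1(\R_\theta)\cap\R_\theta\subset L_p(\R_\theta)$ for every $1\le p\le\infty$ and every $\beta$.

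To prove boundedness I would run a Fourier-side argument that works uniformly in $\theta$. For $b\in L_1(\R_\theta)$ define the noncommutative Fourier coefficients $\widehat b(\bxi):=\tau_\theta\big(b\,\la_\theta(\bxi)^*\big)$; since $\la_\theta(\bxi)$ is unitary, $\|\widehat b\|_{\infty}\le\|b\|_1$. For $b=\la_\theta(f)$ with $f\in\S(\R^d)$ one has $\widehat b=\hat f$ (computed from the quantization formula, $\tau_\theta(\la_\theta(g))=\int g$, and $\bxi^{t}\theta\bxi=0$), and since $D_j(\la_\theta(T))=\la_\theta(-i\p_{\bx_j}T)$ one obtains the differentiation rule $\widehat{D^\al(b)}(\bxi)=\bxi^\al\widehat b(\bxi)$ — proved on $\S_\theta$ and extended to any $b\in L_1(\R_\theta)$ with $D^\al(b)\in L_1(\R_\theta)$ by a routine distributional argument. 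Hence $|\bxi^\al\widehat b(\bxi)|\le\|D^\al(b)\|_1$ for all $\al,\bxi$, so $\widehat b$ decays faster than any polynomial, and in particular $\widehat b\in L^1(\R^d)$. Then $b=\la_\theta(f)$ with $f=\mathcal F^{-1}\widehat b$, and the weak integral $\la_\theta(f)=(2\pi)^{-d}\int\widehat b(\bxi)\la_\theta(\bxi)\,d\bxi$ has operator norm at most $(2\pi)^{-d}\|\widehat b\|_1<\infty$; thus $b\in\R_\theta$, and being also in $L_1(\R_\theta)$, $b\in L_1(\R_\theta)\cap\R_\theta\subset\K(\R_\theta,\tau_\theta)=\E_\theta$.

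For the second assertion: by the above every $a\in W^{1,\infty}(\R_\theta)$ lies in $\E_\theta$, the identity $D^\al(a^*)=(-1)^{|\al|}D^\al(a)^*$ gives closure under adjoints, and the Leibniz rule together with $L_2(\R_\theta)\cdot L_2(\R_\theta)\subset L_1(\R_\theta)$ gives closure under products; hence $W^{1,\infty}(\R_\theta)^\sim$ is a unital $*$-subalgebra of $\E_\theta^\sim$, dense because it contains $\S_\theta$ (whose elements have all derivatives again in $\S_\theta\subset L_1(\R_\theta)$) and $\S_\theta$ is dense in $\E_\theta$. For closure under holomorphic functional calculus, topologize $W^{1,\infty}(\R_\theta)^\sim$ by the seminorms $a\mapsto\|D^\al(a)\|_{L_1}$ and the $C^*$-norm of $\E_\theta^\sim$, so that it is a Fréchet $*$-algebra continuously embedded in $\E_\theta^\sim$; it then suffices to prove spectral invariance. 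If $a=b+\lambda\in W^{1,\infty}(\R_\theta)^\sim$ is invertible in $\E_\theta^\sim$ then $\lambda\neq0$, and $a^{-1}-\lambda^{-1}=-\lambda^{-1}b\,a^{-1}\in L_1(\R_\theta)\cdot\R_\theta\subset L_1(\R_\theta)$; iterating $D_j(a^{-1})=-a^{-1}D_j(a)a^{-1}$ writes each $D^\al(a^{-1})$ with $|\al|\ge1$ as a finite sum of products of bounded operators (the $a^{-1}$ and the $D^{\beta_i}(a)$ with $\sum\beta_i=\al$, $|\beta_i|\ge1$) in which at least one factor lies in $L_1(\R_\theta)$, hence in $L_1(\R_\theta)$; so $a^{-1}\in W^{1,\infty}(\R_\theta)^\sim$, and closure under holomorphic functional calculus follows by the standard contour-integral argument in the Fréchet algebra.

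The main obstacle will be the second paragraph: setting up the noncommutative Fourier coefficient and verifying its two elementary properties — $\|\widehat b\|_\infty\le\|b\|_1$ and $\widehat{D^\al(b)}=\bxi^\al\widehat b$ for $b$ merely in $L_1(\R_\theta)$ — carefully enough that $\widehat b\in L^1(\R^d)$ can be converted back into boundedness of $b$. The rest (log-convexity, Leibniz bookkeeping, and the spectral-invariance computation) is routine.
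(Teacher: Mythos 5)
Your proof is correct, but the first (analytic) half takes a genuinely different route than the paper. The paper proves the bootstrap $L_1\to L_\infty$ via the Bessel potential: it shows $(1+\Delta)^{-n}$ (with $2n>d$) maps $L_2(\R_\theta)$ into $\R_\theta$, using Cauchy--Schwarz on the Fourier side of the quantization formula,
\[\norm{(1+\Delta)^{-n}\la_\theta(f)}{\infty}\le\norm{\bc{\bet}^{-n}\hat f}{1}\le\norm{\bc{\bet}^{-n}}{2}\norm{\hat f}{2}=\norm{\bc{\bet}^{-n}}{2}\norm{\la_\theta(f)}{2},\]
then gets $L_1\to L_2$ by self-adjoint duality, composes to obtain an $L_1\to\R_\theta$ bound, and writes $a=(1+\Delta)^{-2n}\bigl[(1+\Delta)^{2n}a\bigr]$ with $(1+\Delta)^{2n}a\in L_1$. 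You instead define the noncommutative Fourier coefficient $\hat b(\bxi)=\tau_\theta(b\la_\theta(\bxi)^*)$, prove the Riemann--Lebesgue bound $\|\hat b\|_\infty\le\|b\|_1$ and the multiplication rule $\widehat{D^\al b}(\bxi)=\bxi^\al\hat b(\bxi)$, deduce rapid decay of $\hat b$, and convert $\hat b\in L^1(\R^d)$ into $\|b\|_\infty\le(2\pi)^{-d}\|\hat b\|_1$. Both routes are legitimate; yours is shorter and avoids the $L_2$/duality detour, at the price of having to justify (i) the integration-by-parts step $\tau_\theta(D_j(b)\la_\theta(\bxi)^*)=-\tau_\theta(b\,D_j(\la_\theta(\bxi)^*))$ for $b$ only in $L_1$ --- this needs an approximate-identity argument since $\la_\theta(\bxi)^*\in\M_\theta$ but not in $\S_\theta$ --- and (ii) the Fourier inversion: one should set $c:=(2\pi)^{-d}\int\hat b(\bxi)\la_\theta(\bxi)\,d\bxi\in\R_\theta$, check that $\tau_\theta(b\la_\theta(g))=\tau_\theta(c\la_\theta(g))=(2\pi)^{-d}\int\hat b(\bxi)\hat g(-\bxi)\,d\bxi$ for every $g\in\S(\R^d)$, and conclude $b=c$ from density of $\S_\theta$ in $L_1\cap\R_\theta$. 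Your reduction to the case $b=D^\beta(a)$ and the interpolation $\|b\|_p\le\|b\|_1^{1/p}\|b\|_\infty^{1-1/p}$ are as in the paper, and the resolvent/Leibniz argument for holomorphic functional calculus coincides with the paper's proof.
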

\begin{proof}
Denote $\Delta=\sum_j D_{x_j}^2$. For $\la_\theta(f)\in \S_\theta$,
\[(1+\Delta)\la_\theta(f)=\la_\theta((1+\Delta)f)= \int\bc{\bet}^2\hat{f}(\bet)\la_\theta(\bet)d\bet \pl.\]
Choose a integer $2n>d$,
we have $(1+\Delta)^{-n}:L_2(\R_\theta)\to L_\infty(\R_\theta)$ is bounded because
\begin{align*}
\norm{(1+\Delta)^{-n}\la_\theta(f)}{}=&\norm{\int\bc{\bet}^{-n}\hat{f}(\bet)\la_\theta(\bet)d\bet}{}
\le \norm{\bc{\bet}^{-n}\hat{f}}{1}
\\ \le &\norm{\bc{\bet}^{-n}}{2}\norm{\hat{f}}{2}
=\norm{\bc{\bet}^{-n}}{2}\norm{\la_\theta(f)}{2}.
\end{align*}
By duality, we also have that $(1+\Delta)^{-n}:L_1(\R_\theta)\to L_2(\R_\theta)$ is bounded. Indeed, for any $\la_\theta(f),\la_\theta(g)\in \S_\theta$,
\begin{align*}
 \lan \la_\theta(g), (1+\Delta)^{-n}\la_\theta(f)\ran_{\tau_\theta}&=\lan (1+\Delta)^{-n}\la_\theta(g), \la_\theta(f)\ran_{\tau_\theta}\\&\le  \norm{(1+\Delta)^{-n}\la_\theta(g)}{\infty} \norm{\la_\theta(f)}{1}\le  C\norm{\la_\theta(g)}{2} \norm{\la_\theta(f)}{1}
\end{align*}
Here we have used the fact $(1+\Delta)^{-n}$ is self-adjoint on $\S_\theta$. Thus we have that $(1+\Delta)^{-n}:L_1(\R_\theta) \to L_\infty(\R_\theta)$ is continuous. If $D^\al(a)\in L_1(\R_\theta)$ for all $|\al|\le 2n$, then $(1+\Delta)^n (a)\in L_1(\R^d)$ and hence $a \in L_\infty(\R_\theta)$. Therefore $W^{1,\infty}(\R_\theta)$ is closed under product hence a subalgebra of $\E_\theta$. It is dense because $\S_\theta \subset W^{1,\infty}(\R_\theta)$. To show $W^{1,\infty}(\R_\theta)$ is closed under holomorphic calculus, it suffices to consider the resolvent $(\la-a)^{-1}$ for $\la\notin \text{Spec}(a)$. Indeed, $(\la-a)^{-1}$ is bounded and
\[\la^{-1}-(\la-a)^{-1}=\la^{-1}\big((\la-a)-\la\big)(\la-a)^{-1}=-\la^{-1}a(\la-a)^{-1}\in L_1(\R_\theta)\pl. \]
For the derivatives, \[[D_j,(\la-a)^{-1}]=(\la-a)^{-1}[D_j,a](\la-a)^{-1}\in L_1\]
For higher order derivatives $D^\al$, we use induction and Leibniz rule
\begin{align*}D^\al ((\la-a)^{-1})=&D^\al ((\la-a)^{-1}(\la-a)(\la-a)^{-1})
\\=&\sum_{\al_1+\al_2+\al_3=\al}\frac{\al !}{\al_1 !\al_2 !\al_3 !}D^{\al_1}((\la-a)^{-1})D^{\al_2}(\la-a)D^{\al_3}((\la-a)^{-1}) \pl.\qedhere
\end{align*}
\end{proof}
The above lemma implies that the inclusion $W^{1,\infty}(\R_\theta)\subset \E_\theta$ induces $K$-groups isomorphism (c.f. page 292 of \cite{connes94}). In particular, every projection (resp. unitary) in $\E_\theta^{\sim}$ or $M_n(\E_\theta^{\sim})$ can be approximated using projections (resp. unitary) in $W^{1,\infty}(\R_\theta)^{\sim}$.
To verify the finite and smooth summability, we need the following lemma.
\begin{lemma} \label{prodint}\label{L1}Let $a\in W^{1,\infty}(\R_\theta)$. Then $\bc{\xi}^{-\frac{r}{2}}a\bc{\xi}^{-\frac{r}{2}},a\bc{\xi}^{-r}\in L_1(\R_\Theta)$ if $r>d$.
\end{lemma}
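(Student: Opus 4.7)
The plan is to exploit the fact (from the previous lemma) that $a \in W^{1,\infty}(\R_\theta)$ gives $a$ and every derivative $D^\alpha(a)$ in $L_1(\R_\theta)\cap L_\infty(\R_\theta)$, in particular in $L_p(\R_\theta)$ for all $1\le p\le \infty$. I would handle the symmetric sandwich form first and then reduce the one-sided product to it via a commutator expansion.

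As a preliminary, I would verify that $\bc{\xi}^{-r} \in L_1(\R_{\theta'})$ for $r > d$. Combining the Mellin identity
\[ \bc{\xi}^{-r} = \Gamma(r/2)^{-1}\int_0^\infty t^{r/2-1}e^{-t}e^{-t|\xi|^2}\,dt \]
with the heat trace $\tau_{\theta'}(e^{-t|\xi|^2}) = t^{-d/2}\det(\pi it\theta'/\sinh(it\theta'))^{1/2}$ from Proposition \ref{interg}(ii), the residual $\Gamma$-integral $\int_0^\infty t^{(r-d)/2-1}e^{-t}\,dt$ converges precisely when $r>d$. In particular $\bc{\xi}^{-r/2} \in L_2(\R_{\theta'})$.

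For the sandwich form, I would decompose $a$ into four positive pieces $a_j \in L_1\cap L_\infty(\R_\theta)$. For each $b=a_j\ge 0$, $b^{1/2}\in L_2(\R_\theta)$, and
\[ \bc{\xi}^{-r/2}\,b\,\bc{\xi}^{-r/2} = (b^{1/2}\bc{\xi}^{-r/2})^*(b^{1/2}\bc{\xi}^{-r/2}). \]
The tensor-$L_2$ identity of the immediately preceding lemma places $b^{1/2}\bc{\xi}^{-r/2} \in L_2(\R_\Theta)$, so its absolute square lies in $L_1(\R_\Theta)$. For the one-sided product I would split
\[ a\bc{\xi}^{-r} = \bc{\xi}^{-r/2}\,a\,\bc{\xi}^{-r/2} + [a,\bc{\xi}^{-r/2}]\,\bc{\xi}^{-r/2}; \]
the first term is already treated. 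For the commutator, I would use the iterated Cauchy representation $\bc{\xi}^{-r/2} = c_{N,r}\int_0^\infty(t+\bc{\xi}^2)^{-N}t^{N-r/4-1}\,dt$ with $N$ large (as in Section \ref{sectionasy}) together with the identity
\[ [a,\bc{\xi}^2] = -\sum_j(D_j(a)\xi_j + \xi_j D_j(a)), \]
a consequence of the CCR relation $[\xi_j,a] = D_j(a)$ for $a \in \R_\theta$. After moving the outer $\bc{\xi}^{-r/2}$ inside, each Bochner integrand is a finite sum of sandwich-type terms $\bc{\xi}^{-s_1}D_j(a)\bc{\xi}^{-s_2}$ with $s_1+s_2>d$; applying the sandwich estimate to $D_j(a)\in W^{1,\infty}(\R_\theta)$ places each term in $L_1(\R_\Theta)$, and the remaining $t$-integral converges absolutely by $\Gamma$-function estimates on the Mellin weights.

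The main obstacle I anticipate is the combinatorial bookkeeping in the commutator expansion for $[a,\bc{\xi}^{-r/2}]\bc{\xi}^{-r/2}$: the resolvents $(t+\bc{\xi}^2)^{-1}$ do not commute with $D_j(a)$ and each commutator produces further terms that must be reorganized into sandwich form with sufficient $\bc{\xi}$-decay. The redeeming feature is that $W^{1,\infty}(\R_\theta)$ is closed under every $D_j$ (if all $D^\alpha(a)\in L_1$, then $D^{\alpha+e_j}(a)\in L_1$ as well), so every derivative produced by the expansion falls once again under the scope of the sandwich estimate, and the iteration closes.
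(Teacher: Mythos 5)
Your proposal matches the paper's proof in all essentials: both establish the sandwich form by factoring $a$ through $L_2(\R_\theta)\times L_2(\R_\theta)$ (you split into positive pieces and use $b^{1/2}$, the paper uses a polar-type factorization $a=a_1a_2$) and invoke the tensor-$L_2$ lemma, and both reduce $a\bc{\xi}^{-r}$ to the sandwich term plus a commutator handled by a resolvent-integral representation of $\bc{\xi}^{-r/2}$. The paper sidesteps the bookkeeping obstacle you flag by using $\bc{\xi}^{-r/2}=C_s\int_0^\infty t^{-s}(t+\bc{\xi}^{2n})^{-1}\,dt$ with a \emph{single} resolvent bracket of large inner power $2n>r/2$, so that after one commutation the $t$-independent factor $\bc{\xi}^{-r/2}[a,\bc{\xi}^{2n}]\bc{\xi}^{-2n}$ pulls out and the resolvents contribute only operator-norm bounds $\|(t+\bc{\xi}^{2n})^{-1}\|\,\|\bc{\xi}^{2n}(t+\bc{\xi}^{2n})^{-1}\|\lesssim(t+1)^{-1}$ to the $t$-integral, avoiding the need to chase $D_j(a)$ through an iterated stack of non-commuting resolvents.
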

\begin{proof}We write $a$ as $a=a_1a_2$ with $a_1,a_2\in L_2(\R_\theta)$. Then
\[\bc{\xi}^{-\frac{r}{2}}a\bc{\xi}^{-\frac{r}{2}}=(\bc{\xi}^{-\frac{r}{2}}a_1)(a_2\bc{\xi}^{-\frac{r}{2}})\in L_1(\R_\Theta)\] because \begin{align*}
\norm{\bc{\xi}^{-\frac{r}{2}}a_1}{L_2(\R_\theta)}=\norm{\bc{\xi}^{-\frac{r}{2}}}{L_2(\R_{\theta'})}\norm{a_1}{L_2(\R_\theta)}\pl,\pl
\norm{a_2\bc{\xi}^{-\frac{r}{2}}}{L_2(\R_\theta)}=\norm{\bc{\xi}^{-\frac{r}{2}}}{L_2(\R_{\theta'})}\norm{a_2}{L_2(\R_\theta)}\pl.
\end{align*}
Note that $\bc{\xi}^{-\frac{r}{2}}[a,\bc{\xi}^{-\frac{r}{2}}]=\bc{\xi}^{-\frac{r}{2}}a\bc{\xi}^{-\frac{r}{2}}-a\bc{\xi}^{-r}$. To show $\bc{\xi}^{-\frac{r}{2}}[a,\bc{\xi}^{-\frac{r}{2}}]\in L_1(\R_\Theta)$, choose $n$ such that $2n>\frac{r}{2}$ and write $s=\frac{r}{4n}$. By operator integral,
\begin{align*}
\bc{\xi}^{-\frac{r}{2}}[a,\bc{\xi}^{-\frac{r}{2}}]=&C_s\bc{\xi}^{-\frac{r}{2}}\int_0^\infty t^{-s} [a,(t+\bc{\xi}^{2n})^{-1}]dt
\\ =&C_s\bc{\xi}^{-\frac{r}{2}}\int_0^\infty t^{-s} (t+\bc{\xi}^{2n})^{-1}[a, t+\bc{\xi}^{2n}](t+\bc{\xi}^{2n})^{-1}dt
\\ =C_s&\int_0^\infty t^{-s} (t+\bc{\xi}^{2n})^{-1} \Big(\bc{\xi}^{-\frac{r}{2}}[a, \bc{\xi}^{2n}]\bc{\xi}^{-2n}\Big)  \bc{\xi}^{2n}(t+\bc{\xi}^{2n})^{-1} dt
\end{align*}Here $C_s$ is some positive constant depending on $s$. Since $[a, \bc{\xi}^{2n}]$ is a linear combination of $a$'s derivatives, we know
\[  \bc{\xi}^{-\frac{r}{2}}[a, \bc{\xi}^{2n}]\bc{\xi}^{-2n}\in L_1(\R_\Theta)\pl.\]
Then the integral converges in $L_1$-norm,
\begin{align*}
&\norm{\bc{\xi}^{-\frac{r}{2}}[a,\bc{\xi}^{-\frac{r}{2}}]}{1}\\ \lesssim &
\int_0^\infty t^{-s} \norm{(t+\bc{\xi}^{2n})^{-1}}{\infty} \norm{\bc{\xi}^{-\frac{r}{2}}[a, \bc{\xi}^{2n}]\bc{\xi}^{-2n}}{1} \norm{\bc{\xi}^{2n}(t+\bc{\xi}^{2n})^{-1}}{\infty} dt
\\ \lesssim &\int_0^\infty t^{-s} (t+1)^{-1}dt <\infty\pl. \qedhere
\end{align*}
\end{proof}

Recall that the Clifford algebra $Cl^d$ is generated by $d$ self-adjoint operators $c_1,\cdots, c_d$ satisfying the anti-commutation relation $c_jc_k+c_kc_j=2\delta_{j,k}$.
For $d=2n$ even,  $Cl^d$ is isomorphic to the $N\times N$ matrix algebra $M_N$ with $N=2^n$. For $d=2n+1$ odd, $Cl^d$ is isomorphic to $M_{2^n}\oplus M_{2^n} \subset M_N$ with $N=2^{n+1}$. When $d$ even, $Cl^d$ is $\mathbb{Z}_2$ graded with the parity element $\gamma=(-i)^\frac{d}{2}c_1\cdots c_d$.
\begin{theorem} $(W^{\infty,1}(\R_\theta)\ten M_N, L_2(\R_\Theta)\ten \C^N, \sum_{j}\xi_j\ten c_j)$ relative to $(\R_\Theta\ten M_N, \tau_\Theta\ten tr)$ is a smooth summable semi-finite spectral triple with spectral dimension $d$. Moreover it is even if $d=2n$ is even, and $\gamma=(-i)^\frac{d}{2}c_1\cdots c_d$.
\end{theorem}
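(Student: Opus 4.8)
The plan is to check, in order, the two axioms of \cite[Definition 2.1]{CGRSmemo}, then finite summability, then smooth summability via the criterion \cite[Proposition 2.21]{CGRSmemo}, and finally evenness; throughout, the algebra acts on $H=L_2(\R_\Theta)\ten\C^N$ by $a\mapsto a\ten 1_N$, $\N=\R_\Theta\ten M_N$ carries the trace $\tau=\tau_\Theta\ten tr$, and $D=\sum_j\xi_j\ten c_j$ (the matrix amplifications $M_k(W^{\infty,1}(\R_\theta))$ needed for the index pairing being handled verbatim). The first thing I would establish is the structural identity for $D^2$: $D$ is symmetric and essentially self-adjoint on $\la_\Theta(\S(\R^{2d}))\ten\C^N$, and since $[\xi_j,\xi_k]=-i\theta'_{jk}$ is a scalar, a two-line Clifford computation gives $D^2=\bc{\xi}^2\ten 1_N-\omega$ with $\omega\in 1\ten M_N$ bounded self-adjoint. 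Consequently $\bc{\xi}^2\ten 1_N$ and $\omega$ commute, $1+D^2\ge 1$, and $c_1\,\bc{\xi}^2\ten 1_N\le 1+D^2\le c_2\,\bc{\xi}^2\ten 1_N$ for some $0<c_1\le c_2$; from this I would extract that $(\bc{\xi}^{s}\ten 1_N)(1+D^2)^{-s/2}$ and $(1+D^2)^{-s/2}(\bc{\xi}^{s}\ten 1_N)$ are bounded for every $s\ge 0$, and that $(1+D^2)^{-s/2}=(\bc{\xi}^{-s}\ten 1_N)\,b_s$ with $b_s$ bounded and commuting with $\bc{\xi}$.

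For axiom i), the key point is that the derivation $[\xi_j,\cdot]$ and the canonical partial derivative $D_j$ agree on $\R_\theta$ (both send $x_k$ to $-i\delta_{jk}$), so $[D,a\ten 1_N]=\sum_j D_j(a)\ten c_j$; since $a\in W^{\infty,1}(\R_\theta)$ forces every $D_j(a)$ to have all derivatives in $L_1(\R_\theta)$, the $L_p$-regularity lemma for $W^{\infty,1}(\R_\theta)$ gives $D_j(a)\in L_\infty(\R_\theta)=\R_\theta$, so $[D,a\ten 1_N]$ is bounded in $\N$ and $a\ten 1_N$ visibly preserves $\operatorname{dom}D$. For axiom ii) I would first reduce to $a\in\S_\theta$: $W^{\infty,1}(\R_\theta)\subset\E_\theta=\K(\R_\theta,\tau_\theta)$, $\S_\theta$ is norm dense in $\E_\theta$, and $(1+D^2)^{-1/2}$ is bounded, so it suffices to show $(a\ten 1_N)(1+D^2)^{-1/2}\in\K(\N,\tau)$ for $a\in\S_\theta$. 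For such $a$, a direct check with Definition \ref{defin} and the composition formula (Theorem \ref{compo}) — exact here since the symbol $a\ten 1\ten 1_N$ has no $\xi$-dependence — gives $(a\ten 1_N)(1+D^2)^{-1/2}=Op\bigl(a\ten(\bc{\xi}^2-\omega)^{-1/2}\bigr)$, the $Op$ of a tame symbol of order $-1$ whose body lies in $\E_\theta$; I would then argue that such an operator lies in some $L_{p,\infty}(\N,\tau)$ with $p<\infty$ (by extending the interpolation argument of Corollary \ref{int} to weak $L_p$-spaces and using Proposition \ref{interg}, or by a Cwikel-type estimate), and conclude since $L_{p,\infty}(\N,\tau)\cap\N\subset\K(\N,\tau)$.

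Finite summability is the cleanest step: for $s>d$ I would write $(a\ten 1_N)(1+D^2)^{-s/2}=\bigl(a\,\bc{\xi}^{-s}\ten 1_N\bigr)\bigl[(\bc{\xi}^{s}\ten 1_N)(1+D^2)^{-s/2}\bigr]$ and invoke Lemma \ref{prodint}, which gives $a\,\bc{\xi}^{-s}\in L_1(\R_\Theta)$, so the product is in $L_1(\N,\tau)$ and the spectral dimension is at most $d$. For the opposite inequality, I would choose $0\ne b=a^*a\in W^{\infty,1}(\R_\theta)$, $b\ge 0$; using $1+D^2\le c_2\,\bc{\xi}^2\ten 1_N$ and the trace factorisation $\tau_\Theta(b\,\bc{\xi}^{-s})=\tau_\theta(b)\,\tau_{\theta'}(\bc{\xi}^{-s})$ one gets $\tau\bigl((1+D^2)^{-s/2}b\bigr)\gtrsim \tau_\theta(b)\,\tau_{\theta'}(\bc{\xi}^{-s})=\infty$ for $s\le d$ by Proposition \ref{interg}, so $b(1+D^2)^{-s/2}\notin L_1(\N,\tau)$; hence the spectral dimension equals $d$.

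For smooth summability I would use \cite[Proposition 2.21]{CGRSmemo}, reducing to: $(1+D^2)^{-s/4}L^k(T)(1+D^2)^{-s/4}\in L_1(\N,\tau)$ for all $T\in W^{\infty,1}(\R_\theta)\cup[D,W^{\infty,1}(\R_\theta)]$, all $k\ge 1$ and all $s>d$, where $L(X)=(1+D^2)^{-1/2}[D^2,X]$. Both classes consist of finite sums of operators $b\ten m$ with $b\in W^{\infty,1}(\R_\theta)$ and $m\in M_N$, and using $[D^2,b\ten m]=\sum_l\bigl(2D_l(b)\,\xi_l+D_l^2(b)\bigr)\ten m-b\ten[\omega,m]$, $[D^2,\xi_l]=-2i\sum_k\theta'_{kl}\,\xi_k$ and $[D^2,(1+D^2)^{-1/2}]=0$, I would prove by induction on $k$ that $L^k(T)$ is a finite sum of operators $c\,(D^\alpha(b)\ten m)\,c'$, with $b\in W^{\infty,1}(\R_\theta)$, $m\in M_N$, and $c,c'$ bounded operators in the $*$-algebra generated by $(1+D^2)^{-1/2}$ and $\xi_1,\dots,\xi_d$ in which the number of $\xi$-factors never exceeds that of $(1+D^2)^{-1/2}$-factors — the point being that $[D^2,\cdot]$ raises the $\xi$-order by one while $(1+D^2)^{-1/2}$ lowers it, so $L$ preserves the $\xi$-order $0$ of $T$. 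Commuting the bounded factors $c,c'$ through $(1+D^2)^{-s/4}$ — which only produces more terms of the same shape, with $D^\alpha(b)$ replaced by higher derivatives still in $W^{\infty,1}(\R_\theta)$ — then reduces everything to the assertion $\bc{\xi}^{-s/2}\bigl(D^\beta(b)\ten m\bigr)\bc{\xi}^{-s/2}\in L_1(\N,\tau)$ for $s>d$, which is precisely Lemma \ref{prodint}. Finally, evenness for $d=2n$ is immediate with $\gamma=(-i)^{d/2}c_1\cdots c_d\in M_N$ acting as $1\ten\gamma$: $\gamma=\gamma^*$, $\gamma^2=1$, $\gamma$ commutes with $a\ten 1_N$, and $\gamma$ anticommutes with each $c_j$ in even dimension, so $D\gamma+\gamma D=0$. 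The hard part of the whole argument is the smooth-summability step: carrying out the induction that brings $L^k(T)$ into the normal form $\sum c\,(D^\alpha(b)\ten m)\,c'$ while tracking the matching of $\xi$-powers against powers of $(1+D^2)^{-1/2}$, so that sandwiching with $(1+D^2)^{-s/4}$ lands exactly in the hypotheses of Lemma \ref{prodint}.
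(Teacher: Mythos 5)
Your proposal is correct and follows essentially the same route as the paper: reduce $1+D^2=\bc{\xi}^2\ten 1_N-\omega$ to the scalar weight $\bc{\xi}^2$ using that $\omega\in M_N$ is bounded and commutes with $\R_\Theta$, invoke the $L_1$-membership of $\bc{\xi}^{-s/2}a\bc{\xi}^{-s/2}$ and $a\bc{\xi}^{-s}$ for $s>d$ (Lemma \ref{prodint}) to get summability, and read off evenness from the Clifford parity. You are more thorough than the paper on two points that the paper treats cursorily or leaves implicit — the $\tau$-compactness axiom ii) (where your reduction via a tame order $-1$ symbol, or alternatively a direct complex-interpolation argument from $a(1+D^2)^{-s/2}\in L_1$ to $a(1+D^2)^{-1/2}\in L_s$, is a genuine gap-filler) and the $\xi$-order bookkeeping in the $L^k$ induction for smooth summability — but these are elaborations, not a different argument.
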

\begin{proof} Note that \[D^2=\sum_{j,k}\xi_j\xi_k\ten c_jc_k =\sum_{j}\xi_j^2-\frac{i}{2}\sum_{j,k} \theta'_{j,k}c_jc_k \pl.\]
Denote $\omega=\frac{i}{2}\sum_{j,k} \theta'_{j,k}c_jc_k$. Then $1+D^2=\bc{\xi}^2-\omega$.
Since $\omega\in M_N$ commutes with $\R_\Theta$, to verify summability it is equivalent to replace $1+D^2$ by $\bc{\xi}^2$. By Lemma \ref{L1}, we know the spectral dimension is less than $d$. On the other hand, if $a\bc{\xi}^{-r}\in L_1(\R_\Theta)$,
\[\norm{a\bc{\xi}^{-\frac{r}{2}}}{2}^2\le\norm{a\bc{\xi}^{-d}a^*}{1}\le\norm{a^*}{\infty}\norm{a\bc{\xi}^{-d}}{1}<\infty\]
which implies $r>d$. For smooth summability, we know $[\bc{\xi}^2,a]\in L_1(\R_\theta)$ and by Lemma \ref{L1} again,
\[ (1+D^2)^{-\frac{s}{2}}L(a)(1+D^2)^{-\frac{s}{2}}\in L_1(\R_\Theta)\]
if $s>d$. The arguments for $L^k(a)$ are similar.
\end{proof}
\subsection{Local Index formula}
We briefly recall the local index formula for the even case and refer to \cite{CM95,CGRSmemo} for detailed information. Let $(\A,H,D)$ be an even spectral triple relative to $(\N,\tau)$ and $\gamma$ is the parity element. Denote $H_+=\frac{\gamma+1}{2}H$ and $H_-=\frac{1-\gamma}{2}H$
For $\mu>0$, define $D_\mu=\left[\begin{array}{cc} D &\mu\\ \mu&D\end{array}\right]$ on $H\oplus H$. Write $F_\mu=D_\mu|D_\mu|^{-1}$ and \begin{align}\label{phase}(F_\mu)_{+}=(\frac{1+\gamma}{2}\ten I_2)F_\mu: H_+\oplus H_+\to H_-\oplus H_- \pl .
\end{align}
Here and in the following $I_n$ represents the $n$-dimensional identity matrix. For a projection $e\in M_n(\A^\sim)$, denote $\hat{e}=\left[\begin{array}{cc} e &0\\0&1_e\end{array}\right] \in M_{2n}(\A^\sim)$ where $1_e\in M_n(\C)$ is the rank element of $e$. Following \cite[Definition 2.12 and Proposition 2.13]{CGRSmemo}, the numerical index pairing between the $K_0(\A)$ element $[e]-[1_e]$ and the even spectral triple $(\A,H,D)$ is given by
\[\lan [e]-[1_e], (\A,H,D)\ran=\text{index}_{\tau\ten tr_{2n}}(\hat{e}(F_{\mu , +}\ten I_n)\hat{e})\]
Here the numerical index  $\text{index}_{\tau}(F)=\tau(\text{ker}F)-\tau(\text{coker}F)$ is defined as the trace of kernel subtracting the trace of cokernel.
Both quantities are topological invariants under homotopy. The local index formula express the index pairings by the following residue cocycle formulas.
\begin{defi}
$(\A,H,D)$ has {\bf isolated spectral dimension} if for all $a_0,\cdots, a_m\in \A$, the zeta function
\[\zeta(z)=\tau(\gamma a_0d a_1^{(k_1)}\cdots d a_m^{(k_m)}(1+D^2)^{-|k|-m/2-z})\]
has an analytic continuation to a deleted neighbourhood of $z=0$.
\end{defi}
Here we introduce the notation $da:=[D,a]$ and $\displaystyle da^{(k)}:=\underbrace{[D^2,[D^2,\cdots [D^2}_{\text{$k$-times}},da]]$. Let $(\A,H,D)$ be a smoothly summable semifinite spectral triple with spectral dimension $d$ and
$M$ be the largest integer in $[0,d+1]$. Suppose $\A$ has isolated spectral dimension. The residue cocycle $\phi_m:\A^{\ten m+1}\to \mathbb{C}$ is the $(m+1)$-linear form given by
\begin{align}
\phi_0(a_0)=&Res_{z=0}z^{-1}\tau( \gamma a_0(1+D^2)^{-z})\\
\phi_m(a_0,\cdots,a_m)=&\sum_{|k|=0}^{M-m}(-1)^{|k|}\al(k)\sum_{j=0}^{|k|+m/2} \nonumber\\ & \si_{|k|+m/2,j}Res_{z=0}z^{j-1}\tau(\gamma a_0d a_1^{(k_1)}\cdots d a_m^{(k_m)}(1+D^2)^{-|k|-m/2-z}) \pl. \label{cocycle}\end{align}
where $\al(k),\si_{|k|+m/2,j}$ are the constant defined as follows. For a multi-index $k=(k_1,\cdots,k_m)$,
\begin{align}\label{alc}\al(k)= k_1!k_2!\cdots k_m!/(k_1+1)(k_1+k_2+2)\cdots(|k|+m)\pl. \end{align} $\si_{n,j}$ are the non negative constant given by the equation
\begin{align}&\prod_{j=0}^{n-1}(z+j)=\sum_{j=1}\si_{n,j}z^j \pl \text{for} \end{align}
In particular, $\al(0)=m!$ and $\si_{n,1}=(n-1)!$. The terms in $\phi_m$ is a linear combination of residue and higher order residue of the zeta function
\[  \zeta(z)=\tau(\gamma a_0d a_1^{(k_1)}\cdots d a_m^{(k_m)}(1+D^2)^{-|k|-m/2-z})\pl.\]
The isolated spectral dimension condition assumes that these residues are well-defined.
\begin{theorem}[Theorem 3.33 of \cite{CGRSmemo} (even case)] Let $(\A,H,D)$ relative to $(\N,\tau)$ be an even smoothly summable semi-finite spectral triple. Suppose that $(\A,H,D)$ has isolated spectral dimension. Then the numerical index pairing can be computed by
\begin{align*}&\lan [e]-[1_e], [(\A,H,D)]\ran= \sum_{m=0,even}^M \phi_m(Ch^m(e)-Ch^m(1_e)) \pl,
\end{align*}
where for a projection $e\in M_n(\A^\sim)$,  $Ch_0(e)=(e)$ and \begin{align*} &Ch^{2k}(e)=(-1)^k\frac{2k!}{k!}(e-\frac{1}{2})\ten e \ten \cdots\ten e\in \A^{\ten 2k+1}\pl.
\end{align*}
\end{theorem}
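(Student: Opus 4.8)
The statement above is the even case of the semifinite, non-unital local index theorem of Carey--Gayral--Rennie--Sukochev, which is itself the semifinite and locally compact generalisation of the Connes--Moscovici local index formula; accordingly we do not reprove it and simply invoke \cite[Theorem 3.33]{CGRSmemo}. For orientation it is nevertheless worth recording the shape of that argument, since it makes transparent which of the structural hypotheses we have verified above are actually consumed. The proof has three conceptual stages. First, one identifies the numerical index $\mathrm{index}_{\tau\otimes tr_{2n}}(\hat e (F_{\mu,+}\otimes I_n)\hat e)$ with the pairing of the $K_0$-class $[e]-[1_e]$ against a cyclic $(b,B)$-cocycle manufactured from $D$ --- in the semifinite framework this is the resolvent cocycle, and the identification rests on the Carey--Phillips semifinite spectral flow formula together with the observation that $F_\mu$ differs from a genuine symmetry by a $\tau$-trace-class perturbation once the triple is finitely summable. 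Smooth summability (which we have checked for $(W^{\infty,1}(\R_\theta)\ten M_N, L_2(\R_\Theta)\ten\C^N, \sum_j\xi_j\ten c_j)$) is what guarantees the resolvent cocycle is well defined and that its cohomology class does not depend on the auxiliary parameter $\mu$.

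The second stage deforms the resolvent cocycle to the residue cocycle $\{\phi_m\}$. This is where an abstract pseudodifferential calculus attached to $(\A,H,D)$ is indispensable: resolvents $(1+D^2+s)^{-1}$ are expanded inside the algebra generated by $\A$, $[D,\A]$ and $(1+D^2)^{-1/2}$, and after performing the $s$-contour integrals every summand of the resolvent cocycle becomes a finite linear combination of zeta-type quantities $\tau(\gamma\, a_0\, da_1^{(k_1)}\cdots da_m^{(k_m)}(1+D^2)^{-|k|-m/2-z})$ plus a remainder holomorphic near $z=0$; the combinatorial constants $\al(k)$ and $\si_{n,j}$ are exactly what those contour integrals produce. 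The \emph{isolated spectral dimension} hypothesis is precisely the statement that these zeta functions admit meromorphic continuation to a punctured disc about $z=0$, so that the residues $\mathrm{Res}_{z=0}z^{j-1}(\cdots)$ defining $\phi_m$ exist; one then checks $\{\phi_m\}$ is a $(b,B)$-cocycle cohomologous to the resolvent cocycle. The third stage is purely algebraic: one pairs $\{\phi_m\}$ with $Ch^{\ast}(e)-Ch^{\ast}(1_e)$, using that $1_e$ is a scalar matrix (so every $da^{(k)}$ built from it vanishes, leaving only the $\phi_0$ contribution) and that the grading $\gamma$ anticommutes with $D$, which forces only even $m\le M$ to contribute.

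The genuinely hard analytic content, and the only place the global operator-theoretic structure is used, is the second stage: establishing the meromorphic continuations and controlling the orders of the poles in the \emph{semifinite and non-unital} setting, where $(1+D^2)^{-s/2}$ is never $\tau$-trace class and becomes so only after multiplication by elements of $\A$, so every trace in sight must be localised. In our application this localisation is supplied by Lemma \ref{L1} and the summability estimates established just before it, combined with the symbol calculus of Section \ref{phido}; this is exactly why we took the trouble to establish smooth summability together with isolated spectral dimension before citing the theorem, and it is also why in Theorem \ref{C} the zeta functions collapse to simple poles at $|k|=0$.
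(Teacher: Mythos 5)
Your proposal is correct and matches the paper exactly: the paper does not reprove Theorem 3.33 of \cite{CGRSmemo} but simply cites it, and you do likewise. The additional sketch you give of the resolvent-cocycle/residue-cocycle argument and of where smooth summability and isolated spectral dimension are consumed is accurate and consistent with how the paper subsequently uses the theorem, but it is supplementary context rather than a divergence from the paper's treatment.
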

We shall now calculate the local index formula for the spectral triple $(W^{\infty,1}(\R_\theta),L_2(\R_\Theta)\ten \C^N, \sum_{j}\xi_j\ten c_j)$. Recall that $\omega =\frac{i}{2}\sum\theta_{jk}' c_jc_k$ is the analog of curvature form. Let us denote the super trace on $Cl^d$ as $str(a)=tr(\gamma a)$ and the corresponding super trace on $\R_\Theta\ten Cl^d$ (resp. $\R_\theta\ten Cl^d$) as $Str_\Theta=\tau_\Theta\ten str$ (resp. $Str_\theta=\tau_\theta\ten str$).
\begin{theorem}Let $d$ be even. The spectral triple $(W^{\infty,1}(\R_\theta),L_2(\R_\Theta)\ten \C^N, \sum_{j}\xi_j\ten c_j)$ has isolated spectral dimension. Moreover,
$a_0,\cdots, a_{m}\in W^{\infty,1}(\R_\theta)$,
\[\phi_{m}(a_0,\cdots, a_{m})=\begin{cases}
         \frac{\pi^\frac{d}{2}}{m!}Str_\theta (a_0 da_1\cdots  da_m  \frac{\omega^{\frac{d-m}{2}}}{\frac{(d-m)}{2}!}), & \mbox{if } m \pl \text{even} \\
         0, & \mbox{if } m \pl \text{odd}.
       \end{cases}\pl.\]
\end{theorem}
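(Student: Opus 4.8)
The plan is to reduce the computation of the residue cocycle $\phi_m$ to a heat‐kernel asymptotic expansion, using the $\Psi$DO calculus of Section \ref{phido} to control the symbols of $a_0\,da_1^{(k_1)}\cdots da_m^{(k_m)}(1+D^2)^{-s}$, and then to identify the surviving terms via Clifford‐algebra supertrace identities. First I would observe that with $D=\sum_j\xi_j\otimes c_j$ we have $1+D^2=\bc{\xi}^2-\omega$ with $\omega=\tfrac i2\sum_{j,k}\theta'_{j,k}c_jc_k\in M_N$ commuting with $\R_\Theta$, and that $da=[D,a]=\sum_j D_j(a)\otimes c_j$ for $a\in W^{\infty,1}(\R_\theta)$ (since $[x$–variables$,\xi]=-i\delta$, so $[\xi_j,a]=-iD_j(a)$ up to the sign convention, and $a$ commutes with $\bc\xi$'s $\xi$–component only up to derivatives). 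More precisely I would compute that $da^{(k)}=[D^2,[D^2,\cdots,[D^2,da]]]$ is, by Theorem \ref{order} / Theorem \ref{char}, a symbol whose order drops: each commutator with $D^2=\bc{\xi}^2-\omega$ against an operator built from $x$–variables raises the $\xi$–order by at most $1$ while differentiating in $x$, so $a_0\,da_1^{(k_1)}\cdots da_m^{(k_m)}$ is a tame symbol (Definition of $\Sigma^m_{tame}$) of order $\le |k|+m-2|k|\cdot 0\dots$ — the key point being that because all the $a_j$ lie in $W^{\infty,1}(\R_\theta)$, every factor is a tame symbol in the first leg, and the product with $(1+D^2)^{-|k|-m/2-z}\sim\bc\xi^{-2(|k|+m/2+z)}$ lands in $L_1(\R_\Theta)\otimes Cl^d$ once $\mathrm{Re}(z)$ is large, by Corollary \ref{int} and Lemma \ref{L1}.

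Next I would use the trace formula Proposition \ref{trin}, $\tau_\Theta(Op(a))=\tau_{\theta,\theta'}(a)$, together with the composition formula Theorem \ref{compo} to write the zeta function $\zeta(z)=\tau(\gamma\,a_0\,da_1^{(k_1)}\cdots da_m^{(k_m)}(1+D^2)^{-|k|-m/2-z})$ as $\tau_{\theta,\theta'}\otimes str$ of the symbol product, then integrate out the $\xi$–leg against $\bc\xi^{-2(|k|+m/2+z)}$. Crucially, since $a_0,\dots,a_m$ and all their $x$–derivatives act only on the $\R_\theta$ leg and are \emph{constant} in $\xi$ (the noncommuting derivatives $D_j(a_\ell)$ still lie in $\R_\theta$, hence in the first tensor leg), the symbol $a_0\,da_1\cdots da_m$ commutes past all of $\omega$ and $\bc\xi$. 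Therefore $\zeta(z) = \tau_\theta\big(a_0 da_1\cdots da_m\big)\otimes str\big(\gamma\,(\text{Clifford part})\cdot\tau_{\theta'}(\bc\xi^{-2(|k|+m/2+z)-(\text{extra }\xi\text{'s from the }da^{(k)})})\big)$ up to lower–order symbol corrections from Theorem \ref{compo} that contribute only to higher poles; I would show these corrections vanish at $z=0$ because they raise the $\xi$–order of integrability and hence the $\tau_{\theta'}$–integral is analytic there. The $\tau_{\theta'}$–integral is evaluated by Proposition \ref{interg}(ii) via the Mellin transform of $e^{-t\bc\xi^2}$: $\tau_{\theta'}(\bc\xi^{-2s})=\frac{1}{\Gamma(s)}\int_0^\infty t^{s-1}\tau_{\theta'}(e^{-t}e^{-t|\xi|^2})\,dt=\frac{1}{\Gamma(s)}\int_0^\infty t^{s-1-d/2}e^{-t}\det\!\big(\tfrac{\pi it\theta'}{\sinh it\theta'}\big)^{1/2}dt$, which has a simple pole exactly at $s=d/2$ with residue $\pi^{d/2}/\Gamma(d/2)\cdot(\text{value of the }\det\text{ factor at }t=0)=\pi^{d/2}/\Gamma(d/2)$ — all higher-order terms of the analytic $\det\big(\tfrac{\pi it\theta'}{\sinh it\theta'}\big)^{1/2}=1+O(t^2)$ push the poles to $s<d/2$, so the residue at $z=0$ is simple and isolated. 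This establishes isolated spectral dimension and shows only $|k|=0$ contributes (since $|k|>0$ shifts $s$ away from $d/2$).

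Finally, with $|k|=0$ surviving, $\al(0)=m!$ and $\si_{m/2,1}=(m/2-1)!$, and the $j=m/2$ term picking out the simple pole, $\phi_m(a_0,\dots,a_m)=\frac{1}{m!}\,\si_{m/2,m/2-?}\,\mathrm{Res}_{z=0}\zeta(z)$ reduces to $\frac{\pi^{d/2}}{m!}\,\mathrm{Str}_\theta\big(a_0\,da_1\cdots da_m\cdot(\text{Clifford factor from integrating }e^{-t\omega})\big)$. Here the Clifford factor arises because the full heat operator is $e^{-t(1+D^2)}=e^{-t}e^{-t\bc\xi^2}e^{t\omega}$ and the Mellin/residue extraction effectively evaluates $e^{t\omega}$ at the pole, contributing $\omega^{(d-m)/2}/\big(\tfrac{d-m}{2}\big)!$ after matching the number of Clifford generators needed for $str(\gamma\,c_{j_1}\cdots c_{j_m}\cdot\omega^{(d-m)/2})$ to be nonzero — the supertrace $str(\gamma\,c_{j_1}\cdots c_{j_\ell})$ vanishes unless $\ell=d$, forcing $\ell=m+2\cdot\frac{d-m}{2}=d$, which also immediately gives $\phi_m=0$ when $m$ is odd (then $d-m$ is odd and no power of the degree-$2$ form $\omega$ can make up an odd shortfall). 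The hardest step, and the one I would spend the most care on, is controlling the lower-order symbol corrections from the composition formula Theorem \ref{compo} and from expanding $e^{t\omega}$: I must verify rigorously that every such correction term is a tame symbol of strictly lower $\xi$–order, hence its $\tau_{\theta'}$–Mellin transform is holomorphic at $z=0$ and contributes nothing to the residue — this is where the ``asymptotic degree'' bookkeeping of Section \ref{sectionasy} and the tame-symbol integrability of Corollary \ref{int} do the real work, and it is also what simultaneously proves the poles are simple (isolated spectral dimension).
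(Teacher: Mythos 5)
Your heat-kernel/Mellin strategy is exactly what the paper does: Cahen--Mellin writes $(1+D^2)^{-|k|-m/2-z}$ as a Laplace integral of $e^{-s(1+D^2)}=e^{-s}e^{-s|\xi|^2}e^{s\omega}$, the $\R_{\theta'}$--trace of $e^{-s|\xi|^2}$ is evaluated by Proposition \ref{interg}(ii), the $\omega$-exponential is expanded in powers, and the Clifford supertrace kills everything below top degree. However, your diagnosis of the hard step is off, and this would send you down an unnecessary detour. You spend much of the proposal on the composition formula Theorem \ref{compo} and on ``lower-order symbol corrections,'' and you explicitly name that bookkeeping as the hardest part; but the paper never invokes Theorem \ref{compo} in this proof at all. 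The supertrace factorization $Str_\Theta(\Psi_k e^{-s(1+D^2)})=tr_{\theta'}(e^{-s(1+|\xi|^2)})Str_\theta(\Psi_k e^{s\omega})$ is used directly, and the vanishing of the $|k|>0$ and $j\ge 2$ contributions comes not from symbol-order bookkeeping but from a clean pincer between two \emph{independent} constraints: the Clifford supertrace forces $Str_\theta(\Psi_k\omega^n)=0$ unless $m+2n\ge d$ (because $\Psi_k$ carries Clifford degree at most $m$), while the Mellin residue $Res_{z=0}\,\Gamma(n-\tfrac d2+|k|+\tfrac m2+z)$ is zero unless $n-\tfrac d2+|k|+\tfrac m2\le 0$. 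Intersecting these two inequalities forces $|k|=0$, $n=\tfrac{d-m}2$, and makes the pole simple. Your version gestures at both halves (``$str(\gamma c_{j_1}\cdots c_{j_\ell})$ vanishes unless $\ell=d$'' and ``$|k|>0$ shifts $s$ away from $d/2$'') but does not close the pincer, and misattributes the residual work to the $\Psi$DO calculus. One small further slip: you say ``the $j=m/2$ term'' picks out the simple pole, but since the pole is simple it is the $j=1$ term in the sum defining $\phi_m$ in \eqref{cocycle} that survives, with coefficient $\alpha(0)\sigma_{m/2,1}=\Gamma(m/2)/m!$; the $j=0$ term would require a constant-term extraction and the $j\ge 2$ terms vanish because $z^{j-1}\zeta(z)$ is then holomorphic at $0$.
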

\begin{proof} We first consider $m>0$. Let us denote $\Psi_k=a_0d a_1^{(k_1)}\cdots d a_m^{(k_m)}$. The cocycle $\phi_{m}$ is a linear combination of residue of the zeta functions at $z=0$,
\[\zeta_{k}(z)=Str_\Theta(\Psi_k(1+D^2)^{-|k|-\frac{m}{2}-z})\pl.\] Because $a_0,\cdots,a_{m}\in  W^{\infty,1}(\R_\theta)^\sim$ and $d a_j^{(k_j)}$ are derivatives of $a_j$, ${\Psi_k\in W^{\infty,1}(\R_\theta)\ten Cl^d}$. Using the same argument of Lemma \ref{prodint}, one can obtain that $\Psi_k(1+D^2)^{-r}\in L_1(\R_\Theta\ten M_N)$ if $r>\frac{d}{2}$. Then $\zeta_{k}(z)$ is analytic for $|k|+\frac{m}{2}+
\text{Re} \pl z>\frac{d}{2}$, and hence it suffices to consider the nonzero residue of $\zeta_{k}$ at $z=0$ for $m+2|k|\le d$. Applying Cahen–Mellin integral, we have
\begin{align}(1+D^2)^{-|k|-\frac{m}{2}-z}=\frac{1}{\Gamma(|k|+\frac{m}{2}+z)}\int_0^\infty e^{-s(1+ D^2)}s^{|k|+\frac{m}{2}+z-1}ds \pl. \label{mel}\end{align}
For $a\in W^{\infty,1}(\R_\theta)$ and $\nu\in Cl^d$,
\[\norm{(a\ten\nu)  e^{-s(1+ D^2)}}{L_1(\R_\Theta\ten M_N)}\le e^{-s}\norm{(a\ten\nu)(1+D^2)^{-r}}{1} \norm{ (1+D^2)^{r}e^{-sD^2}}{\infty} \]
By functional calculus,  \[\norm{ (1+D^2)^{r}e^{-sD^2}}{\infty}\le \begin{cases}
                                                                            \frac{r^r}{s^r}, & \mbox{if $s<r$}  \\
                                                                            1, & \mbox{if $s\ge r$}.
                                                                          \end{cases}\]
Then the integral $\displaystyle \int_0^\infty \norm{(a\ten\nu)  e^{-s(1+ D^2)}}{L_1(\R_\Theta\ten M_N)}s^{|k|+\frac{m}{2}+z-1}ds$ converges for $|k|+\frac{m}{2}+Re(z)>r>\frac{d}{2}$. Hence by Fubini Theorem
\[\zeta_{k}(z)=\int_0^\infty Str_\Theta(\Psi_k e^{-s(1+ D^2)})s^{|k|+m/2+z-1}ds\]
Using the trace formula from Proposition \ref{trin},
\begin{align*}Str_{\Theta}(\Psi_{k} e^{-s(1+D^2)})
 =&Str_{\Theta} (\Psi_k(e^{-s(1+|\xi|^2)}\ten e^{-s\omega}))
=  tr_{\theta'}(e^{-s(1+|\xi|^2)})Str_{\theta} (\Psi_ke^{s\omega})
 \\=& \sum_{n}Str_{\theta} \Big(\Psi_k \frac{\omega^n}{n!} \Big) \pi^\frac{d}{2} e^{-s}s^{n-\frac{d}{2}}h(s)
\end{align*}
Here we used the calculation in Proposition \ref{interg} that
\[tr_\theta'(e^{-s|\xi|^2})=s^{-\frac{d}{2}}\det(\frac{i\pi s\theta'}{\sinh i s\theta'})^{\frac{1}{2}}=s^{-\frac{d}{2}}\pi^{\frac{d}{2}}h(s)\pl,\]
where \[h(s)=\det(\frac{is\theta'}{\sinh is\theta'})=\Pi_{j=1}^l \frac{\la_j s}{\sinh \la_j s},\]
where $i\la_1,-i\la_1, \cdots, i\la_l,-i\la_l$ are the nonzero eigenvalues of $\theta'$. Using L'Hospital's Rule, we know $\displaystyle\lim_{s\to 0}s^{-1}(h(s)-1)=0$.
Then we split the residue into two parts
\begin{align*}
Res_{z=0}\zeta_k(z)=&Res_{z=0}Str_\Theta(\Psi_k(1+D^2)^{-m/2-|k|-z})
\\=& Res_{z=0}\frac{1}{\Gamma(m/2+|k|+z)}\int_0^\infty Str_\Theta(\Psi_k e^{-s(1+ D^2)})s^{|k|+m/2+z-1}ds
\\=& \frac{\pi^\frac{d}{2}}{\Gamma(m/2+|k|)}\sum_{n}\frac{1}{n!}Str_\theta(\Psi_k \omega^n)\Big( Res_{z=0}\int_0^\infty e^{-s}s^{n-\frac{d}{2}+|k|+m/2+z-1}ds\\&+  Res_{z=0}\int_0^\infty (h(s)-1)e^{-s}s^{n-\frac{d}{2}+|k|+m/2+z-1}ds\Big)
\end{align*}
Note that for any $j_1,j_2$ and $j_3$, $[c_{j_1}c_{j_2},c_{j_3}]=0$ or of order $1$. Then
\[\textstyle[D^2,da]=[|\xi|^2-\omega,\sum_{j}D_j(a)\ten c_j]=\sum_{j}[|\xi|^2,D_j(a)]\ten c_j+\sum_{j}D_j(a)\ten [\omega,c_{j}]\]
is of Clifford order $1$ and similarly for $da^{(k_0)}$. Thus $\Psi_k=a_0d a_1^{(k_1)}\cdots d a_m^{(k_m)}$ contains Clifford term of at most order $m$ and $\Psi_k \omega^n$ contains Clifford elements of order at most $m+2n$. Hence the super trace $Str_{\theta}(\Psi_k \omega^n )=0$ for $2n+m<d$. It suffices to consider the residue for $2n+m\ge d$. On one hand,
\begin{align}\label{resi}&Res_{z=0}\int_0^\infty(h(s)-1)e^{-s}s^{n-\frac{d}{2}}s^{|k|+m/2+z-1}ds \nonumber\\=&Res_{z=0}\int_0^\infty\frac{h(s)-1}{s}e^{-s}s^{n-\frac{d}{2}+|k|+m/2+z}ds=0 \end{align}
because the integral converges absolutely for $Re(z)>-1\ge -n+\frac{d}{2}-|k|-m/2-1$. For the other residue
\begin{align*}
Res_{z=0}\int_0^\infty e^{-s}s^{n-\frac{d}{2}+|k|+m/2+z-1}ds=Res_{z=0}\Gamma(n-\frac{d}{2}+|k|+m/2+z)
\end{align*}
is zero if $n-\frac{d}{2}+|k|+m/2\ge 0$. Therefore, the only nonzero residue is at $2n+m-d=|k|=0$ and it is a simple pole. Then $\phi_m$ vanishes for odd $m$ and for even $m\ge 2$,
\begin{align*}\phi_m(a_0,\cdots,a_m)&=\al(0) \si_{\frac{m}{2},1} Res_{z=0}\zeta_{0}(z)=\frac{\Gamma(m/2)}{m!}\frac{\pi^\frac{d}{2}}{\Gamma(m/2)}Res_{z=0} \Gamma(z)Str_\theta(\Psi_0 \frac{\omega^{(d-m)/2}}{\frac{d-m}{2}!} )\\
=&\frac{\pi^\frac{d}{2}}{m!}Str_\theta(a_0da_1 \cdots da_m \frac{\omega^{(d-m)/2}}{\frac{d-m}{2}!}) \pl.
\end{align*} 
For $m=0$, we follow the same argument
\begin{align*}\phi_0(a_0)=&Res_{z=0}z^{-1}Str_\Theta(a_0(1+D^2)^{-z})
\\=&Res_{z=0}z^{-1}\frac{1}{\Gamma(z)}\int_0^\infty Str_\Theta(a_0e^{-s(1+D^2)})s^{z-1}ds
\\=&Res_{z=0}\frac{1}{z\Gamma(z)}\int_0^\infty tr_\theta(a_0)tr_{\theta'}(e^{-s|\xi|^2})str(e^{s\omega})e^{-s}s^{z-1}ds
\\=&tr_\theta(a_0)Res_{z=0}\frac{1}{\Gamma(z+1)}\int_0^\infty \sum_{n=0}\frac{str(\omega^n)}{n!}h(s)e^{-s}\pi^{\frac{d}{2}}s^{n-\frac{d}{2}+z-1}ds
\\=&\pi^{\frac{d}{2}}tr_\theta(a_0)\sum_{n=0}\frac{str(\omega^n)}{n!}\Big(Res_{z=0}\int_0^\infty e^{-s}s^{n-\frac{d}{2}+z}ds\\&+ Res_{z=0}\int_0^\infty (h(s)-1)e^{-s}s^{n-\frac{d}{2}+z-1}ds\Big)
\end{align*}
The super trace $str(\omega^n)$ is non-zero if $n< \frac{d}{2}$. For $n\ge \frac{d}{2}$, the second residue \begin{align}\label{resi}&Res_{z=0}\int_0^\infty(h(s)-1)e^{-s}s^{n-\frac{d}{2}+z-1}ds \nonumber\\=&Res_{z=0}\int_0^\infty\frac{h(s)-1}{s}e^{-s}s^{n-\frac{d}{2}+z}ds=0 \end{align}
because the integral converges for integral converges absolutely for $Re(z)>-1\ge n-\frac{d}{2}-1$. The first residue
\begin{align*}
Res_{z=0}\int_0^\infty e^{-s}s^{n-\frac{d}{2}+z-1}ds=Res_{z=0}\Gamma(n-\frac{d}{2}+z)
\end{align*}
is non-zero only if $n-\frac{d}{2}\le 0$. Therefore, $\phi_0(a_0)=\pi^{d/2}Str_\theta(a_0\frac{\omega^{d/2}}{(d/2)!})$.
\end{proof}

 For compact Spin manifolds, the isolated spectral dimension condition always holds and the only nonzero residues when $j=0$ and $k=0$. This simplification recovers the Atiyah-Singer index theorem for Spin Dirac operator (see \cite{CM95}, \cite{higson} and \cite{Ponge}).
The above theorem gives a simplification of the cocycle formula for \[(W^{\infty,1}(\R_\theta),L_2(\R_\Theta)\ten \C^N, \sum\xi_j\ten c_j)\]to the terms only for $|k|=j=0$. As a consequence, the local index formula for $\R_\theta$ simplifies too. We can see the term $\omega$ plays the role of the curvature form.
\begin{cor}\label{formula}For any projection $e\in M_n(W^{\infty,1}(\R_\theta))$ and with $F_{\mu,+}$ defined as in \eqref{phase}, \[Index(e(F_{\mu,+}\ten id_n)e) = \pi^{\frac{d}{2}}Str_\theta\Big((e-1_e)\frac{\omega^n}{n!}+\sum_{m=2, even}^d\frac{1}{m!}e(de)^{m} \frac{\omega^{d-m}}{(d-m)!}\Big)\pl.\]
\end{cor}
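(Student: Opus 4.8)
The statement follows by feeding the data assembled in the preceding theorem into the even semifinite local index formula of Carey--Gayral--Rennie--Sukochev (the Theorem of \cite{CGRSmemo} recalled above), applied to the spectral triple $(W^{\infty,1}(\R_\theta)\ten M_N,\ L_2(\R_\Theta)\ten\C^N,\ D=\sum_j\xi_j\ten c_j)$ relative to $(\R_\Theta\ten M_N,\ \tau_\Theta\ten tr)$. The preceding theorem has already established that this triple is even (with grading $\gamma=(-i)^{d/2}c_1\cdots c_d$), smoothly summable, of spectral dimension $d$, and of isolated spectrum dimension, and has computed its residue cocycles: $\phi_m=0$ for odd $m$, while for even $m$
\[
\phi_m(a_0,\dots,a_m)=\frac{\pi^{d/2}}{m!}\,Str_\theta\!\Big(a_0\,da_1\cdots da_m\,\frac{\omega^{(d-m)/2}}{\big(\tfrac{d-m}{2}\big)!}\Big),
\]
with the convention that the term vanishes once $m>d$. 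By \cite[Definition~2.12, Proposition~2.13]{CGRSmemo} the quantity on the left of the asserted identity is precisely the numerical index pairing $\langle[e]-[1_e],(\A,H,D)\rangle$ (independent of $\mu>0$), and the local index theorem expresses it as the finite sum $\sum_{m=0,\ \mathrm{even}}^{d}\phi_m\big(Ch^m(e)-Ch^m(1_e)\big)$.

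\smallskip

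\textbf{The plan.} First I would substitute the Chern character components $Ch^0(e)=e$ and $Ch^{2k}(e)=(-1)^k\frac{(2k)!}{k!}(e-\tfrac12)\ten e^{\ten 2k}$. Since $1_e\in M_n(\C)$ is a scalar projection, $d(1_e)=[D,1_e]=0$, so $Ch^m(1_e)$ contributes only through $m=0$; this yields the correction term $\phi_0(e-1_e)=\pi^{d/2}\,Str_\theta\!\big((e-1_e)\tfrac{\omega^{d/2}}{(d/2)!}\big)$. For each even $m=2k\ge 2$ only $Ch^{2k}(e)$ survives, and plugging in gives an expression proportional to $Str_\theta\!\big((e-\tfrac12)(de)^{2k}\tfrac{\omega^{(d-2k)/2}}{((d-2k)/2)!}\big)$. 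I would then reduce this to the stated form using the elementary projection identities obtained by differentiating $e^2=e$ --- namely $e(de)e=0$ and $de=e(de)(1-e)+(1-e)(de)e$, which imply that $e$ commutes with $(de)^2$ and that $(de)^{2k}$ splits into an $e$-supported summand $\big(e(de)(1-e)(de)e\big)^k$ plus a $(1-e)$-supported summand $\big((1-e)(de)e(de)(1-e)\big)^k$ --- combined with the fact that $\omega$ is central relative to $\R_\Theta$ and that $Str_\theta=\tau_\Theta\ten str$ is a \emph{super}trace, so transposing the two odd-Clifford-degree blocks costs a sign. This lets one replace $(e-\tfrac12)(de)^{2k}$ by $e(de)^{2k}$, absorbing the sign and the binomial factor. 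Finally I would collect the $m=0$ term together with the $m=2,4,\dots,d$ terms to obtain the displayed formula.

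\smallskip

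\textbf{Main obstacle.} All the genuine analysis --- applicability of the local index formula, smooth summability, isolated spectrum dimension, and the explicit residue cocycles --- is already in hand from the previous theorem, so the only real work is the last, essentially combinatorial normalization: one must track the factorials and signs coming from the Chern character normalization, the residue constants $\alpha(0)$ and $\sigma_{m/2,1}$ already absorbed into $\phi_m$, and the supertrace sign rule, and check that they conspire to collapse the coefficient of the $m$-th term to $\tfrac1{m!}$ and to turn each $(e-\tfrac12)$ into $e$. The remaining points --- vanishing of the odd-$m$ and high-degree contributions, and the reduction of the $1_e$-terms to the single $m=0$ term --- are immediate.
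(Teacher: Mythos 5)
Your high-level plan---feed the residue cocycles $\phi_m$ computed in the preceding theorem into the quoted CGRS even local index formula and substitute the Chern character components---is precisely what the paper has in mind; the corollary is stated without a written proof as an immediate consequence, and your observation that $Ch^m(1_e)$ contributes only at $m=0$ (since $d(1_e)=0$) is correct. So far so good.

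The step you defer as ``essentially combinatorial normalization'' is, however, where the argument genuinely needs to be closed, and your proposed mechanism does not do it. Substituting $Ch^{2k}(e)=(-1)^k\frac{(2k)!}{k!}(e-\tfrac12)\otimes e^{\otimes 2k}$ into $\phi_{2k}(a_0,\ldots,a_{2k})=\frac{\pi^{d/2}}{(2k)!}Str_\theta\bigl(a_0\,da_1\cdots da_{2k}\,\tfrac{\omega^{(d-2k)/2}}{((d-2k)/2)!}\bigr)$ produces the coefficient $\tfrac{(-1)^k}{k!}$ on $Str_\theta\bigl((e-\tfrac12)(de)^{2k}\tfrac{\omega^{(d-2k)/2}}{((d-2k)/2)!}\bigr)$; even granting the vanishing of $Str_\theta\bigl((de)^{2k}\omega^j\bigr)$, which lets you replace $e-\tfrac12$ by $e$, you are left with $\tfrac{(-1)^k}{k!}$, which is not the $\tfrac1{m!}=\tfrac1{(2k)!}$ appearing in the statement. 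No Fedosov-type projection identity or supertrace sign rule converts one into the other, so ``absorbing the sign and the binomial factor'' is not a checkable step as written---you must actually track it. Be warned that the paper's own $d=2$ verification (Lemma~\ref{ap} and Theorem~\ref{example}) uses the coefficient $\pi\cdot 1$ on $Str_\theta(e\,de\,de)$, i.e.\ $\tfrac{1}{(m/2)!}$ for $m=2$, which matches neither the corollary's $\tfrac1{m!}$ nor the raw $\tfrac{(-1)^{m/2}}{(m/2)!}$; so there is a genuine normalization issue to resolve here, not just bookkeeping. Separately, the vanishing $Str_\theta\bigl((de)^{2k}\omega^j\bigr)=0$ follows easily from cyclicity and $de\,\gamma=-\gamma\,de$ only when $j=0$; for $j>0$, $\omega^j$ does not commute with $de$ in the Clifford algebra, so you should justify the vanishing by expanding the Clifford supertrace and invoking the traciality of $\tau_\theta$ on the resulting cyclic words in the $D_l(e)$'s.
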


\subsection{A concrete example for $d=2$}We shall now calculate a concrete example in dimension $d=2$. In the classical case, a canonical generator for $K_0(C_0(\R^2))$ is the Bott projector
\[ e_B(\bx,\by)=\frac{1}{1+\bx^2+\by^2}\left[\begin{array}{cc} 1 & \bx-i\by\\ \bx+i\by & \bx^2+\by^2 \end{array}\right]\in M_2 (C_0(\R^2)^\sim )\pl, 1_{e_B}= \left[\begin{array}{cc} 0 & 0\\ 0 & 1 \end{array}\right]\in M_2(\C)\pl.\]
Now let $\theta$ be a real number and $\R_\theta$ is the Moyal plane generated by two self-adjoint element $x,y$ with $[x,y]=-i\theta$. We consider an analog of Bott projection for $\R_\theta$. Write $z=x+iy$, $R=(1+z^*z)^{-1}$ and $\displaystyle u=\left[\begin{array}{c}1\\ z\end{array}\right]$. Then
$e:= u\left[\begin{array}{cc}R &0\\ 0& 0\end{array}\right]u^*=\left[\begin{array}{cc}R&Rz^*\\ zR&zRz^*\end{array}\right]$
is a projection because $u^*Ru=1$. The only drawback of $e$ is that it does not belongs to $M_2(W^{\infty,1}(\R_\theta)^\sim)$. Indeed,
by Proposition \ref{interg} and Theorem \ref{order}, we know that $R,zR,zRz^*\notin L_1(\R_\theta)$. Nevertheless, $dede$ and $id\ten tr_2(e-1_e)=R+zRz^*-1$ do belong to $L_1$ so that the cocycle formula in Corollary \ref{formula} are well defined. The next lemma shows that by approximation the cocycle formula remains valid for $e$.
\begin{lemma}\label{ap}There exists a sequence of projection $e_n\in M_2(W^{\infty,1}(\R_\theta)^\sim)$ such that $1_{e_n}=1_e$ and
$\lim_{n\to \infty}\norm{e_n-e}{\infty}=0, \lim_{n\to \infty}\norm{id\ten tr_2(e_n-e)}{1}=0$. As a consequence,
\[\lan [e]-[1_e], (W^{\infty,1}(\R_\theta),L_2(\R_\Theta)\ten \C^N, \sum\xi_j\ten c_j)\ran=\pi Str_\theta((e-1_{e})\omega)+ \pi Str_\theta(ede de)\]
\end{lemma}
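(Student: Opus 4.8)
The idea is to truncate the generating column of the Bott projector by a smooth cutoff of $z^*z$. Fix $\chi\in C_c^\infty([0,\infty))$ with $0\le\chi\le 1$, $\chi\equiv 1$ near $0$ and $\supp\chi\subset[0,2]$, choose $c_n\to\infty$, and set $q_n:=\chi(z^*z/c_n)$. As a compactly supported smooth function of $z^*z$, $q_n$ lies in $\S_\theta$ (in $C_c^\infty(\R^2)\subset\S_\theta$ when $\theta=0$), with $0\le q_n\le1$ and $q_n\uparrow 1$ strongly. Put $u_n:=(q_n,z)^t$; then $u_n^*u_n=q_n^2+z^*z\ge 1$ is invertible, so $e_n:=u_n(u_n^*u_n)^{-1}u_n^*\in M_2(\M_\theta)$ is a projection. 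Using the intertwining relation $z\,g(z^*z)=g(zz^*)\,z$ one computes the entries: writing $F_n(t):=\chi(t/c_n)^2\,(\chi(t/c_n)^2+t)^{-1}$ one finds $(e_n)_{11}=F_n(z^*z)$, $(e_n)_{22}=1-F_n(zz^*)$, while the off--diagonal entries each carry a factor $\chi(z^*z/c_n)$ or $\chi(zz^*/c_n)$. Hence each of $(e_n)_{11},(e_n)_{12},(e_n)_{21}$ and $(e_n)_{22}-1$ lies in the ideal $\S_\theta\subset W^{\infty,1}(\R_\theta)$; so $e_n\in M_2(W^{\infty,1}(\R_\theta)^\sim)$ and its scalar part is $1_e$, i.e.\ $1_{e_n}=1_e$.

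For the two convergences: the entries of $e-1_e$, namely $R:=(1+z^*z)^{-1}$, $Rz^*$, $zR$ and $zRz^*-1=-(1+zz^*)^{-1}=:-\tilde R$, all lie in $\E_\theta$ (they are $\tau_\theta$--compact), and the matching entry of $e_n-1_e$ is obtained from it by inserting the cutoffs $\chi(z^*z/c_n),\chi(zz^*/c_n)$; a direct estimate on the corresponding one--variable profiles (equivalently, norm--continuity of multiplication by a strongly convergent bounded net on compacts) gives $\norm{e_n-e}{\infty}\to0$. For the $L_1$ part, the entrywise formulas yield
\[id\ten tr_2(e_n-1_e)=F_n(z^*z)-F_n(zz^*),\qquad id\ten tr_2(e-1_e)=R-\tilde R=G(z^*z)-G(zz^*),\]
with $G(t)=(1+t)^{-1}$, so that $id\ten tr_2(e_n-e)=f_n(z^*z)-f_n(zz^*)$ where $f_n(t):=(F_n-G)(t)=\frac{t(\chi(t/c_n)^2-1)}{(\chi(t/c_n)^2+t)(1+t)}$. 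One checks that $f_n$ vanishes on $[0,\delta c_n]$ and $|f_n'(t)|\lesssim\bc{t}^{-2}$ uniformly in $n$; since $zz^*=z^*z-2\theta$ when $d=2$, the mean value theorem together with $\sum_{t\in\operatorname{Spec}(z^*z),\,t\gtrsim c_n}\bc{t}^{-2}=O(c_n^{-1})$ (the spectrum and the relevant traces being those of Proposition~\ref{interg}; the whole expression is $0$ when $\theta=0$) give $\norm{id\ten tr_2(e_n-e)}{1}=O(c_n^{-1})\to0$. The essential point --- and the main obstacle --- is exactly this cancellation: $R$ and $\tilde R=1-zRz^*$ separately lie only in $L_{2,\infty}(\R_\theta)$, and it is solely their difference $R-\tilde R$ that is in $L_1$, so norm--smallness of $e_n-e$ by itself is useless and one must exploit the interplay between the $(1,1)$ and $(2,2)$ corners.

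Finally, for the consequence: once $\norm{e_n-e}{\infty}<1$ (all large $n$) the projections $e_n$ and $e$ are Murray--von Neumann equivalent in $M_2(\E_\theta^\sim)$, hence $[e_n]-[1_e]=[e]-[1_e]$ under the isomorphism $K_0(W^{\infty,1}(\R_\theta))\cong K_0(\E_\theta)$, and, the semifinite index pairing being a $K$--theory invariant, $\lan[e_n]-[1_e],(W^{\infty,1}(\R_\theta),L_2(\R_\Theta)\ten\C^N,\sum_j\xi_j\ten c_j)\ran$ is the constant $\lan[e]-[1_e],(\cdots)\ran$. Applying Corollary~\ref{formula} with $d=2$ to each $e_n$ writes this pairing as $\pi\,Str_\theta((e_n-1_e)\omega)+\pi\,Str_\theta(e_n\,de_n\,de_n)$. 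The first term depends on $e_n$ only through $id\ten tr_2(e_n-1_e)\in L_1(\R_\theta)$ (as $\omega$ is a scalar multiple of $\gamma$ in $Cl^2$) and converges to $\pi\,Str_\theta((e-1_e)\omega)$ by the $L_1$ estimate. For the second term one verifies from the explicit formulas that $D_j(e_n)\to D_j(e)$ entrywise in $L_2(\R_\theta)$ --- here using that $D_j(R),D_j(Rz^*),D_j(zR),D_j(zRz^*)\in L_2(\R_\theta)$ for $d=2$ and that the commutators $[D_j,\chi(z^*z/c_n)]$ are supported far out and die in $L_2$ --- so that $de_n\to de$ in Hilbert--Schmidt norm and hence, with $\norm{e_n-e}{\infty}\to0$, $Str_\theta(e_n\,de_n\,de_n)\to Str_\theta(e\,de\,de)$. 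Passing to the limit gives the asserted identity; controlling $[D_j,q_n]$ is the one remaining technical nuisance, and is exactly why a smooth (rather than sharp spectral) cutoff is used.
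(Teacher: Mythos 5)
Your proposal is correct in outline but takes a genuinely different route from the paper. You build the approximating projections explicitly by truncating the generating column of the Bott projector, $e_n=u_n(u_n^*u_n)^{-1}u_n^*$ with $u_n=(\chi(z^*z/c_n),z)^t$, which gives honest projections with entries in $\S_\theta$ from the start; the paper instead sets $\tilde{e}_n=(\la_\theta(\phi_n)\ten1)(e-1_e)+1_e$ with $\la_\theta(\phi_n)$ the approximate identity of Proposition \ref{ai} and corrects $\tilde{e}_n$ to a projection by holomorphic functional calculus. The paper's route makes the $L_1$ convergence of $id\ten tr_2(\tilde{e}_n-1_e)$ trivial (it is $\la_\theta(\phi_n)$ acting on the fixed $L_1$ element $id\ten tr_2(e-1_e)$), whereas you must track the cancellation between the $(1,1)$ and $(2,2)$ corners by hand via the spectral estimate on $f_n(z^*z)-f_n(zz^*)$; that estimate is sound, but it is extra work. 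The more significant divergence is in the passage to the limit for $\phi_2$. You propose to show $de_n\to de$ in $L_2$, then use Cauchy--Schwarz; this does work, but note that your phrasing ``$[D_j,\chi(z^*z/c_n)]$ \dots die in $L_2$'' is misleading --- the $L_2$ norm of $[D_j,q_n]$ alone does not tend to $0$; it is only the derivatives of the full resolvent combinations $f_n(z^*z),\ g_n(z^*z)z^*$, etc.\ whose $L_2$ norms decay, and that requires tracking the $t^{-2}$ decay of $f_n',g_n'$ against the $x_j\sim\sqrt{z^*z}$ growth coming from $[D_j,z^*z]$. The paper sidesteps this entirely: using cyclicity of $Str_\theta$ together with $de_n\gamma=-\gamma de_n$ (which gives $Str_\theta(de_nde_n)=0$ and permits shuffling of factors), it rewrites $Str_\theta(ede\,de)-Str_\theta(e_nde_n\,de_n)$ as a sum of three terms each factored as $(e-e_n)$ times a product of two $d(\cdot)$'s lying in $M_2(L_1)$, so only the uniform norm $\|e-e_n\|_\infty\to0$ (together with uniform $L_1$ bounds on $dede_n,\ de_nde_n$) is needed. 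In short: your construction is more explicit and avoids the functional-calculus correction, at the cost of a nontrivial $L_2$ estimate on derivatives; the paper's cyclicity trick avoids that estimate, at the cost of an indirect construction of $e_n$.
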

\begin{proof} Let $\la_\theta(\phi_n)$ be the approximation identity in Propsition \ref{ai}. Define \[\tilde{e}_n:=(\la_\theta(\phi_n)\ten 1)(e-1_e)+1_e\in M_2(W^{\infty,1}(\R_\theta))\pl. \]Because $e-1_e\in \E_\theta$ and $id\ten tr_2(e-1_e)\in L_1(\R_\theta)$, we have
\begin{align*}&\norm{\tilde{e}_n-e}{\infty}=\norm{(\la_\theta(\phi_n)\ten 1)(e-1_e)-(e-1_e)}{\infty}\to 0\pl,\\ &\norm{id\ten tr_2(\tilde{e}_n-1_e)-id\ten tr_2(e-1_e)}{1}\to 0\pl.\end{align*}
Using holomorphic functional calculus, we can made projections $e_n\in M_2(W^{\infty,1}(\R_\theta))$ from $\tilde{e}_n$ with satisfies the same limits above. It is known that if two projections $e,f$ satisfy that $\norm{e-f}{}< 1$ then $e$ is homotopic to $f$ hence $[e]=[f]$ (see e.g. \cite{rordam00}). Then by the homotopy invariance of index pairing, we know for $n$ large enough
\begin{align*}\lan [e]-[1_e], (\A,H,D)\ran &= \lan [e_n]-[1_{e_n}], (\A,H,D)\ran =\phi_0(e_n-1_{e_n})+\phi_2(e_n-\frac{1}{2},e_n,e_n)\\&=\pi Str_\theta(e_n-1_{e_n}\omega)+\pi Str_\theta((e_n-\frac{1}{2})de_nde_n))\pl.
\end{align*}
Taking the limit $n\to \infty$,
\begin{align*}
\lim_{n\to \infty }Str_\theta((e_n-1_{e_n})\omega)=Str_\theta((e-1_e)\omega)\pl.
\end{align*}
For the second term, we first note that $Str_\theta(de_nde_n)=Str_\theta(-de_nde_n)=0$ because ${de_n\gamma=-\gamma de_n}$. For the same reason, we have the cyclicity that
\begin{align*} &Str_\theta(ede_nde)=Str_\theta(d(ee_n)de)-Str_\theta(d(e)e_nde)=Str_\theta(e_ndede), \\ &Str_\theta(e_ndede_n)=Str_\theta(d(e_ne)de_n)-Str_\theta(d(e_n)ede_n)=Str_\theta(ede_nd(e_n))\pl.\end{align*}
Therefore,
\begin{align*}
&Str_\theta(ede  de)-\tau_\theta\ten Str_\theta(e_nde_n  de_n)\\
=&Str_\theta(ede  de-e_nde  de)+ Str_\theta(e_nde  de-e_nde_n  de)+Str_\theta(e_nde_n  de-e_nde_n  de_n)\\
=&Str_\theta(ede  de-e_nde  de)+ Str_\theta(ede  de_n-e_nde  e_n )+Str_\theta(ede_n  de_ne_nde_n  de_n)\\
=&Str_\theta\big((e-e_n)de  de\big)+ Str_\theta\big((e-e_n)de  de_n\big)+Str_\theta\big((e-e_n)de_n  de_n\big),
\end{align*}
All the three terms above converges to $0$, since $\norm{e-e_n}{\infty} \to 0$ and $dede,dede_n,de_nde_n$ are in $M_2(L_1(\R_\theta))$.
\end{proof}
\begin{theorem}\label{example} For any $\theta,\theta'$, \[\lan [e]-[1_e], (W^{\infty,1}(\R_\theta),L_2(\R_\Theta)\ten \C^N, \sum\xi_j\ten c_j)\ran=4\pi^2(1-\theta\theta')\pl.\]
In particular, $[e]$ is a generator of $K_0(\E_\theta)=\Z$.
\end{theorem}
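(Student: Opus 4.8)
The plan is to start from the identity furnished by Lemma~\ref{ap}, which expresses the pairing as $\pi\,Str_\theta\big((e-1_e)\omega\big)+\pi\,Str_\theta\big(e\,de\,de\big)$, and to compute the two summands. First I would perform the Clifford bookkeeping for $d=2$, where $N=2$ and $\gamma=-ic_1c_2$. Since $\omega=\tfrac{i}{2}\sum_{j,k}\theta'_{jk}c_jc_k=i\theta' c_1c_2$ is a scalar multiple of $\gamma$, the element $(e-1_e)\omega$ is $(e-1_e)$ times a multiple of $\gamma$, and applying $\operatorname{id}\ten str$ removes the Clifford factor: the first summand equals $str(\omega)\cdot\pi\cdot(\tau_\theta\ten tr_2)(e-1_e)$. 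For the second, $de=\sum_j D_j(e)\ten c_j$ with $D_j(e)=[\xi_j,e]$ acting as the $j$-th partial derivative on the entries of $e$, so $e\,de\,de=\sum_{j,k}eD_j(e)D_k(e)\ten c_jc_k$; since $str(c_jc_k)$ vanishes for $j=k$ and equals $\pm 2i$ for $\{j,k\}=\{1,2\}$, this summand collapses to a constant multiple of $\pi\cdot(\tau_\theta\ten tr_2)\big(e[D_1 e,D_2 e]\big)$. Thus everything is reduced to the two ``scalar'' traces $\tau_\theta\big(tr_2(e-1_e)\big)$ and $(\tau_\theta\ten tr_2)\big(e[D_1 e,D_2 e]\big)$.

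For the first, with $z=x+iy$ and $R=(1+z^*z)^{-1}$ one has $tr_2(e-1_e)=R+zRz^*-1$, and the operator identity $zRz^*=z(1+z^*z)^{-1}z^*=(1+zz^*)^{-1}zz^*=1-(1+zz^*)^{-1}$ rewrites this as $(1+z^*z)^{-1}-(1+zz^*)^{-1}$. By the Stone--von Neumann description of $\R_\theta$ together with Proposition~\ref{interg}, $z$ and $z^*$ are $\sqrt{2\theta}$ times a creation/annihilation pair on $L_2(\R)$, so $z^*z$ and $zz^*$ are diagonal in the number basis with spectra differing by a shift of $2\theta$; hence $tr(R+zRz^*-1)$ is an absolutely convergent telescoping sum equal to $-\operatorname{sgn}(\theta)$. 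Reading off the normalization $\tau_\theta=2\pi|\theta|\cdot tr$ by comparing $\tau_\theta(e^{-t|x|^2})$ from Proposition~\ref{interg} with the harmonic-oscillator partition function gives $\tau_\theta\big(tr_2(e-1_e)\big)=-2\pi\theta$, which together with $str(\omega)$ produces the term $-4\pi^2\theta\theta'$. I would stress that the nonvanishing of this trace is a genuinely noncommutative effect: classically $tr_2(e_B-1_{e_B})\equiv 0$.

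The second trace is where I expect the real work. I would introduce the derivations $\partial_z=\tfrac{i}{2}(D_1-iD_2)$ and $\partial_{\bar z}=\tfrac{i}{2}(D_1+iD_2)$, for which $\partial_z z=1$ and $\partial_{\bar z}z=0$ (and the conjugate relations). Writing $e=vRv^*$ with $v=(1,z)^{t}$ one then checks the projection identities $e\,\partial_z e=0$ and $(\partial_{\bar z}e)\,e=0$, which reduce $(\tau_\theta\ten tr_2)\big(e[D_1 e,D_2 e]\big)$ to $\mp 2i\,(\tau_\theta\ten tr_2)(\partial_{\bar z}e\,\partial_z e)$, the ``Chern curvature'' of the Bott line bundle. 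This I would evaluate either (a) directly, writing $\partial_z e$ and $\partial_{\bar z}e$ as explicit operator matrices and computing the resulting trace as a second telescoping sum in the number basis, or (b) by invoking homotopy/deformation invariance of the Connes pairing of $K_0(\E_\theta)\cong\Z$ against the cyclic $2$-cocycle built from $\tau_\theta$ and $D_1,D_2$, reducing to the classical value $\frac{1}{2\pi i}\int_{\R^2}tr(e_B\,de_B\wedge de_B)=1$. Either way this term equals $4\pi^2$, independent of $\theta$ and $\theta'$. Adding the two contributions gives $4\pi^2(1-\theta\theta')$, with the overall signs pinned down by the conventions for $\gamma$ and for $str(c_1c_2)$.

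For the last assertion, the first computation in fact identifies $[e]-[1_e]$ with $-\operatorname{sgn}(\theta)$ under the trace isomorphism $K_0(\E_\theta)=K_0(\K(L_2(\R)))\xrightarrow{\ \sim\ }\Z$ when $\theta\neq 0$, so $[e]-[1_e]$ is $\pm$ a generator; for $\theta=0$ it is the Bott element of $K_0(C_0(\R^2))$, a generator by Bott periodicity. (One can also simply specialize to $\theta'=0$, where the pairing equals $4\pi^2\neq 0$.) The principal obstacle is the evaluation of the noncommutative Chern-number trace described above; once the ladder-operator realization of $\R_\theta$ and the normalization of $\tau_\theta$ are available, the rest is bookkeeping.
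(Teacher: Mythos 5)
Your overall plan coincides with the paper's: both proceed from Lemma~\ref{ap} and reduce the pairing to the two traces $Str_\theta((e-1_e)\omega)$ and $Str_\theta(e\,de\,de)$, but the two individual reductions you propose are genuinely different and in places more transparent.

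For the $\phi_0$ piece, the paper does not evaluate $\tau_\theta(R+zRz^*-1)$ directly; it first proves the commutator identity $R+zRz^*-1=2\theta(RzRz^*-R)$ and then re-uses the trace $\tau_\theta(R-RzRz^*)$ already needed for $\phi_2$. Your route---the operator identity $zRz^*=1-(1+zz^*)^{-1}$ followed by the telescoping sum $tr\bigl((1+z^*z)^{-1}-(1+zz^*)^{-1}\bigr)=-\operatorname{sgn}\theta$ and the normalization $\tau_\theta=2\pi|\theta|\cdot tr$---is a cleaner, self-contained computation of the same number $-2\pi\theta$, and has the virtue of explaining why this term vanishes at $\theta=0$. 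For the $\phi_2$ piece, the paper expands $e\,de\,de$ into eight matrix-entry terms and reduces them by hand with Leibniz and super-trace cyclicity; your use of $\partial_z,\partial_{\bar z}$ and the projection identities $e\,\partial_z e=0$ and $(\partial_{\bar z}e)\,e=0$ (both of which do hold, via $e=uRu^*$, $\partial_z u^*=0$, $\partial_z R=-Rz^*R$ and $u^*u=R^{-1}$) collapses $Str_\theta(e\,de\,de)$ at once to $4(\tau_\theta\ten tr_2)(\partial_{\bar z}e\,\partial_z e)=4\tau_\theta(R-RzRz^*)$, reaching the paper's final trace by a far more conceptual route.

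Two cautions. You leave the evaluation of $\tau_\theta(R-RzRz^*)$ to ``either (a) or (b)'', and option (b)---homotopy or deformation invariance down to the commutative value---is not available off-the-shelf here: the trace $\tau_\theta$, the algebra $\E_\theta$, and the projection $e$ all vary with $\theta$, and constancy of the cocycle pairing across the Moyal deformation family is itself a nontrivial claim that would have to be established separately. Only option (a) is self-contained, and it does close: $R-RzRz^*=(1+z^*z)^{-1}(1+zz^*)^{-1}$ has eigenvalues $\bigl((1+2\theta(n+1))(1+2\theta n)\bigr)^{-1}$, which sum to $(2\theta)^{-1}$, so $\tau_\theta(R-RzRz^*)=\pi$ and the $\phi_2$ contribution is $4\pi^2$. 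Second, you assert the $\phi_0$ contribution is $-4\pi^2\theta\theta'$ without displaying the arithmetic; plugging in your own ingredients ($str(\omega)=-2\theta'$, $\tau_\theta(tr_2(e-1_e))=-2\pi\theta$, an overall factor $\pi$ from $\phi_0$) appears to give $+4\pi^2\theta\theta'$, so a sign still has to be pinned down, either in $str(\omega)$ or in the orientation convention for $\gamma$. The paper's own intermediate steps for this term also contain sign and factor-of-two slips, so you should resolve the sign independently rather than by matching the text.
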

\begin{proof} The super trace $Str_\theta(ede de)$ is of eight terms
\begin{align*}
&Str_\theta(ede de)=Str_\theta\ten tr_{2}\Big(\left[\begin{array}{cc}R&Rz^*\\ zR&zRz^*\end{array}\right]\left[\begin{array}{cc}dR& d(Rz^*)\\ d(zR)&d(zRz^*)\end{array}\right] \left[\begin{array}{cc}dR&d(Rz^*)\\ d(zR)&d(zRz^*)\end{array}\right]\Big)\nonumber\\
=&Str_\theta\Big(Rd(R)d(R)+Rd(Rz^*)d(zR)+Rz^*d(zR)d(R)+Rz^*d(zRz^*)d(zR) \nonumber
\\&+zRd(R)d(Rz^*)+zRd(Rz^*)d(zRz^*)+zRz^*d(zR)d(Rz^*)+zRz^*d(zRz^*)d(zRz^*)\Big)\pl.
\end{align*}
We will repeatedly use Leibniz rule and cyclicity of trace (in the strong sense \cite[Theorem 17]{brownkosaki}) that
\[d(a_1a_2)=(da_1) a_2+a_1d a_2\pl, Str_\theta(da_1(da_2) a_3)=Str_\theta(a_3da_1 da_2)\]
Denote $\tau=Str_\theta$ in short. For the first and fifth term,
\begin{align*}
\tau\Big(Rd(R)d(R)+zRd(R)d(Rz^*)\Big)&= \tau\Big(d(R)d(R) R+d(R)d(Rz^*)zR\Big)\\
& =\tau\Big(d(R)d(R) R+d(R)d(R)z^*zR+ d(R) R d(z^*)zR\Big)\\
& =\tau\Big(d(R)d(R) R+d(R)d(R)(1-R)+ d(R) R d(z^*)zR\Big)\\
& =\tau\Big(d(R)d(R) + d(R) R d(z^*)zR\Big)
\end{align*}
Similarly we have for the second and sixth term, third and seventh term , fourth and eighth term,
\begin{align*}
\tau\Big(Rd(Rz^*)d(zR)+zRd(Rz^*)d(zRz^*)\Big)&= \tau\Big(d(Rz^*)d(zR)+ zRd(Rz^*)zRdz^*\Big)\\
\tau\Big(Rz^*d(zR)d(R)+zRz^*d(zR)d(Rz^*)\Big)&= \tau\Big(z^*d(zR)dR+zRz^*d(zR)Rdz^*\Big)\\
\tau\Big(Rz^*d(zRz^*)d(zR)+zRz^*d(zRz^*)d(zRz^*)\Big)&= \tau\Big(z^*d(zRz^*)d(zR)+zRz^*d(zRz^*)zR dz^*\Big)\\
\end{align*}
Recoupling these terms,
\begin{align*}
\tau\Big(dRdR+z^*d(zR)dR\Big)&= \tau\Big(R^{-1}dRdR+z^*(dz) RdR\Big)\\
\tau\Big(zR(dR) Rdz^*+zRz^*d(zR)Rdz^*\Big)&= \tau\Big(z(dR) Rdz^*+zRz^*dz R^2dz^*\Big)\\
\tau\Big(d(Rz^*)d(zR)+z^*d(zRz^*)d(zR)\Big)&= \tau\Big(R^{-1}d(Rz^*)d(zR)+z^*(dz) Rz^*d(zR)\Big)\\
\tau\Big(zRd(Rz^*)zRdz^*+zRz^*d(zRz^*)zR dz^*\Big)&= \tau\Big(zd(Rz^*)zRdz^*+zRz^*(dz) Rz^*zRdz^*\Big)\\
\end{align*}
On the right hand side, there are only three terms still contains derivatives of products. We again use Leibniz rule,
\begin{align*}
\tau(R^{-1}d(Rz^*)d(zR))=&\tau(R^{-1}d(R)z^*d(zR) + dz^*d(zR))\\
=&\tau(d(R)z^*d(z)+R^{-1}d(R)(R^{-1}-1)dR) + dz^*d(z)R+dz^*zdR)\\
\tau(z^*(dz) Rz^*d(zR))=&\tau(z^*(dz) (1-R)dR+z^*(dz) Rz^*d(z)R)\\
\tau(zd(Rz^*)zRdz^*)=&\tau(z^*Rdz^*zRdz^*+zdR (1-R)dz^*)
\end{align*}
Gathering all the terms we have,
\begin{align*}&((dR) z^*dz+z^*dz dR)+(dz^*  zdR+zdR  dz^*)+
\\&(zR(dz^*)zRdz^*+ R^{-1}dR  R^{-1}dR+(dz)Rz^* (dz)Rz^*) +Rdz^*  dz+zRz^*(dz)  Rdz^*\pl.
\end{align*}
Here only the last two terms has nonzero trace. This is because for any $a_1,a_2,a_3, b_1,b_2 b_3$
\begin{align*}&Str_\theta\Big(a_1(da_2)a_3b_1(db_2)b_3\Big)=- Str_\theta\Big(b_1(db_2)b_3a_1(da_2)a_3\Big), \\ &Str_\theta\Big(a_1(da_2)a_3a_1(da_2)a_3\Big)=0.
 \end{align*}
 This follows from that fact $a_1(da_2)a_3$ has Clifford term of order $1$ hence $a_1(da_2)a_3\gamma=-\gamma a_1(da_2)a_3$.
It remains to calculate the trace of $Rdz^*dz+zRz^*dzRdz^*$. Note that
$zz^*=z^*z-2\theta=R^{-1}-1-2\theta \pl, dz=-ic_1+c_2 \pl ,dz^*=-ic_1-c_2\pl.$
Then
\begin{align*}
Str_\theta(Rdz^* dz+zRz^*(dz)  Rdz^*)&=4\tau_\theta(R-zRz^*R)
\end{align*}
Finally we use the spectrum of quantum harmonic oscillator the above trace. Assume that $\theta>0$. By Proposition \ref{trace}, there is a trace preserving $*$-isomorphism (up to a factor $2\pi\theta$ $\pi:\R_\theta \to B(L_2(\R))$ such that
\[x \mapsto \sqrt{\theta}D_{\bx}\pl , \pl  y\mapsto \sqrt{\theta}\bx \pl ,\]
Recall that $H=D_{\bx}^2+\bx^2$ is the Hamiltonian of $1$-dimensional quantum harmonic oscillator which has eigenbasis $\ket{n}, n\ge 0$ with $H\ket{n}=(2n+1)\ket{n}$.
For the creation operator $a^*=D_{\bx}+i\bx$ and the annihilation $a=D_{\bx}-i\bx$,
\[ a^*\ket{n}= \sqrt{2n+2}\ket{n+1}\pl, a \ket{n}= \sqrt{2n}\ket{n-1}\]
Now take $z=\sqrt{\theta}a^*, z^*=\sqrt{\theta}a$ and $R^{-1}=1+2\theta+zz^*=\theta (H+1)+1$. We have \begin{align*}
4\tau_\theta(R-zRz^*R)&=2\theta\pi\cdot 4\sum_{k=0}\frac{1}{1+2\theta +2k\theta}-\frac{1}{1+ 2k\theta}\frac{2k\theta}{1+2\theta +2k\theta}
\\&=8\theta\pi \sum_{k=0}\frac{1}{1+ 2k\theta}\frac{1}{1+2\theta +2k\theta}=4\pi\pl.
\end{align*}
For $\phi_0$, we have
\begin{align*}
\phi_0(e-1_e)&=Str_\theta((e-1_e)\omega)
=\tau_\theta(R+zRz^*-1)tr(\gamma\omega)=2\theta'\tau_\theta(R+zRz^*-1)
\end{align*}
Note that $R^{-1}=1+z^*z=1+\theta+x^2+y^2$ and $[R^{-1},z]=[x^2+y^2,x+iy]=2\theta z$. Then,
\begin{align*}
R+zRz^*-1=&R(1+z^*z)-1+[z,Rz^*]=[z,Rz^*]
\\=&[z,R]z^*+R[z,z^*]=R[R^{-1},z]Rz^*-2\theta R
=2\theta (RzRz^*- R)
\end{align*}
We have calculated that $\tau_\theta(R-RzRz^*)=2\pi $. So $Str_\theta((e-1_e)\omega)=-\theta\theta'4\pi$. To conclude, we have the index pairing \begin{align*}\lan [e]-[1_e], (W^{\infty,1}(\R_\theta), L_2(\R_\Theta)\ten M_N,D )\ran= & \pi Str_\theta((e-1_{e})\omega)+ \pi Str_\theta(ede de)
\\=& -4\pi^2\theta\theta'+4\pi^2=4\pi^2(1-\theta\theta')
\end{align*}
Recall for $d=2$ that $\Theta=\left[\begin{array}{cccc}0&\theta & 1& 0\\ -\theta&0 & 0& 1\\ 1&0 & 0& \theta'\\ 0&1 & -\theta'& 0 \end{array}\right]$. When $\det\Theta=(1-\theta\theta')^2\neq 0$, we have $\R_\Theta$ is $*$-isomorphic to $B(L_2(\R^2))$ with the trace differs by a factor $\tau_\Theta=(2\pi)^2|1-\theta\theta'|tr$, which is exactly the normalization constant we obtained. In other words, if we replace $\tau_\Theta$ with the matrix trace $tr$, $\text{Index}_{tr}(e F_{\mu,+}e)=1$ (or $-1$). Since for every $\theta$, we can choose $\theta'$ such that $\theta\theta'\neq 1$, then the index pairing shows that $e\in M_2(\E_\theta^\sim)$ is a representative of generator of the $K_0(\E_\theta)=\Z$.
\end{proof}


\bibliographystyle{alpha}
\bibliography{thesisbib_edited_190821}

\end{document}